\documentclass{amsart}

\usepackage[pdfborder={0 0 0}, draft=false, pagebackref, unicode=true]{hyperref}

\usepackage{esint,amssymb,mathrsfs,mathtools,amsthm,tikz}
\usetikzlibrary{shadings}

\newtheorem{thm}[equation]{Theorem}
\newtheorem{lem}[equation]{Lemma}
\newtheorem{cor}[equation]{Corollary}
\newtheorem{prp}[equation]{Proposition}
\theoremstyle{definition}
\newtheorem{defn}[equation]{Definition}
\newtheorem{rmk}[equation]{Remark}

\numberwithin{equation}{section}

\usepackage{cleveref}
\crefname{equation}{}{}
\crefname{lem}{}{}
\crefrangelabelformat{equation}{(#3#1#4--#5#2#6)}

\newcommand\dist{\mathop{\mathrm{dist}}\nolimits}
\newcommand\Div{\mathop{\mathrm{div}}\nolimits}
\newcommand\Tr{\mathop{\mathrm{Tr}}\nolimits}
\newcommand\supp{\mathop{\mathrm{supp}}\nolimits}
\newcommand\diam{\mathop{\mathrm{diam}}\nolimits}
\newcommand\esssup{\mathop{\mathrm{ess\,sup}}}
\newcommand\R{\mathbb{R}}
\newcommand\Z{\mathbb{Z}}
\newcommand\N{\mathbb{N}}
\newcommand\1{\mathbf{1}}

\newcommand\corkscrew{\varkappa}

\newcommand\dmn{{\mathfrak{n}}}  
\newcommand\pdmn{{\mathfrak{n}}} 
\newcommand\bdmn{{\mathfrak{d}}} 
\newcommand\dmnMinusOne{{\mathfrak{n}-1}}
\newcommand\pdmnMinusOne{{(\mathfrak{n}-1)}}
\newcommand\pbdmn{{\mathfrak{d}}}
\newcommand\bpdmn{{\mathfrak{d}}}

\title{The Poisson problem in domains with Ahlfors regular boundary}

\author{Ariel Barton, Svitlana Mayboroda, and Alberto Pacati}

\subjclass[2020]{35J25 (Primary), 46E35 (Secondary)}

\begin{document}

\begin{abstract}
We establish well posedness of the Poisson problem in weak local John domains, for linear second order elliptic equations with real coefficients, and with data in weighted Lebesgue spaces with a very broad range of acceptable parameters.
\end{abstract}

\maketitle

\setcounter{tocdepth}{2}

\tableofcontents

\setcounter{tocdepth}{3}

\sloppy

\section{Introduction}

Let us first set up the framework for our results and then turn to details and to the (extensive) history of the subject.

The present paper and its companion \cite{BarMP25pB} establish what one could see as an extrapolation mechanism to obtain solvability of the Poisson problem for the second order elliptic partial differential equation $-\Div A\nabla u=h$, in weighted Sobolev spaces, with Dirichlet boundary conditions, in the full generality of (possibly weighted) elliptic coefficients and Ahlfors-David regular domains.

Poisson's equation $-\Delta u=h$, for a given function~$h$ and an unknown function~$u$, was historically one of the first differential equations to be studied, due in large part to its many applications in physics and beyond. Well-posedness results in various function spaces with progressively rougher assumptions on the coefficients and domains were obtained throughout the 20th century. The development of harmonic analysis and geometric measure theory in the past 20--30 years, however, have pushed the boundaries of the subject towards the sharp, best possible (or rather worst possible) conditions. In particular, equivalence of well-posedness of the Dirichlet problem in $L^p$ to the weak local John condition along with the uniform rectifiability of the boundary was established \cite{AzzHMMT20}, weighted elliptic equations for domains with boundaries of an arbitrary dimension have been treated, and the sharp range of $A$ for which either of the above is possible is rather well understood now, through the variations of the so-called Dahlberg-Kenig-Pipher condition, cf.\ \cite{KenP01, HofMMTZ21}. We refer the reader to Section~\ref{sec:history} and the discussion below for more detailed references and even more so to our recent note \cite{BarMP25pD} where we tried to survey the results of particular relevance to the present context. These results correspond to ``zero'' smoothness, around $L^p$ boundary data. Some (comparatively few) of them have been also extended to the regularity problem, that is, to the Dirichlet problem with data of smoothness 1 in some sense; see, for example, \cite{KenP93, Shen2019,
DindosHofmannPipher2023,
MouPT22p}. The present paper and \cite{BarMP25pB} address the intermediate range, establishing well-posedness for the Poisson problem which would correspond to Dirichlet boundary data of smoothness strictly between 0 and 1 and to solutions in weighted Sobolev spaces, in the full generality of elliptic operators and (properly open and connected) Ahlfors-David regular domains.

Our results are far from unprecedented---see \cite{JerK95,MayMit04A, BarM16A, MazMS10} and many predecessors discussed below. However, there is a significant gap between the previous knowledge and the present paper, and perhaps more importantly, a completely different, extrapolation rather than interpolation paradigm. With the exception of relatively smooth coefficients and domains, the literature devoted to well-posedness of the Poisson problem in spaces of fractional smoothness pertains to (at worst) coefficients satisfying strong regularity properties, typically either lying in the Saranson space $VMO$ or constant in a specified direction, and at worst Lipschitz boundaries or boundaries with a well-controlled unit normal. As we mentioned above, this is very far from the modern limitations of the subject. The reason for this shortcoming was not lack of attention. Most of the existing methods relied, in a sense, on interpolation mechanisms, {\emph{interpolating}} between the Dirichlet problem of smoothness 0 and the regularity problem of smoothness 1 and/or on the accompanying method of layer potentials. In the present work we develop a different approach, allowing one to draw an extended range of well-posedness (cf.\ Figure~\ref{fig:extrapolation}) knowing only {\emph{one}} Dirichlet end-point result. In fact, we even give a default range of well-posedness (cf.\ Figure~\ref{fig:arbitrary}) knowing {\emph{no}} $L^p$ Dirichlet endpoint. We treat general elliptic coefficients, including possibly weighted ones, and general John domains with boundary of any dimension $0<\bdmn<n$, but even in the classical setting $\bdmn=n-1$ with Dahlberg-Kenig-Pipher coefficients these results are new. Yet, the value of our theorems is in their versatility: in many borderline settings, it is not known whether the regularity problem is well-posed and in some cases it is even known that it is not. Beyond DKP coefficients, even the $L^p$ Dirichlet problem need not be well-posed. Our theorems provide a general umbrella or rather extension for all of the above.

In a forthcoming paper \cite{BarMP25pB} we will consider the Poisson-Dirichlet problem
\begin{equation*}
-\Div A\nabla u=h\text{ in }\Omega,\quad
u=f\text{ on }\partial\Omega
\end{equation*}
for specified $f$, $\Omega$, $A$, and~$h$. In this monograph we will confine our attention to the Poisson problem with homogeneous boundary data
\begin{equation}\label{eqn:Poisson:introduction} -\Div A\nabla u=h\text{ in }\Omega,\quad
u=0\text{ on }\partial\Omega
\end{equation}
for specified $\Omega$, $A$, and~$h$.

The principal condition we will impose on the domain is Ahlfors regularity of the boundary. This condition essentially asserts only that the boundary is $\bdmn$-dimensional for some consistent value of~$\bdmn$.

\begin{defn}\label{dfn:Ahlfors}
A nonempty, possibly unbounded set $\Gamma \subset\R^\dmn$ is said to be $\pbdmn$-Ahlfors regular if there is a constant $A>0$ such that, if $\xi\in\Gamma $ and $r\in\R$ with $0<r\leq \diam \Gamma $, then
\begin{equation*}\frac{1}{A}r^\bdmn\leq \sigma(B(\xi,r)\cap\Gamma )\leq Ar^\bdmn\end{equation*}
where $\sigma$ denotes the $\bdmn$-dimensional Hausdorff measure on subsets of~$\R^\dmn$.
\end{defn}

We will consider the full range $0<\bdmn<\dmn$, where $\bdmn$ need not be an integer. Our work includes significant new results in the classical case $\bdmn=\dmnMinusOne$, but also contains results beyond this case. Thus our work is a part of the recent surge of interest in elliptic boundary value problems in domains with lower-dimensional boundary; see \cite{DavFM19A,DavFM19B,MayZ19,DavEM21, DavFM21,FenMZ21,FenMZ21,DavM22,Fen22,DavFM20p}.

With the exception of the more general paper \cite{DavFM20p}, these papers consider boundary value problems in the domain $\Omega$, where $\partial\Omega$ is $\bdmn$-Ahlfors regular for some $0<\bdmn<\dmnMinusOne$, for the operator $L=-\Div A\nabla$, where $A$ is a measurable $\dmn\times\dmn$ matrix-valued function with real coefficients satisfying the weighted ellipticity conditions
\begin{align}
\label{eqn:elliptic:introduction}
{\xi}\cdot A(x)\xi
&\geq
\delta(x)^{\bdmn+1-\pdmn} \lambda\|\xi\|^2
,\\
\label{eqn:elliptic:bounded:introduction}
|\eta\cdot A(x)\xi|
&\leq \delta(x)^{\bdmn+1-\pdmn} \Lambda\|\xi\|\,\|\eta\| \end{align}
for all $\eta$, $\xi\in\R^\dmn$ and all $x\in\R^\dmn\setminus\partial\Omega$, where
\begin{equation*}\delta(x)=\dist(x,\partial\Omega)=\inf_{\xi\in\partial\Omega} |x-\xi|. \end{equation*}
This pair of conditions was introduced in \cite{DavFM19A}. The classical case considers matrices that are bounded and positive definite, uniformly in $\Omega$ (or $\R^\dmn$), that is, the $\bdmn=\dmnMinusOne$ case. However, it is well known that lower-dimensional sets are invisible to the operator $L$ in the classical case; for example, if $\partial\Omega$ is $\bdmn$-Ahlfors regular, then the Dirichlet problem with continuous boundary values is not well posed if $\bdmn\leq \dmn-2$ (by contrast, it is well posed for any $\dmn-2<\bdmn<\dmn$). The weight $\delta^{\bdmn+1-\pdmn}$ magnifies the effect of the boundary, increasing the visibility of the lower-dimensional boundary to the operator~$L$. Once again, we underline that our results are new in the classical setting, without any degeneracy, as well.

Our first main result, Theorem~\ref{thm:Poisson}, will be established without any further assumptions on the operator~$L$ beyond the ellipticity conditions (\ref{eqn:elliptic:introduction}--\ref{eqn:elliptic:bounded:introduction}), the condition that the input $u$ to $L$ be a scalar-valued function (not a vector-valued function), and the condition that $A$ have real entries. Systems of elliptic equations $L\vec u=\vec h$ or equations with complex coefficients have been investigated in the literature (see \cite{AusQ02,MazMS10,HofMayMou15,BarM16A,Bar20p}), but the methods of the present paper rely on many techniques and results only available in the case of real equations, such as harmonic measure and the maximum principle.

We will, however, need to impose some additional geometric conditions, of topological nature. This is classical in the subject and of the nature of minimal possible. First, domains containing an open neighborhood of infinity (that is, domains with bounded complements) introduce many technicalities relating to behavior at infinity that are not present in the case of bounded domains, or even unbounded domains with unbounded boundary. Thus, we have chosen to confine ourselves to domains that are either bounded or unbounded with unbounded boundary; this case preserves much of the possible local behavior. Observe that we may state this condition more concisely as the condition that $\Omega$ has an unbounded complement, or the condition that $\diam(\Omega)=\diam(\partial\Omega)$.

Second, in the case $\dmnMinusOne\leq \bdmn<\dmn$, we will impose the weak local John condition on our domains. This condition has appeared recently in the literature (see for example \cite{AzzHMMT20}) and is extremely general. We will give a precise definition below (Definition~\ref{dfn:local:John}); loosely, the weak local John condition means that, for every point $x$ in~$\Omega$, a substantial portion of $\partial\Omega$ is close to~$x$ and also nontangentially accessible from~$x$; this means that a substantial portion of the boundary is able to have significant influence on the point~$x$, and so the condition $u=0$ on $\partial\Omega$ in the Poisson problem~\eqref{eqn:Poisson:introduction} can have a significant effect on the value of~$u(x)$.

In the case $\dmnMinusOne<\bdmn<\dmn$, we will impose interior corkscrew and Harnack chain conditions. We conjecture that these conditions are probably unnecessary but extending the background to even more general domains would perhaps make the length of the manuscript insurmountable---see Remark~\ref{Rem1.13}.

\begin{thm}\label{thm:Poisson}
Let $\Omega\subseteq\R^\dmn$, $\dmn\geq 2$, be a connected open set. Suppose that $\partial\Omega$ is $\bdmn$-Ahlfors regular for some real number $\bdmn$ with $0<\bdmn<\dmn$, and that $\R^\dmn\setminus\Omega$ is unbounded.

If $\bdmn=\dmnMinusOne$, we additionally require that $\Omega$ satisfies the weak local John condition. If $\dmnMinusOne<\bdmn<\dmn$, we require $\Omega$ to satisfy the interior corkscrew and Harnack chain conditions.

Let $L=-\Div A\nabla$ be a linear differential operator associated to real, not necessarily symmetric coefficients $A:\Omega\mapsto\R^{\dmn\times\dmn}$ that satisfy the positive definiteness and boundedness conditions \textup{(\ref{eqn:elliptic:introduction}--\ref{eqn:elliptic:bounded:introduction})}.

There are then numbers $\mathfrak{a}$, $\mathfrak{a}^*\in (0,1]$ and $p^-$, $p^+$ with $p^-<2<p^+$ such that, if $(s,1/p)$ lies in the open pentagon indicated in Figure~\ref{fig:arbitrary}, and if $p^-<\beta<p^+$, then
for all
$\vec H:\Omega\mapsto\R^\dmn$ such that the right hand side of the bound~\eqref{eqn:Besov:estimate} is finite, there is a unique solution $u$ to the Poisson problem
\begin{equation}\label{eqn:Poisson}
\left\{\begin{aligned}
Lu&=-\Div(\delta^{\bdmn+1-\pdmn}\vec H)&&\text{in }\Omega,\\
u&=0&&\text{on }\partial\Omega\text{ in the averaged sense,}
\end{aligned}\right.\end{equation}
that satisfies
\begin{multline}\label{eqn:Besov:estimate}
\int_\Omega
\biggl(\fint_{B(x,\delta(x)/2)} |\nabla u|^\beta \biggr)^{p/\beta}
\delta(x)^ {\bdmn-\pdmn+p-ps}
\,dx
\\\leq
C\int_\Omega
\biggl(\fint_{B(x,\delta(x)/2)}|\vec H|^\beta \biggr)^{p/\beta}
\delta(x)^ {\bdmn-\pdmn+p-ps}
\,dx
\end{multline}
for some constant $C$ depending on $p$, $s$, $\beta$, $A$, and~$\Omega$, but not on~$\vec H$.

If $0<s<\mathfrak{a}$, $p^-<\beta<p^+$, and the right-hand side of the bound~\eqref{eqn:Besov:estimate:infinity} is finite, then there is a unique solution to the problem~\eqref{eqn:Poisson} that satisfies the estimate
\begin{equation}\label{eqn:Besov:estimate:infinity}
\sup_{x\in\Omega}
\biggl(\fint_{B(x,\delta(x)/2)}|\nabla u|^\beta\biggr)^{1/\beta}
\delta(x)^ {1-s}
\leq
C\sup_{x\in\Omega}
\biggl(\fint_{B(x,\delta(x)/2)}|\vec H|^\beta\biggr)^{1/\beta}
\delta(x)^ {1-s}
.\end{equation}

Furthermore, we have that $\mathfrak{a}\geq \max(\alpha,1-\bdmn)$ and $\mathfrak{a}^*\geq \max(\alpha^*,1-\bdmn)$, where $\alpha$ is as in the boundary De Giorgi-Nash estimate (Lemma~\ref{lem:boundary:DGN} below), and $\alpha^*$ is as in Lemma~\ref{lem:boundary:DGN} with $L$ replaced by~$L^*$.

Finally, $p^+$ may be taken to be the number $p^+_L$ in Meyers's interior reverse Hölder estimate \eqref{eqn:interior:Meyers} for~$L$, and $p^-$ may be taken to be the Hölder conjugate of the number $p^+_{L^*}$ in Meyers's reverse Hölder estimate for~$L^*$.
\end{thm}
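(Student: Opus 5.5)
The plan is to represent the solution through the Green's function and to reduce the estimate~\eqref{eqn:Besov:estimate} to pointwise bounds on the gradient of the Green's function, averaged over Whitney balls. Let $G(x,y)$ be the Green's function for $L$ in~$\Omega$; its existence and its interior and boundary De Giorgi--Nash--Moser estimates hold under our hypotheses (weak local John for $\bdmn=\dmnMinusOne$, corkscrew and Harnack chain for $\bdmn>\dmnMinusOne$, the degenerate theory for $\bdmn<\dmnMinusOne$). We define
\begin{equation*}u(x)=\int_\Omega \nabla_y G(x,y)\cdot \delta(y)^{\bdmn+1-\pdmn}\vec H(y)\,dy.\end{equation*}
This $u$ solves~\eqref{eqn:Poisson} and vanishes at the boundary in the averaged sense; we first check this for nice $\vec H$ and then pass to the limit using the estimates below.

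Next we decompose $\Omega$ into Whitney balls $B_Q$ of radius comparable to $\ell(Q)\approx\delta$ and write $\vec H=\sum_Q \vec H\1_{Q}$, with $u_Q$ the corresponding solution. The local piece (where $Q$ is close to and comparable in size with the Whitney ball around~$x$) is handled by Meyers's reverse Hölder estimate~\eqref{eqn:interior:Meyers} together with the local $L^\beta$ theory. This is where $p^-<\beta<p^+$ enters: we need $L^\beta$ boundedness of $\nabla L^{-1}\Div$ on a ball, and the adjoint bound for $\beta<2$. For the far pieces, $u_Q$ solves $Lu_Q=0$ away from $Q$ and vanishes on~$\partial\Omega$. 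Caccioppoli's inequality plus the boundary Hölder decay (Lemma~\ref{lem:boundary:DGN}) with exponent $\alpha$ give, for $x$ in a Whitney ball $B_R$,
\begin{equation*}\Bigl(\fint_{B_R}|\nabla u_Q|^\beta\Bigr)^{1/\beta}\lesssim \ell(R)^{-1}\Bigl(\frac{\ell(R)}{\dist(R,Q)+\ell(Q)}\Bigr)^{\alpha}\sup|u_Q|.\end{equation*}
Applying the same reasoning to $L^*$ in the $y$ variable (exponent $\alpha^*$) together with the size bounds for $G$ yields kernel decay of the form
\begin{equation*}K(R,Q)\lesssim \frac{\ell(Q)^{\bdmn}}{(\ell(R)+\ell(Q)+\dist(R,Q))^{\bdmn}}\Bigl(\frac{\ell(R)}{D}\Bigr)^{\alpha}\Bigl(\frac{\ell(Q)}{D}\Bigr)^{\alpha^*},\end{equation*}
with $D=\ell(R)+\ell(Q)+\dist(R,Q)$, after normalizing the weights. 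The ``$1-\bdmn$'' lower bounds on $\mathfrak a$ and $\mathfrak a^*$ should come from a separate, cruder estimate valid when $\bdmn<1$: there, interior decay of $G$ alone (without using the boundary) already gives this rate.

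The final step is a Schur-test argument for the discrete operator $(\vec H_Q)\mapsto(\sum_Q K(R,Q)\vec H_Q)_R$ acting on the weighted sequence spaces with weights $\ell^{\bdmn+p-ps}$. We count Whitney cubes at scale $2^{-k}$ within distance $D$ using Ahlfors regularity, whose number is about $(D/2^{-k})^{\bdmn}$. Summing geometric series then gives the conditions $s<\mathfrak a$ (from decay in $\ell(R)$), $s>1-\mathfrak a^*$ type conditions (from $\ell(Q)$), and conditions mixing $1/p$ with $\bdmn$ coming from the counting; the $p=\infty$ case~\eqref{eqn:Besov:estimate:infinity} is the sup-norm version of the same argument. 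Together these constraints carve out the pentagon.

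Uniqueness follows by duality. If $u$ solves the homogeneous problem with finite left side of~\eqref{eqn:Besov:estimate}, we pair it with the solution of the adjoint problem with data in the dual space, $(s,p)\mapsto(1-s,p')$, which lies in the adjoint pentagon. This needs a justification of integration by parts via cutoffs near $\partial\Omega$, which the averaged boundary condition provides.

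I expect the main obstacle to be the sharp kernel bound, in particular extracting decay in \emph{both} $\ell(R)/D$ and $\ell(Q)/D$ simultaneously with the right $\bdmn$-dimensional normalization. This requires comparing $G$ to harmonic-measure-type quantities in general weak local John domains. I would first try iterating boundary Hölder decay in each variable separately, using the symmetry $G_L(x,y)=G_{L^*}(y,x)$.
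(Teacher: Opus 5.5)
\textbf{There is a genuine gap.} A Schur test on a positive kernel cannot produce the pentagon when its only summability comes from the factors $(\ell(R)/D)^{\alpha}(\ell(Q)/D)^{\alpha^*}$. In general it does not even recover the Lax--Milgram point $(1/2,1/2)$.

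To see this, take one atom $\vec H=\1_Q\vec H$ with $\mathcal{W}_{c,\beta}\vec H\approx 1$ on $Q$, $q=\ell(Q)$. Look only at the cubes $R$ lying under $Q$, with $\ell(R)=r\ll q$ and $\dist(R,Q)\approx q$; there are about $(q/r)^{\bdmn}$ of them. Your first display gives $\mathcal{W}_{c,\beta}(\nabla u_Q)\lesssim (q/r)^{1-\alpha}$ on each such $R$: energy plus Moser give $|u_Q|\lesssim q$, boundary H\"older gives the factor $(r/q)^\alpha$, and Caccioppoli gives $r^{-1}$. Their contribution to $\|u_Q\|^p_{\dot W^{1,p}_{av,\beta,s}(\Omega)}$ is then
\[
\sum_{r\le q}\Bigl(\frac{q}{r}\Bigr)^{\bdmn}\Bigl(\frac{q}{r}\Bigr)^{p(1-\alpha)}r^{\bdmn+p-ps}=q^{\bdmn+p-p\alpha}\sum_{r\le q}r^{p(\alpha-s)},
\]
which diverges unless $s<\alpha$, for every $p$. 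Symmetrically (cubes $Q$ under a fixed $R$, at $p=\infty$) you need $s>1-\alpha^*$.

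So the best you can reach is $1-\alpha^*<s<\alpha$. This is empty when $\alpha+\alpha^*\le 1$, which the theorem allows. It excludes $(1/2,1/2)$ whenever $\alpha\le1/2$, although the energy bound holds there for every $A$. It never reaches the vertices $(1,1)$, $(1,1+\mathfrak{a}^*/\bdmn)$ or $(0,0)$. Pointwise boundary H\"older decay is far from sharp on average, and the estimates you want are not consequences of size bounds. The double decay you single out as the main obstacle is not the real issue: you can get it by applying boundary H\"older in $y$ uniformly in $x$ and then in $x$. Even that sharp pointwise kernel is too weak.

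\textbf{What the paper does instead.} It anchors at Lax--Milgram plus Meyers (Lemma~\ref{lem:beta}). It then moves only by real interpolation of the $L^p_{av,\beta,s}$ scale (Theorem~\ref{thm:interp:sol}) and by duality $(s,1/p)\mapsto(1-s,1-1/p)$ with $L\leftrightarrow L^*$ (Theorem~\ref{first:step:duality}).

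The one new estimate is for $p\le 1$ (Proposition~\ref{prp:atom:Poisson}). The $p$-triangle inequality reduces everything to single Whitney atoms. Pointwise decay is used only in the data variable (exponent $\alpha^*$), and only where $\delta(y)\gtrsim\dist(y,Q)$; this is the source of $\bdmn/p<\bdmn-1+\alpha^*+s$. Near the boundary, decay in $x$ is obtained only in an averaged sense (Lemma~\ref{lem:W:near:0}), combining:
\begin{itemize}
\item the Green representation with a cutoff (Lemma~\ref{lem:green:solution:2});
\item the bound of $G$ by harmonic measure (Lemma~\ref{lem:harmonic:controls:Green});
\item the fact that the induced boundary measure $\nu$ has total mass $\lesssim r^{\bdmn+1}$;
\item H\"older in sequence spaces, using $p\le1$.
\end{itemize}
Together these give $\sum_j 2^{-j(p-ps)}$, so only $s<1$ is needed. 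This is what produces $(1,1)$ and $(1,1+\mathfrak{a}^*/\bdmn)$, and duality then gives $(0,0)$ and $(\mathfrak{a},0)$.

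\textbf{The bounds $\mathfrak{a},\mathfrak{a}^*\ge 1-\bdmn$.} These do not follow from ``interior decay of $G$'': the bound $G\lesssim|x-y|^{1-\bdmn}$ carries no decay in $\delta(y)$, so it only gives $|u_Q|\lesssim q^{\bdmn}D^{1-\bdmn}$. They come from a bootstrap: already established well posedness near $(0,0)$ is fed back into the atomic estimate through hypothesis~\eqref{eqn:p<1:extrapolation}.

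Your uniqueness-by-duality step is fine and matches Lemma~\ref{lem:uniqueness}.
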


\def\figurealpha{0.3}
\def\figuredimen{2}
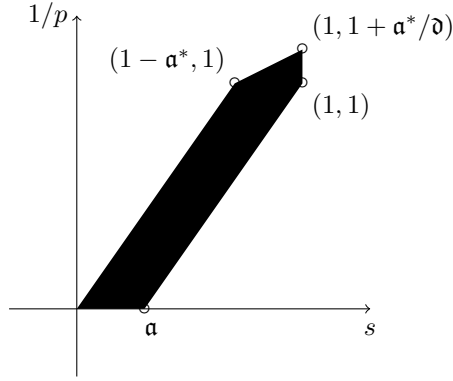
\begin{figure}
\begin{tikzpicture}[scale=3]
\begin{scope}[shift={(0,0)}]
\draw [->] (-0.3,0)--(1.3,0) node [below ] {$\vphantom{1}s$};
\draw [->] (0,-0.3)--(0,1.3) node [ left] {$1/p$};
\fill  (1,1) -- (\figurealpha,0) -- (0,0) -- (1-\figurealpha,1) --(1,1+\figurealpha/\figuredimen) -- cycle;
\node at (1,1) {$\circ$};
\node [below right] at (1,1) {$(1,1)$};
\node at (1,1+\figurealpha/\figuredimen) {$\circ$};
\node [above right] at (1,1+\figurealpha/\figuredimen) {$(1,1+\mathfrak{a}^*/\bdmn)$};
\node at (1-\figurealpha,1) {$\circ$};
\node [above left] at (1-\figurealpha,1) {$(1-\mathfrak{a}^*,1)$};
\node [below] at (\figurealpha,0) {$\phantom{1}\mathfrak{a}$}; \node at (\figurealpha,0) {$\circ$};
\end{scope}
\end{tikzpicture}
\caption{Values of the parameters $(s,1/p)$ such that the estimate~\eqref{eqn:Besov:estimate} on solutions to the Poisson problem is valid. Because $\mathfrak{a}^*\geq 1-\bdmn$, the illustrated region is convex.}
\label{fig:arbitrary}
\end{figure}

\begin{rmk} The choice of function spaces to measure regularity of solutions may seem complicated, but it is, in fact, natural and well-aligned with the existing results. Barring the extra averaging in $L^\beta$, needed to overcome lack of local regularity, these are the traditional choices of weighted Sobolev spaces in this context---see Section~\ref{sec:history}. In a forthcoming paper \cite{BarMP25pB}, we will generalize to the Poisson-Dirichlet problem
\begin{equation*}
\left\{\begin{aligned}
Lu&=-\Div(\delta^{\bdmn+1-\pdmn}\vec H)&&\text{in }\Omega,\\
u&=f&&\text{on }\partial\Omega\text{ in the averaged sense,}
\end{aligned}\right.\end{equation*}
with the estimate
\begin{multline*}
\int_\Omega
\biggl(\fint_{B(x,\delta(x)/2)} |\nabla u|^\beta \biggr)^{p/\beta}
\delta(x)^ {\bdmn-\pdmn+p-ps}
\,dx
\\\leq
C\|f\|_{\dot B^{s,p}_p(\Omega)}^p
+
C\int_\Omega
\biggl(\fint_{B(x,\delta(x)/2)}|\vec H|^\beta \biggr)^{p/\beta}
\delta(x)^ {\bdmn-\pdmn+p-ps}
\,dx
\end{multline*}
where the spaces $\dot B^{s,p}_p(\Omega)$ are generalizations of the well known homogeneous Besov spaces on~$\R^\bdmn$ and are $p<\infty$ analogues of the space $\dot C^s(\partial\Omega)$ of Hölder continuous functions. Extension theorems (to be proven in \cite{BarMP25pB} and analogous to those of \cite{Usp61,Nik77B,Sha85,NikLM88,MitT06,Kim07,MazMS10,DavFM21}) indicate that there is a deep and natural connection between boundary data in ${\dot B^{s,p}_p(\Omega)}$ and solutions $u$ such that the norm on the left-hand side of the previous equation (or formula~\eqref{eqn:Besov:estimate}) is finite. It is for this reason that we have chosen to work with the complicated expression $\delta^{\bdmn-\pdmn+p-ps}$, rather than a simpler weight such as $\delta^\theta$ or $\delta^{p\theta}$.
\end{rmk}

Expressed algebraically, the pair $(s,1/p)$ satisfies the conditions of Theorem~\ref{thm:Poisson} if either
\begin{align}
\label{eqn:thm:Dirichlet:quasi}
\frac{\bdmn}{\bdmn+\mathfrak{a}^*}&<p\leq 1&\text{and}&&\frac{\bdmn}{p}+1-\bdmn-\mathfrak{a}^*&<s<1  \end{align}
or
\begin{align}
\label{eqn:thm:Dirichlet:Banach}
1&\leq p\leq \infty &\text{and}&&\frac{1-\mathfrak{a}^*}{p}&<s<\mathfrak{a}+\frac{1-\mathfrak{a}}{p}.
\end{align}

By $u=0$ on~$\partial\Omega$ in the averaged sense, we mean that
\begin{equation}\label{eqn:trace}
\lim_{r\to 0^+} \frac{1}{r^\dmn}\int_{B(\xi,r)\cap \Omega} |u|=0
\end{equation}
for $\sigma$-almost every $\xi\in\partial\Omega$.

Recall that $Lu=-\Div(A\nabla u)$ and that $A$ satisfies the ellipticity conditions~\cref{eqn:elliptic:introduction,eqn:elliptic:bounded:introduction}, meaning that the eigenvalues of~$A$ are bounded above and below by multiples of $\delta^{1+\bdmn-\pdmn}$.
We prefer to state Poisson's equation as $Lu=-\Div(A\nabla u)=-\Div(\delta^{1+\bdmn-\pdmn}\vec H)$, rather than $Lu=f$ or $Lu=-\Div\vec\Phi$, because with this formulation the two quantities $\vec H$ and $\nabla u$ lie in the same function spaces.

The  weak local John, interior corkscrew, and Harnack chain conditions are given by Definitions~\ref{dfn:local:John}, \ref{dfn:iCS}, and~\ref{dfn:Harnack} below.
We observe that if $\bdmn<\dmnMinusOne$ and $\partial\Omega$ is $\bdmn$-Ahlfors regular, then by \cite[Lemmas 2.1 and~11.6]{DavFM21} $\Omega$ necessarily satisfies the interior corkscrew and Harnack chain conditions. As noted in Remark~\ref{rmk:higher:John:corkscrew}, any domain satisfying the interior corkscrew and Harnack chain conditions is a weak local John domain. Thus all domains $\Omega$ considered in Theorem~\ref{thm:Poisson} are weak local John domains, and we will use this condition extensively in the proof; this condition is omitted from the statement of Theorem~\ref{thm:Poisson} in the $\bdmn<\dmnMinusOne$ case, when it follows automatically from the Ahlfors condition.

We comment on the additional geometric conditions required in the case $\dmnMinusOne<\bdmn<\dmn$.
The case $\bdmn=\dmnMinusOne$ is the classical case and has been studied intensively, and many results are known in the generality of weak local John domains and beyond. The case $\bdmn<\dmnMinusOne$ has been of considerable interest in recent years, and as noted above automatically enjoys the interior corkscrew and Harnack chain conditions.
The case $\bdmn>\dmnMinusOne$, by contrast, has received little study. Indeed the principal work treating this case, \cite{DavFM20p}, is primarily concerned with domains of mixed codimension (in particular with so-called ``sawtooth domains'' arising from domains with lower dimensional boundary), and produces results for the case $\bdmn>\dmnMinusOne$ almost incidentally. The paper \cite{DavFM20p} imposes the interior corkscrew and Harnack chain conditions on all domains of study because they hold automatically in the higher codimensional case of their principal interest; thus, many important known results in the literature, which we will cite and use, are in the $\bdmn>\dmnMinusOne$ case known only with these additional geometric assumptions.

\begin{rmk}\label{Rem1.13}
The Harnack chain and interior corkscrew conditions will not be used directly in any of the arguments of the present paper. They will be used indirectly only in that the boundary de Giorgi-Nash estimate (Lemma~\ref{lem:boundary:DGN} below), well posedness of the continuous Dirichlet problem (Lemma~\ref{lem:cts} below), and the existence of the Green's function and its properties (formulas~\cref{eqn:green:W12,eqn:green:solution,eqn:green:fundamental,eqn:green:boundary,eqn:green:symmetric,eqn:green:poisson:nodiv,eqn:green:Poisson,eqn:green:near:size,eqn:green:upper:bound,eqn:green:positive} below) for $\bdmn>\dmnMinusOne$ were established in \cite{DavFM20p}, and thus are known only in the case of Harnack chains and interior corkscrews. We observe that if Lemmas \ref{lem:boundary:DGN} and~\ref{lem:cts} and formulas~\cref{eqn:green:W12,eqn:green:solution,eqn:green:fundamental,eqn:green:boundary,eqn:green:symmetric,eqn:green:poisson:nodiv,eqn:green:Poisson,eqn:green:near:size,eqn:green:upper:bound,eqn:green:positive} could be proven in the case of weak local John domains for $\dmnMinusOne<\bdmn<\dmn$, then Theorem~\ref{thm:Poisson} and~\ref{thm:Poisson:Lq} would hold in this generality as well.
\end{rmk}

To state our next theorem, recall that if $\Omega$ and $L$ are as in Theorem~\ref{thm:Poisson}, it is well known that for every function $f$ defined on $\partial\Omega$ that is Lipschitz continuous and compactly supported, there is a unique solution $v$ to the Dirichlet problem
\begin{equation}\label{eqn:Dirichlet:cts}
\left\{\begin{gathered}\begin{aligned}
Lv&=0\quad\text{in }\Omega,\\
v&=f\quad\text{on }\partial\Omega,\\
\sup_\Omega v&= \sup_{\partial\Omega} f, \quad
\inf_\Omega v= \inf_{\partial\Omega} f,\\
\end{aligned}\\
\int_\Omega |\nabla v|^2\,\delta^{\bdmn+1-\pdmn}<\infty,\\
v\text{ is continuous on $\overline\Omega$}
.\end{gathered}
\right.\end{equation}
See Lemma~\ref{lem:cts} below; note that the proof consists almost entirely of citations of known results in the literature.

In many cases, solutions to the problem~\eqref{eqn:Dirichlet:cts} are known to satisfy further estimates. In particular, if $f\in L^q(\partial\Omega)$ for some $1<q<\infty$, then the solution $v$ may often be shown to satisfy the nontangential estimate
\begin{equation}
\label{eqn:N:bound}
\|Nv\|_{L^q(\partial\Omega)}\leq C_q\|f\|_{L^q(\partial\Omega)}\end{equation}
where $N$ is the nontangential maximal function given by \begin{equation}\label{eqn:N:intro}
Nv(\xi)=N_{a,\Omega} v(\xi)
=\esssup\{|v(x)|:x\in\Omega,\> |x-\xi|<(1+a)\dist(x,\partial\Omega)\}
\end{equation}
for some positive number~$a$. This is the rigorous estimate accompanying the aforementioned well-posedness of the Dirichlet problem in $L^q$.

\begin{thm}\label{thm:Poisson:Lq}
Suppose that the conditions of Theorem~\ref{thm:Poisson} are satisfied.

Suppose furthermore that there is a $q\in (1,\infty)$ such that the $L^q$-Dirichlet problem is well posed. That is, suppose that there exist $q\in (1,\infty)$, $a>0$, and $C_q<\infty$ such that if $f$ is Lipschitz continuous and compactly supported, then the solution $v$ to the problem~\eqref{eqn:Dirichlet:cts} also satisfies the estimate~\eqref{eqn:N:bound}.

Then there is a number $\mathfrak{b}^*\in (0,1]$ such that the conclusions of Theorem~\ref{thm:Poisson} are valid for $(s,1/p)$ in the region indicated on the left in Figure~\ref{fig:extrapolation}.

If instead there is a $q^*\in (1,\infty)$ such that the solution $v$ to the problem~\eqref{eqn:Dirichlet:cts}, with $L$ replaced by~$L^*$, satisfies the estimate~\eqref{eqn:N:bound} (with $q$ replaced by~$q^*$), then there is a $\mathfrak{b}\in (0,1]$ such that the conclusions of Theorem~\ref{thm:Poisson} are valid for $(s,1/p)$ in the region indicated on the right in Figure~\ref{fig:extrapolation}.

Furthermore, the numbers $\mathfrak{b}^*$ or $\mathfrak{b}$ satisfy the estimates
\begin{equation}\label{eqn:b:lower}
\mathfrak{b}\geq \alpha,
\qquad
\mathfrak{b}^*\geq \alpha^*,
\qquad
\mathfrak{b}\geq 1+\frac{\bdmn}{q^*}-\bdmn,
\qquad
\mathfrak{b}^*\geq 1+\frac{\bdmn}{q}-\bdmn
\end{equation}
where $\alpha$ and $\alpha^*$ are the boundary De Giorgi-Nash exponents for $L$ and~$L^*$.

(In the event that $q$ and $q^*$ both exist, the conclusions of Theorem~\ref{thm:Poisson} are valid for $(s,1/p)$ in the region indicated on the bottom in Figure~\ref{fig:extrapolation}.)
\end{thm}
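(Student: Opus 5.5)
The plan is to reduce Theorem~\ref{thm:Poisson:Lq} to the same machinery as Theorem~\ref{thm:Poisson}, changing only the exponent input: the pointwise Green's function decay that comes from the boundary De Giorgi--Nash estimate is replaced by an averaged decay derived from the $L^q$ Dirichlet hypothesis.

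\textbf{Representation and estimate.} Write the solution through the Green's function,
\[
u(x)=\int_\Omega \nabla_y G(x,y)\cdot\delta(y)^{\bdmn+1-\pdmn}\vec H(y)\,dy,
\]
using \cref{eqn:green:Poisson}. Decompose $\Omega$ into Whitney cubes $Q$. Bound the left side of \eqref{eqn:Besov:estimate} by a double sum over pairs $(Q,R)$ of an interaction kernel $K(Q,R)$ times the local $L^\beta$ averages of $\vec H$ on $R$. The interaction kernel is built from local $L^2$ or Meyers averages of $\nabla_x\nabla_y G$ on $Q\times R$. The region of Figure~\ref{fig:arbitrary} should come from two decay estimates:
\begin{itemize}
\item decay of $K$ as $R$ approaches the boundary far from $Q$, governed by the exponent $\mathfrak{a}^*$ (the regularity of $G(x,\cdot)$, which solves an $L^*$ equation, hence $\alpha^*$);
\item decay as $Q$ approaches the boundary, governed by $\mathfrak{a}$ (the regularity of $G(\cdot,y)$, hence $\alpha$).
\end{itemize}
Schur-type tests, and for $p\le1$ the $p$-triangle inequality, then give the admissible $(s,1/p)$. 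I expect the proof of Theorem~\ref{thm:Poisson} to be organized this way, with $\mathfrak{a},\mathfrak{a}^*$ defined as the best exponents for which these kernel bounds hold. Theorem~\ref{thm:Poisson:Lq} then asks only that we improve one of these exponents to $\mathfrak{b}^*$ (or $\mathfrak{b}$). Uniqueness carries over verbatim from Theorem~\ref{thm:Poisson}.

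\textbf{Improved exponent from $L^q$ solvability.} Suppose the $L^q$ Dirichlet problem for $L$ is solvable. The standard equivalence (harmonic measure $\omega_L\in$ weak-$A_\infty$/reverse Hölder with exponent $q'$) gives a bound on the Poisson kernel $k=d\omega_L/d\sigma$: on a surface ball $\Delta$ of radius $r$, far from the pole,
\[
\Bigl(\fint_\Delta k^{q'}\Bigr)^{1/q'}\lesssim \fint_\Delta k .
\]
By the Caffarelli--Fabes--Mortola--Salsa comparison, $G(x,y)\approx r^{\pdmn-\bdmn-1}\,\omega_L^x(\Delta)$ for $y$ at distance $r$ from the boundary. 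Feeding the reverse Hölder bound into the $y$-sum in place of the De Giorgi--Nash Hölder decay, together with Hölder's inequality over the boundary, produces summability with the exponent $1+\bdmn/q-\bdmn$. Combining with the trivial choice $\alpha^*$ gives
\[
\mathfrak{b}^*\ge\max\Bigl(\alpha^*,\;1+\frac{\bdmn}{q}-\bdmn\Bigr),
\]
which is \eqref{eqn:b:lower}, and it governs the edge of the region adjacent to $p\le1$ in Figure~\ref{fig:extrapolation}.

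\textbf{Which side improves.} Solvability for $L$ controls $G(x,\cdot)$ near the boundary in the variable where $\omega_L$ lives, which is the $\mathfrak{a}^*$ side. Hence:
\begin{itemize}
\item solvability for $L$ improves $\mathfrak{a}^*$ to $\mathfrak{b}^*$;
\item solvability for $L^*$ (exponent $q^*$) improves $\mathfrak{a}$ to $\mathfrak{b}$, by the symmetry $G_L(x,y)=G_{L^*}(y,x)$ from \eqref{eqn:green:symmetric};
\item if both hold, both improvements apply simultaneously, since each estimate is used independently in the Schur test.
\end{itemize}
Uniqueness needs no change.

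\textbf{Main obstacle.} The difficulty is converting the averaged reverse Hölder control of $k$ into a usable bound for the Whitney-cube sums. The key is to sum over cubes $R$ lying over $\Delta$, where a single pointwise bound is not available. I would first try to dominate $\sum_{R}\ell(R)^{\gamma}\,|\nabla_yG(x,\cdot)|$ averages by a Carleson-type or nontangential expression in $\omega_L^x$. Then I would apply \eqref{eqn:N:bound} by duality: the adjoint of the Dirichlet solution operator pairs $L^q$ data with $L^{q'}$ area-type norms of $G$. Extracting the exact exponent $\bdmn/q$ requires careful Hölder bookkeeping between $\sigma(\Delta)\approx r^\bdmn$ and the $L^{q'}$ norm.
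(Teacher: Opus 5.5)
This is a genuine gap: you have misread the target region. The left region in Figure~\ref{fig:extrapolation} is not the pentagon of Figure~\ref{fig:arbitrary} with $\mathfrak{a}^*$ replaced by $\mathfrak{b}^*$. It has a new vertex at $(0,1/q)$, so it contains $(s,1/q)$ for every small $s>0$; symmetrically, the right region has the new vertex $(1,1-1/q^*)$.

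In the pentagon, the left edge is the segment from $(0,0)$ to $(1-\mathfrak{a}^*,1)$. At height $1/p=1/q$ this forces $s>(1-\mathfrak{a}^*)/q$. Improving $\mathfrak{a}^*$ to any $\mathfrak{b}^*<1$ never reaches the points near $(0,1/q)$, and neither does interpolation or duality with the old region. By duality those points correspond to $(1,1-1/q)$ for $L^*$, which lies outside the right edge of the $L^*$ pentagon for the same reason.

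So even if every kernel bound in your Schur-test scheme held, you would prove only a strict subset of the claim. Moreover, Theorem~\ref{thm:Poisson} is not proved that way, so you cannot simply feed it a new exponent; the only such mechanism in the paper is Corollary~\ref{cor:atom}, and it takes compatible solvability at some $(\theta,1/\mathfrak{q})$ as input. Two smaller problems: the two-sided Caffarelli--Fabes--Mortola--Salsa comparison needs Harnack chains, which weak local John domains need not have, so only the upper bound of Lemma~\ref{lem:harmonic:controls:Green} is available. And the correct normalization there is $G\lesssim r^{1-\bdmn}\omega$, not $r^{\pdmn-\bdmn-1}\omega$.

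The missing idea is to use \eqref{eqn:N:bound} directly to get compatible solvability of the $L^q_{av,\beta,s}(\Omega)$-problem for all $0<s<1/q$. You start from the point $(1/q,1/q)$, which Theorem~\ref{thm:Poisson} already supplies.

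For Whitney cubes $Q$ with $\ell(Q)=2^k$, let $\vec H_k$ be $\vec H$ restricted to cubes $R$ with $\ell(R)\le 2^{k-6}$, and let $u_k$ be the corresponding solution. The term $u-u_k$ is controlled by the $L^q_{av,\beta,1/q}$ estimate together with a geometric series in $k$, which converges because $qs<1$.

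For $u_k$, note that $2Q\cap\supp\vec H_k=\emptyset$, so interior estimates reduce matters to $\fint_{2Q}|u_k|$. Then \eqref{eqn:green:Poisson}, interior bounds on $G^{L^*}_x$ in $2R$, symmetry, and Lemma~\ref{lem:harmonic:controls:Green} dominate $|u_k|$ on $2Q$ by the continuous Dirichlet solution $w_k$ with data $f_k=\sum_{\ell(R)\le 2^{k-6}}\ell(R)^{1-\pdmn/\beta}\|\vec H\|_{L^\beta(R)}\eta_R$. Averaging over $\Delta(\xi_Q,\ell(Q))$ gives the bound $2^{-kqs}\|N_a w_k\|_{L^q(\partial\Omega)}^q\lesssim 2^{-kqs}\|f_k\|_{L^q(\partial\Omega)}^q$. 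A scale-by-scale H\"older argument with $\varepsilon<qs$ then sums this to $C\|\vec H\|_{L^q_{av,\beta,s}(\Omega)}^q$.

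Once this segment is established, $\mathfrak{b}^*\ge 1+\bdmn/q-\bdmn$ follows by taking $(\theta,1/\mathfrak{q})$ near $(0,1/q)$ in the case \eqref{eqn:p<1:extrapolation} of Corollary~\ref{cor:atom}. The rest of the region then follows by interpolation (Theorem~\ref{thm:interp:sol}) and duality. The $q^*$ case is the same argument for $L^*$, followed by duality.

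Your reverse-H\"older route to the exponent $1+\bdmn/q-\bdmn$ is a plausible alternative for that single bound: it amounts to replacing $\alpha^*$ by $1+\bdmn/q-\bdmn$ in Lemma~\ref{lem:u:7delta}. It cannot, however, produce the new vertex, which is the main content of the theorem.
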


\def\figureq{0.2}
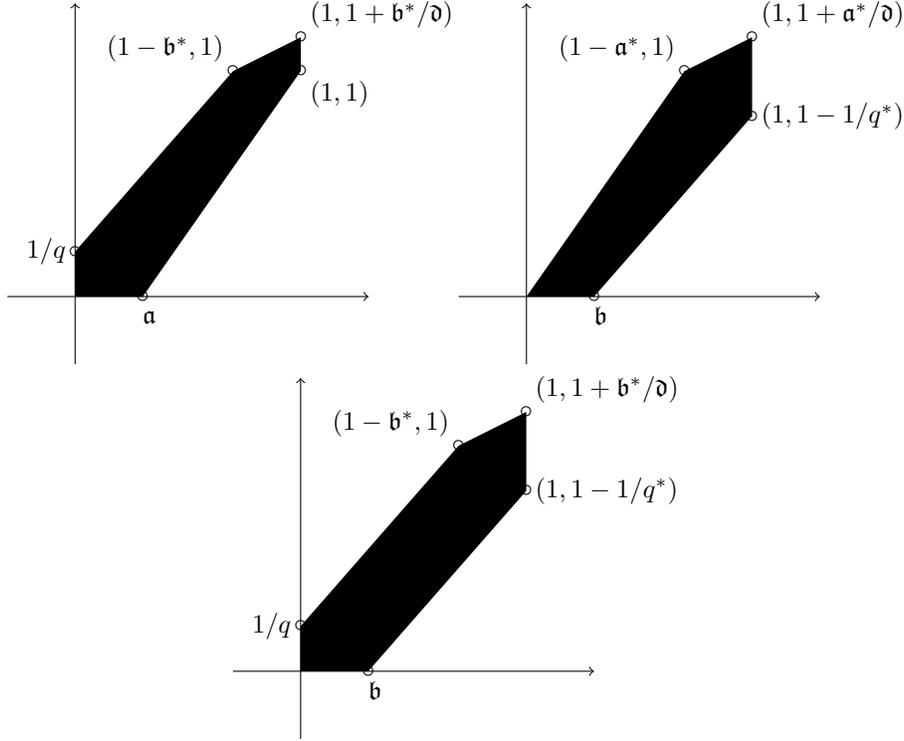
\begin{figure}
\begin{tikzpicture}[scale=3]
\begin{scope}[shift={(0,0)}]
\draw [->] (-0.3,0)--(1.3,0);
\draw [->] (0,-0.3)--(0,1.3);
\node at (1,1) {$\circ$};
\node [below right] at (1,1) {$(1,1)$};
\node at (1,1+\figurealpha/\figuredimen) {$\circ$};
\node [above right] at (1,1+\figurealpha/\figuredimen) {$(1,1+\mathfrak{b}^*/\bdmn)$};
\node at (1-\figurealpha,1) {$\circ$};
\node [above left] at (1-\figurealpha,1) {$(1-\mathfrak{b}^*,1)$};
\node [left] at (0,\figureq) {$1/q$}; \node at (0,\figureq) {$\circ$};
\node [below] at (\figurealpha,0) {$\phantom{1}\mathfrak{a}$}; \node at (\figurealpha,0) {$\circ$};
\fill
  (0,0) --(0,\figureq) --(1-\figurealpha,1) --(1,1+\figurealpha/\figuredimen) -- (1,1) --(\figurealpha,0) --cycle;
\end{scope}
\begin{scope}[shift={(2,0)}]
\draw [->] (-0.3,0)--(1.3,0);
\draw [->] (0,-0.3)--(0,1.3);
\node at (1,1-\figureq) {$\circ$};
\node [right] at (1,1-\figureq) {$(1,1-1/q^*)$};
\node at (1,1+\figurealpha/\figuredimen) {$\circ$};
\node [above right] at (1,1+\figurealpha/\figuredimen) {$(1,1+\mathfrak{a}^*/\bdmn)$};
\node at (1-\figurealpha,1) {$\circ$};
\node [above left] at (1-\figurealpha,1) {$(1-\mathfrak{a}^*,1)$};
\node [below] at (\figurealpha,0) {$\phantom{1}\mathfrak{b}$}; \node at (\figurealpha,0) {$\circ$};
\fill
  (1,1) --(1,1-\figureq) --(\figurealpha,0)--(0,0)
--(1-\figurealpha,1) --(1,1+\figurealpha/\figuredimen) --cycle;
\end{scope}
\end{tikzpicture}\\
\begin{tikzpicture}[scale=3]
\draw [->] (-0.3,0)--(1.3,0);
\draw [->] (0,-0.3)--(0,1.3);
\node at (1,1-\figureq) {$\circ$};
\node [right] at (1,1-\figureq) {$(1,1-1/q^*)$};
\node at (1,1+\figurealpha/\figuredimen) {$\circ$};
\node [above right] at (1,1+\figurealpha/\figuredimen) {$(1,1+\mathfrak{b}^*/\bdmn)$};
\node at (1-\figurealpha,1) {$\circ$};
\node [above left] at (1-\figurealpha,1) {$(1-\mathfrak{b}^*,1)$};
\node [left] at (0,\figureq) {$1/q$}; \node at (0,\figureq) {$\circ$};
\node [below] at (\figurealpha,0) {$\phantom{1}\mathfrak{b}$}; \node at (\figurealpha,0) {$\circ$};
\fill
  (1,1) --(1,1-\figureq) --(\figurealpha,0) --(0,0)
--(0,\figureq) --(1-\figurealpha,1) --(1,1+\figurealpha/\figuredimen) --cycle;
\end{tikzpicture}
\caption{Values of the parameters $(s,1/p)$ such that the estimate~\eqref{eqn:Besov:estimate} is valid, given solvability of the $L^q$-Dirichlet problem for $L$ (on the left) or the $L^{q^*}$-Dirichlet problem for $L^*$ (on the right) with nontangential estimates.  Because $\mathfrak{b}^*\geq 1-\bdmn+\bdmn/q$, the illustrated region is convex.
In the common case that $q$ and $q^*$ both exist, the estimate~\eqref{eqn:Besov:estimate} is valid for values of $(s,1/p)$ as in the bottom picture.
}

\label{fig:extrapolation}
\end{figure}

\begin{rmk}Our proofs of Theorems \ref{thm:Poisson} and~\ref{thm:Poisson:Lq} are valid for any real number $\bdmn\in (0,\dmn)$; in particular, we do not require that $\bdmn$ be an integer.

However, most of the results in the literature that establish the estimate~\eqref{eqn:N:bound}, for certain domains and coefficients, require that $\partial\Omega$ be uniformly rectifiable, which requires in particular that $\bdmn$ be an integer.

Thus, Theorem~\ref{thm:Poisson} does not require that $\bdmn$ be an integer, and neither does the proof of Theorem~\ref{thm:Poisson:Lq}, but most known applications of Theorem~\ref{thm:Poisson:Lq} do. A notable exception is the ``magic exponent'' of \cite{DavEM21}.
\end{rmk}

The outline of this paper is as follows. In Section~\ref{sec:history} we will review the history of the Poisson problem.

In Section~\ref{sec:function} we will discuss function spaces on domains, in particular the weighted averaged Lebesgue and Sobolev spaces used in Theorem~\ref{thm:Poisson}.
In Section~\ref{sec:trace:zero} we will discuss a particular topic in function theory, namely trace results (and in particular will elaborate on the condition $u=0$ on $\partial\Omega$ of Theorems \ref{thm:Poisson} and~\ref{thm:Poisson:Lq}).

In Section~\ref{sec:elliptic} we will review the basic theory of solutions to elliptic differential equations and some known results which we intend to use.

In Section~\ref{sec:beta} we will discuss more deeply the theory of local integrability of solutions to elliptic equations. This will lay the framework for an expanded range of~$\beta$ beyond the single case $\beta=2$ in Theorems \ref{thm:Poisson} and~\ref{thm:Poisson:Lq}.

In Section~\ref{sec:elliptic:boundary} we will return to the literature on solutions to elliptic differential equations. In particular, some such results are easy to prove in the case $\dmn=\bdmn+1=2$, but the authors have not been able to find them in the literature in the generality of weak local John domains; we will thus use the results of Section~\ref{sec:elliptic} to prove certain results in Section~\ref{sec:beta}, and then use the results of Section~\ref{sec:beta} to prove the results of Section~\ref{sec:elliptic:boundary} in this special case.

We will prove the $p\leq 1$, $\mathfrak{a}^*=\alpha^*$ case of Theorem~\ref{thm:Poisson} in Section~\ref{sec:atom}. We will complete the proof (that is, prove the $p>1$ case and the $\mathfrak{a}^*=\max(\alpha,1-\bdmn)$ case) in Section~\ref{sec:thm:Poisson}, and will prove Theorem~\ref{thm:Poisson:Lq} in Section~\ref{sec:Poisson:Lp}.

Finally, in Appendix~\ref{sec:Lp}, we address the small constant case that establishes the bound~\eqref{eqn:N:bound} for any specified $q$ for certain coefficients and domains.

\subsection{The history of the Poisson problem and weighted averaged spaces}\label{sec:history}

We now review the history of the Poisson problem. A straightforward argument involving the Lax-Milgram lemma (see Definition~\ref{dfn:Lax-Milgram} below) establishes that there is a unique solution $u$ to the Poisson problem
\begin{equation*}Lu=-\Div(\delta^{1+\bdmn-\pdmn}\vec H)\text{ in } \Omega,
\qquad
u=0\text{ on }\partial\Omega\end{equation*}
 that satisfies the estimate
\begin{equation*}\int_\Omega |\nabla u|^2\delta^{1+\bdmn-\pdmn} \leq C\int_\Omega |\vec H|^2 \delta^{1+\bdmn-\pdmn}\end{equation*}
for every $\vec H$ such that the right hand side is finite.
In the classical codimension 1 case this reduces to the estimate $\|\nabla u\|_{L^2(\Omega)}\leq C\|\vec H\|_{L^2(\Omega)}$. (This is in fact true considerably beyond the scope of the present paper, allowing for complex coefficients, systems, fourth-order and higher-order differential equations, and so forth.)

Much of the existing literature on the Poisson problem generalizes the estimate $\|\nabla u\|_{L^2(\Omega)}\leq C\|\vec H\|_{L^2(\Omega)}$ to the estimate $\|\nabla u\|_{L^p(\Omega)}\leq C\|\vec H\|_{L^p(\Omega)}$ for some $p\in (1,\infty)$, or to the weighted estimate $\|\nabla u\|_{L^p(\Omega,\omega)}\leq C\|\vec H\|_{L^p(\Omega,\omega)}$ for some weight~$\omega$. See, for example,  \cite{DiF96,KinZ01,AusQ02,ByuW04,Byu05} (the unweighted case) and \cite{MitT06,MazMS10,Phu11,AdiP16,AdiMP21,YanYY22} (the weighted case). These papers all treat coefficients with some continuity properties beyond those of Theorem~\ref{thm:Poisson}, most commonly coefficients lying  in or near the space $VMO$. In this case, and in sufficiently nice domains and for weights satisfying strong Muckenhoupt type conditions, the weighted $L^p$ estimate holds for a very broad range of~$p$. For general coefficients, by contrast, the $L^p$ or weighted $L^p$ estimate holds only for $p$ very near~$2$.

The particular weight $\delta^{\bdmn-\pdmn+p-ps}$ of Theorem~\ref{thm:Poisson} (and studied in particular in \cite{MitT06,MazMS10}) is inspired by, first, the obvious importance of the distance to the boundary, and, second, existing trace and extension theorems. One can extend from the Poisson problem of the present paper
\begin{equation*}
-\Div A\nabla u=\Div(\delta^{1+\bdmn-\pdmn}\vec H)\text{ in }\Omega,\quad
u=0\text{ on }\partial\Omega
\end{equation*}
to the Dirichlet problem
\begin{equation*}
-\Div A\nabla v=0\text{ in }\Omega,\quad
v=f\text{ on }\partial\Omega
\end{equation*}
by letting $\Phi$ be an arbitrary function with $\Phi=f$ on~$\partial\Omega$; letting $u$ be the solution to the Poisson problem with data $\vec H=\delta^{\pdmn-\bdmn-1}A\nabla\Phi$ yields that $v=\Phi-u$ is a solution to the Dirichlet problem. Thus there is a natural interest in the Poisson problem with data $\vec H=\delta^{\pdmn-\bdmn-1}A\nabla\Phi$, where $\Phi$ is an extension of a function on the boundary.

It was shown in \cite{Usp61} that if $\Omega=\R^\dmn_+$ is a half space, (and so in particular $\partial\Omega$ is $\bdmn$-Ahlfors regular for $\bdmn=\dmnMinusOne$,) and if $f$ lies in the boundary Besov space $\dot B^{s,p}_p(\partial\Omega)$, then $f$ may be extended to a function $\Phi$ in a weighted Sobolev space, that is, with
\begin{equation}\label{eqn:Uspenskii}
\|\Phi\|_{L^p(\Omega;\delta^{\bdmn-\pdmn+p-ps})}=
\biggl(\int_{\Omega} |\nabla \Phi|^p\delta^{\bdmn-\pdmn+p-ps}\biggr)^{1/p}\approx\|f\|_{\dot B^{s,p}_p(\partial\Omega)}.\end{equation}
This result has been generalized in succeeding years. If $\Omega$ is a bounded Lipschitz domain, then such extensions were constructed in \cite[Proposition~4.1]{MitT06} and \cite[Theorem~2.10]{Kim07}. If $\partial\Omega$ is merely $\bdmn$-Ahlfors regular, then extensions of functions in $\dot B^{s,p}_p(\partial\Omega)$ were constructed in \cite{JonW84}; the monograph \cite{JonW84} establishes membership of the extensions in a different function space, but in the case $\bdmn-\pdmn+p-ps=0$ their results coincide with the bound~\eqref{eqn:Uspenskii}. If $\Omega$ is as in Theorem~\ref{thm:Poisson}, then by \cite{DavFM21,DavFM20p} a function $f\in\dot B^{1/2,2}_{2}(\partial\Omega)$ may be extended to a function $\Phi\in L^2(\Omega;\delta^{1+\bdmn-\pdmn})$ satisfying the $p=2$, $s=1/2$ case of the estimate~\eqref{eqn:Uspenskii}.  Most of the work in our forthcoming paper \cite{BarMP25pB} consists of constructing extensions of functions in $\dot B^{s,p}_p(\partial\Omega)$, where $\partial\Omega$ is merely $\bdmn$-Ahlfors regular, that satisfy the estimate~\eqref{eqn:Uspenskii} for a broad range of $p$ and~$s$.

Thus it is natural to seek solutions $u$ to the Poisson (or Poisson-Dirichlet) problem that satisfy $\int_\Omega |\nabla u|^p\delta^{\bdmn-\pdmn+p-ps}<\infty$. Beyond the Lax-Milgram construction mentioned above and used extensively in \cite{DavFM21,DavFM20p}, this has thus far been done only in the codimension $1$ case $\bdmn=\dmnMinusOne$.

We mention in particular the papers \cite{MitT06,MazMS10}, in which the Poisson-Dirichlet problem with weighted estimates
\begin{equation}\label{eqn:MazMS10}\left\{\begin{aligned}
Lu&=-\Div \vec H\phantom{f} \text{ in }\Omega,\\
u&=f\phantom{-\Div \vec H}\text{ on }\partial\Omega,\\
\int_\Omega |\nabla u|^p\delta^{p-1-ps}&\leq
C\|f\|_{\dot B^{s,p}_p(\partial\Omega)}^p +C\int_\Omega |\vec H|^p\delta^{p-1-ps}
\end{aligned}\right.\end{equation}
was shown to be well posed for a broad range of $p$ and~$s$ under certain conditions on $A$ and~$\Omega$.

The paper \cite{MitT06} considered the case where $\Omega$ is a bounded Lipschitz domain and where $A$ is real, symmetric, and satisfies the Dini-type condition
\begin{equation*}|A(x)-A(y)|\leq \omega(|x-y|), \qquad \int_0^1 \frac{\sqrt{\omega(t)}}{t}\,dt<\infty.\end{equation*}
Under these circumstances they were able to establish well posedness of the problem~\eqref{eqn:MazMS10} for a range of $s$ and $p$ as in Figure~\ref{fig:extrapolation} (in fact with $q$, $q^*<2$).

The paper \cite{MazMS10} considered the case where $\Omega$ is a bounded Lipschitz domain, and moreover where $A$ and the unit outward normal $\nu$ to $\partial\Omega$ both lie in or near the space $VMO$. If $A$ and $\nu$ lie in $VMO$, then the Poisson-Dirichlet problem~\eqref{eqn:MazMS10} is well posed for all $0<s<1$ and all $1<p<\infty$; conversely, for any such $s$ and~$p$, if $A$ and $\nu$ are sufficiently close to $VMO$ (depending on $s$, $p$ and other parameters) then the problem~\eqref{eqn:MazMS10} is well posed.

Note that in both cases, the coefficients $A$ display regularity properties beyond boundedness and ellipticity. For coefficients with different regularity properties, or with no regularity properties, the estimate $\nabla u\in L^p(\Omega;\delta^{p-1-ps})$ is not suitable. For arbitrary coefficients the solutions even to $Lu=0$ can display bad behavior, in particular with the gradient $\nabla u$ only locally in $L^{2+\varepsilon}$ rather than locally in $L^p$.

Following the treatment of the Laplacian in Lipschitz domains \cite{JerK95,MayMit04A}, the monograph \cite{BarM16A} (by the first and second authors of the present paper) considered the Poisson-Dirichlet (and Poisson-Neuman) problems in the domain above a Lipschitz graph for coefficients constant in a direction transverse to the boundary.
To require only local $L^\beta$ estimates for $\beta$ sufficiently close to~$2$, we modified the problem~\eqref{eqn:MazMS10} by replacing the estimate on $u$ therein with
\begin{multline}\label{eqn:BarM16A}
\int_\Omega \biggl(\fint_{B(x,\delta(x)/2)}|\nabla u|^\beta \biggr)^{p/\beta} \delta(x)^{p-1-ps}\,dx
\\\leq
C\|f\|_{\dot B^{s,p}_p(\partial\Omega)}^p +C\int_\Omega \biggl(\fint_{B(x,\delta(x)/2)}|\vec H|^\beta \biggr)^{p/\beta} \delta(x)^{p-1-ps}\,dx\end{multline}
for $\beta$ sufficiently close to~$2$. This was inspired by a similar modification $\widetilde N$ of the nontangential maximal operator $N$ given by formula~\eqref{eqn:N:intro} and pioneered in \cite{KenP93} for the same reason.
We were able to establish well posedness of the Poisson-Dirichlet problem for such real coefficients for a range of $s$, $p$ as in Figure~\ref{fig:extrapolation}, using known results for the $L^q$-Dirichlet problem for such coefficients, as in Theorem~\ref{thm:Poisson:Lq} of the present paper.

Before concluding this brief history of the Poisson problem, (the interested reader may consult \cite{BarMP25pD} for a more detailed exposition,) we would like to mention one further class of results concerning the Poisson problem with rough variable coefficients. As noted above, the estimates in the problems~\eqref{eqn:MazMS10}, \eqref{eqn:BarM16A}, or Theorem~\ref{thm:Poisson} are well adapted for extending the Poisson problem to the Poisson-Dirichlet problem with boundary data~$f$ in the Besov space $\dot B^{s,p}_p(\partial\Omega)$. The literature on the Dirichlet problem without Poisson data, by contrast, contains many more results for boundary data $f$ in Lebesgue spaces $L^q(\partial\Omega)$ or boundary Sobolev spaces $\dot W^{1,q}(\partial\Omega)$.\footnote{Recently some authors have sought to generalize from the boundary Sobolev spaces $\dot W^{1,q}(\partial\Omega)$, which are only meaningful in domains with relatively nice boundary, to the Haj{\l}asz spaces $\dot M^{1,q}(\partial\Omega)$.} In particular, in many situations the estimate~\eqref{eqn:N:bound} has been established; it is this fact that makes Theorem~\ref{thm:Poisson:Lq} of use. In some situations there also exist estimates on solutions $u$ to the Dirichlet problem~\eqref{eqn:Dirichlet:cts} of the form
\begin{equation*}\|\mathcal{A}_2(\delta\nabla u)\|_{L^p(\partial\Omega)} \leq C\|f\|_{L^p(\partial\Omega)}
\quad\text{or}\quad
\|\widetilde N(\nabla u)\|_{L^p(\partial\Omega)} \leq C\|f\|_{\dot W^{1,p}(\partial\Omega)}
\end{equation*}
where $\widetilde{N}$ is the modification of the nontangential maximal operator $N$ pioneered in \cite{KenP93}, and where $\mathcal{A}_2$ is the Lusin area integral common in the theory. The second inequality above is the aforementioned regularity problem treated, e.g., in \cite{KenP93, Shen2019,
DindosHofmannPipher2023, MouPT22p}.

In the papers \cite{HofMayMou15,Bar20p}, the authors established a number of estimates on the solution $w$ to  the global Poisson problem
\begin{equation*}Lw=-\Div \vec H\quad\text{in }\R^\dmn\end{equation*}
for a function $\vec H$ supported in~$\Omega$, and for $L$ and $\Omega$ as in \cite{BarM16A}, that is, for $t$-independent coefficients~$L$ in the domain above a Lipschitz graph. The function $w$ then need not be zero on~$\partial\Omega$, but rather can be extended to a function that satisfies $Lw=0$ in $\R^\dmn\setminus\overline\Omega$. Some representative estimates are
\begin{align*}
\|\widetilde N(\nabla w)\|_{L^p(\partial\Omega)}
+\|\Tr w\|_{\dot W^{1,p}(\partial\Omega)}
&\leq C \|\mathcal{A}_2(\vec H)\|_{L^p(\partial\Omega)}
,\\
\|\mathcal{A}_2(\delta\nabla w)\|_{L^p(\partial\Omega)}
+\|\Tr w\|_{L^p(\partial\Omega)}
&\leq C \|\widetilde{\mathfrak{C}}_1(\delta\vec H)\|_{L^p(\partial\Omega)}
\end{align*}
where $\Tr w$ represents the Dirichlet boundary values (that is, the boundary trace) of~$w$, and where $\widetilde{\mathfrak{C}}_1$ is the Carleson-type operator given by
\begin{equation*}
\widetilde{\mathfrak{C}}_1 F(\xi)=
\sup_{r>0} \frac{1}{r^{\dmn-1}} \int_{B(\xi,r)\cap\Omega}
\biggl(\fint_{B(x,\delta(x)/2)} |F|^2\biggr)^{1/2} \frac{dx}{\delta(x)}.\end{equation*}
Although this is not noted in \cite{HofMayMou15,Bar20p}, given well posedness of the Dirichlet problem, we can correct $w$ to take zero boundary values, or any desired boundary values, by subtracting an appropriate solution to the Dirichlet problem; this yields solutions $u$ to the Dirichlet-Poisson problem
\begin{equation*}Lu=-\Div\vec H\text{ in }\Omega,\quad u=f\text{ on }\partial\Omega\end{equation*}
that satisfy the estimates
\begin{align}
\label{eqn:Poisson:regularity}
\|\widetilde N(\nabla u)\|_{L^p(\partial\Omega)}
&\leq C \|\mathcal{A}_2(\vec H)\|_{L^p(\partial\Omega)}
+C\|f\|_{\dot W^{1,p}(\partial\Omega)}
\quad\text{or}
\\
\label{eqn:Poisson:Dirichlet}
\|\mathcal{A}_2(\delta\nabla u)\|_{L^p(\partial\Omega)}
&\leq C \|\widetilde{\mathfrak{C}}_1(\delta\vec H)\|_{L^p(\partial\Omega)}
+C\|f\|_{L^p(\partial\Omega)}
.\end{align}

The paper \cite{MouPT22p} also considered the Poisson-Dirichlet problem with boundary data in $L^p(\partial\Omega)$ or in the Sobolev space $\dot W^{1,p}(\partial\Omega)$, or in the more general Haj\l asz space $\dot M^{1,p}(\partial\Omega)$ (with a slightly different set of estimates); unlike \cite{HofMayMou15,Bar20p}, the authors of \cite{MouPT22p} worked directly with the Poisson-Dirichlet problem in~$\Omega$ rather than working with the global Poisson problem. The paper \cite{MouPT22p}, like many recent results, considers much more general domains, and much more general coefficients, than \cite{HofMayMou15,Bar20p}. Indeed, the bulk of their argument applies in all domains with $\pdmnMinusOne$-Ahlfors regular boundary and unbounded complement satisfying the corkscrew condition.

This form of the Dirichlet-Poisson problem, with boundary data in a Lebesgue, Sobolev, or Haj\l asz space, is attractive due to the greater interest in the theory in Lebesgue, Sobolev, or Haj\l asz spaces as compared to Besov spaces. However, we observe that in above estimates, the functions $\nabla u$ and $\vec H$ lie in different spaces. In particular, the estimate on $\vec H$ is often more stringent than the estimate on~$\nabla u$. In this paper, we have chosen to focus on the form of the Poisson problem given in Theorem~\ref{thm:Poisson} (directly analogous to \eqref{eqn:MazMS10} or~\eqref{eqn:BarM16A}), because in this case $\nabla u$ and $\vec H$ lie in the same space as warranted by the equation.

\vskip .5cm {\bf Acknowledgements.} Svitlana Mayboroda is partially supported by the Simons Initiative of Geometry of Flows. Svitlana Mayboroda and Alberto Pacati are both partially supported by the Simons Collaboration on Localisation of Waves, 563916 SM.

\section{Notation and function spaces}\label{sec:function}

In this section we will define the norms on functions that we intend to use in this paper. We will then use several tools, most notably the Whitney decomposition, to establish certain properties of the weighted averaged Lebesgue and Sobolev spaces used in Theorem~\ref{thm:Poisson}.

\subsection{Definitions}

We begin with some standard notation.

We will say that $A\approx B$ if there are positive constants $c$ and $C$ such that $cA\leq B\leq CA$; we will refer to $c$ and $C$ as the implicit constants in such an inequality.

If $E\subset\R^\dmn$ then $|E|$ denotes the Lebesgue measure of~$E$, while if $\Gamma\subset\R^\dmn$ is $\bdmn$-Ahlfors regular we will let $\sigma$ denote the $\bdmn$-dimensional Hausdorff measure restricted to~$\Gamma$.

If $\Psi\subseteq\R^\dmn$ is a set of positive $\dmn$-dimensional Lebesgue measure, we let $L^p(\Psi)$ be the standard Lebesgue space with respect to Lebesgue measure; conversely, if $\Psi$ is $\bdmn$-Ahlfors regular we let $L^p(\Psi)$ be the standard Lebesgue space with respect to the $\bdmn$-dimensional Hausdorff measure~$\sigma$.

We will let $\fint$ denote the averaged integral, that is,
\begin{equation*}\fint_E F=\frac{1}{|E|} F,\quad \fint_\Psi F\,d\sigma=\frac{1}{\sigma(\Psi)} \int_\Psi F\,d\sigma\end{equation*}
whenever $E$ is a set of positive finite Lebesgue measure and $\Psi$ is (a subset of) a $\bdmn$-Ahlfors regular set with $0<\sigma(\Psi)<\infty$.

If $Q\subset\R^\dmn$ is a cube and $K>0$, then we let $x_Q$ denote its midpoint, $\ell(Q)$ denote its side-length, and $KQ$ its $K$-fold dilate; that is,
\begin{equation*}\ell(Q)=|Q|^{1/\pdmn},
\quad x_Q=\fint_Q x\,dx,\quad
KQ=\{x_Q+K(y-x_Q):y\in Q\}.\end{equation*}

We let $L^p_{loc}(\Psi)$ be the set of functions $u$ such that, if $K\subset\Psi$ is compact, then $u\in L^p(K)$ (with the same measure as used in $L^p(\Psi)$).

Let $W\subset\R^\dmn$ be open. We let $\dot W^{1,p}(W)$ be the standard homogeneous Sobolev space with norm
\begin{equation*}\|u\|_{\dot W^{1,p}(W)}=\|\nabla u\|_{L^p(W)}.
\end{equation*}
Elements of $\dot W^{1,p}(W)$ are then not functions, but equivalence classes of functions defined up to adding constants.

We define $\dot W^{1,p}_{loc}(W)$ analogously to $L^p_{loc}(W)$, as the set of all functions $u$ such that $u\in \dot W^{1,p}(V)$ for every open set $V$ such that the closure $\overline V$ is both compact and contained in~$W$.

Throughout $\Omega$ will denote an open set in $\R^\dmn$. In many cases we will need to impose additional assumptions on~$\Omega$. We will adopt the shorthand
\begin{equation}
\label{eqn:delta}
\delta(x)=\dist(x,\partial\Omega),
\qquad
\Delta(x,r)=B(x,r)\cap\partial\Omega\end{equation}
for all $x\in\R^\dmn$, all $E\subset\R^\dmn$, and all $r>0$.

The definition~\eqref{eqn:N:intro} of the nontangential maximal function $N$ of Theorem~\ref{thm:Poisson:Lq} may be written as
\begin{equation*}
Nu(\xi)=N_a u(\xi)=N_{a,\Omega}(\xi)
=\esssup_{\gamma_{a,\Omega}(\xi)} |u|\end{equation*}
where the nontangential cone $\gamma_{a,\Omega}(\xi)$ is given by
\begin{equation}\label{eqn:cone}
\gamma_a(\xi)=\gamma_{a,\Omega}(\xi)=
\{x\in\Omega:|x-\xi|<(1+a)\, \delta(x)\}\end{equation}
where $a>0$ is a fixed positive constant. When no ambiguity will arise we will omit the $\Omega$ subscript.

\begin{rmk}\label{rmk:N:aperture}%
In many familiar geometric situations, it is well known that, if $0<a<b$ and $1\leq q\leq\infty$, then
\begin{equation*}\|N_a F\|_{L^q(\partial\Omega)} \leq \|N_b F\|_{L^q(\partial\Omega)}\leq C_{a,b,q,\bdmn,A} \|N_a F\|_{L^q(\partial\Omega)}.\end{equation*}
We will now prove it in the case where $\Omega\subset\R^\dmn$ and $\partial\Omega$ is $\bdmn$-Ahlfors regular, $0<\bdmn<\dmn$.
Observe that
\begin{equation*}\{\xi\in\partial\Omega:N_aF(\xi)>\lambda\}= \bigcup_{\substack{x\in \Omega\\|F(x)|>\lambda}}
B(x,(1+a)\delta(x))\cap\partial\Omega.\end{equation*}
Apply the Vitali covering lemma to the collection of balls on the right hand side. Then
\begin{equation*}\{x\in \Omega:|F(x)|>\lambda\} \subset \bigcup_{x\in\mathcal{I}}
B(x,(5+5a)\delta(x))\cap\partial\Omega\end{equation*}
where the balls $B(x,(1+a)\delta(x))$ for $x\in\mathcal{I}$ are pairwise disjoint. Then
\begin{equation*}\sigma(\{\xi\in\partial\Omega:N_aF(\xi)>\lambda\})
\geq \sum_{x\in\mathcal{I}} \sigma(B(x,(1+a)\delta(x))\cap\partial\Omega). \end{equation*}
For each $x\in\Omega$ there is a $\xi_x\in\partial\Omega$ with $|x-\xi_x|=\delta(x)$. We have that
\begin{equation*}\Delta(\xi_x,a\delta(x))\subset B(x,(1+a)\delta(x)) \subset B(\xi,(2+a)\delta(x))\end{equation*}
and so
\begin{equation*}\sigma(\{\xi\in\partial\Omega:N_aF(\xi)>\lambda\})
\geq \sum_{x\in\mathcal{I}} \sigma(\Delta(\xi_x,a\delta(x))). \end{equation*}
Conversely,
\begin{align*}\{\xi\in\partial\Omega:N_bF(\xi)>\lambda\}
&= \bigcup_{\substack{z\in \Omega\\|F(z)|>\lambda}}
B(z,(1+b)\delta(z))\cap\partial\Omega
\\&\subset \bigcup_{x\in \mathcal{I}} \bigcup_{z\in B(x,(5+5a)\delta(x))} B(z,(1+b)\delta(z))\cap\partial\Omega
.\end{align*}
If $z\in B(x,(5+5a)\delta(x))\subset B(\xi_x,(6+5a)\delta(x))$ then $\delta(z)\leq |z-x|+\delta(x)< (6+5a)\delta(x)$, and so
\begin{align*}\{\xi\in\partial\Omega:N_bF(\xi)>\lambda\}
&\subset \bigcup_{x\in \mathcal{I}}
 \Delta(\xi_x,C_{a,b}\delta(x))
.\end{align*}
Thus
\begin{align*}
\sigma(\{\xi\in\partial\Omega:N_bF(\xi)>\lambda\})
&\leq \sum_{x\in \mathcal{I}}
 \sigma(\Delta(\xi_x,C_{a,b}\delta(x)))
.\end{align*}
By the Ahlfors regularity of the boundary,
\begin{equation*}\sigma(\Delta(\xi_x,C_{a,b}\delta(x)))
\leq C_{a,b,A}\sigma(\Delta(\xi_x,a\delta(x)))\end{equation*}
and so
\begin{equation*}\sigma(\{\xi\in\partial\Omega:N_bF(\xi)>\lambda\})
\leq C_{a,b,A} \sigma(\{\xi\in\partial\Omega:N_aF(\xi)>\lambda\}).
\end{equation*}
A standard argument yields the bound $\|N_b F\|_{L^q(\partial\Omega)}\leq C_{a,b,A,q}\|N_a F\|_{L^q(\partial\Omega)}$.

Thus, the exact value of the aperture parameter $a$ in the nontangential maximal function is not crucial; in particular, the bound~\eqref{eqn:N:bound} is either true for all choices of $N=N_a$ (with $C$ depending on~$a$) or for no choices of~$N$.
\end{rmk}

We now introduce some shorthand notation for the norms on $u$ and $\vec H$ in Theorem~\ref{thm:Poisson}.

\begin{defn}\label{dfn:weighted:averaged}
Let $\Omega$ be an open set.
Recall that $\delta(y)=\dist(y,\partial\Omega)$.

Let $0<c<1$ and $1\leq \beta\leq\infty$.
The Whitney averaging operator is given by
\begin{align*}
\mathcal{W}_{c,\beta} u(y) &=
\biggl(\fint_{B(y,c\delta(y))} |u|^\beta \biggr)^{1/\beta}
\end{align*}
for $y\in\Omega$ and $u:\Omega\to\R$,
with the obvious modifications in the case $\beta=\infty$.

Let $0<p\leq\infty$ and let $s\in\R$.
The weighted averaged Lebesgue space ${L^{p}_{av,c,\beta,s}(\Omega)}$ is the set of all $f\in L^\beta_{loc}(\Omega)$ such that
\begin{equation*}\|f\|_{L^{p}_{av,c,\beta,s}(\Omega)}
= \biggl(\int_{\Omega}
\mathcal{W}_{c,\beta} u(y)^p \, \delta(y)^{\bdmn-\pdmn+p-ps}\,dy\biggr)^{1/p}\end{equation*}
is finite.

The weighted averaged Sobolev space $\dot W^{1,p}_{av,c,\beta,s}(\Omega)$ is the set of all $f\in \dot W^{1,\beta}_{loc}(\Omega)$ such that
\begin{equation*}\|f\|_{\dot W^{1,p}_{av,c,\beta,s}(\Omega)}
= \|\nabla f\|_{L^p_{av,c,\beta,s}(\Omega)}
=
\biggl(\int_{\Omega}
\mathcal{W}_{c,\beta} (\nabla u)(y)^p\,
\delta(y)^{\bdmn-\pdmn+p-ps}\,dy\biggr)^{1/p}\end{equation*}
is finite.

We will occasionally need local versions of these spaces. If $\Psi\subset\Omega$ is open, we will let ${L^{p}_{av,c,\beta,s}(\Psi;\Omega)}$ denote the set of all $f\in L^\beta_{loc}(\Omega)$ such that
\begin{equation*}\|f\|_{L^{p}_{av,c,\beta,s}(\Psi;\Omega)}
= \biggl(\int_{\Psi}
\mathcal{W}_{c,\beta} u(y)^p \,\dist(y,\partial\Omega)^{\bdmn-\pdmn+p-ps}\,dy\biggr)^{1/p}
\end{equation*}
is finite. (We emphasize that we will use $\delta$ as shorthand for $\dist(y,\partial\Omega)$, not $\dist(y,\partial\Psi)$.) We will let $\dot W^{1,p}_{av,c,\beta,s}(\Psi;\Omega)$ denote the set of all $f$ such that $\|f\|_{\dot W^{1,p}_{av,c,\beta,s}(\Psi;\Omega)}
= \|\nabla f\|_{L^p_{av,c,\beta,s}(\Psi;\Omega)}$
is finite.
\end{defn}

We observe that Hölder's inequality immediately yields the embedding result
\begin{equation}\label{eqn:Lpav:Holder}
\|F\|_{L^p_{av,c,\beta,s}(\Omega)}\leq \|F\|_{L^p_{av,c,\rho,s}(\Omega)}\quad\text{whenever } 0<\beta<\rho\leq\infty
.\end{equation}

We will see (Lemma~\ref{lem:averaged:Whitney} below) that if $c$, $\widetilde c\in (0,1)$ then $L^p_{av,c,\beta,s}(\Omega)=L^p_{av,\widetilde c,\beta,s}(\Omega)$ with an equivalence of norms, and so in many cases the exact value of $c$ does not matter. We will thus employ the shorthand
\begin{equation*}L^p_{av,\beta,s}(\Omega)=L^p_{av,1/2,\beta,s}(\Omega),
\qquad
\dot W^{1,p}_{av,\beta,s}(\Omega) =\dot W^{1,p}_{av,1/2,\beta,s}(\Omega).\end{equation*}

\subsection{A lemma concerning Ahlfors regular sets}

In Section~\ref{sec:Whitney:Sobolev} we will prove some basic properties of the weighted averaged spaces $L^p_{av,\beta,s}$ and $\dot W^{1,p}_{av,\beta,s}$ that will be of use throughout the paper. We will need the following lemma (as well as some tools of Sections \ref{sec:dfn:Whitney} and~\ref{sec:Whitney:properties}).  This lemma lets us estimate the measure of sets near~$\partial\Omega$. We remark that the $\varpi=1$ case of formula~\eqref{eqn:delta:in:ball} is \cite[formula~(2.6)]{DavFM21}.
\begin{lem}\label{lem:close:to:Ahlfors}Let $\Gamma\subset\R^\dmn$ be $\bdmn$-Ahlfors regular for some $0<\bdmn<\dmn$ with Ahlfors constant~$A$. If $\xi\in\Gamma$ and $0<r\leq R<2\diam\Gamma$, then
\begin{equation}
\label{eqn:close:to:Ahlfors}
|\{y\in B(\xi,R):\dist(y,\partial\Omega)<r\}|
\approx R^\bdmn r^{\dmn-\pbdmn}
\end{equation}
where the implicit constants depend only on~$A$, $\bdmn$ and~$\dmn$.

Furthermore, if $\xi\in \Gamma$, $0<R<\infty$, and $\varpi>0$, then
\begin{equation}\label{eqn:delta:in:ball}
\int_{B(\xi,R)} \dist(y,\Gamma)^{\pbdmn-\pdmn+\varpi}\,dy
\approx R^{\pbdmn+\varpi}
\end{equation}
where the implicit constants depend only on~$A$, $\bdmn$, $\dmn$, and~$\varpi$.
\end{lem}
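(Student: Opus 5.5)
The plan is to first establish the measure estimate \eqref{eqn:close:to:Ahlfors} by a covering argument, and then derive \eqref{eqn:delta:in:ball} from it by a dyadic layer decomposition. (Here $\partial\Omega$ in \eqref{eqn:close:to:Ahlfors} is understood as~$\Gamma$.)

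\emph{Upper bound in \eqref{eqn:close:to:Ahlfors}.} Let $E=\{y\in B(\xi,R):\dist(y,\Gamma)<r\}$. Each $y\in E$ has a point $\zeta_y\in\Gamma$ with $|y-\zeta_y|<r$, and $\zeta_y\in\Gamma\cap B(\xi,R+r)\subset \Gamma\cap B(\xi,2R)$. Choose a maximal $r$-separated set $\{\zeta_j\}\subset \Gamma\cap B(\xi,2R)$. Then $E\subset\bigcup_j B(\zeta_j,2r)$. The balls $B(\zeta_j,r/2)$ are disjoint and lie in $B(\xi,3R)$, so the lower Ahlfors bound gives $\#\{j\}\,r^\bdmn\lesssim\sigma(\Gamma\cap B(\xi,3R))\lesssim R^\bdmn$. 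Two small points need care here: the radii $r/2$ and $3R$ may exceed $\diam\Gamma$. If $3R>\diam\Gamma$, the upper bound still holds, since $\sigma(\Gamma)\lesssim(\diam\Gamma)^\bdmn\lesssim R^\bdmn$. For the lower bound at radius $r/2$, the hypothesis $r<2\diam\Gamma$ gives $r/2<\diam\Gamma$. We conclude $|E|\lesssim (R/r)^\bdmn r^\dmn = R^\bdmn r^{\dmn-\bdmn}$.

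\emph{Lower bound in \eqref{eqn:close:to:Ahlfors}.} Take a maximal $2r$-separated family $\{\zeta_j\}$ in $\Gamma\cap B(\xi,R/2)$. Since the $\Gamma\cap B(\zeta_j,2r)$ cover $\Gamma\cap B(\xi,R/2)$, the upper Ahlfors bound together with the lower bound $\sigma(\Gamma\cap B(\xi,R/2))\gtrsim R^\bdmn$ (valid as $R/2<\diam\Gamma$) gives $\#\{j\}\gtrsim (R/r)^\bdmn$. The balls $B(\zeta_j,\min(r,R/2)/2)$ are pairwise disjoint, lie in $E$, and each has volume $\approx r^\dmn$ because $r\le R$. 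Summing gives $|E|\gtrsim R^\bdmn r^{\dmn-\bdmn}$.

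\emph{Estimate \eqref{eqn:delta:in:ball}.} Every $y\in B(\xi,R)$ satisfies $\dist(y,\Gamma)\le R$, since $\xi\in\Gamma$. First suppose $R<2\diam\Gamma$. Split $B(\xi,R)$ into the layers $\{2^{-k-1}R\le\dist(y,\Gamma)<2^{-k}R\}$, $k\ge0$. By the upper bound in \eqref{eqn:close:to:Ahlfors}, the $k$-th layer contributes at most
\[
C\,R^\bdmn(2^{-k}R)^{\dmn-\bdmn}\,(2^{-k}R)^{\bdmn-\dmn+\varpi}
\]
up to constants depending on the sign of the exponent, that is, at most $C 2^{-k\varpi}R^{\bdmn+\varpi}$. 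Summing the geometric series over $k$ gives the upper bound, and here $\varpi>0$ is exactly what is needed. For the lower bound, apply \eqref{eqn:close:to:Ahlfors} with $r=R$ and with $r=cR$ for small $c$: the layer $\{cR\le\dist<R\}\cap B(\xi,R)$ then has measure $\gtrsim R^\dmn$, and on it the integrand is $\approx R^{\bdmn-\dmn+\varpi}$.

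\emph{The case $R\ge 2\diam\Gamma$.} This is the main obstacle, since there the estimate \eqref{eqn:close:to:Ahlfors} is unavailable. Since $\Gamma$ is nonempty and $\diam\Gamma$ is finite, $\Gamma\subset B(\xi,R/2)$. Write $D=\diam\Gamma$.

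\begin{itemize}
\item On $B(\xi,R)\setminus B(\xi,2D)$ we have $\dist(y,\Gamma)\approx|y-\xi|$. The integral over this region is therefore comparable to $\int_{2D}^R t^{\bdmn-\dmn+\varpi}\,t^{\dmn-1}\,dt$.
\item This integral is $\approx R^{\bdmn+\varpi}$, because the exponent $\bdmn+\varpi$ is positive and $R\ge2D$. When $R$ is close to $2D$ the region is thin, but the lower bound is then supplied by the previous case applied at radius $D$, since $\int_{B(\xi,R)}\ge\int_{B(\xi,D)}\gtrsim D^{\bdmn+\varpi}\approx R^{\bdmn+\varpi}$.
\item The part inside $B(\xi,2D)$ is controlled by the previous case at radius $\approx D\lesssim R$. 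More precisely, cover it by $B(\xi,2D)$ and run the same layer argument with the Ahlfors bounds at scale $D$.
\end{itemize}

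Combining the two regions gives \eqref{eqn:delta:in:ball} in this case as well.
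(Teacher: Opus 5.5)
Your proposal is correct and follows essentially the paper's route. You prove \eqref{eqn:close:to:Ahlfors} by a covering argument, with your maximal separated sets playing the role of the paper's Vitali coverings, and then derive the integral estimate from the distribution of $\dist(\cdot,\Gamma)$; your dyadic layers replace the paper's layer-cake integral and also spare you its split by the sign of $\bdmn-\dmn+\varpi$. For $R\geq 2\diam\Gamma$ you compare $\dist(y,\Gamma)$ with $|y-\xi|$ on an outer annulus and use the bounded case on an inner ball, as the paper does. The only loose end is cosmetic: the inner ball $B(\xi,2\diam\Gamma)$ does not meet the strict hypothesis $R<2\diam\Gamma$, but your plan to rerun the layer argument at scale $\diam\Gamma$ covers it, since your proof of the upper bound in \eqref{eqn:close:to:Ahlfors} only needs $r\leq 2\diam\Gamma$. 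Alternatively, shrink the inner radius to $\tfrac32\diam\Gamma$ as the paper does.
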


We remark that for most of this paper, we are concerned with a domain~$\Omega$ whose boundary $\partial\Omega$ is $\bdmn$-Ahlfors regular, and thus we will apply the lemma with $\Gamma=\partial\Omega$. The bound
\begin{equation*}
\int_{B(\xi,R)\cap\Omega} \dist(y,\partial\Omega)^{\pbdmn-\pdmn+\varpi}\,dy
\leq C R^{\pbdmn+\varpi}
\end{equation*}
is valid in all such domains, but we remark that the reverse inequality (which is inherently present in the bound~\eqref{eqn:delta:in:ball}) requires the domain of integration to be all of $B(\xi,R)$ rather than merely $B(\xi,R)\cap\Omega$.

\begin{proof}[Proof of Lemma~\ref{lem:close:to:Ahlfors}]
Throughout this proof let $\delta(y)=\dist(y,\Gamma)$.

We begin with the bound~\eqref{eqn:close:to:Ahlfors}. If $r>R/3$ then the estimate follows because $B(\xi,r)\subset\{y\in B(\xi,R):\delta(y)<r\}\subset B(\xi,R)$. Thus we may assume that $r\leq R/3$.

We apply the Vitali covering lemma to find points $x_j\in\Delta(\xi,R-r)$ such that
\begin{equation*}\Delta(\xi,R-r)\subset \bigcup_{j=1}^M B(x_j,r)\end{equation*}
and such that the balls $B(x_j,r/5)$ are pairwise disjoint.
We apply the Vitali covering lemma again to find points $x_j\in \Delta(\xi,R+r)$ for $M+1\leq j\leq N$ such that
\begin{equation*}\Delta(\xi,R+r)\subset \bigcup_{j=1}^N B(x_j,r)\end{equation*}
and such that the balls $B(x_j,r/5)$ are still pairwise disjoint.

If $y\in B(x_j,r)$ then $\delta(y)<r$. Conversely, if $y\in B(\xi,R)$ and $\delta(y)<r$ then $y\in B(x,r)$ for some $x\in \Delta(\xi,R+r)$. Thus
\begin{equation*}\bigcup_{j=1}^M B(x_j,r)
\subset \{y\in B(\xi,R):\delta(y)<r\}
\subset \bigcup_{j=1}^N B(x_j,2r)
\end{equation*}
and so
\begin{equation*}
M\beta_\dmn (r/5)^\dmn
\leq |\{y\in B(\xi,R):\delta(y)<r\}|
\leq N\beta_\dmn (2r)^\dmn\end{equation*}
where $\beta_\dmn$ is the volume of the unit ball in~$\R^\dmn$.

Observe that $r\leq R$ and $x_j\in B(\xi,R+r)$, and so $B(x_j,r/5)\subset B(\xi,R+6r/5)\subseteq B(\xi,(11/5)R)$. Because the sets $\Delta(x_j,r/5)$ are pairwise disjoint, we have that
\begin{equation*}
N\frac{1}{A}\biggl(\frac{r}{5}\biggr)^\bdmn
\leq
\sum_{j=1}^N \sigma(\Delta(x_j,r/5))
=
\sigma\biggl(\bigcup_{j=1}^N \Delta(x_j,r/5)\biggr)
\leq \sigma\Bigl(\Delta\Bigl(\xi,\frac{11}{5}R\Bigr)\Bigr) \leq A\frac{11^\bdmn}{5^\bdmn}R^\bdmn.\end{equation*}
Similarly,
\begin{equation*}
\frac{1}{A}(R/2)^\bdmn
\leq \sigma(\Delta(\xi,R/2))
\leq \sigma(\Delta(\xi,R-r))
\leq \sum_{j=1}^M \sigma(\Delta(x_j,r))
\leq MAr^\bdmn.\end{equation*}
Combining these estimates establishes the bound~\eqref{eqn:close:to:Ahlfors}.

We now turn to the estimate~\eqref{eqn:delta:in:ball}.
If $R\geq2\diam(\Gamma)$, then because $\bdmn+\varpi>0$ we have that
\begin{equation*}\int_{B(\xi_0,R)\setminus B(\xi_0,(3/2)\diam(\Gamma))} \delta^{\bdmn-\pdmn+\varpi} \approx \int_{B(\xi_0,R)\setminus B(\xi_0,(3/2)\diam(\Gamma))} |x-\xi_0|^{\bdmn-\pdmn+\varpi}\,dx\approx R^{\bdmn+\varpi}\end{equation*} and so we may assume $R<2\diam(\Gamma)$. Thus we may use the bound~\eqref{eqn:close:to:Ahlfors}.

If $\pbdmn-\pdmn+\varpi\geq 0$ then $\delta\leq R$ in $B(\xi,R)$ and by the bound~\eqref{eqn:close:to:Ahlfors} there is a $\varepsilon>0$ depending on $\dmn$, $\bdmn$ and~$A$ such that $\delta>\varepsilon R$ in a set of measure comparable to~$R^\dmn$. The bound~\eqref{eqn:delta:in:ball} follows immediately.

We are left with the case $R<2\diam(\Gamma)$ and $\pbdmn-\pdmn+\varpi<0$. By definition of the Lebesgue integral
\begin{align*}
\int_{B(\xi,R)} \delta^{{\pbdmn-\pdmn+\varpi}}
&= \int_0^\infty |\{x\in B(\xi,R):\delta(x)^{\pbdmn-\pdmn+\varpi}>\lambda\}|\,d\lambda
\\&= \int_0^\infty |\{x\in B(\xi,R):\delta(x)<\lambda^{-1/(\pdmn-\pbdmn-\varpi)}\}| \,d\lambda
.\end{align*}
By the bound~\eqref{eqn:close:to:Ahlfors},
\begin{align*}
\int_{B(\xi,R)} \delta^{{\pbdmn-\pdmn+\varpi}}
&\approx \int_0^\infty
\min (R^\dmn,
R^\bdmn
\lambda^{-(\dmn-\pbdmn)/{(\pdmn-\pbdmn-\varpi)}})
\,d\lambda
.\end{align*}
Because $0<\varpi<\dmn-\pbdmn$, the exponent is less than $-1$ and so the integral converges. A straightforward computation completes the proof.
\end{proof}

\subsection{Whitney decompositions of open sets}\label{sec:dfn:Whitney}

We define Whitney decompositions of open sets as follows.

\begin{defn}\label{dfn:Whitney}
A dyadic cube in $\R^\dmn$ is a set of the form \begin{equation*}Q=[j_12^{k},(j_1+1)2^k)\times\dots\times[j_\dmn2^k,(j_\dmn+1)2^k)\end{equation*} for some integers $k$, $j_1,\dots,j_\dmn$. The side length of such a cube is
$\ell(Q)=2^k=|Q|^{1/\dmn}$. Observe that the set of all dyadic cubes of side length $2^k$ forms a partition of~$\R^\dmn$.

Suppose that $Q$ is a dyadic cube in $\R^\dmn$. We let $P(Q)$ denote its dyadic parent, that is, the unique dyadic cube that satisfies both $Q\subset P(Q)$ and $\ell(P(Q))=2\ell(Q)$.

If $\Omega\subset\R^\dmn$ is a nonempty proper open subset of~$\R^\dmn$, and if $0<\kappa<\infty$, then the Whitney decomposition $\mathcal{G}_\kappa=\mathcal{G}_\kappa(\Omega)$ of $\R^\dmn\setminus\Omega$ is given by
\begin{equation*}\mathcal{G}_\kappa(\Omega)= \{Q\text{ dyadic}: \dist(Q,\partial\Omega)>\kappa\diam Q,\> \dist(P(Q),\partial\Omega) \leq \kappa \diam P(Q)\}.\end{equation*}
\end{defn}

We will frequently need the closest point to a cube; thus, if $Q\in \mathcal{G}_\kappa$, we let $\xi_Q$ satisfy
\begin{equation}\label{eqn:cube:point}
\xi_Q\in\partial\Omega,\qquad \dist(\xi_Q,Q)=\dist(\partial\Omega,Q).
\end{equation}
Because $\partial\Omega$ is closed, $\xi_Q$ must exist.

\subsection{Basic properties of Whitney decompositions}
\label{sec:Whitney:properties}

In this section we establish a few straightforward bounds on Whitney cubes.
Observe that if $x\in\Omega$ then $x\in Q$ for exactly one $Q\in\mathcal{G}_\kappa(\Omega)$.
If $Q\in \mathcal{G}_\kappa$,
then
\begin{equation*}\dist(Q,\partial\Omega)-\diam Q \leq\dist(P(Q),\partial\Omega)\leq \kappa\diam(P(Q))=2\kappa\diam Q ,\end{equation*}
and so
\begin{equation}\label{eqn:Whitney}
\kappa\diam Q < \dist(Q,\partial\Omega)\leq (2\kappa+1)\diam Q .
\end{equation}
Thus $\diam Q\approx \dist(Q,\partial\Omega)=\dist(Q,\xi_Q)$ with implicit constants depending only on~$\kappa$.

We will need the following covering lemma for the boundary and Whitney cubes.

\begin{lem}\label{lem:Whitney:boundary}
Let $\Omega\subsetneq\R^\dmn$ be a nonempty proper open subset and let $\kappa>0$, $\tau>0$.
For each $R\in \mathcal{G}_\kappa(\Omega)$, let \begin{equation*}\Delta_R=\Delta(\xi_R,\tau\diam R)=B(\xi_R, \tau\diam R)\cap\partial\Omega\end{equation*}
where $\xi_R$ is as in formula~\eqref{eqn:cube:point}.

Then there is a $C$ depending only on $\kappa$, $\tau$ and $\dmn$ such that, if $x\in\partial\Omega$ and $j\in \Z$, then
\begin{equation*}\{R\in \mathcal{G}_\kappa(\Omega):\ell(R)=2^j,\>x\in \Delta_R\}\end{equation*}
contains at most $C$ elements.
\end{lem}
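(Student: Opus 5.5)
The argument is a packing argument: the Whitney cubes in question are pairwise disjoint, all have side length $2^j$, and all lie in a fixed ball around $x$ whose radius is comparable to $2^j$. Comparing volumes then bounds their number.

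Fix $x\in\partial\Omega$ and $j\in\Z$, and let $R$ be any cube in the set in question. Then $\ell(R)=2^j$ and $\diam R=\sqrt{\dmn}\,2^j$. First we locate $R$ relative to $x$. Since $x\in\Delta_R$, we have $|x-\xi_R|<\tau\diam R$. By the definition~\eqref{eqn:cube:point} of $\xi_R$ together with the Whitney bound~\eqref{eqn:Whitney},
\begin{equation*}\dist(\xi_R,R)=\dist(\partial\Omega,R)\leq(2\kappa+1)\diam R.\end{equation*}
So for any $y\in R$ the triangle inequality gives
\begin{equation*}|y-x|\leq |y-\xi_R|+|\xi_R-x| < (2\kappa+1)\diam R+\diam R+\tau\diam R=(2\kappa+2+\tau)\sqrt{\dmn}\,2^j.\end{equation*}
Thus every such cube $R$ is contained in the ball $B(x,(2\kappa+2+\tau)\sqrt{\dmn}\,2^j)$.

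Next we compare volumes. Distinct dyadic cubes of the same side length are pairwise disjoint, since cubes of a fixed generation partition $\R^\dmn$. Each cube in the set has Lebesgue measure $2^{j\dmn}$. If there are $m$ of them, then
\begin{equation*}m\,2^{j\dmn}\leq \beta_\dmn (2\kappa+2+\tau)^\dmn \dmn^{\dmn/2}2^{j\dmn},\end{equation*}
where $\beta_\dmn$ is the volume of the unit ball. The factor $2^{j\dmn}$ cancels, so $m\leq C:=\beta_\dmn(2\kappa+2+\tau)^\dmn\dmn^{\dmn/2}$, which depends only on $\kappa$, $\tau$ and~$\dmn$, as required.

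No step here is a real obstacle. The only point that needs care is getting the upper bound $\dist(R,\partial\Omega)\leq(2\kappa+1)\diam R$ from~\eqref{eqn:Whitney} rather than the lower bound, because the upper bound is what confines $R$ to a ball around $x$ of radius comparable to $\ell(R)$. Ahlfors regularity is not used.
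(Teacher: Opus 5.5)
Your proposal is correct and follows the paper's proof essentially verbatim. You confine each such cube $R$ to $B(x,(2\kappa+2+\tau)\sqrt{\dmn}\,2^j)$ using the upper Whitney bound $\dist(R,\partial\Omega)\leq(2\kappa+1)\diam R$, and then compare volumes of the disjoint cubes to get the same constant $\beta_\dmn(2\kappa+2+\tau)^\dmn\dmn^{\dmn/2}$.
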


\begin{proof} Fix $x\in\partial\Omega$ and $j\in\Z$.
If $R\in\mathcal{G}_\kappa$, $\ell(R)=2^j$, and $x\in\Delta_R$, then for all $y\in R$ we have that
\begin{equation*}|y-x|\leq \diam(R)+\dist(R,\xi_R)+|\xi_R-x|
\leq (2\kappa+2+\tau)\diam R =(2\kappa+2+\tau)\sqrt{\dmn}2^j\end{equation*}
by the bound~\eqref{eqn:Whitney},
and so $R\subset B(x,(2\kappa+2+\tau)\sqrt{\dmn} 2^j)$. Because each such $R$ has volume $2^{j\dmn}$ and distinct cubes in $\mathcal{G}_\kappa$ are disjoint, there are at most
$\beta_\dmn (2\kappa+2+\tau)^\dmn\sqrt{\dmn}^\dmn$ such cubes, where $\beta_\dmn$ is the volume of the unit ball in~$\R^\dmn$.
\end{proof}

We now prove the following lemma concerning dilations of Whitney cubes.
\begin{lem}\label{lem:dilate:Whitney}
Let $K\geq 1$, let $\kappa>0$, let $\Omega\subset\R^\dmn$ be a nonempty proper open set, and let $\mathcal{G}_\kappa=\mathcal{G}_\kappa(\Omega)$ be as in Definition~\ref{dfn:Whitney}.

If $Q\in\mathcal{G}_\kappa$, then
\begin{align}
\label{eqn:whitney:dilate:distance:min}
\dist(KQ,\partial\Omega)&>\biggl(\kappa-\frac{K-1}{2}\biggr)\diam Q,
\\
\label{eqn:whitney:dilate:distance:max}
\sup_{x\in KQ} |x-\xi_Q|&\leq\biggl(\frac{K+3}{2}+2\kappa\biggr) \diam(Q)\end{align}
where $\xi_Q$ is as in formula~\eqref{eqn:cube:point}.

If $Q$, $R\in\mathcal{G}_\kappa$, then
\begin{equation}
\label{eqn:Whitney:dilate:distance}
\biggl(\kappa-\frac{K-1}{2}\biggr)\diam(Q)
-\biggl(\frac{K+3}{2}+2\kappa\biggr)\diam(R)
\leq\dist(KQ,KR)\end{equation}
and so if $K<2\kappa+1$ and $KQ\cap KR\neq \emptyset$, then
\begin{equation*}
\dist(Q,\partial\Omega) \approx \diam Q\approx \diam R \approx \dist(R,\partial\Omega)\end{equation*}
with the implicit constants depending only on $\kappa$ and~$K$.
In particular, for each~$Q$ there are at most $C_{\dmn,K,\kappa}$ such cubes~$R$.
\end{lem}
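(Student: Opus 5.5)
The plan is to work directly from the geometry of dilated cubes and the Whitney bound~\eqref{eqn:Whitney}. Write $d=\diam Q$ and recall that $KQ$ has the same center $x_Q$ as $Q$ and diameter $Kd$. Every point of $KQ$ lies within $\frac{K-1}{2}d$ of $Q$. To see this, note that a point $x_Q+K(y-x_Q)$ with $y\in Q$ is at distance $(K-1)|y-x_Q|\le (K-1)d/2$ from the point $y$, since $|y-x_Q|\le d/2$. Hence for $x\in KQ$ we have
\begin{equation*}
\dist(x,\partial\Omega)\ge \dist(Q,\partial\Omega)-\tfrac{K-1}{2}d>\bigl(\kappa-\tfrac{K-1}{2}\bigr)d
\end{equation*}
by the lower bound in~\eqref{eqn:Whitney}, which gives~\eqref{eqn:whitney:dilate:distance:min}. (Strictness of the infimum over $KQ$ needs a small care: we will use the closure, or the fact that $\dist(Q,\partial\Omega)>\kappa d$ is attained on $\overline Q$.)

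For~\eqref{eqn:whitney:dilate:distance:max}, let $z\in\overline Q$ be a point with $|z-\xi_Q|=\dist(Q,\partial\Omega)\le(2\kappa+1)d$. For $x\in KQ$ we will bound $|x-\xi_Q|\le |x-x_Q|+|x_Q-z|+|z-\xi_Q|$. Here $|x-x_Q|\le Kd/2$ and $|x_Q-z|\le d/2$, so
\begin{equation*}
|x-\xi_Q|\le \tfrac{K+1}{2}d+(2\kappa+1)d=\bigl(\tfrac{K+3}{2}+2\kappa\bigr)d .
\end{equation*}

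For~\eqref{eqn:Whitney:dilate:distance}, take $x\in KQ$ and $y\in KR$. Then
\begin{equation*}
|x-y|\ge \dist(x,\partial\Omega)-\dist(y,\partial\Omega)\ge \dist(x,\partial\Omega)-|y-\xi_R| ,
\end{equation*}
and we apply~\eqref{eqn:whitney:dilate:distance:min} to $Q$ and~\eqref{eqn:whitney:dilate:distance:max} to $R$. If $K<2\kappa+1$ and $KQ\cap KR\ne\emptyset$, then the left side of~\eqref{eqn:Whitney:dilate:distance} is at most $0$. Since $c_0=\kappa-\frac{K-1}{2}>0$, this yields $\diam Q\le C\diam R$, and by symmetry (swapping $Q$ and $R$) $\diam R\le C\diam Q$. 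Combined with~\eqref{eqn:Whitney}, all four quantities are comparable.

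For the counting claim, each such $R$ satisfies $R\subset B(x_Q,C\diam Q)$, because $KR$ meets $KQ$ and $\diam R\approx\diam Q$. The cubes in $\mathcal{G}_\kappa$ are disjoint with volume $\gtrsim (\diam Q)^\dmn$, so a volume comparison bounds their number by $C_{\dmn,K,\kappa}$. The only delicate point is the bookkeeping of constants and strict versus nonstrict inequalities; no step is a genuine obstacle.
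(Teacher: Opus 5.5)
Your proof is correct and follows the paper's argument essentially step for step. You pass through $\dist(x,Q)\le\frac{K-1}{2}\diam Q$ and the Whitney bound~\eqref{eqn:Whitney}, and you route the triangle inequality through the boundary point $\xi_R$; you also spell out the comparability and counting consequences, which the paper's proof leaves to the reader. The strictness caveat you raise is unnecessary: the bound $\dist(x,\partial\Omega)\ge\dist(Q,\partial\Omega)-\frac{K-1}{2}\diam Q$ is uniform in $x\in KQ$, and $\dist(Q,\partial\Omega)>\kappa\diam Q$ is already strict.
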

\begin{proof}
Observe that if $x\in KQ$ then $\dist(x,Q)\leq \frac{K-1}{2}\diam Q$. By the triangle inequality and the bound~\eqref{eqn:Whitney},
\begin{align*}|x-\xi_Q|
&\leq \dist(x,Q)+\diam(Q)+\dist(Q,\xi_Q)
\\&\leq \frac{K-1}{2}\diam Q +\diam(Q)+(2\kappa+1)\diam(Q).\end{align*}
This establishes the bound~\eqref{eqn:whitney:dilate:distance:max}.
Again by the triangle inequality,
\begin{equation*}\dist(Q,\partial\Omega)\leq \dist(Q,x)+\dist(x,\partial\Omega)\leq
\frac{K-1}{2}\diam Q+\dist(x,\partial\Omega),\end{equation*}
which by the definition of Whitney decomposition gives the bound~\eqref{eqn:whitney:dilate:distance:min}.

If $Q$, $R\in \mathcal{G}_\kappa$ and $x\in KR$, then
\begin{align*}
\dist(KQ,\partial\Omega)
&\leq\dist(KQ,\xi_R)\leq \dist(KQ,x)+|x-\xi_R|
\\&\leq \dist(KQ,x)+\biggl(\frac{K+3}{2}+2\kappa\biggr)\diam(R)
.\end{align*}
Taking the infimum over all such~$x$ and applying the bound~\eqref{eqn:whitney:dilate:distance:min} establishes the bound~\eqref{eqn:Whitney:dilate:distance}.
\end{proof}

\subsection{The basic lemma concerning the Whitney decomposition and weighted averaged Sobolev spaces}

In this section we will prove the following lemma. This gives an alternative characterization (in terms of Whitney cubes) of the weighted averaged spaces of Definition~\ref{dfn:weighted:averaged}.

\begin{lem}\label{lem:averaged:Whitney}Let $\Omega\subset\R^\dmn$ be a nonempty proper open subset of~$\R^\dmn$. Let $0<c<1$ and let $0<\kappa<\infty$. Let $\mathcal{G}_\kappa$ be as in Definition~\ref{dfn:Whitney} and let $1\leq K<2\kappa+1$.

Let $1\leq\beta\leq\infty$, $0< p\leq\infty$ and $s\in\R$. Let $F\in L^\beta_{loc}(\Omega)$. Then
\begin{equation}
\label{eqn:averaged:Whitney}
\|F\|_{L^p_{av,c,\beta,s}(\Omega)}
\approx
\biggl(\sum_{Q\in\mathcal{G}_\kappa} \biggl(\fint_{KQ} |F|^\beta\biggr)^{p/\beta} \ell(Q)^{\bdmn+p-ps}\biggr)^{1/p}
\end{equation}
where the implicit constants depend only on $p$, $s$, $\kappa$, $K$, $\dmn$, $\bdmn$, and~$c$.
\end{lem}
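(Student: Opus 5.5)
The plan is to break the integral over $\Omega$ along the Whitney cubes and to compare each piece with the averages over the dilates $KQ$. Note first that $\delta(y)\approx\ell(Q)$ for $y\in Q\in\mathcal{G}_\kappa$, by the bound~\eqref{eqn:Whitney}. Also $|B(y,c\delta(y))|\approx\ell(Q)^\dmn=|Q|$. So the $Q$-portion of the left-hand side satisfies
\begin{equation*}
\int_Q \mathcal{W}_{c,\beta}F(y)^p\,\delta(y)^{\bdmn-\pdmn+p-ps}\,dy
\approx \ell(Q)^{\bdmn+p-ps}\fint_Q \mathcal{W}_{c,\beta}F(y)^p\,dy .
\end{equation*}
It therefore suffices to compare $\fint_Q \mathcal{W}_{c,\beta}F^p$ with sums of the quantities $(\fint_{K R}|F|^\beta)^{p/\beta}$, where $R$ ranges over finitely many cubes near~$Q$. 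The case $p=\infty$ is handled in the same way with suprema in place of sums.

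For the bound of the left-hand side by the right-hand side, fix $y\in Q$. The ball $B(y,c\delta(y))$ has radius at most $c(2\kappa+2)\diam Q$. Every point of the ball is at distance at least $(1-c)\delta(y)>(1-c)\kappa\diam Q$ from the boundary. Hence the ball meets only cubes $R\in\mathcal{G}_\kappa$ with $R$ close to $Q$ and $\ell(R)\approx\ell(Q)$, with constants depending on $c,\kappa$. A volume count as in Lemma~\ref{lem:Whitney:boundary} shows there are at most $C$ such cubes. Since $R\subset KR$ and $|KR|\approx|B(y,c\delta(y))|$, we obtain
\begin{equation*}
\mathcal{W}_{c,\beta}F(y)\le C\sum_R \Bigl(\fint_{KR}|F|^\beta\Bigr)^{1/\beta}.
\end{equation*}
We raise this to the power $p$, using $(\sum_{j\le C}a_j)^p\le C_p\sum a_j^p$, which is valid for all $p>0$. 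Then we sum over $Q$. Each $R$ is counted for at most boundedly many $Q$, and $\ell(R)\approx\ell(Q)$, so the weights $\ell(\cdot)^{\bdmn+p-ps}$ are interchangeable.

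For the reverse bound, fix $Q$. By Lemma~\ref{lem:dilate:Whitney} with $K<2\kappa+1$, the dilate $KQ$ stays at distance $\gtrsim\diam Q$ from $\partial\Omega$, with the constant depending on $\kappa-(K-1)/2>0$. Cover $KQ$ by a bounded number of small subcubes of side $\eta\ell(Q)$, where $\eta$ is small depending on $c,\kappa,K$. For each subcube $S$ and each $y\in Q$ lying within $\eta\ell(Q)$ of... this does not quite work, because $y$ must lie near~$S$, whereas $Q$ may be far from~$S$ relative to $c\delta$. Instead, we use points $y$ in the subcube $S$ itself. Every $y\in S$ satisfies $\delta(y)\approx\ell(Q)$, and $S\subset B(y,c\delta(y))$ once $\eta$ is small. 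Hence
\begin{equation*}
\Bigl(\fint_S|F|^\beta\Bigr)^{1/\beta}\le C\,\mathcal{W}_{c,\beta}F(y)\quad\text{for all }y\in S.
\end{equation*}
Averaging over $y\in S$ gives $(\fint_S|F|^\beta)^{p/\beta}\le C\fint_S \mathcal{W}_{c,\beta}F^p$. Summing over the bounded number of subcubes then bounds $(\fint_{KQ}|F|^\beta)^{p/\beta}$ by $C\ell(Q)^{-\dmn}\int_{KQ}\mathcal{W}_{c,\beta}F^p$. On $KQ$ we have $\delta\approx\ell(Q)$, which converts this into $C\,\ell(Q)^{-\bdmn-p+ps}\int_{KQ}\mathcal{W}_{c,\beta}F^p\,\delta^{\bdmn-\pdmn+p-ps}$. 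Finally, the dilates $KQ$ have bounded overlap by Lemma~\ref{lem:dilate:Whitney}, so summing over $Q$ yields the left-hand side.

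The main obstacle is only bookkeeping: tracking the geometric constants so that all balls and dilates stay in the region where $\delta\approx\ell(Q)$, and so that the overlap counts stay bounded. The strict inequality $K<2\kappa+1$ and $c<1$ are exactly what make this work.
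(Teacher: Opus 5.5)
Your proof is correct. The bound of $\|F\|_{L^p_{av,c,\beta,s}(\Omega)}$ by the Whitney sum is essentially the paper's first step: a ball $B(y,c\delta(y))$ meets boundedly many Whitney cubes of comparable size, followed by a bounded-multiplicity count. You organize it cube by cube and use $R\subset KR$ directly, whereas the paper first reduces to $K=1$.

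The reverse inequality is where you take a genuinely different route. The paper first treats $\kappa\geq K/c$. In that case $KQ\subset B(x,c\delta(x))$ for every $x\in Q$, so the average over $KQ$ is dominated by $\inf_{Q}\mathcal{W}_{c,\beta}F$. For general $\kappa$ it then compares $\mathcal{G}_\kappa$ with the finer grid $\mathcal{G}_{K/c}$. Each $KR$ is covered by the boundedly many cubes of $\mathcal{G}_{K/c}$ that lie inside neighbours $R'\in\mathcal{G}_\kappa$ with $KR\cap KR'\neq\emptyset$, and the large-$\kappa$ case is applied to that grid.

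You instead subdivide $KQ$ itself into small Euclidean subcubes $S$. You evaluate $\mathcal{W}_{c,\beta}F$ at points of $S$ rather than of $Q$, which is exactly the fix for the obstruction your abandoned sentence ran into. You then sum using bounded overlap of the dilates $KQ$ from Lemma~\ref{lem:dilate:Whitney}. Your argument is shorter and treats every admissible pair $(\kappa,K)$ in one pass, with no nesting argument between two Whitney grids. The paper's argument stays inside the Whitney-cube framework and produces along the way a direct comparison between the dilated sums over $\mathcal{G}_\kappa$ and the undilated sums over $\mathcal{G}_{K/c}$.

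In a write-up, delete the abandoned sentence. Take the $S$ to be an exact partition of $KQ$ into congruent cubes of side at most $c\bigl(\kappa-\frac{K-1}{2}\bigr)\ell(Q)$. Then $S\subset B(y,c\delta(y))$ for every $y\in S$ follows from \eqref{eqn:whitney:dilate:distance:min}, and $\sum_S\int_S=\int_{KQ}$ holds exactly.
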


We divide the proof into three steps.

\subsubsection{The upper bound on \texorpdfstring{$\|F\|_{L^p_{av,c,\beta,s}(\Omega)}$}{the norm of F}}

In this case it suffices to consider the case $K=1$, as
\begin{equation*}\sum_{Q\in\mathcal{G}_\kappa} \biggl(\fint_{Q} |F|^\beta\biggr)^{p/\beta} \ell(Q)^{\bdmn+p-ps}
\leq
K^{\pdmn p/\beta}\sum_{Q\in\mathcal{G}_\kappa} \biggl(\fint_{KQ} |F|^\beta\biggr)^{p/\beta} \ell(Q)^{\bdmn+p-ps}.\end{equation*}
Observe that if $x\in\Omega$, then
\begin{equation*}
\biggl(\int_{B(x,c\delta(x))} |F|^\beta\biggr)^{1/\beta}
\leq
\biggl(\sum_{\substack{Q\in\mathcal{G}_\kappa\\Q\cap B(x,c\delta(x))\neq \emptyset}}
\int_{Q} |F|^\beta\biggr)^{1/\beta} .\end{equation*}

Now, if $Q\in\mathcal{G}_\kappa$, $x\in\Omega$, and $B(x,c\delta(x))\cap Q\neq \emptyset$, we may compute a number of useful facts relating $\delta(x)$, $\diam(Q)$, and~$Q$.

Let $y\in B(x,c\delta(x))\cap Q$. Recalling the bound~\eqref{eqn:Whitney}, we have that
\begin{equation*}
\kappa\diam Q <\dist(Q,\partial\Omega)\leq \delta(y)<\delta(x)+c\delta(x)=(1+c)\delta(x)\end{equation*}
and, conversely,
\begin{equation*}\delta(x)\leq \delta(y)+|x-y|\leq \dist(Q,\partial\Omega)+\diam Q +c\delta(x)\end{equation*}
and so
\begin{equation*}(1-c)\delta(x)
\leq (2\kappa+2)\diam Q
.\end{equation*}
Thus, $\delta(x)\approx \diam(Q)$.
Furthermore, $\dist(x,Q)\leq |x-y|$ and so
\begin{equation*}Q\subset \overline{B(x,c\delta(x)+\diam Q)}.\end{equation*}
Because the cubes are pairwise-disjoint and $\delta(x)$ and $\diam Q$ are comparable, this means that there are at most $C_{\dmn,\kappa,c}$ cubes $Q\in\mathcal{G}_\kappa$ with $Q\cap B(x,c\delta(x))\neq \emptyset$.

Thus,
\begin{equation*}
\mathcal{W}_{c,\beta}F(x)
=
\biggl(\fint_{B(x,c\delta(x))} |F|^\beta\biggr)^{1/\beta}
\leq
C
\biggl(\sum_{\substack{Q\in\mathcal{G}_\kappa\\Q\cap B(x,c\delta(x))\neq \emptyset}}
\fint_{Q} |F|^\beta \biggr)^{1/\beta}
.\end{equation*}
Because there are at most $C$ such cubes~$Q$, we have that
\begin{equation*}
\mathcal{W}_{c,\beta}F(x)
\leq
C
\sum_{\substack{Q\in\mathcal{G}_\kappa\\Q\cap B(x,c\delta(x))\neq \emptyset}}
\biggl(\fint_{Q} |F|^\beta \biggr)^{1/\beta}
.\end{equation*}

We now bound $\|F\|_{L^\infty_{av,c,\beta,s}(\Omega)}$. By Definition~\ref{dfn:weighted:averaged}, because $\delta(x)\approx\ell(Q)$ for all such cubes~$Q$, and again because there are at most $C$ such~$Q$, we have that
\begin{align*}
\|F\|_{L^\infty_{av,c,\beta,s}(\Omega)}
&=\sup_{x\in\Omega}
\mathcal{W}_{c,\beta} F(x)\,\delta(x)^{1-s}
\leq
C_{\dmn,\kappa,c,s}
\sup_{Q\in\mathcal{G}_\kappa}
\biggl(
\fint_{Q} |F|^\beta\biggr)^{1/\beta}
\ell(Q)^{1-s}
.\end{align*}
We now bound $\|F\|_{L^p_{av,c,\beta,s}(\Omega)}$ for $p$ finite. Again by Definition~\ref{dfn:weighted:averaged}, we have that
\begin{align*}
\|F\|_{L^p_{av,c,\beta,s}(\Omega)}^p
&=
\int_\Omega\mathcal{W}_{c,\beta} F(x)^p\,\delta(x)^{\bdmn-\pdmn+p-ps}\,dx
\\&\leq
C
\int_\Omega
\sum_{\substack{Q\in\mathcal{G}_\kappa\\Q\cap B(x,c\delta(x))\neq \emptyset}}
\biggl(
\fint_{Q} |F|^\beta\biggr)^{p/\beta}
\delta(x)^{\bdmn-\pdmn+p-ps}\,dx
.\end{align*}
Interchanging the order of summation and integration, we have that
\begin{align*}
\|F\|_{L^p_{av,c,\beta,s}(\Omega)}^p
&\leq
C
\sum_{{Q\in\mathcal{G}_\kappa}}
\biggl(\fint_{Q} |F|^\beta\biggr)^{p/\beta}
\int_{\{x\in\Omega:Q\cap B(x,c\delta(x))\neq \emptyset\}}
\delta(x)^{\bdmn-\pdmn+p-ps}\,dx
.\end{align*}
Observe that $\delta(x)$ is bounded above and below by multiples of $\ell(Q)$ and also the set $\{x\in\Omega:Q\cap B(x,c\delta(x))\neq \emptyset\}$ is contained in a ball whose radius is again a multiple of~$\ell(Q)$, and so we have that
\begin{align*}
\|F\|_{L^p_{av,c,\beta,s}(\Omega)}
&\leq
C_{\dmn,p,\kappa,c}
\biggl(\sum_{{Q\in\mathcal{G}_\kappa}}
\biggl(\fint_{Q} |F|^\beta\biggr)^{p/\beta}
\ell(Q)^{\bdmn+p-ps}\biggr)^{1/p}
\end{align*}
for all $0<p\leq\infty$.
This concludes the first step.

\subsubsection{The lower bound on \texorpdfstring{$\|F\|_{L^p_{av,c,\beta,s}(\Omega)}$}{the norm of F} for \texorpdfstring{$\kappa$}{κ} large enough}\label{sec:step2:Whitney:Sobolev}

Suppose that $\kappa\geq K/c$. Then if $x\in Q\in\mathcal{G}_\kappa$, then
\begin{equation*}\diam (KQ) <\frac{K}{\kappa}\dist(Q,\partial\Omega)\leq \frac{K}{\kappa}\delta(x)\leq c\delta(x)\end{equation*}
and so $KQ\subset B(x,c\delta(x))$. Thus, if $1\leq \beta\leq\infty$, then
\begin{align*}\biggl(\int_{KQ} |F|^\beta\biggr)^{1/\beta}
&\leq \inf_{x\in Q}
\biggl(\int_{B(x,c\delta(x))} |F|^\beta\biggr)^{1/\beta}
.\end{align*}
Furthermore, $\ell(Q)\leq \ell(KQ)<\diam(KQ)\leq c\delta(x)$ and $\delta(x)\leq\diam Q +\dist(Q,\partial\Omega)\leq C_\kappa \ell(Q)$.
Thus, if $\theta\in\R$, then
\begin{align*}\biggl(\fint_{KQ} |F|^\beta\biggr)^{1/\beta} \ell(Q)^{\theta}
&\leq \inf_{x\in Q}
C
\biggl(\fint_{B(x,c\delta(x))} |F|^\beta\biggr)^{1/\beta} \delta(x)^{\theta}
=C
\inf_{x\in Q}
\mathcal{W}_{c,\beta} F(x)\delta(x)^{\theta}
.\end{align*}

Recall the definition~\ref{dfn:weighted:averaged} of $L^p_{av,c,\beta,s}(\Omega)$.
If $p=\infty$ we take $\theta=1-s$, and use the fact that the infimum is bounded by the essential supremum, to see that
\begin{align*}
\sup_{Q\in\mathcal{G}_\kappa}
\biggl(\fint_{KQ} |F|^\beta\biggr)^{1/\beta} \ell(Q)^{1-s}
&\leq
\sup_{Q\in\mathcal{G}_\kappa}
C
\esssup_{x\in Q}
\mathcal{W}_{c,\beta} F(x) \,
\delta(x)^{1-s}
=
C
\|F\|_{L^\infty_{av,c,\beta,s}(\Omega)}
.\end{align*}

If $p<\infty$, then we use the fact that the infimum is bounded by the average to see that
\begin{align*}\biggl(\sum_{Q\in\mathcal{G}_\kappa}
\biggl(\fint_{KQ} |F|^\beta\biggr)^{p/\beta} \ell(Q)^{\theta}\ell(Q)^{\dmn}
\biggr)^{1/p}
&\leq
C
\biggl(\sum_{Q\in\mathcal{G}_\kappa}
\int_Q  \mathcal{W}_{c,\beta} F(x)^p \delta(x)^{\theta}
\,dx
\biggr)^{1/p}
\\&=
C
\biggl(
\int_\Omega  \mathcal{W}_{c,\beta} F(x)^p \delta(x)^{\theta}
\,dx
\biggr)^{1/p}
\end{align*}
and taking $\theta=\bdmn-\pdmn+p-ps$ yields that
\begin{align*}
\biggl(\sum_{Q\in\mathcal{G}_\kappa}
\biggl(\fint_{KQ} |F|^\beta\biggr)^{p/\beta} \ell(Q)^{\bdmn+p-ps}
\biggr)^{1/p}
&\leq
C
\|F\|_{L^p_{av,c,\beta,s}(\Omega)}
\end{align*}
for all $0<p\leq\infty$, as long as $\kappa\geq K/c$.

\subsubsection{Arbitrary \texorpdfstring{$\kappa$}{κ}}

Suppose now that $0<\kappa<K/c$. Recall that we have further assumed that $K<2\kappa+1$. We have established that
\begin{align*}
\biggl(\sum_{Q\in\mathcal{G}_{K/c}}
\biggl(\fint_{KQ} |F|^\beta\biggr)^{p/\beta} \ell(Q)^{\bdmn+p-ps}
\biggr)^{1/p}
&\leq
C
\|F\|_{L^p_{av,c,\beta,s}(\Omega)}
.\end{align*}

If $Q\in \mathcal{G}_{K/c}$, then $\dist(Q,\partial\Omega)>{(K/c)}\diam Q>\kappa\diam Q$ and so $Q\subseteq R$ for some $R\in\mathcal{G}_\kappa$.
But if $Q\subseteq R\in\mathcal{G}_\kappa$ and $Q\in\mathcal{G}_{K/c}$, then
\begin{equation*}(2K/c+1)\diam Q\geq \dist(Q,\partial\Omega)\geq \dist(R,\partial\Omega)>\kappa\diam R \end{equation*}
and so $\diam R\geq\diam Q\geq \frac{c\kappa}{2K+c}\diam R$. In particular, if $R\in\mathcal{G}_{\kappa}$ then there are at most $C_{\dmn,\kappa,K,c}$ cubes $Q\in\mathcal{G}_{K/c}$ with $Q\subseteq R$.

To complete the proof, it suffices to show that
\begin{equation*}\biggl(\sum_{R\in\mathcal{G}_{\kappa}}
\biggl(\fint_{KR} |F|^\beta\biggr)^{p/\beta} \ell(R)^{\bdmn+p-ps}
\biggr)^{1/p}
\leq
C \biggl(\sum_{Q\in\mathcal{G}_{K/c}}
\biggl(\fint_{KQ} |F|^\beta\biggr)^{p/\beta} \ell(Q)^{\bdmn+p-ps}
\biggr)^{1/p}\end{equation*}
and so we wish to bound averages over $KR$ for $R\in \mathcal{G}_\kappa$.

Suppose that $R\in \mathcal{G}_\kappa$. By Lemma~\ref{lem:dilate:Whitney}, we have that if $K<2\kappa+1$, then there are at most $C_{\dmn,K,\kappa}$ cubes $R'\in \mathcal{G}_\kappa$ such that $KR\cap KR'\neq \emptyset$. Furthermore, $\diam(R)\approx\diam(R')$ for all such~$R'$. Let $\mathcal{H}_{K/c}(R)= \{Q\in \mathcal{G}_{K/c}: Q\subset R'$ for some $R'\in \mathcal{G}_\kappa$ with $KR\cap KR'\neq \emptyset\}$. By the above argument, $\mathcal{H}_{K/c}(R)$ contains at most $C$ cubes, all of diameter comparable to~$\diam(R)$. Thus if $\theta\in\R$ then
\begin{equation*}\biggl(\fint_{KR} |F|^\beta\biggr)^{1/\beta}\ell(R)^\theta
\leq C \biggl(\sum_{Q\in \mathcal{H}_{K/c}(R)}
\fint_{Q} |F|^\beta\biggr)^{1/\beta}\ell(R)^{\theta}
\end{equation*}
and because $\mathcal{H}_{K/c}(R)$ contains at most $C$ elements, all of diameter comparable to~$\diam(R)$, we have that
\begin{equation*}\biggl(\fint_{KR} |F|^\beta\biggr)^{q/\beta}\ell(R)^\theta
\leq C_q \sum_{Q\in \mathcal{H}_{K/c}(R)}\biggl(
\fint_{Q} |F|^\beta\biggr)^{q/\beta}\ell(Q)^{\theta}\end{equation*}
for all $0<q<\infty$.

If $p=\infty$ then we take $q=1$ and $\theta=1-s$ and use the fact that $\mathcal{H}_{K/c}(R)$ contains at most $C$ cubes~$Q$ to see that
\begin{equation*}\sup_{R\in\mathcal{G}_\kappa}
\biggl(\fint_{KR} |F|^\beta\biggr)^{1/\beta}
\ell(R)^{1-s}
\leq
C_{\dmn,\kappa,K,c,s}
\sup_{\substack{Q\in\mathcal{G}_{K/c}}} \biggl(
\fint_Q |F|^\beta\biggr)^{1/\beta}
\ell(Q)^{1-s}
.\end{equation*}
The right hand side is at most $C\|F\|_{L^\infty_{av,c,\beta,s}(\Omega)}$ by Section~\ref{sec:step2:Whitney:Sobolev}.

If $0<p<\infty$ we take $q=p$ and sum to see that
\begin{align*}
\sum_{R\in \mathcal{G}_\kappa}
\biggl(\fint_{KR} |F|^\beta\biggr)^{p/\beta}
\ell(R)^{\theta}
&\leq
C_{\dmn,\kappa,K,c,p,\theta}
\sum_{R\in \mathcal{G}_\kappa}
\sum_{\substack{Q\in\mathcal{H}_{K/c}(R)}} \biggl(
\fint_Q |F|^\beta\biggr)^{p/\beta}
\ell(Q)^{\theta}
.\end{align*}
If $Q\in \mathcal{H}_{K/c}(R)$ then $\diam(Q)\approx\diam(R)$ and $\dist(R,Q)\leq C\diam(Q)$, and so there can be at most $C$ cubes $R\in\mathcal{G}_\kappa$ with $Q\in \mathcal{H}_{K/c}(R)$. Thus we may take $\theta=\bdmn+p-ps$ and change the order of summation and see that
\begin{align*}
\sum_{R\in \mathcal{G}_\kappa}
\biggl(\fint_{KR} |F|^\beta\biggr)^{p/\beta}
\ell(R)^{\bdmn+p-ps}
&\leq
C_{\dmn,\kappa,K,c,p,\theta}
\sum_{\substack{Q\in\mathcal{G}_{K/c}}} \biggl(
\fint_Q |F|^\beta\biggr)^{p/\beta}
\ell(Q)^{\bdmn+p-ps}
.\end{align*}
Again the right hand side is bounded by Section~\ref{sec:step2:Whitney:Sobolev}, as desired.

\subsection{Further results concerning the Whitney decomposition and weighted averaged Sobolev spaces}\label{sec:Whitney:Sobolev}
In this section we collect some corollaries of Lemma~\ref{lem:averaged:Whitney} that will be of use throughout the paper.

We have the following embedding result.

\begin{cor}\label{cor:weighted:embedding}Let $\Omega\subset\R^\dmn$ be a nonempty open proper subset. Let $0<c<1$.
Let $1\leq \beta\leq\infty$, $0<p<q\leq\infty$, and $s\in\R$.
Then
\begin{equation*}L^p_{av,c,\beta,s}(\Omega)\subset L^q_{av,c,\beta,s+\bdmn/q-\bdmn/p}(\Omega)\end{equation*}
and, if $F\in L^p_{av,c,\beta,s}(\Omega)$, then
\begin{equation*}\|F\|_{L^q_{av,c,\beta,s+\bdmn/q-\bdmn/p}(\Omega)}\leq C_{\dmn,\bdmn,c,p,q,s}\|F\|_{L^p_{av,c,\beta,s}(\Omega)}.
\end{equation*}
\end{cor}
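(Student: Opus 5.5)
The plan is to reduce to the discrete Whitney characterization of Lemma~\ref{lem:averaged:Whitney} (with some fixed $\kappa$ and $K=1$) and then use the embedding $\ell^p\subset\ell^q$ for sequences. For $Q\in\mathcal{G}_\kappa$, set
\begin{equation*}a_Q=\biggl(\fint_Q|F|^\beta\biggr)^{1/\beta}\ell(Q)^{\bdmn/p+1-s}.\end{equation*}
By Lemma~\ref{lem:averaged:Whitney}, $\|F\|_{L^p_{av,c,\beta,s}(\Omega)}\approx\|(a_Q)\|_{\ell^p}$, with constants depending on $p,s,\kappa,\dmn,\bdmn,c$.

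Now consider the target space with smoothness $t=s+\bdmn/q-\bdmn/p$. Its Whitney weight is
\begin{equation*}\ell(Q)^{(\bdmn+q-qt)/q}=\ell(Q)^{\bdmn/q+1-t}=\ell(Q)^{\bdmn/p+1-s}.\end{equation*}
This is exactly the same power of $\ell(Q)$, which is the reason for the shift $\bdmn/q-\bdmn/p$ in the statement. For $q<\infty$, Lemma~\ref{lem:averaged:Whitney} therefore gives $\|F\|_{L^q_{av,c,\beta,t}(\Omega)}\approx\|(a_Q)\|_{\ell^q}$.

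For $q=\infty$ we have $t=s-\bdmn/p$, and the discrete norm is $\sup_Q(\fint_Q|F|^\beta)^{1/\beta}\ell(Q)^{1-t}$. Since $1-t=\bdmn/p+1-s$, this equals $\sup_Q a_Q$. The $L^\infty$ case is covered by Lemma~\ref{lem:averaged:Whitney} as well, because its proof treated $p=\infty$ explicitly.

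It then remains only to apply the elementary inequality $\|(a_Q)\|_{\ell^q}\leq\|(a_Q)\|_{\ell^p}$ for $0<p<q\leq\infty$. This holds because $\sup_Q a_Q\leq\|a\|_{\ell^p}$, and so
\begin{equation*}\sum_Q a_Q^q\leq(\sup_Q a_Q)^{q-p}\sum_Q a_Q^p\leq\|a\|_{\ell^p}^q.\end{equation*}
Chaining the three bounds gives the stated inequality, and in particular the inclusion of spaces.

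There is no serious obstacle here. The only points to check are that the exponent bookkeeping makes the weights match, and that the constants in Lemma~\ref{lem:averaged:Whitney} depend only on the listed parameters, including in the case $q=\infty$.
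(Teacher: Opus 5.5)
Your proposal is correct and follows essentially the same route as the paper. The paper also rewrites the $L^p_{av,c,\beta,s}$ quasi-norm via Lemma~\ref{lem:averaged:Whitney} as an $\ell^p$ norm of $(\fint_Q|F|^\beta)^{1/\beta}\ell(Q)^{\bdmn/p+1-s}$, applies the embedding $\ell^p\subset\ell^q$, and notes that the resulting weight is exactly the one for smoothness $s+\bdmn/q-\bdmn/p$. Your explicit handling of the case $q=\infty$ is a welcome extra check.
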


\begin{proof}
By Lemma~\ref{lem:averaged:Whitney}, we have that
\begin{align*}\|F\|_{L^p_{av,c,\beta,s}(\Omega)}
&\approx
\biggl(\sum_{Q\in\mathcal{G}_\kappa} \biggl(\fint_Q |F|^\beta\biggr)^{p/\beta} \ell(Q)^{\bdmn+p-ps}\biggr)^{1/p}
\\&=
\biggl(\sum_{Q\in\mathcal{G}_\kappa}
\biggl[\biggl(\fint_Q |F|^\beta\biggr)^{1/\beta} \ell(Q)^{\bdmn/p+1-s}\biggr]^p\biggr)^{1/p}
\end{align*}
where the implicit constants depend only on $p$, $s$, $\kappa$, $\dmn$, $\bdmn$, and~$c$.

By the standard embedding result for sequence spaces,
\begin{multline*}
\biggl(\sum_{Q\in\mathcal{G}_\kappa}
\biggl[\biggl(\fint_Q |F|^\beta\biggr)^{1/\beta} \ell(Q)^{\bdmn/p+1-s}\biggr]^p\biggr)^{1/p}
\\
\geq
\biggl(\sum_{Q\in\mathcal{G}_\kappa}
\biggl[\biggl(\fint_Q |F|^\beta\biggr)^{1/\beta} \ell(Q)^{\bdmn/p+1-s}\biggr]^q\biggr)^{1/q}
\\=
\biggl(\sum_{Q\in\mathcal{G}_\kappa}
\biggl(\fint_Q |F|^\beta\biggr)^{q/\beta}
\ell(Q)^{\bdmn+q -q(\bdmn/q-\bdmn /p+s)}
\biggr)^{1/q}.
\end{multline*}
Another application of Lemma~\ref{lem:averaged:Whitney} completes the proof.
\end{proof}

The $\beta=p$ case of Lemma~\ref{lem:averaged:Whitney} immediately yields the following corollary relating averaged spaces to unaveraged spaces.

\begin{cor}\label{cor:averaged:is:Wp}Let $\Omega\subset\R^\dmn$ be a nonempty open proper subset. Let $0<c<1$.
Let $0<p\leq\infty$ and $\theta\in\R$. Let $F\in L^p_{loc}(\Omega)$. Then
\begin{equation*}\int_\Omega \biggl(\fint_{B(x,c\delta(x))} |F|^p\biggr)\,\delta(x)^\theta\,dx
\approx
\int_\Omega |F|^p \delta^\theta
\end{equation*}
where the implicit constants depend only on $p$, $\theta$, $\dmn$, and~$c$.
\end{cor}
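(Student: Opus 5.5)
The plan is to apply Lemma~\ref{lem:averaged:Whitney} twice, once with $\beta=p$ and once in the trivial unaveraged form, and compare the two Whitney-cube sums. First I would rewrite the left-hand side as a norm in the spaces of Definition~\ref{dfn:weighted:averaged}. Set $s$ so that $\bdmn-\pdmn+p-ps=\theta$, that is, $s=(\bdmn-\pdmn+p-\theta)/p$; this is always possible since $p>0$. Then the left-hand side equals $\|F\|_{L^p_{av,c,p,s}(\Omega)}^p$. Lemma~\ref{lem:averaged:Whitney} with $\beta=p$, a fixed $\kappa$ (say $\kappa=1$) and $K=1$ gives
\begin{equation*}
\|F\|_{L^p_{av,c,p,s}(\Omega)}^p\approx \sum_{Q\in\mathcal{G}_\kappa} \biggl(\fint_Q|F|^p\biggr)\ell(Q)^{\bdmn+p-ps}
=\sum_{Q\in\mathcal{G}_\kappa} \ell(Q)^{\theta}\int_Q|F|^p ,
\end{equation*}
because $\ell(Q)^{\bdmn+p-ps}=\ell(Q)^{\theta+\pdmn}$ and $|Q|=\ell(Q)^\pdmn$.

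Second, I would compare this with the right-hand side directly. The Whitney cubes partition $\Omega$, and by \eqref{eqn:Whitney} every $x\in Q$ satisfies $\kappa\diam Q<\delta(x)\leq(2\kappa+2)\diam Q$. Hence $\delta(x)^\theta\approx\ell(Q)^\theta$ on $Q$, with constants depending on $\theta$, $\kappa$ and $\dmn$, whatever the sign of $\theta$. Summing over $Q$ gives
\begin{equation*}
\int_\Omega|F|^p\delta^\theta\approx\sum_Q\ell(Q)^\theta\int_Q|F|^p .
\end{equation*}
Combining the two displays proves the corollary.

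The only point that needs care is the requirement $\beta\geq1$ in Lemma~\ref{lem:averaged:Whitney}, whereas here $\beta=p$ may lie in $(0,1)$. The proof of that lemma never uses $\beta\geq1$, since it only involves containments of balls and cubes, comparability of sizes, and bounded overlap counts. So I would simply note that the lemma holds verbatim for $0<\beta<1$. Alternatively, one can repeat its three steps directly with $\beta=p$, where everything reduces to Fubini and bounded overlap. The case $p=\infty$ is read with the obvious modification (suprema), and it follows in the same way from the $p=\infty$ parts of the lemma's proof.
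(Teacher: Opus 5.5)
Your proposal is correct and is the paper's argument: the paper obtains the corollary as the $\beta=p$ case of Lemma~\ref{lem:averaged:Whitney}, and you make explicit the routine step that $\delta\approx\ell(Q)$ on each Whitney cube converts the cube sum back into $\int_\Omega|F|^p\delta^\theta$. Your remark that the lemma's proof never uses $\beta\geq 1$, so that it also covers $0<p<1$, fills in a point the paper passes over silently.
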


We will conclude this section with the following lemma. This lemma may be viewed as an extension of Corollary~\ref{cor:averaged:is:Wp} to the range $\beta\leq p$; it will also provide some precise local control.

\begin{lem}\label{lem:averaged:to:unaveraged}
Let $\Omega\subset\R^\dmn$ be a nonempty open proper subset, and suppose that $\partial\Omega$ is $\bdmn$-Ahlfors regular for some $0<\bdmn<\dmn$. Let $x\in\partial\Omega$, $0<c<1$, and let $0\leq\varrho<r<2\diam(\partial\Omega)$. Define \begin{equation*}A=\Omega\cap B(x,r)\setminus B(x,\varrho),
\qquad \Psi=\Omega\cap B\biggl(x,\frac{1}{1-c}r\biggr)\setminus B\biggl(x,\frac{1}{1+c}\varrho\biggr).\end{equation*}

Let $\varsigma$, $s\in\R$, and suppose that either $1\leq \beta=p\leq \infty$ and $\varsigma\leq s$, or $1\leq \beta<p\leq \infty$ and
$\varsigma<s$.

Let $F\in L^\beta_{loc}(\Omega)$. Then we have that
\begin{align*}
\biggl(\int_{A} |F|^\beta \delta^{\bdmn-\pdmn+\beta-\beta\varsigma}\biggr)^{1/\beta}
&\leq
C
r^{\bdmn/\beta-\bdmn/p-\varsigma+s}
\|F\|_{L^p_{av,c,\beta,s}(\Psi;\Omega)}
\end{align*}
provided the right hand side is finite, where $C$ is a constant depending only on $A$, $\dmn$, $\bdmn$, $c$, and~$\theta$.
\end{lem}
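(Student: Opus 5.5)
The plan is to discretize both sides with the Whitney decomposition, as in Lemma~\ref{lem:averaged:Whitney}, and then apply Hölder's inequality in the sequence variable. Fix a large $\kappa$ (say $\kappa\geq 1/c$, so that for $y\in Q\in\mathcal{G}_\kappa$ we have $Q\subset B(y,c\delta(y))$). Let $\mathcal{A}$ be the set of cubes $Q\in\mathcal{G}_\kappa$ with $Q\cap A\neq\emptyset$. On each such cube $\delta\approx\ell(Q)$, so
\begin{equation*}
\int_A |F|^\beta\delta^{\bdmn-\pdmn+\beta-\beta\varsigma}
\leq C\sum_{Q\in\mathcal{A}} \biggl(\fint_Q|F|^\beta\biggr)\ell(Q)^{\bdmn+\beta-\beta\varsigma}.
\end{equation*}
For $y\in Q$ we have $\fint_Q|F|^\beta\leq C\,\mathcal{W}_{c,\beta}F(y)^\beta$, with $\ell(Q)\approx\delta(y)$. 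Averaging over $y\in Q$ as in Section~\ref{sec:step2:Whitney:Sobolev} (with the $p$-th power) will convert $\sum_{Q\in\mathcal{A}}(\fint_Q|F|^\beta)^{p/\beta}\ell(Q)^{\bdmn+p-ps}$ into $C\|F\|^p_{L^p_{av,c,\beta,s}(\Psi;\Omega)}$. For that I need each $Q\in\mathcal{A}$ to satisfy $Q\subset\Psi$, or at least to meet $\Psi$ in a fixed fraction of its measure. This is where $\kappa$ large is used: $\diam Q<\delta/\kappa$, and $\delta(z)\leq|z-x|$ since $x\in\partial\Omega$, so $Q\subset B(z,c|z-x|)$ for $z\in Q\cap A$. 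Such a ball lies in $B(x,r/(1-c))\setminus B(x,\varrho/(1+c))$, which explains the shape of~$\Psi$.

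Set $a_Q=(\fint_Q|F|^\beta)^{1/\beta}\ell(Q)^{\bdmn/p+1-s}$. In the case $\beta=p$ the left side becomes $\sum a_Q^p\,\ell(Q)^{p(s-\varsigma)}$. Since $\varsigma\leq s$ and $\ell(Q)\lesssim r$ (cubes meeting $B(x,r)$ have $\ell(Q)\lesssim\delta\leq r$), this is at most $Cr^{p(s-\varsigma)}\sum a_Q^p$, which gives the claim with exponent $s-\varsigma$, matching $\bdmn/\beta-\bdmn/p=0$.

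In the case $\beta<p$ I write the summand as $a_Q^\beta\,\ell(Q)^{\bdmn(1-\beta/p)+\beta(s-\varsigma)}$ and apply Hölder's inequality with exponents $p/\beta$ and its conjugate $p/(p-\beta)$. This gives
\begin{equation*}
\Bigl(\sum a_Q^p\Bigr)^{\beta/p}\Bigl(\sum_{Q\in\mathcal{A}}\ell(Q)^{\bdmn+\gamma}\Bigr)^{(p-\beta)/p},\qquad \gamma=\frac{\beta p(s-\varsigma)}{p-\beta}>0.
\end{equation*}
The remaining step, which I expect to be the only genuine obstacle, is the geometric bound $\sum_{Q\in\mathcal{A}}\ell(Q)^{\bdmn+\gamma}\leq Cr^{\bdmn+\gamma}$. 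The strict inequality $\varsigma<s$ is needed exactly here, because with $\gamma=0$ the sum diverges logarithmically over scales. To prove it, group the cubes by side length $2^j$ with $2^j\lesssim r$. Each cube of size $2^j$ lies within $C2^j$ of $\partial\Omega\cap B(x,Cr)$, so Lemma~\ref{lem:Whitney:boundary} together with Ahlfors regularity (or equivalently Lemma~\ref{lem:close:to:Ahlfors}, comparing volumes: $N_j2^{j\pdmn}\lesssim r^{\bdmn}2^{j(\pdmn-\bdmn)}$) gives at most $C(r/2^j)^{\bdmn}$ such cubes. Then $\sum_j (r/2^j)^\bdmn 2^{j(\bdmn+\gamma)}=r^\bdmn\sum_{2^j\lesssim r}2^{j\gamma}\approx r^{\bdmn+\gamma}$, a convergent geometric series. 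If $r$ is comparable to $\diam\partial\Omega$, the hypothesis $r<2\diam\partial\Omega$ keeps Lemma~\ref{lem:close:to:Ahlfors} applicable. Raising this to the power $(p-\beta)/(p\beta)$ gives $r^{\bdmn/\beta-\bdmn/p+s-\varsigma}$, as claimed. The case $p=\infty$ follows the same way, using the supremum of $a_Q$ in place of Hölder's inequality.
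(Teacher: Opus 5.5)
Your argument is essentially correct and takes a different route from the paper; one containment needs a small repair (second paragraph).

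\textbf{How the paper argues.} The paper never discretizes. It applies Corollary~\ref{cor:averaged:is:Wp} (with $p$ replaced by $\beta$) to $\1_A|F|^\beta$, which gives $\int_A|F|^\beta\delta^{\bdmn-\pdmn+\beta-\beta\varsigma}\le C\int_\Omega\bigl(\fint_{B(z,c\delta(z))}\1_A|F|^\beta\bigr)\delta(z)^{\bdmn-\pdmn+\beta-\beta\varsigma}\,dz$. A nonzero inner integral forces $B(z,c\delta(z))$ to meet $A$. Combined with $\delta(z)\le|x-z|$, this gives $(1+c)|x-z|\ge\varrho$ and $(1-c)|x-z|<r$, that is, $z\in\Psi$; this is exactly where the radii $\varrho/(1+c)$ and $r/(1-c)$ come from. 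The case $\beta=p$ is then immediate. For $\beta<p$ the paper applies H\"older's inequality to the integral over $\Psi$ with exponents $p/\beta$ and $p/(p-\beta)$. The leftover factor is $\int_\Psi\delta^{\bdmn-\pdmn+\varpi}$ with $\varpi=\beta p(s-\varsigma)/(p-\beta)$, which is your $\gamma$, and it is bounded by $Cr^{\bdmn+\varpi}$ directly from \eqref{eqn:delta:in:ball}.

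\textbf{How your route compares.} Your Whitney-cube version is the discrete analogue: H\"older in $\ell^{p/\beta}$ plus the per-scale count $N_j\lesssim(r/2^j)^\bdmn$ amounts to re-deriving \eqref{eqn:delta:in:ball} from \eqref{eqn:close:to:Ahlfors}. It makes the role of $\varsigma<s$ transparent, as a geometric series over scales, and handles $p=\infty$ uniformly. The cost is the cube counting, plus some care when $r$ is comparable to $\diam\partial\Omega$. The paper avoids that regime because \eqref{eqn:delta:in:ball} holds for every $R$.

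\textbf{The step that needs fixing.} With $\kappa\ge 1/c$ you only know $Q\subset B(z,c|z-x|)$ for $z\in Q\cap A$. Points of that ball can lie as close as $(1-c)|z-x|$ to $x$. Since $(1-c)\varrho<\varrho/(1+c)$, this does not give $Q\cap B(x,\varrho/(1+c))=\emptyset$, so $Q\subset\Psi$ is not yet justified. The outer inclusion is fine, because $(1+c)r\le r/(1-c)$.

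The fix is to take $\kappa\ge 1+1/c$. Then $\diam Q<\delta(z)/\kappa\le|z-x|/\kappa$, so every $w\in Q$ satisfies $(1-1/\kappa)\varrho<|w-x|<(1+1/\kappa)r$. You also have $1-1/\kappa\ge 1/(1+c)$ and $1+1/\kappa\le 1/(1-c)$, so $Q\subset\Psi$. The property $Q\subset B(y,c\delta(y))$ for $y\in Q$, which your averaging step needs, still holds with this larger $\kappa$. With that change the rest of your argument goes through as written.
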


\begin{proof}
If $p=\beta=\infty$ the result is obvious. Thus we may assume that $\beta<\infty$.

Recall that
\begin{equation*}\|F\|_{L^p_{av,c,\beta,s}(\Psi;\Omega)}=\biggl(\int_{\Psi} \mathcal{W}_{c,\beta}F(z)^p \delta(z)^{\bdmn-\pdmn+p-ps}\,dz\biggr)^{1/p}. \end{equation*}
By Corollary~\ref{cor:averaged:is:Wp} (with $p$ replaced by~$\beta$),
\begin{equation*}
\int_{A} |F|^\beta \delta^{\bdmn-\pdmn+\beta-\beta\varsigma}
\leq C\int_{\Omega} \biggl(\fint_{B(z,c\delta(z))} \1_{A} |F|^\beta\biggr) \delta(z)^{\bdmn-\pdmn+\beta-\beta\varsigma}\,dz.\end{equation*}
If the inner integral is nonzero, then $B(z,c\delta(z))\cap B(x,r)\neq \emptyset$ and so
\begin{equation*}|x-z|<r+c\delta(z)\leq r+c|x-z|.\end{equation*}
Thus $z\in B(x,\frac{1}{1-c}r)$. Furthermore, $B(z,c\delta(z))\not \subseteq B(x,\varrho)$ and so
\begin{equation*}\varrho\leq|x-z|+c\delta(z)\leq (1+c)|x-z|\end{equation*}
and so $z\notin B(x,\frac{1}{1+c}\varrho)$.
Thus the inner integral is zero if $z\notin\Psi$ and so
\begin{align*}
\int_{A} |F|^\beta \delta^{\bdmn-\pdmn+\beta-\beta\varsigma}
&\leq
C
\int_{\Psi} \biggl(\fint_{B(z,c\delta(z))} |F|^\beta\biggr) \delta(z)^{\bdmn-\pdmn+\beta-\beta\varsigma}\,dz
\\&
=
C
\int_{\Psi} (\mathcal{W}_{c,\beta} F)^\beta\delta^{\bdmn-\pdmn+\beta-\beta\varsigma}
.\end{align*}
If $p=\beta$, then we are done because $\varsigma\leq s$, $\delta\leq r/(1-c)$, and so $\delta^{\beta-\beta\varsigma}=\delta^{p-p\varsigma}\leq r^{ps-p\varsigma}\delta^{p-ps}$ in $B(x,r/(1-c))$.
Otherwise, by Hölder's inequality,
\begin{align*}
\int_{A} |F|^\beta \delta^{\bdmn-\pdmn+\beta-\beta\varsigma}
&\leq
C
\biggl(\int_{\Psi} (\mathcal{W}_{c,\beta} F)^p\delta^{\bdmn-\pdmn+p-ps}
\biggr)^{\beta/p}
\biggl(\int_{\Psi}
\delta^{\bdmn-\pdmn+\varpi}\biggr)^{1-\beta/p}
\end{align*}
where $\varpi>0$ if $s>\varsigma$ and $p>\beta$ (with a suitable modification in the case $p=\infty$). Thus we may apply Lemma~\ref{lem:close:to:Ahlfors} to complete the proof.
\end{proof}

\section{Boundary values}
\label{sec:trace:zero}

Recall that Theorems~\ref{thm:Poisson} and~\ref{thm:Poisson:Lq} concern functions $u$ defined in a domain~$\Omega$ that are zero on~$\partial\Omega$ in the sense that formula~\eqref{eqn:trace} is true for almost every $\xi\in\partial\Omega$.
In this section we will prove some important results for functions in $\dot W^{1,p}_{av,\beta,s}$ that satisfy this condition.

\subsection{Definitions: weak local John domains and traces}

We will not work in the full generality of domains with Ahlfors regular boundaries; we will additionally need our domains to satisfy the weak local John condition (as required by Theorem~\ref{thm:Poisson}), which we now define.

\begin{defn}\label{dfn:local:John}
Let $\Omega\subset\R^\dmn$ be open.

Let $x\in\partial\Omega$ and let $y\in\Omega$. We say that $\zeta$ is a $\lambda$-carrot path from $x$ to $y$ if $\zeta$ is a connected rectifiable path, which we require to be parameterized by arc length, such that $\zeta:[0,T]\to\Omega\cup\{x\}$ is continuous, $\zeta(0)=x$, $\zeta(T)=y$, and $\delta(\zeta(t))=\dist(\zeta(t),\partial\Omega)\geq \lambda t$ for all $t\in [0,T]$.

Suppose that $\partial\Omega$ is $\bdmn$-Ahlfors regular for some $0<\bdmn<\dmn$.
Let $y\in\Omega$. Let $\theta$, $\lambda\in (0,1]$ and let $N\geq 2$. We say that $y\in\Omega$ is a $(\theta,\lambda,N)$-weak local John point if there is a Borel set $F\subset B(y,N\delta(y))\cap\partial\Omega$ such that $\sigma(F)\geq \theta\sigma( B(y,N\delta(y))\cap\partial\Omega )$ and such that, if $x\in F$, then there is a $\lambda$-carrot path from $x$ to~$y$.

We say that $\Omega$ is a weak local John domain if there exist constants $\theta\in (0,1]$, $\lambda\in (0,1]$ and $N\geq 2$ such that every $y\in\Omega$ is a $(\theta,\lambda,N)$-weak local John point.
\end{defn}

\begin{rmk}\label{rmk:carrot:Lusin}Suppose that there is a $\lambda$-carrot path of length~$T$ from $x$ to~$y$. If $t\in (0,T]$, then $(1/\lambda)\delta(\zeta(t))\geq t$ by definition of carrot path. But $t\geq |\zeta(t)-x|$ by definition of the length of a path. Thus by the definition~\eqref{eqn:cone} of a nontangential cone, $\zeta((0,T])\subset \gamma_a(x)$ whenever $a>\frac{1-\lambda}{\lambda}$.
\end{rmk}

Certain of the cited results of Section~\ref{sec:elliptic:boundary} will require the stronger conditions of interior corkscrews and Harnack chains.

\begin{defn}[Interior corkscrew condition]\label{dfn:iCS}
We say that $\Omega$ satisfies the {interior corkscrew condition} if there is a constant $\corkscrew\in (0,1)$ such that, if $x\in \partial\Omega$ and $0<r<\diam\Omega$, then there is a point $A_r(x)$ (called a corkscrew point) such that
\begin{equation*}
A_r(x)\in B(x,r)\quad\text{and}\quad B(A_r(x), \corkscrew r)\subset\Omega.
\end{equation*}
\end{defn}

\begin{defn}[Harnack chains]\label{dfn:Harnack}
Let $\Omega\subset\R^\dmn$. We say that $\Omega$ satisfies the \emph{Harnack chain condition} if there are constants $M\geq 1$ and $m\in (0,1)$ such that if $x$, $y\in \Omega$, then there is an integer~$N$, depending only on the ratio $|x-y|/\delta$ for $\delta=\min(\dist(x,\partial\Omega),\dist(y,\partial\Omega))$, such that there is a chain of open balls $\{B_k\}_{k=1}^N$ with $x\in B_1$, $y\in B_N$, $B_k\cap B_{k+1}\neq\emptyset$ for all $1\leq k<N$, and $m\diam(B_k)\leq \dist(B_k,\partial\Omega)\leq M\diam(B_k)$.
\end{defn}

\begin{rmk}\label{rmk:higher:John:corkscrew}
By connecting corkscrew points with Harnack chains we can construct carrot paths to a point $y$ in the domain from \emph{any} nearby boundary point~$x$. Thus any domain that satisfies both the interior corkscrew and Harnack chain conditions is a weak local John domain.

If $0<\bdmn<\dmnMinusOne$, then the only possible choice for~$\Omega$ is $\Omega=\R^\dmn\setminus\partial\Omega$ (that is, a lower dimensional boundary cannot divide $\R^\dmn$ into multiple open connected components).
Furthermore, if $\bdmn<\dmnMinusOne$ then, by \cite[Lemmas 2.1 and~11.6]{DavFM21}, $\Omega$ satisfies both the interior corkscrew and Harnack chain conditions, and so $\Omega$ is necessarily a weak local John domain.

If $\bdmn\geq\dmnMinusOne$, then the weak local John condition is not guaranteed by the Ahlfors condition. However, it is often a necessary condition in the theory; see, for example, \cite{AzzHMMT20,HofMMTZ21}, where it is shown that given Ahlfors regularity, interior corkscrews, and certain natural conditions on the coefficients~$A$, the weak local John condition is a necessary condition for well posedness of the $L^q$ Dirichlet problem (that is, for the estimate~\eqref{eqn:N:bound}).

The authors conjecture that the interior corkscrew and Harnack chain conditions in Theorem~\ref{thm:Poisson} can be relaxed to the weak local John condition even in the lower codimensional case $\dmnMinusOne<\bdmn<\dmn$, but proving this is beyond the scope of this paper.
\end{rmk}

Recall that in Theorem~\ref{thm:Poisson}, we say that $u=0$ on $\partial\Omega$ in the averaged sense if formula~\eqref{eqn:trace} holds, that is, if
\begin{equation*}\lim_{r\to 0^+} \frac{1}{r^n} \int_{B(\xi,r)\cap\Omega} |u|=0\end{equation*}
for $\sigma$-almost every $\xi\in\partial\Omega$. We will use the shorthand
\begin{equation}\label{dfn:Tr}
\Tr u(\xi)=0 \quad\text{if}\quad \lim_{r\to 0^+} \frac{1}{r^n} \int_{B(\xi,r)\cap\Omega} |u|=0.\end{equation}
Another notion of boundary value (boundary trace) will also be useful for us. We now define this notion of boundary trace.

\begin{defn}\label{dfn:trace}
Let $\Omega\subset\R^\dmn$ be open and let $x\in\partial\Omega$. Let $u\in L^1_{loc}(\Omega)$. If $a>0$ and $c\in (0,1)$, then we define the boundary trace $\Tr_{a,c} u(x)$ by
\begin{equation*}\Tr_{a,c} u(x)=\Tr_{a,c,\Omega} u(x)=U\quad\text{if}\quad x\in \overline{\gamma_a(x)}\quad\text{and}\quad
\lim_{\substack{y\to x\\y\in\gamma_a(x)}} \fint_{B(y,c\delta(y))} |u-U|=0.\end{equation*}
(As usual, if there is no such $U$ then we say that $\Tr_{a,c} u(x)$ does not exist. If $x\not\in \overline{\gamma_a(x)}$, that is, if there is some ball $B(x,r)$ such that $\delta(y)\leq|y-x|/(1+a)$ for all $y\in \Omega\cap B(x,r)$, then we again say that $\Tr_{a,c} u(x)$ does not exist.)
\end{defn}

If $\Tr u(\xi)=0$ in the sense of formula~\eqref{dfn:Tr}, and if $\xi\in \overline{\gamma_a(\xi)}$, then clearly $\Tr_{a,c}u(\xi)=0$. Thus if $\Tr u=0$ almost everywere on~$\partial\Omega$ (that is, $u=0$ almost everywhere on $\partial\Omega$ in the sense of Theorem~\ref{thm:Poisson}), then $\Tr_{a,c} u(x)=0$ for almost every $x\in\partial\Omega$ with $x\in\overline{\gamma_a(x)}$.

Under some conditions on~$u$ and~$\Omega$, we will prove the converse, that is, if $\Tr_{a,c}u(x)=0$ for almost all such nontangentially accessible~$x\in\partial\Omega$ then $\Tr u(\xi)=0$ for $\sigma$-almost every~$\xi\in\partial\Omega$, in Lemmas \ref{lem:boundary:Poincare} and~\ref{lem:trace:zero}.

\subsection{Bounding a function using a carrot path}
\label{sec:trace:Lusin}

The following result allows us to bound a function on an interior ball, given existence of a carrot path and an area integral estimate on the gradient.

\begin{lem}\label{lem:trace:Lusin}
Let $\Omega\subset\R^\dmn$ be open.
Suppose that $x\in\partial\Omega$, $y\in\Omega$, and that there is a $\lambda$-carrot path from $x$ to~$y$ for some $\lambda\in (0,1]$.

Let $a>\frac{1-\lambda}{\lambda}$; by Remark~\ref{rmk:carrot:Lusin} we have that $x\in \overline{\gamma_{a}(x)}$.

If $0<c<\widetilde c<1$, then there is an $\widetilde a>0$ depending only on $\lambda$, $c$, and~$\widetilde c$ with the following significance. If $1\leq \beta\leq \infty$, $0<p\leq\infty$, $\varepsilon\geq0$, and either $\varepsilon>0$ or $p\leq 1$, and if $u\in \dot W^{1,\beta}_{loc}(\Omega)$ is such that $\Tr_{a,c}u(x)$ exists, then
\begin{multline*}
\biggl(\fint_{B(y,c\delta(y))} |u-\Tr_{a,c}u(x)|^\beta\biggr)^{1/\beta}
\\\leq
C\delta(y)^{\varepsilon }
\biggl(\int_{\gamma_{\widetilde a}(x)\cap B(x,\delta(y)/\lambda)}
\mathcal{W}_{\widetilde c,\beta} (\nabla u) ^p \,\delta^{p-p\varepsilon-\pdmn}\biggr)^{1/p}
\end{multline*}
where $\delta(z)=\dist(z,\partial\Omega)$ and for some $C$ depending only on $\dmn$, $\bdmn$, $\lambda$, $c$, $\widetilde c$, $p$, and~$\varepsilon$.
\end{lem}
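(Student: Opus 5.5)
The plan is a telescoping argument along the carrot path $\zeta:[0,T]\to\Omega\cup\{x\}$. First I would build a chain of balls along it. For $k\geq 0$ let $t_k=2^{-k}T$ (or a slightly finer geometric sequence with ratio depending on $c,\widetilde c,\lambda$), set $y_k=\zeta(t_k)$, and let $B_k=B(y_k,c\delta(y_k))$. Here $y_0=y$ and $\delta(y_k)\geq\lambda t_k$. Also $\delta(y_k)\leq |y_k-x|\leq t_k$, so $\delta(y_k)\approx t_k$ with constants depending on~$\lambda$.

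Since $\delta$ is $1$-Lipschitz and $|y_k-y_{k+1}|\leq t_k-t_{k+1}$, consecutive points satisfy $|y_k-y_{k+1}|\leq C\delta(y_{k+1})$. If this constant is too large, I would insert a bounded number (depending on $\lambda,c,\widetilde c$) of intermediate points along $\zeta$. After this refinement consecutive balls $B_k,B_{k+1}$ are both contained in a single ball $\widetilde B_k=B(z_k,\widetilde c\,\delta(z_k))$, where $z_k$ is one of the chain points and the radii $c\delta(y_k)$, $\widetilde c\delta(z_k)$ are comparable. Each $z_k$ lies on $\zeta((0,T])$, so by Remark~\ref{rmk:carrot:Lusin} it lies in $\gamma_{a'}(x)$ for every $a'>(1-\lambda)/\lambda$. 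Moreover $|z_k-x|\leq t_k\leq T\leq\delta(y)/\lambda$.

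Next comes the telescoping. By the Poincar\'e inequality in $\widetilde B_k$ (with $L^\beta$ norms),
\begin{equation*}
\biggl(\fint_{B_k}|u-u_{B_{k+1}}|^\beta\biggr)^{1/\beta}
+\biggl|\fint_{B_{k+1}} u-\fint_{B_k}u\biggr|
\leq C\delta(z_k)\,\mathcal{W}_{\widetilde c,\beta}(\nabla u)(z_k).
\end{equation*}
Because $\Tr_{a,c}u(x)$ exists and $y_k\to x$ within $\gamma_a(x)$, we have $\fint_{B_k}|u-\Tr_{a,c}u(x)|\to0$, and hence $u_{B_k}\to\Tr_{a,c}u(x)$. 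Summing the series gives
\begin{equation*}
\biggl(\fint_{B(y,c\delta(y))}|u-\Tr_{a,c}u(x)|^\beta\biggr)^{1/\beta}\leq C\sum_k \delta(z_k)\,\mathcal{W}_{\widetilde c,\beta}(\nabla u)(z_k).
\end{equation*}
The pointwise value at $z_k$ must then be replaced by an integral. The quantity $\mathcal{W}_{\widetilde c,\beta}(\nabla u)(z_k)$ is dominated by $C\,\mathcal{W}_{\widetilde c,\beta}(\nabla u)(w)$ for every $w\in B(z_k,\eta\delta(z_k))$, provided $\eta$ is small. To make this work I would apply the Poincar\'e step with an intermediate constant $c<c'<\widetilde c$ and then enlarge the ball. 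Averaging over $w$ gives
\begin{equation*}
\delta(z_k)\,\mathcal{W}_{c',\beta}(\nabla u)(z_k)\leq C\biggl(\fint_{B(z_k,\eta\delta(z_k))}\mathcal{W}_{\widetilde c,\beta}(\nabla u)^p\biggr)^{1/p}\delta(z_k).
\end{equation*}
For $p\leq1$ this needs care, because averaging a $p$-th power is not monotone in the right direction. The correct approach is to bound the supremum-type quantity by an infimum over the small ball, which is legitimate for any $p>0$. The balls $B(z_k,\eta\delta(z_k))$ lie in $\gamma_{\widetilde a}(x)\cap B(x,\delta(y)/\lambda)$ for a suitable $\widetilde a$ (after slightly shrinking the radius bound or adjusting constants). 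They also have bounded overlap, since the $\delta(z_k)$ decay geometrically.

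Finally I would sum. With $\delta(z_k)\approx 2^{-k}T$, each term equals $\delta(z_k)^{\varepsilon}\bigl(\int_{B(z_k,\eta\delta(z_k))}\mathcal{W}^p\delta^{p-p\varepsilon-\dmn}\bigr)^{1/p}$ up to constants. If $p\leq1$ and $\varepsilon\geq0$, the $p$-triangle inequality $\sum a_k\leq(\sum a_k^p)^{1/p}$ together with $\delta(z_k)^\varepsilon\leq C\delta(y)^\varepsilon$ gives the claim. If $\varepsilon>0$ and $p>1$, H\"older's inequality gives $\sum_k\delta(z_k)^{\varepsilon}a_k\leq(\sum_k\delta(z_k)^{\varepsilon p'})^{1/p'}(\sum_k a_k^p)^{1/p}$, and the geometric sum is $\leq C\delta(y)^{\varepsilon}$. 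Bounded overlap then turns $\sum_k a_k^p$ into the integral over $\gamma_{\widetilde a}(x)\cap B(x,\delta(y)/\lambda)$. I expect the main obstacle to be the geometric bookkeeping: choosing the chain and the intermediate constants so that consecutive balls fit inside a single $\widetilde c$-Whitney ball, and verifying that the small balls lie inside the cone $\gamma_{\widetilde a}(x)$ and inside $B(x,\delta(y)/\lambda)$.
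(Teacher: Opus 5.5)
Your proposal is correct and follows essentially the same route as the paper. The shared steps are: a geometric chain $y_k=\zeta(\tau^kT)$ on the carrot path; Poincar\'e telescoping, with the definition of $\Tr_{a,c}u(x)$ used to make the averages converge to the trace, giving $\sum_k\delta(y_k)\,\mathcal{W}(\nabla u)(y_k)$; the $\ell^p\subset\ell^1$ or H\"older step; and Lemma~\ref{lem:averaged:nearby}, which replaces the value at $y_k$ by an infimum, hence an average, of $\mathcal{W}_{\widetilde c,\beta}(\nabla u)^p$ over small balls with bounded overlap inside $\gamma_{\widetilde a}(x)$. The paper links consecutive balls through a small ball in $B_k\cap B_{k-1}$ rather than enclosing both in one larger ball, which is cosmetic. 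The one vague point, containment in $B(x,\delta(y)/\lambda)$, is harmless: you cannot shrink that radius, but you need not. As the paper does, integrate over $B(x,\delta(y)/\lambda)\cap B(y_k,\upsilon\delta(y_k))$, which still has volume comparable to $\delta(y_k)^{\dmn}$ because $|y_k-x|\leq\delta(y)/\lambda$ and $\upsilon\delta(y_k)\leq\delta(y)/\lambda$.
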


The remainder of Section~\ref{sec:trace:Lusin} will be devoted to the proof of Lemma~\ref{lem:trace:Lusin}.

Without loss of generality we take $\Tr_{a,c} u(x)=0$.
Let $\zeta:[0,T]\to\R^\dmn$ be the assumed $\lambda$-carrot path from $x$ to~$y$. Then $|x-y|\leq T\leq \delta(y)/\lambda$. Let $\tau=\frac{1}{1+{{c}}\lambda}$. Define $y_k=\zeta(\tau^k T)$, so $y_0=y$ and $\lim_{k\to\infty} y_k=x$. We then have that
\begin{align*}
\delta(y_k)&\geq \lambda \tau^k T,&
|y_k-x|&\leq \tau^k T,&
|y_k-y_{k+1}|&\leq \tau^k (1-\tau)T
\end{align*}
and so
\begin{gather}
\label{eqn:carrot:Harnack:1}
\lambda\tau^k\delta(y)
\leq
\lambda\tau^k|y-x|
\leq \delta(y_k)\leq |y_k-x|\leq \frac{\tau^k}{\lambda}\delta(y)
.\end{gather}
It is straightforward to check that
\begin{equation}\label{eqn:carrot:Harnack:2}
|y_k-y_{k+1}|\leq {{c}} \min\bigl(\delta(y_k),\delta(y_{k+1})\bigr).\end{equation}

Let $B_k=B(y_k,c\delta(y_k))$.
By Remark~\ref{rmk:carrot:Lusin}, each $y_k$ is in $\gamma_a(x)$. By the definition of $\Tr_{a,c}$ in Definition~\ref{dfn:trace},
\begin{align*}
\lim_{k\to \infty} \biggl|\fint_{B_k} u\biggr|
&\leq \lim_{k\to \infty} \frac{C}{\delta(y_k)^\dmn}\int_{B_k} |u-\Tr_{a,c} u(x)|
=0
.\end{align*}
Thus because $\beta\geq 1$ we have that
\begin{equation*}\biggl(\fint_{B(y,c\delta(y))} |u|^\beta\biggr)^{1/\beta}
\leq \biggl(\fint_{B_0} |u-{\textstyle\fint_{B_0} u}|^\beta\biggr)^{1/\beta}
+ \sum_{k=1}^\infty \biggl|\fint_{B_{k}} u-\fint_{B_{k-1}}u\biggr|.\end{equation*}
By the Poincar\'e inequality,
\begin{equation*}\biggl(\fint_{B_0} |u-{\textstyle\fint_{B_0} u}|^\beta\biggr)^{1/\beta}
\leq C\delta(y)\biggl(\fint_{B_0} |\nabla u|^\beta\biggr)^{1/\beta}
.\end{equation*}
We wish to use the Poincar\'e inequality to bound the second term in the previous equation.
Let $\widehat y_k=\frac{1}{2}(y_k+y_{k-1})$. Then by the bound~\eqref{eqn:carrot:Harnack:2},
\begin{equation*}|\widehat y_k-y_k|=|\widehat y_k-y_{k-1}|=\frac{1}{2}|y_k-y_{k-1}|
\leq \frac{1}{2}c\min(\delta(y_k),\delta(y_{k-1}))\end{equation*}
and so $\widehat B_k=B(\widehat y_k,\frac{1}{2}c\min(\delta(y_k),\delta(y_{k-1}))) \subset B_k\cap B_{k-1}$. Thus
\begin{align*}
\biggl|\fint_{B_k} u-\fint_{B_{k-1}} u\biggr|
&\leq
\biggl|\fint_{B_k} u-\fint_{\widehat B_{k}} u\biggr|
+\biggl|\fint_{\widehat B_k} u-\fint_{B_{k-1}} u\biggr|
\\&=
\biggl|\fint_{\widehat B_{k}} \bigl({\textstyle\fint_{B_k} u}-u\bigr)\biggr|
+\biggl|\fint_{\widehat B_k} \bigl(u-{\textstyle\fint_{B_{k-1}} u}\bigr)\biggr|
\\&\leq
\fint_{\widehat B_{k}} \bigl|u-{\textstyle\fint_{B_k} u}\bigr|
+\fint_{\widehat B_k} \bigl|u-{\textstyle\fint_{B_{k-1}} u}\bigr|
\\&\leq
C\fint_{B_k} \bigl|u-{\textstyle\fint_{B_k} u}\bigr|
+C\fint_{B_{k-1} } \bigl|u-{\textstyle\fint_{B_{k-1}} u}\bigr|
\end{align*}
where in the last line we have used that $|B_k|\approx |\widehat B_k|\approx |B_{k-1}|$ with implicit constants depending on $\lambda$, $c$, and~$\dmn$. Then the Poincar\'e inequality yields that
\begin{equation*}\biggl|\fint_{B_k} u-\fint_{B_{k-1}} u\biggr|
\leq C\delta(y_k)\fint_{B_k} |\nabla u|+C\delta(y_{k-1})\fint_{B_{k-1}} |\nabla u|\end{equation*}
and so by Hölder's inequality
\begin{equation*}
\biggl(\fint_{B(y,c\delta(y))} |u|^\beta\biggr)^{1/\beta}
\leq C
\sum_{k=0}^\infty \delta(y_k)\biggl(\fint_{B_{k}} |\nabla u|^\beta\biggr)^{1/\beta}
= C
\sum_{k=0}^\infty \delta(y_k)\mathcal{W}_{c,\beta}(\nabla u)(y_k)
\end{equation*}
where $\mathcal{W}_{c,\beta}$ is as in Definition~\ref{dfn:weighted:averaged}.

If $p\leq 1$, then by the standard inclusion relations in sequence spaces and because $\delta(y_k)\leq C\delta(y)$, we have that if $\varepsilon\geq 0$ then
\begin{equation*}
\biggl(\fint_{B(y,c\delta(y))} |u|^\beta\biggr)^{1/\beta}
\leq
C\delta(y)^\varepsilon\Bigl(
\sum_{k=0}^\infty \delta(y_k)^{p-p\varepsilon}
\mathcal{W}_{c,\beta} (\nabla u)(y_k)^p\Bigr)^{1/p}
.\end{equation*}
Conversely, if $p>1$ and $\varepsilon>0$, then by Hölder's inequality
\begin{equation*}
\biggl(\fint_{B(y,c\delta(y))} |u|^\beta\biggr)^{1/\beta}
\leq C
\biggl(\sum_{k=0}^\infty \delta(y_k)^{p-p\varepsilon} \mathcal{W}_{c,\beta} (\nabla u) (y_k)^p\biggr)^{1/p}
\biggl(\sum_{k=0}^\infty \delta(y_k)^{\varepsilon p'} \biggr)^{1/p'}
.\end{equation*}
Because $\delta(y_k)\approx \tau^k\delta(y)$, we may apply convergence of geometric series to see that
\begin{equation}
\label{eqn:trace:Lusin:1}
\biggl(\fint_{B(y,c\delta(y))} |u|^\beta\biggr)^{1/\beta}
\leq C\delta(y)^{\varepsilon }
\biggl(\sum_{k=0}^\infty \delta(y_k)^{p-p\varepsilon}\mathcal{W}_{c,\beta} (\nabla u) (y_k)^p\biggr)^{1/p}
\end{equation}
if $0< p\leq 1$ and $\varepsilon\geq 0$, or if $1<p<\infty$ and $\varepsilon>0$.

The following lemma will allow us to pass from a sum over the discrete points $y_k$ to an integral over a nontangential cone.
\begin{lem}\label{lem:averaged:nearby}Let $\Omega$ be a nonempty proper open subset of~$\R^\dmn$.
Suppose that $0<c<\widetilde c<1$. Let $\upsilon=\frac{\widetilde c-c}{1+\widetilde c}$.

If $y_k\in\Omega$ and $|y_k-z|<\upsilon\delta(y_k)$, and if $F\in L^\beta_{loc}(\Omega)$, then
\begin{equation*}\mathcal{W}_{c,\beta} F(y_k)\leq C_{c,\widetilde c} \mathcal{W}_{\widetilde c,\beta} F(z)
\quad\text{and}\quad
\mathcal{W}_{c,\beta} F(z)\leq C_{c,\widetilde c} \mathcal{W}_{\widetilde c,\beta} F(y_k)
.\end{equation*}
\end{lem}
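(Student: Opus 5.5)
The plan is to prove both inequalities by showing a containment of balls and comparability of radii; the averages then compare because $|F|^\beta$ is nonnegative. The whole argument rests on one fact: if $|y_k-z|<\upsilon\delta(y_k)$, then $\delta(y_k)$ and $\delta(z)$ are comparable, since $\delta$ is $1$-Lipschitz. Concretely,
\begin{equation*}(1-\upsilon)\delta(y_k)\leq\delta(z)\leq(1+\upsilon)\delta(y_k).\end{equation*}
Since $\upsilon<1$ (because $\widetilde c<1$), both constants are positive and depend only on $c$ and~$\widetilde c$.

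For the first inequality, I will show $B(y_k,c\delta(y_k))\subset B(z,\widetilde c\delta(z))$. Take $w$ with $|w-y_k|<c\delta(y_k)$. Then
\begin{equation*}|w-z|<(c+\upsilon)\delta(y_k),\end{equation*}
and it suffices to check that $(c+\upsilon)\delta(y_k)\leq \widetilde c(1-\upsilon)\delta(y_k)$, which reduces to $c+\upsilon+\widetilde c\upsilon\leq\widetilde c$. This is exactly the choice $\upsilon=\frac{\widetilde c-c}{1+\widetilde c}$, with equality, so the ball containment holds. The radii satisfy $\widetilde c\delta(z)\leq\widetilde c(1+\upsilon)\delta(y_k)$, so the measures of the two balls are comparable. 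Therefore
\begin{equation*}\fint_{B(y_k,c\delta(y_k))}|F|^\beta\leq \frac{|B(z,\widetilde c\delta(z))|}{|B(y_k,c\delta(y_k))|}\fint_{B(z,\widetilde c\delta(z))}|F|^\beta,\end{equation*}
and taking $\beta$-th roots gives the first bound. For $\beta=\infty$ the containment alone suffices.

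For the second inequality, I will show $B(z,c\delta(z))\subset B(y_k,\widetilde c\delta(y_k))$. Take $w$ with $|w-z|<c\delta(z)$. Then
\begin{equation*}|w-y_k|<c(1+\upsilon)\delta(y_k)+\upsilon\delta(y_k).\end{equation*}
So I need $c+(c+1)\upsilon\leq\widetilde c$, i.e.\ $\upsilon\leq\frac{\widetilde c-c}{1+c}$. This holds because $1+c<1+\widetilde c$ gives $\frac{\widetilde c-c}{1+\widetilde c}<\frac{\widetilde c-c}{1+c}$. The radii are again comparable, using $\delta(z)\geq(1-\upsilon)\delta(y_k)$, so the same averaging argument applies.

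There is no real obstacle here. The only point needing care is checking that the stated $\upsilon$ makes both containments hold, and I have verified this above.
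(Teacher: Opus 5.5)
Your proposal is correct and is essentially the paper's argument. The paper also uses the $1$-Lipschitz bounds $\frac{1+c}{1+\widetilde c}\delta(y_k)<\delta(z)<(1+\upsilon)\delta(y_k)$, derives the same two ball containments with the same choice of $\upsilon$, and concludes by comparing the volumes of the balls.
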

\begin{proof}
Observe that $0<\upsilon<1$. Suppose that $|y_k-z|<\upsilon\delta(y_k)$. Then
\begin{align}
\label{eqn:averaged:nearby:1}
\delta(z)&\geq \delta(y_k)-|y_k-z|> \delta(y_k)\frac{1+c}{1+\widetilde c}
,\\
\label{eqn:averaged:nearby:2}
\delta(z)&\leq \delta(y_k)+|y_k-z|
< \delta(y_k) \frac{1+2\widetilde c-c}{1+\widetilde c}
\end{align}
and so
\begin{align*}
|y_k-z|+c\delta(y_k)&< \widetilde c\frac{1+c}{1+\widetilde c}\delta(y_k)
\leq \widetilde c\delta(z)
,\\
|y_k-z|+c\delta(z)
&<
\delta(y_k) \frac{\widetilde c+2c\widetilde c-c^2}{1+\widetilde c}
=\biggl(\widetilde c -\frac{(\widetilde c-c)^2}{1+\widetilde c}\biggr)\delta(y_k)
<
\widetilde c\delta(y_k)
.\end{align*}
This yields that $B(y_k,c\delta(y_k))\subset B(z,\widetilde c\delta(z))$ and $B(z,c\delta(z))\subset B(y_k,\widetilde c\delta(y_k))$. By the upper and lower bounds on $\delta(z)$, $|B(y_k,c\delta(y_k))|\approx |B(z,\widetilde c\delta(z))|$ with implicit constants depending only on $\dmn$, $c$, and~$\widetilde c$. Recalling the definition of $\mathcal{W}_{c,\beta}$ completes the proof.
\end{proof}

We now return to the proof of Lemma~\ref{lem:trace:Lusin}. By the bound~\eqref{eqn:trace:Lusin:1} and Lemma~\ref{lem:averaged:nearby},
\begin{equation*}
\biggl(\fint_{B(y,c\delta(y))} |u|^\beta\biggr)^{1/\beta}
\leq C\delta(y)^{\varepsilon }
\biggl(\sum_{k=0}^\infty
\delta(y_k)^{p-p\varepsilon}
\inf_{z\in B(y_k,\upsilon\delta(y_k))}
\mathcal{W}_{\widetilde c,\beta} (\nabla u) (z)^p\biggr)^{1/p}.
\end{equation*}
By the bound~\eqref{eqn:carrot:Harnack:1}, $|y_k-x|\leq \delta(y)/\lambda$ and so $y_k\in B(x,\delta(y)/\lambda)$ for all~$k\geq 0$. Furthermore, $\delta(y_k)\leq |y_k-x|$ and so $\delta(y_k)\leq \delta(y)/\lambda$, and so $E_k=B(x,\delta(y)/\lambda)\cap B(y_k,\upsilon\delta(y_k))$ has volume comparable to $\delta(y_k)^\dmn$. Thus
\begin{equation*}
\biggl(\fint_{B(y,c\delta(y))} |u|^\beta\biggr)^{1/\beta}
\leq C\delta(y)^{\varepsilon }
\biggl(\sum_{k=0}^\infty
\delta(y_k)^{p-p\varepsilon-\pdmn}
\int_{E_k}
\mathcal{W}_{\widetilde c,\beta} (\nabla u) (z)^p\,dz\biggr)^{1/p}.
\end{equation*}
By the bounds~(\ref{eqn:averaged:nearby:1}--\ref{eqn:averaged:nearby:2}), if $z\in B(y_k,\upsilon\delta(y_k))\supseteq E_k$ then $\delta(z)\approx\delta(y_k)$, and so
\begin{equation*}
\biggl(\fint_{B(y,c\delta(y))} |u|^\beta\biggr)^{1/\beta}
\leq C\delta(y)^{\varepsilon }
\biggl(\sum_{k=0}^\infty
\int_{E_k}
\mathcal{W}_{\widetilde c,\beta} (\nabla u) (z)^p
\delta(z)^{p-p\varepsilon-\pdmn}
\,dz\biggr)^{1/p}.
\end{equation*}
We claim that the sets $\{E_k\}_{k=0}^\infty$ have bounded overlap. To see this, observe that if $E_k\cap E_\ell\neq \emptyset$, then $B(y_k,\upsilon\delta(y_k))\cap B(y_\ell,\upsilon\delta(y_\ell))\neq\emptyset$ and so \begin{equation*}|y_k-y_\ell|<\upsilon \delta(y_k)+\upsilon \delta(y_\ell).\end{equation*}
But $|y_k-y_\ell|\geq \delta(y_k)-\delta(y_\ell)$ and so $(1-\upsilon)\delta(y_k)<(1+\upsilon)\delta(y_\ell)$.
By the bound~\eqref{eqn:carrot:Harnack:1}
\begin{equation*}
(1-\upsilon)\lambda\tau^k \delta(y)
\leq
(1-\upsilon)\delta(y_k)<(1+\upsilon)\delta(y_\ell)
\leq (1+\upsilon) \frac{\tau^\ell}{\lambda}\delta(y)\end{equation*}
and so if $E_k\cap E_\ell\neq \emptyset$ then
$\frac{(1-\upsilon)\lambda^2}{1+\upsilon}
\leq \tau^{\ell-k}$. This puts a lower bound on $k-\ell$ in terms of $\lambda$, $c$ and~$\widetilde c$. By symmetry there is the same lower bound on~$\ell-k$, and so a point in $\bigcup_k E_k$ can be in $E_\ell$ for at most $C$ values of~$\ell$.

Thus if $E=\bigcup_{k=0}^\infty E_k$, then
\begin{equation*}
\biggl(\fint_{B(y,c\delta(y))} |u|^\beta\biggr)^{1/\beta}
\leq C\delta(y)^{\varepsilon }
\biggl(\sum_{k=0}^\infty
\int_{E}
\mathcal{W}_{\widetilde c,\beta} (\nabla u) (z)^p
\delta(z)^{p-p\varepsilon-\pdmn}
\,dz\biggr)^{1/p}.
\end{equation*}
It remains only to show that $E\subseteq \gamma_{\widetilde a}(x)\cap B(x,\delta(y)/\lambda)$.
By construction $E\subset B(x,\delta(y)/\lambda)$.
If $z\in B(y_k,\upsilon \delta(y_k))$, then by the bound~\eqref{eqn:averaged:nearby:1} and by the bound~\eqref{eqn:carrot:Harnack:1}
\begin{align*}
\delta(z)&\geq \frac{1+c}{1+\widetilde c}\delta(y_k),\\
|z-x|&\leq |z-y_k|+|y_k-x|\leq (\upsilon+1/\lambda^2)\delta(y_k).
\end{align*}
Combining these bounds yields that $z\in\gamma_{\widetilde a}(x)$ (and so $E\subset\gamma_{\widetilde a}(x)$), where ${\widetilde a}$ depends only on $\lambda$, $c$ and~$\widetilde c$.
This completes the proof of Lemma~\ref{lem:trace:Lusin}.

\subsection{A boundary Poincar\'e inequality}

We now use Lemma~\ref{lem:trace:Lusin} to prove a boundary Poincar\'e inequality for our weighted averaged Sobolev spaces.

\begin{lem}\label{lem:boundary:Poincare}
Let $\Omega\subset\R^\dmn$ be a weak local John domain, and let $\bdmn\in (0,\dmn)$, $\lambda\in (0,1]$, $\theta\in(0,1]$, and $N\geq 2$ be as in Definition~\ref{dfn:local:John} (that is, the definition of weak local John domain).

Let $\xi\in\partial\Omega$ and let $0<R<2\diam(\partial\Omega)$, and let $\Psi\subseteq \Omega\cap B(\xi,R)$.

For each $y\in \Psi$, let $F(y)$ be a subset of the set in the definition of weak local John point, that is,
\begin{equation*}F(y)\subseteq\{x\in {B(y,N\delta(y))}\cap\Gamma:
\text{there is a $\lambda$-carrot path from $x$ to~$y$}\}.\end{equation*}
We further require that there exists a $\vartheta$ with $0<\vartheta\leq\theta$ such that
\begin{equation*}\sigma(F(y))\geq \vartheta\sigma(B(y,N\delta(y))\cap\partial\Omega)\end{equation*}
for each $y\in\Psi$.

Let $C_1=N+1/\lambda+1$, and let $a>(1-\lambda)/\lambda$.
Let $s>0$, $1\leq p\leq\infty$, $0<c<\widetilde c<1$, and $1\leq \beta\leq \infty$.
Suppose that $u\in \dot W^{1,\beta}_{loc}(\Omega\cap B(\xi,2C_1R))$ and that $\Tr_{a,c} u$ exists $\sigma$-almost everywhere on $\bigcup_{y\in\Psi}F(y)$.
Then
\begin{multline}\label{eqn:boundary:Poincare} \int_{\Psi}
\biggl(\fint_{B(y,c\delta(y))} |u-{\textstyle \fint_{F(y)} \Tr_{a,c}u\,d\sigma}|^\beta\biggr)^{p/\beta}
\delta(y)^{\bdmn-\dmn-ps}\,dy
\\
\leq C
\int_{B(\xi,C_1R)\cap\Omega}\mathcal{W}_{\widetilde c,\beta}(\nabla u)^p\delta^{\bdmn-\dmn+p-ps}\end{multline}
where $C$ depends only on $a$, $c$, $p$, $s$, $\dmn$, $\bdmn$, $N$, $\lambda$, $\theta$, and~$\vartheta$.

In particular, if $\Tr_{a,c} u(x)=0$ for almost every $x\in \Delta(\xi,C_1R)$ that satisfies $x\in\overline{\gamma_a(x)}$, then
\begin{equation}\label{eqn:boundary:Poincare:zero}
\int_{B(\xi,R)\cap\Omega}
\mathcal{W}_{c,\beta}(u)^p
\delta^{\bdmn-\dmn-ps}
\leq C
\int_{B(\xi,C_1R)\cap\Omega}\mathcal{W}_{\widetilde c,\beta}(\nabla u)^p\delta^{\bdmn-\dmn+p-ps}.\end{equation}
\end{lem}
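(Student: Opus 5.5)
The plan is to average the pointwise carrot-path estimate of Lemma~\ref{lem:trace:Lusin} over the boundary set $F(y)$, then integrate over $y\in\Psi$ and exchange the order of integration, so that each cone is counted with the correct weight.

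\emph{Pointwise bound.} Fix $y\in\Psi$. For $\sigma$-a.e.\ $x\in F(y)$ there is a $\lambda$-carrot path from $x$ to $y$, and $\Tr_{a,c}u(x)$ exists. Write $U_y=\fint_{F(y)}\Tr_{a,c}u\,d\sigma$. Minkowski's inequality (valid since $\beta\ge1$) gives
\begin{equation*}
\biggl(\fint_{B(y,c\delta(y))}|u-U_y|^\beta\biggr)^{1/\beta}\le\fint_{F(y)}\biggl(\fint_{B(y,c\delta(y))}|u-\Tr_{a,c}u(x)|^\beta\biggr)^{1/\beta}d\sigma(x).
\end{equation*}
Apply Lemma~\ref{lem:trace:Lusin} with some $0<\varepsilon<s$. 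Since $p\ge1$, Jensen's inequality in $x$ then gives
\begin{equation*}
\biggl(\fint_{B(y,c\delta(y))}|u-U_y|^\beta\biggr)^{p/\beta}\le C\delta(y)^{p\varepsilon}\fint_{F(y)}\int_{\gamma_{\widetilde a}(x)\cap B(x,\delta(y)/\lambda)}\mathcal{W}_{\widetilde c,\beta}(\nabla u)(z)^p\delta(z)^{p-p\varepsilon-\dmn}\,dz\,d\sigma(x).
\end{equation*}
To normalize the average, use $\sigma(F(y))\ge\vartheta\sigma(B(y,N\delta(y))\cap\partial\Omega)\gtrsim\delta(y)^{\bdmn}$. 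This holds by Ahlfors regularity, because $B(y,N\delta(y))$ contains a surface ball of radius $(N-1)\delta(y)$ centered at a nearest boundary point. The case $p=\infty$ is analogous, using a supremum instead.

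\emph{Fubini.} Multiply by $\delta(y)^{\bdmn-\dmn-ps}$ and integrate over $y\in\Psi$. Every relevant $x$ lies in $\Delta(y,N\delta(y))\subset\Delta(\xi,(N+1)R)$, and every relevant $z$ satisfies $|z-\xi|\le|z-x|+|x-\xi|\le(1/\lambda+N+1)R=C_1R$, so $z\in B(\xi,C_1R)\cap\Omega$. Exchange the order of integration so that the outer integral is in $z$ and the middle one is in $x$. The constraints are then:
\begin{itemize}
\item $z\in\gamma_{\widetilde a}(x)$, which forces $|x-z|\approx\delta(z)$;
\item $|x-z|<\delta(y)/\lambda$, which forces $\delta(y)\gtrsim\delta(z)$;
\item $|x-y|<N\delta(y)$.
\end{itemize}
So for fixed $z$ one must bound
\begin{equation*}
\int_{\{x\in\partial\Omega:|x-z|\lesssim\delta(z)\}}\int_{\{y:\delta(y)\gtrsim\delta(z),\ |y-x|<N\delta(y)\}}\delta(y)^{\bdmn-\dmn-ps+p\varepsilon-\bdmn}\,dy\,d\sigma(x).
\end{equation*}
Decompose dyadically in $\delta(y)\approx 2^j\delta(z)$ for $j\ge0$. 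For $x$ fixed, the set of such $y$ has measure at most $C(2^j\delta(z))^\dmn$. The $y$-integral is therefore bounded by $\sum_{j\ge0}(2^j\delta(z))^{-p(s-\varepsilon)}\approx\delta(z)^{-p(s-\varepsilon)}$, which converges because $\varepsilon<s$. The $x$-integral then contributes $\sigma(\Delta(\cdot,C\delta(z)))\approx\delta(z)^\bdmn$.

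\emph{Collecting exponents.} The total power of $\delta(z)$ is
\begin{equation*}
(p-p\varepsilon-\dmn)+\bdmn-p(s-\varepsilon)=\bdmn-\dmn+p-ps,
\end{equation*}
which gives \eqref{eqn:boundary:Poincare}. I expect this Fubini step, keeping track of all the geometric constraints and exponents, to be the main obstacle; it is also where the restriction $s>0$ enters, through the choice of $\varepsilon\in(0,s)$.

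\emph{Consequence \eqref{eqn:boundary:Poincare:zero}.} Take $\Psi=B(\xi,R)\cap\Omega$ and $F(y)$ equal to the full weak local John set, with $\vartheta=\theta$. Every point of $F(y)$ is the endpoint of a carrot path, so it satisfies $x\in\overline{\gamma_a(x)}$ by Remark~\ref{rmk:carrot:Lusin}. It also lies in $\Delta(\xi,C_1R)$. By hypothesis, then, $\Tr_{a,c}u=0$ $\sigma$-a.e.\ on $F(y)$, so $U_y=0$, and the left-hand side of \eqref{eqn:boundary:Poincare} becomes $\mathcal{W}_{c,\beta}(u)^p$.
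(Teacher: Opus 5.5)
Your proposal is correct and follows essentially the same route as the paper. You use Minkowski's inequality to pull the $F(y)$-average outside, apply Lemma~\ref{lem:trace:Lusin} with $0<\varepsilon<s$, then Jensen's inequality, Fubini, and a dyadic decomposition in $\delta(y)\approx 2^j\delta(z)$; the consequence \eqref{eqn:boundary:Poincare:zero} is derived exactly as in the paper. The only cosmetic difference is the order of integration: you integrate out $y$ for fixed $x$ and then bound the $x$-integral by $\delta(z)^{\bdmn}$, whereas the paper keeps the ratio $\sigma(F(y,z))/\sigma(F(y))$ inside the $y$-integral and bounds $\sigma(F(y,z))\le C\delta(z)^{\bdmn}$ there.
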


\begin{proof}
We begin by showing that the bound~\eqref{eqn:boundary:Poincare:zero} follows from the bound~\eqref{eqn:boundary:Poincare} under the stated assumptions. We simply choose $\Psi=B(\xi,R)\cap \Omega$ and let $F(y)$ be the (full) set in Definition~\ref{dfn:local:John}. Then if $y\in\Psi$ then $F(y)\subset B(\xi,R+N\delta(y))\subset B(\xi,(1+N)R)\subset B(\xi, C_1R)$. Furthermore, $F(y)\subseteq\partial\Omega$ and so $F(y)\subseteq \Delta(\xi,C_1R)$. Finally, by Remark~\ref{rmk:carrot:Lusin}, if $x\in F(y)$ then $x\in \overline{\gamma_{a}(x)}$, and so if $\Tr_{a,c}u(x)=0$ for almost every $x\in\Delta(\xi,C_1R)$ that satisfies $x\in\overline{\gamma_a(x)}$, we have that $\Tr_{a,c}u(x)=0$ almost everywhere on~$F(y)$, and so the average must be zero.

We now establish the bound~\eqref{eqn:boundary:Poincare}.
By definition of average,
\begin{equation*}\fint_{B(y,c\delta(y))} |u-{\textstyle \fint_{F(y)} \Tr_{a,c} u\,d\sigma}|^\beta
=
\fint_{B(y,c\delta(y))} |{\textstyle \fint_{F(y)} u(z)-\Tr_{a,c} u(x)\,d\sigma(x)}|^\beta\,dz
.\end{equation*}
Because $L^\beta$ is a Banach space,
\begin{multline*}\biggl(\fint_{B(y,c\delta(y))} |u-{\textstyle \fint_{F(y)} \Tr_{a,c} u\,d\sigma}|^\beta\biggr)^{1/\beta}
\\\leq
\fint_{F(y)}
\biggl(\fint_{B(y,c\delta(y))} |u(z)-\Tr_{a,c} u(x)|^\beta\,dz\biggr)^{1/\beta}
\,d\sigma(x)
.\end{multline*}
Thus by Lemma~\ref{lem:trace:Lusin}, if $\varepsilon>0$ and $c<\widetilde c<1$ then
\begin{multline*}\biggl(\fint_{B(y,c\delta(y))} |u-{\textstyle \fint_{F(y)} \Tr_{a,c} u\,d\sigma}|^\beta\biggr)^{1/\beta}
\\\leq
C\delta(y)^{\varepsilon }
\fint_{F(y)}
\biggl(\int_{\gamma_{\widetilde a}(x)\cap B(x,\delta(y)/\lambda)}
\mathcal{W}_{\widetilde c,\beta} (\nabla u)(z) ^p \,\delta(z)^{p-p\varepsilon-\pdmn}\,dz\biggr)^{1/p}
\,d\sigma(x)
.\end{multline*}
By Hölder's inequality and because $p\geq 1$,
{\multlinegap=0pt\begin{multline*}
\int_{\Psi}
\biggl(\fint_{B(y,c\delta(y))} |u-{\textstyle \fint_{F(y)} \Tr_{a,c} u\,d\sigma}|^\beta\biggr)^{p/\beta}
\delta(y)^{\bdmn-\dmn-ps}\,dy
\\\leq
C\int_{\Psi}
\fint_{F(y)}
\int_{\gamma_{\widetilde a}(x)\cap B(x,\delta(y)/\lambda)}
\mathcal{W}_{\widetilde c,\beta} (\nabla u)(z) ^p \,
\frac{\delta(y)^{\bdmn-\dmn-ps+p\varepsilon}} {\delta(z)^{-p+p\varepsilon+\pdmn}}
\,dz
\,d\sigma(x)
\,dy
.\end{multline*}}
If $(y,x,z)$ is in the region of integration, that is, if $y\in\Psi\subset B(\xi,R)$, $x\in F(y)\subset  {B(y,N\delta(y))}$, and $z\in B(x,\delta(y)/\lambda)$, then $|z-\xi|\leq |z-x|+|x-y|+|y-\xi|<\delta(y)/\lambda+N\delta(y)+R$. But $\delta(y)\leq |y-\xi|<R$ and so $|z-\xi|\leq C_1R$, where we recall that $C_1=(N+1+1/\lambda)$. Thus, if we change the order of integration, we find that
\begin{multline*}
\int_{\Psi}
\biggl(\fint_{B(y,c\delta(y))} |u-{\textstyle \fint_{F(y)} \Tr_{a,c} u\,d\sigma}|^\beta\biggr)^{p/\beta}
\delta(y)^{\bdmn-\dmn-ps}\,dy
\\\leq
C
\int_{B(\xi,C_1R)\cap\Omega}
\mathcal{W}_{c,\beta} (\nabla u) (z)^p \int_{\Psi}
\frac{\sigma(F(y,z))}{\sigma(F(y))}
\,\frac{\delta(y)^{\bdmn-\pdmn-ps+p\varepsilon}} {\delta(z)^{\dmn-p+p\varepsilon}}
\,dy
\,dz
\end{multline*}
where $F(y,z)=\{x\in F(y):z\in \gamma_{\widetilde a}(x)\cap B(x,\delta(y)/\lambda)\}$.

Define
\begin{equation*}I(z)=\int_{\Psi}
\frac{\sigma(F(y,z))}{\sigma(F(y))}
\,\delta(y)^{\bdmn-\pdmn-ps+p\varepsilon}
\,dy
\end{equation*}
for $z\in B(\xi, C_1R)\cap\Omega$. To complete the proof, we need only show that $I(z)\leq C\delta(z)^{\bdmn-ps+p\varepsilon}$.

If $F(y,z)\neq\emptyset$ then $|y-z|<(N+1/\lambda)\delta(y)$. Let \begin{equation*}E(z)=\{y\in\Omega:|y-z|<(N+1/\lambda)\delta(y)\}\end{equation*}
so
\begin{equation*}I(z)=\int_{E(z)\cap\Psi}
\frac{\sigma(F(y,z))}{\sigma(F(y))}
\,\delta(y)^{\bdmn-\pdmn-ps+p\varepsilon}
\,dy
.\end{equation*}
If $y\in E(z)$ then $\delta(y)\geq \delta(z)-|z-y|>\delta(z)-(N+1/\lambda)\delta(y)$ and so $\delta(y)>\delta(z)/C_1$.
Let
\begin{equation*}
E_j(z)=\{y\in E(z)\cap\Psi:2^{j}\delta(z)/C_1<\delta(y)\leq 2^{j+1}\delta(z)/C_1\}.\end{equation*}
We may thus write
\begin{align*}
I(z)
&= \sum_{j=0}^\infty \int_{E_j(z)}
\frac{\sigma(F(y,z))}{\sigma(F(y))}
\,\delta(y)^{\bdmn-\pdmn-ps+p\varepsilon}
\,dy
.\end{align*}
If $y\in E_j(z)$ then $|y-z|<(N+1/\lambda)\delta(y)<2^{j+1}\delta(z)$. Thus $E_j(z) \subset B(z, 2^{j+1}\delta(z))$ and so $|E_j(z)|\leq C_\dmn(2^j\delta(z))^\dmn$. Furthermore, if $y\in E_j(z)$ then $\delta(y)\approx 2^j\delta(z)$. Thus
\begin{align}\label{eqn:poincare:boundary:2}
I(z)
&\leq
C\sum_{j=0}^\infty
2^{j(\bdmn+p\varepsilon-ps)}
\,\delta(z)^{\bdmn+p\varepsilon-ps}
\sup_{y\in E_j(z)}\frac{\sigma(F(y,z))}{\sigma(F(y))}
.\end{align}
Recall that if $x\in F(y,z)$ then $z\in \gamma_{\widetilde a}(x)$. Thus $|x-z|\leq (1+{\widetilde a})\delta(z)$ and so $x\in B(z,(1+{\widetilde a})\delta(z))$. Furthermore, $\delta(z)\leq C_1R<2C_1\diam(\partial\Omega)$. Applying the Ahlfors regularity of~$\Gamma$, we have that
\begin{equation*}\sigma(F(y,z))\leq C\delta(z)^\bdmn.\end{equation*}
By assumption on~$F(y)$,
\begin{equation*}\sigma(F(y))\geq \frac{1}{C}\delta(y)^\bdmn\end{equation*}
and so
\begin{align*}
I(z)
&\leq
C\sum_{j=0}^\infty
2^{j(\bdmn+p\varepsilon-ps)}
\,\delta(z)^{\bdmn+p\varepsilon-ps}
\sup_{y\in E_j(z)}
\frac{\delta(z)^\bdmn}{\delta(y)^\bdmn}
\leq
C\sum_{j=0}^\infty
2^{j(p\varepsilon-ps)}
\,\delta(z)^{\bdmn+p\varepsilon-ps}
.\end{align*}
Choosing $0<\varepsilon<s$, the sum converges. This completes the proof.
\end{proof}

\begin{rmk}Even if the weak local John condition fails, the bound~\eqref{eqn:boundary:Poincare:zero} is still valid if $ps>\bdmn$ and $\Tr_{a,c} u(x)=0$ for every (not almost every) $x\in\Gamma$ with $x\in\overline{\gamma_a(x)}$.

To see this, observe that if $N>1$ then $F(y)$ is nonempty: there is at least one $x\in\Gamma$ with $|x-y|=\delta(y)$, and the ball $B(y,\delta(y))$ contains no points of~$\Gamma$, so the straight line path from $x$ to $y$ is a $\lambda$-carrot path for $\lambda=1$.

If we replace the surface measure $\sigma$ on $F(y)$ with any other finite measure~$\mu_y$ with $\mu_y(F(y))>0$, then the proof proceeds identically in the absence of the weak local John condition until the bound~\eqref{eqn:poincare:boundary:2}. This includes the sufficiency of the bound $I(z)\leq C\delta(z)^{\bdmn-ps+p\varepsilon}$.

Because $F(y,z)\subseteq F(y)$ we have that $\sup_{y\in E_j(z)}\frac{\mu_y(F(y,z))}{\mu_y(F(y))}\leq 1$ and so $I(z)
\leq
C\sum_{j=0}^\infty
2^{j(\bdmn+p\varepsilon-ps)}
\,\delta(z)^{\bdmn+p\varepsilon-ps}
$. If $ps>\bdmn$, and in particular if $p=\infty$, we may choose $\varepsilon>0$ small enough that the sum converges. Thus if $p>\bdmn/s$, then the bound~\eqref{eqn:boundary:Poincare:zero} is valid (if $\Tr_{a,c}u=0$ everywhere) even without the weak local John condition. However, the weak local John condition does appear to be necessary for general~$p$.
\end{rmk}

\subsection{The converse: a sufficient condition for trace zero}

The following result is a converse to the trace zero case of
Lemma~\ref{lem:boundary:Poincare}. Together these results provide in essence an alternative characterization of the trace of a $\dot W^{1,p}_{av,\beta,s}$-function.

To state this lemma, we provide a slightly stronger notion of trace.

\begin{defn}\label{dfn:trace:stronger}
Let $\Omega\subset\R^\dmn$ be open and let $x\in\partial\Omega$. Let $u\in L^1_{loc}(\Omega)$. We define the boundary trace $\Tr u(x)$ by
\begin{equation*}\Tr u(x)=\Tr_{\Omega} u(x)=U\quad\text{if}\quad
\lim_{r\to 0^+} \frac{1}{r^\dmn}\int_{B(x,r)\cap\Omega} |u-U|=0.\end{equation*}
(As usual, if there is no such $U$ then we say that $\Tr_{a,c} u(x)$ does not exist. If $x\not\in \overline{\gamma_a(x)}$, that is, if there is some ball $B(x,r)$ such that $\delta(y)\leq|y-x|/(1+a)$ for all $y\in \Omega\cap B(x,r)$, then we again say that $\Tr_{a,c} u(x)$ does not exist.)
\end{defn}

\begin{lem}\label{lem:trace:zero}
Let $\Omega\subset\R^\dmn$ and suppose that $\partial\Omega$ is $\bdmn$-Ahlfors regular for some $0<\bdmn<\dmn$. Let $\xi\in\partial\Omega$ and let $0<R<2\diam(\partial\Omega)$.

Let $1\leq\beta\leq\infty$, $1\leq p\leq \infty$, $0<c<1$, and $s\in\R$.

Suppose that $u\in L^p_{av,c,\beta,s+1}(B(\xi,R)\cap\Omega;\Omega)$, that is, that $u\in L^\beta_{loc}(\Omega)$ and satisfies
\begin{equation*}
\|u\|_{L^p_{av,c,\beta,s+1}(B(\xi,R)\cap\Omega;\Omega)}
=\biggl(
\int_{B(\xi,R)\cap\Omega}
\mathcal{W}_{c,\beta}(u)^p
\delta^{\bdmn-\dmn-ps}\biggr)^{1/p}
<\infty.\end{equation*}
If $s>-(\dmn-\bdmn)$, then $u\in L^1_{loc}(\overline\Omega\cap B(\xi,R))$. If $p=\infty$ and $s>0$, or if $p<\infty$ and $s\geq \bdmn/p$, then $\Tr u=0$ everywhere on~$\Delta(\xi,R)$. If $p<\infty$ and $s\geq 0$, then  $\Tr u=0$ $\sigma$-almost everywhere on~$\Delta(\xi,R)$.
\end{lem}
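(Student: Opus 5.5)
The plan is to reduce all three conclusions to estimates on integrals of $|u|$ over pieces of balls centered on the boundary, using Lemma~\ref{lem:averaged:to:unaveraged} with $F=u$.

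\textbf{Setup.} Write the given norm as $\|u\|_{L^p_{av,c,\beta,s+1}}$. Its weight is $\delta^{\bdmn-\dmn+p-p(s+1)}=\delta^{\bdmn-\dmn-ps}$, so in Lemma~\ref{lem:averaged:to:unaveraged} the role of "$s$" is played by $s+1$. Fix $\eta\in\Delta(\xi,R)$ and a small $r$ with $B(\eta,r/(1-c))\subset B(\xi,R)$. Apply Lemma~\ref{lem:averaged:to:unaveraged} with $\beta$ replaced by $1$, using $L^\beta$-averages $\ge L^1$-averages as in~\eqref{eqn:Lpav:Holder}. Choose $\varsigma$ so that $\bdmn-\dmn+1-\varsigma=0$, that is, $\varsigma=\bdmn-\dmn+1$. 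This gives
\begin{equation*}
\int_{B(\eta,r)\cap\Omega}|u|\leq C r^{\bdmn-\bdmn/p-\varsigma+s+1}\,\|u\|_{L^p_{av,c,1,s+1}(\Psi_r;\Omega)}
= C r^{\dmn-\bdmn/p+s}\,\|u\|_{L^p_{av,c,\beta,s+1}(\Psi_r;\Omega)},
\end{equation*}
where $\Psi_r=\Omega\cap B(\eta,r/(1-c))$. The lemma requires $\varsigma<s+1$ (or $\varsigma\le s+1$ when $p=1$), i.e.\ $s>\bdmn-\dmn$, which is exactly the hypothesis $s>-(\dmn-\bdmn)$. Since $\Psi_r\subset B(\xi,R)\cap\Omega$, the right-hand norm is finite.

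\textbf{Local integrability.} The displayed bound shows $u\in L^1(B(\eta,r)\cap\Omega)$ for every boundary point $\eta$ near $\xi$. Combined with $u\in L^\beta_{loc}(\Omega)$, a compactness cover of $\overline\Omega\cap B(\xi,R)$ gives $u\in L^1_{loc}(\overline\Omega\cap B(\xi,R))$.

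\textbf{Trace zero everywhere.} Dividing the bound by $r^\dmn$ gives
\begin{equation*}
r^{-\dmn}\int_{B(\eta,r)\cap\Omega}|u|\leq C r^{s-\bdmn/p}\,\|u\|_{L^p_{av,c,\beta,s+1}(\Psi_r;\Omega)}.
\end{equation*}
If $p=\infty$ and $s>0$, the power $r^s\to0$. If $p<\infty$ and $s\ge\bdmn/p$, the power is bounded, and $\|u\|_{L^p_{av}(\Psi_r)}\to0$ by dominated convergence, because the sets $\Psi_r$ shrink to the null set $\{\eta\}$ and the integrand is integrable. Either way $\Tr u(\eta)=0$ for every $\eta\in\Delta(\xi,R)$.

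\textbf{Almost-everywhere case} ($p<\infty$, $0\le s<\bdmn/p$). This is the main step. Define the measure $\mu(E)=\int_{E\cap\Omega}\mathcal W_{c,\beta}(u)^p\delta^{\bdmn-\dmn-ps}$, which is finite on $B(\xi,R)$. The bound above gives
\begin{equation*}
r^{-\dmn}\int_{B(\eta,r)\cap\Omega}|u|\le C\Bigl(r^{-\bdmn}\mu\bigl(B(\eta,r/(1-c))\bigr)\Bigr)^{1/p}r^{s}.
\end{equation*}
If $s>0$, it suffices that $\limsup_{r\to0} r^{-\bdmn}\mu(B(\eta,Cr))<\infty$ for $\sigma$-a.e.\ $\eta$. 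This holds because $\mu$ is a finite measure and $\sigma$ is $\bdmn$-Ahlfors regular: by the standard Vitali covering density argument, the set where the upper $\bdmn$-density of $\mu$ exceeds $t$ has $\sigma$-measure at most $C\mu/t$.

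If $s=0$, I need $r^{-\bdmn}\mu(B(\eta,Cr))\to0$ $\sigma$-a.e. Here I use that $\mu$ gives zero mass to $\partial\Omega$, since it is absolutely continuous with respect to Lebesgue measure and $|\partial\Omega|=0$ by Lemma~\ref{lem:close:to:Ahlfors}. The density argument is then run on the restriction of $\mu$ to $\{\delta<\epsilon\}$, whose total mass tends to $0$ as $\epsilon\to0$. Because the balls $B(\eta,Cr)$ with $r<\epsilon/C$ lie inside $\{\delta<\epsilon\}$, the set where the upper density exceeds $t$ has $\sigma$-measure at most $C\mu(\{\delta<\epsilon\})/t$. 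Letting $\epsilon\to0$ and then $t\to0$ gives density zero $\sigma$-a.e. This covering argument is the part to carry out carefully.
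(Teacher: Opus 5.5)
Your proposal is correct. The setup, local integrability, and everywhere-trace steps are the same as the paper's. The paper also applies Lemma~\ref{lem:averaged:to:unaveraged} with $\varrho=0$, $\beta=1$, $\varsigma=\bdmn+1-\dmn$ and $s$ replaced by $s+1$, and obtains the same bound $r^{-\dmn}\int_{B(x,r)\cap\Omega}|u|\le Cr^{s-\bdmn/p}\|u\|_{L^p_{av,c,\beta,s+1}(\Psi_r;\Omega)}$. Your dominated-convergence remark for the borderline case $s=\bdmn/p$ is a detail the paper leaves implicit.

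Where you genuinely depart from the paper is the almost-everywhere case.

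\emph{The paper's route.} It forms the dyadic sum $\Phi_L(x)=\bigl(\sum_{\ell\le L}(2^{-\ell\dmn}\int_{B(x,2^\ell)\cap\Omega}|u|)^p\bigr)^{1/p}$ and bounds each term by the same lemma. Interchanging sum and integrals produces the kernel $|x-z|^{p\varsigma-\bdmn}$, which it integrates over $\Delta(\xi,2^J)$ using Ahlfors regularity. This yields $\int_{\Delta(\xi,2^J)}\Phi_L^p\,d\sigma\le C2^{Lps}\|u\|^p$. Finiteness of $\Phi_L$ $\sigma$-a.e.\ then forces the terms to vanish. For $s=0$ it needs an auxiliary exponent $\varsigma\in(\bdmn-\dmn,0)$ to avoid a logarithmic divergence.

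\emph{Your route.} You replace all of this with the density theorem for the finite measure $\mu$ relative to the Ahlfors regular $\sigma$, via a Vitali $5r$-covering. You should restrict $\mu$ to $B(\xi,R)$; this is harmless because small balls about $\eta\in\Delta(\xi,R)$ lie there.

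\emph{Comparison.} Your argument is shorter and purely qualitative, and it treats $s>0$ and $s=0$ uniformly. In fact your $s=0$ argument, which gives $r^{-\bdmn}\mu(B(\eta,Cr))\to0$ for $\sigma$-a.e.\ $\eta$, already covers every $s\ge 0$, so the separate $s>0$ case is redundant. Also, $\mu(\partial\Omega)=0$ is immediate since $\mu$ lives on $\Omega$, so you do not need $|\partial\Omega|=0$. What the paper's argument buys in exchange is a quantitative, scale-invariant $L^p(\sigma)$ bound on the dyadic $\ell^p$-sum of boundary averages. That is, it gives summability over scales rather than mere convergence to zero, with an explicit constant.
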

\begin{proof}
Let $x\in\Delta(\xi,R)$ and let $r>0$ be such that $B(x,\frac{1}{1-c}r)\subset B(\xi,R)$.
By Lemma~\ref{lem:averaged:to:unaveraged} with $\varrho=0$, $\beta=1$, $\varsigma=\bdmn+1-\pdmn$, and $s$ replaced by~$s+1$, and extending $u$ by zero if necessary to the other connected components of~$\R^\dmn\setminus\partial\Omega$, we have that
\begin{equation}\label{eqn:boundary:Poincare:3}
\frac{1}{r^\dmn}\int_{B(x,r)\cap\Omega} |u|
\leq Cr^{s-\bdmn/p}
\biggl(\int_{B(x,\frac{1}{1-c} r)\cap\Omega} \mathcal{W}_{c,1} (u)^p\,\delta^{\bdmn-\pdmn-ps}\biggr)^{1/p}
\end{equation}
provided the right hand side is finite and provided $1+s>\varsigma=\bdmn+1-\pdmn$.

The right hand side is finite by definition of $L^p_{av,c,\beta,s+1}$. This completes the proof that $u$ is locally integrable.

If $p=\infty$ and $s$ is positive, or if $p$ is finite and $s\geq \bdmn/p$, then the right hand side converges to zero as $r\to 0^+$. This implies that $\Tr u(x)=0$ for all such~$x$.
This completes the proof in this case.

Assume that $p<\infty$ and $s<\bdmn/p$ (that is, that $ps-\bdmn<0$). We need to show that $\Tr u(x)=0$ for almost every $x\in \Delta(\xi,R)$. Observe that if $r>0$, then there is a $\ell\in\Z$ with $2^{\ell-1}<r\leq 2^\ell$, and that
\begin{equation*}\frac{1}{r^n}\int_{B(x,r)\cap\Omega} |u|\leq \frac{2^\dmn}{2^{\ell\dmn}} \int_{B(x,2^\ell)\cap\Omega} |u|.\end{equation*}
Thus, it suffices to show that $\lim_{\ell\to -\infty} 2^{-\ell\dmn}\int_{B(x,2^\ell)\cap\Omega} |u|=0$ for almost every $x\in\partial\Omega$.

If $x\in\R^\dmn$, define
\begin{equation*}\Phi_L(x)=\biggl(\sum_{\ell=-\infty}^L \biggl(2^{-\ell\dmn}\int_{B(x,2^\ell)\cap\Omega} |u| \biggr)^p\biggl)^{1/p}.\end{equation*}
If $\Phi_L(x)$ is finite, then the sum converges and so $\lim_{\ell\to -\infty} 2^{-\ell\dmn}\int_{B(x,2^\ell)2^{-\ell\dmn}} |u|=0$. Thus, it suffices to prove that $\Phi_L$ is finite almost everywhere; we will do this by showing that $\Phi_L\in L^p_{loc}(\partial\Omega)$.

Let $\nu\in\N$ satisfy $2^\nu\geq \frac{1}{1-c}$. If $s>0$, let $\varsigma=s$; otherwise, let $\varsigma\in (-\pdmn+\bdmn,0)$. By the bound~\eqref{eqn:boundary:Poincare:3} with $r=2^\ell$ and with $s$ replaced by~$\varsigma$,
\begin{equation*}\Phi_L(x)
\leq
C\biggl(\sum_{\ell=-\infty}^L 2^{\ell p\varsigma-\ell\bdmn}
\int_{B(x, 2^{\ell+\nu})\cap\Omega} \mathcal{W}_{c,\beta} (u)^p\,\delta^{\bdmn-\pdmn-p\varsigma}\biggl)^{1/p}
.\end{equation*}
Changing the order of integration and summation yields that
\begin{equation*}
\Phi_L(x)^p
\leq
C
\int_{B(x, 2^{L+\nu})\cap\Omega} \mathcal{W}_{c,\beta} (u)(z)^p\,\delta(z)^{\bdmn-\pdmn-p\varsigma}
\sum_{\ell=\ell(z)}^L 2^{\ell p\varsigma-\ell\bdmn}
\,dz
\end{equation*}
where $\ell(z)$ is the smallest integer with $|x-z|<2^{\ell(z)+\nu}$. Because $p\varsigma<\bdmn$, the geometric series is dominated by its first term and so
\begin{equation*}
\Phi_L(x)^p
\leq
C
\int_{B(x, 2^{L+\nu})} \mathcal{W}_{c,\beta} (u)(z)^p\,\delta(z)^{\bdmn-\pdmn-p\varsigma}
|x-z|^{p\varsigma-\bdmn}
\,dz
.\end{equation*}

Fix some $\xi\in\partial\Omega$ and some $J\in\Z$ with $2^J<\diam(\partial\Omega)$. Then
\begin{multline*}
\int_{\Delta(\xi,2^J)} \Phi_L(x)^p\,d\sigma(x)
\\\leq
C \int_{\Delta(\xi,2^J)}
\int_{B(x, 2^{L+\nu})} \mathcal{W}_{c,\beta} (u)(z)^p\,\delta(z)^{\bdmn-\pdmn-p\varsigma}
|x-z|^{p\varsigma-\bdmn}
\,dz
\,d\sigma(x)
.\end{multline*}
Assume $L+\nu\leq J$.
Changing the order of integration yields that
\begin{multline*}
\int_{\Delta(\xi,2^J)} \Phi_L(x)^p\,d\sigma(x)
\\\leq
C
\int_{B(\xi, 2^{J+1})}
\mathcal{W}_{c,\beta} (u)(z)^p\,\delta(z)^{\bdmn-\pdmn-p\varsigma}
\int_{B(z,2^{L+\nu})\cap\partial\Omega}
|x-z|^{p\varsigma-\bdmn}
\,d\sigma(x)
\,dz
.\end{multline*}
The inner integral satisfies
\begin{equation*}\int_{B(z,2^{L+\nu})\cap\partial\Omega}
|x-z|^{p\varsigma-\bdmn}
\,d\sigma(x)
\leq C\sum_{k=K(z)}^{L+\nu} \sigma(B(z,2^k)\cap \partial\Omega) 2^{k(p\varsigma-\bdmn)}\end{equation*}
where $K(z)$ is the smallest integer with $2^{K(z)}\geq \delta(z)$. Applying the Ahlfors regularity of the boundary, we see that
\begin{equation*}\int_{B(z,2^{L+\nu})\cap\partial\Omega}
|x-z|^{p\varsigma-\bdmn}
\,d\sigma(x)
\leq C\sum_{k=K(z)}^{L+\nu}  2^{kp\varsigma}\end{equation*}
which converges to $C2^{Lp\varsigma}$ if $\varsigma>0$, and to $C\delta(z)^{p\varsigma}$ if $\varsigma<0$. (The reason to use $\varsigma$ rather than $s$ in the case $s=0$ is to avoid an inconvenient logarithmic term.) Thus, whether $s=0$ or $s>0$, we have that
\begin{equation*}
\int_{\Delta(\xi,2^J)} \Phi_L(x)^p\,d\sigma(x)
\leq
C2^{Lps}
\int_{B(\xi, 2^{J+1})}
\mathcal{W}_{c,\beta} (u)(z)^p\,\delta(z)^{\bdmn-\pdmn-ps}
\,dz
.\end{equation*}
This is finite, which completes the proof.
\end{proof}

\subsection{The Banach space of functions with trace zero}\label{sec:dense}

We now define
\begin{equation}
\label{eqn:W:traceless}
\widetilde W^{1,p}_{av,c,\beta,s}(\Psi;\Omega)
=
\dot W^{1,p}_{av,c,\beta,s}(\Psi;\Omega)\cap L^{p}_{av,c,\beta,s+1}(\Psi;\Omega)
\end{equation}
and as usual let $\widetilde W^{1,p}_{av,c,\beta,s}(\Omega)=\widetilde W^{1,p}_{av,c,\beta,s}(\Omega;\Omega)$.

The space $\widetilde W^{1,p}_{av,c,\beta,s}(\Psi;\Omega)$ is clearly a Banach space (that is, complete) under the norm $\|u\|_{\widetilde W^{1,p}_{av,c,\beta,s}(\Psi;\Omega)}= \|u\|_{\dot W^{1,p}_{av,c,\beta,s}(\Psi;\Omega)} +\|u\|_{L^{p}_{av,c,\beta,s+1}(\Psi;\Omega)}$. Thus it is a closed subset of $\dot W^{1,p}_{av,c,\beta,s}(\Psi;\Omega)$.

\begin{rmk}\label{rmk:traceless}
Consider the case $\Psi=\Omega$.
If $\Omega$ is a weak local John domain with unbounded complement, then by Lemma~\ref{lem:trace:zero} and by letting $R=\diam\Omega$ (or taking the limit as $R\to \infty=\diam\Omega$) in Lemma~\ref{lem:boundary:Poincare}, we see that $\widetilde W^{1,p}_{av,c,\beta,s}(\Omega)$ is the set of all $u\in\dot W^{1,p}_{av,c,\beta,s}(\Omega)$ with $\Tr u=0$ $\sigma$-almost everywhere on~$\partial\Omega$ and that $\|u\|_{\widetilde W^{1,p}_{av,c,\beta,s}(\Omega)}\approx \|u\|_{\dot W^{1,p}_{av,c,\beta,s}(\Omega)}$.
\end{rmk}

We will make relatively little use of the spaces $\widetilde W^{1,p}_{av,c,\beta,s}(\Psi;\Omega)$ for $\Psi\neq\Omega$.

We will need the fact that smooth compactly supported functions are dense in the Banach space $\widetilde W^{1,p}_{av,c,\beta,s}(\Omega)$. We now prove this fact.

\begin{prp}\label{prp:dense}
Let $\Omega$ be a weak local John domain with unbounded complement. Let $s>0$, $1\le p<\infty$, and $1\leq\beta<\infty$. Then $C^\infty_0(\Omega)$ is dense in $\widetilde W^{1,p}_{av,c,\beta,s}(\Omega)$, where $\widetilde W^{1,p}_{av,c,\beta,s}(\Omega)$ is given by formula~\eqref{eqn:W:traceless}.
\end{prp}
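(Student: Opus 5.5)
The natural approach is the usual two-step density argument: truncate near the boundary and near infinity with a cutoff, then mollify. Fix $u\in\widetilde W^{1,p}_{av,c,\beta,s}(\Omega)$. By Remark~\ref{rmk:traceless}, its norm is comparable to $\|u\|_{\dot W^{1,p}_{av,c,\beta,s}(\Omega)}+\|u\|_{L^p_{av,c,\beta,s+1}(\Omega)}$. The key point is that the second term is finite, so the zeroth-order weighted piece controls the error introduced by cutoff gradients.

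\emph{Cutoff.} For small $\epsilon>0$ and large $R$, let $\varphi_{\epsilon,R}=\eta(\delta/\epsilon)\,(1-\eta(|x-\xi_0|/R))$, smoothed appropriately. Here $\eta$ is smooth, vanishes on $[0,1]$ and equals $1$ on $[2,\infty)$, and $\xi_0\in\partial\Omega$ is fixed. Since $\delta$ is only Lipschitz, I would use a regularized distance, or build $\varphi$ from a Whitney partition of unity subordinate to $\mathcal{G}_\kappa$; in either case $|\nabla\varphi|\lesssim 1/\delta$ on $\{\epsilon<\delta<2\epsilon\}$.

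For $R$, note that $|\nabla\varphi|\lesssim 1/R\lesssim 1/\delta$ on $B(\xi_0,2R)\setminus B(\xi_0,R)$. Since $\diam\Omega=\diam\partial\Omega$, the set $\Omega$ is bounded exactly when $\partial\Omega$ is. In the bounded case the $R$-cutoff is unnecessary. In the unbounded case we have $\delta\le|x-\xi_0|$, so the bound $|\nabla\varphi|\lesssim 1/\delta$ holds on the transition region of the $R$-cutoff as well.

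We then write $\nabla(\varphi u)-\nabla u=(\varphi-1)\nabla u+u\nabla\varphi$. Using Lemma~\ref{lem:averaged:Whitney}, the $\dot W^{1,p}_{av}$ norm becomes a sum over Whitney cubes. The term $(\varphi-1)\nabla u$ is supported in cubes near the boundary or near infinity, so its norm is a tail of the convergent series $\|u\|_{\dot W^{1,p}_{av}}^p$ and tends to zero. This uses $p<\infty$.

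The term $u\nabla\varphi$ is supported on cubes with $\ell(Q)\approx\epsilon$ or $\ell(Q)\approx\delta\lesssim R$ in an annulus. Its contribution is at most
\[
\sum_Q\Bigl(\fint_{KQ}|u|^\beta\Bigr)^{p/\beta}\ell(Q)^{\bdmn-ps}
\]
over those cubes. This is again a tail of the series for $\|u\|^p_{L^p_{av,\beta,s+1}}$ and therefore tends to zero. The $L^p_{av,\beta,s+1}$ part of $\|(1-\varphi)u\|$ is handled the same way, since $|1-\varphi|\le1$ and its support shrinks. Hence $\varphi u\to u$ in norm, and $\varphi u$ is supported in a compact subset of $\Omega$. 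Compactness holds because the support lies where $\delta\ge\epsilon$ and $|x-\xi_0|\le 2R$; in the bounded case we use boundedness of $\Omega$ instead.

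\emph{Mollification.} Let $v=\varphi u$, with support in a compact $K\subset\Omega$ on which $\delta\ge\epsilon$. Consider $v*\rho_t$ for $t<\epsilon/4$. The weights $\delta^{\bdmn-\pdmn+p-ps}$ are then bounded above and below on a neighborhood of $K$, and only finitely many Whitney cubes are involved. The Whitney characterization of Lemma~\ref{lem:averaged:Whitney} with a dilation $K$ shows that both norms are dominated by finitely many terms of the form $\|\nabla(v*\rho_t-v)\|_{L^\beta}$ and $\|v*\rho_t-v\|_{L^\beta}$ over a fixed compact set. These tend to zero because $\beta<\infty$ and $v\in W^{1,\beta}$ with compact support; here $v\in L^\beta$ follows from $u\in L^p_{av,\beta,s+1}$, which gives $L^\beta$ on each Whitney cube.

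\emph{Main obstacle.} I expect the main difficulty to be the cutoff step, specifically justifying that $u\nabla\varphi$ is controlled by the tail of $\|u\|_{L^p_{av,\beta,s+1}}$. The averaging over $B(x,c\delta(x))$ mixes cubes of comparable size, which is handled by the bounded-overlap statement in Lemma~\ref{lem:dilate:Whitney}. One must also check that replacing $\delta$ by a smooth comparable function keeps $|\nabla\varphi|\lesssim\delta^{-1}$ and keeps the supports within bounded Whitney generations.
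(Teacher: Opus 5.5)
Your argument is correct. It has the same three-step architecture as the paper: a cutoff near $\partial\Omega$ (Lemma~\ref{lem:dense:boundary}), a cutoff near infinity (Lemma~\ref{lem:dense:infinity}), and mollification (Lemma~\ref{lem:dense:smooth}). The difference lies in how you estimate the cutoff error $u\nabla\varphi$.

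\emph{The paper's route.} It first passes from $u\in L^p_{av,c,\beta,s+1}$ to $\Tr u=0$ (Lemma~\ref{lem:trace:zero}). It then applies the boundary Poincar\'e inequality (Lemma~\ref{lem:boundary:Poincare}) on a Vitali cover of $\partial\Omega$, so that $u\nabla\eta_r$ is controlled by the near-boundary part of the norm of $\nabla u$. At infinity it uses the logarithmic cutoff $\varphi(\ln_+|x|/\ln R)$ together with Poincar\'e, obtaining a bound of the form $C(\ln R)^{-1}\|\nabla u\|$.

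\emph{Your route.} You observe that $\mathcal{W}_{c,\beta}(u\nabla\varphi)\lesssim\delta^{-1}\mathcal{W}_{c,\beta}(u)\1_{E_{\epsilon,R}}$. Here $E_{\epsilon,R}$ is the set of $x$ with $B(x,c\delta(x))\cap\supp\nabla\varphi\neq\emptyset$. So the error is at most $\int_{E_{\epsilon,R}}\mathcal{W}_{c,\beta}(u)^p\delta^{\bdmn-\pdmn-ps}$. This integral is finite by the very definition of $\widetilde W^{1,p}_{av,c,\beta,s}(\Omega)$, and it tends to zero by dominated convergence. That is also why an ordinary linear cutoff at scale $R$ suffices, via $\delta\le|x-\xi_0|$.

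\emph{Comparison.} Your estimates never use the weak local John condition, $s>0$, or the unbounded complement, so your route is more elementary and more general. You also do not need Remark~\ref{rmk:traceless}, which itself rests on the Poincar\'e inequality; the sum norm is simply the definition of the space. What the paper's detour buys is error bounds in terms of $\nabla u$ alone, matching the norm equivalence of Remark~\ref{rmk:traceless}. It also gives an explicit $(\ln R)^{-1}$ rate for the zero-order error that persists at $p=\infty$.

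The difficulty you flag about smoothing $\delta$ is not a real one. The Lipschitz cutoff $\eta(\delta/\epsilon)$ already obeys the product rule with $u\in W^{1,\beta}_{loc}(\Omega)$; the paper's $\eta_r$ is likewise only assumed to have a gradient bound. Smoothness is supplied by the final mollification.
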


If $p=\infty$ then we have density in the weak-$\star$ topology; see Lemma~\ref{lem:dense:infinity} below.

We observe that in a weak local John domain with unbounded complement, Proposition~\ref{prp:dense} implies that $\Tr u=0$ almost everywhere on $\partial\Omega$ if and only if $u=0$ on $\partial\Omega$ in the sense of Sobolev spaces; this latter characterization is common in the literature.

The remainder of Section~\ref{sec:dense} will be devoted to the proof of Proposition~\ref{prp:dense}. The proof divides naturally into three parts. We will also prove a localized version.

We begin with the density of functions that are zero in a neighborhood of the boundary.

\begin{lem}\label{lem:dense:boundary}
Let $\Omega$ be a weak local John domain. Let $s>0$, $1\leq\beta\leq\infty$, and let $1\leq p\leq \infty$.

Let $0<r<\diam(\partial\Omega)$ and let $\eta_r:\Omega\to[0,1]$ be such that $\eta_r\equiv0$ on $\{y\in\Omega:\delta(y)\le r\}$, $\eta_r\equiv1$ on $\{y\in\Omega:\delta(y)\ge 2r\}$, and $|\nabla \eta_r|\leq 2 r^{-1}$.

Let $\Upsilon\subseteq\Omega$ be open. Let
$\varrho>0$. Let $\Psi\subsetneq\Upsilon$ be open and satisfy
\begin{equation*}
B(y,\varrho)\subseteq\Upsilon\text{ for all } y\in\Psi.\end{equation*}

If $0<c<\widetilde c<1$ and $u\in \widetilde W^{1,p}_{av,\widetilde c,\beta,s}(\Upsilon;\Omega)$, then
\begin{equation*}\|u-u\eta_r\|_{\widetilde W^{1,p}_{av,c,\beta,s}(\Psi;\Omega)}
\leq C\|u\|_{\widetilde W^{1,p}_{av,\widetilde c,\beta,s}(\Upsilon;\Omega)}\end{equation*}
for all $r<\varrho/C_2$, where $C_2$ depends only on $c$ and the constant $C_1$ in Lemma~\ref{lem:boundary:Poincare}.

If $p<\infty$ we additionally have that
\begin{equation*}\lim_{r\to 0^+}\|u-u\eta_r\|_{\widetilde W^{1,p}_{av,c,\beta,s}(\Psi;\Omega)}=0.\end{equation*}
\end{lem}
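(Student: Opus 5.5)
The plan is to write $u-u\eta_r=u(1-\eta_r)$ and estimate its two pieces separately. The function-norm piece is $\|u(1-\eta_r)\|_{L^p_{av,c,\beta,s+1}(\Psi;\Omega)}$. The gradient piece uses $\nabla(u(1-\eta_r))=(1-\eta_r)\nabla u-u\nabla\eta_r$. Since $0\le 1-\eta_r\le1$, and $1-\eta_r$ vanishes where $\delta\ge 2r$, we have pointwise
\begin{equation*}
\mathcal{W}_{c,\beta}(u(1-\eta_r))\le \mathcal{W}_{c,\beta}(u)\1_{\{\delta<2r/(1-c)\}},
\qquad
\mathcal{W}_{c,\beta}((1-\eta_r)\nabla u)\le \mathcal{W}_{c,\beta}(\nabla u)\1_{\{\delta<2r/(1-c)\}}.
\end{equation*}
The reason is that $B(y,c\delta(y))$ meets $\{\delta<2r\}$ only if $\delta(y)<2r/(1-c)$. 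Integrating over $\Psi$ against the weights shows that these two terms are bounded by the corresponding pieces of $\|u\|_{\widetilde W^{1,p}_{av,c,\beta,s}(\Psi;\Omega)}$, restricted to the strip $\{\delta<2r/(1-c)\}$. We can pass from $c$ to $\widetilde c$ by the monotonicity $\mathcal{W}_{c,\beta}\le C\mathcal{W}_{\widetilde c,\beta}$, which holds because the balls are nested and have comparable volume.

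The remaining term is $u\nabla\eta_r$. It is supported in $\{r\le\delta\le 2r\}$, where $|\nabla\eta_r|\le 2/r\le 4/\delta$. Hence
\begin{equation*}
\mathcal{W}_{c,\beta}(u\nabla\eta_r)(y)\le C\delta(y)^{-1}\mathcal{W}_{c,\beta}(u)(y)\1_{\{\delta(y)<2r/(1-c)\}},
\end{equation*}
using that $\delta$ is comparable to $\delta(y)$ on $B(y,c\delta(y))$. Multiplying by the weight $\delta^{\bdmn-\pdmn+p-ps}$ gives
\begin{equation*}
\int_\Psi \mathcal{W}_{c,\beta}(u\nabla\eta_r)^p\,\delta^{\bdmn-\pdmn+p-ps}\le C\int_{\Psi\cap\{\delta<2r/(1-c)\}}\mathcal{W}_{c,\beta}(u)^p\,\delta^{\bdmn-\pdmn-ps},
\end{equation*}
which is exactly the $L^p_{av,c,\beta,s+1}$ piece. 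For $p=\infty$ the same computation is done with suprema. This already gives the first inequality with $\Upsilon$ replaced by $\Psi$, even without the restriction $r<\varrho/C_2$.

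This suggests the restriction $r<\varrho/C_2$ and the weak local John hypothesis are needed to control the norm by the $\dot W$ part alone, or to justify $\widetilde c$. One natural route is to apply the boundary Poincaré inequality, Lemma~\ref{lem:boundary:Poincare}, in the form \eqref{eqn:boundary:Poincare:zero}, on balls $B(\xi,R)$ with $R\approx r$ centered at boundary points near $\Psi\cap\{\delta<2r/(1-c)\}$. This bounds the $u$-term by $\int_{B(\xi,C_1R)\cap\Omega}\mathcal{W}_{\widetilde c,\beta}(\nabla u)^p\delta^{\bdmn-\pdmn+p-ps}$. For this to make sense, $B(\xi,C_1R)\cap\Omega\subset\Upsilon$ is required, and this is where $r<\varrho/C_2$ enters. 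The trace-zero hypothesis of that lemma is supplied by Lemma~\ref{lem:trace:zero}, since $u\in L^p_{av,\widetilde c,\beta,s+1}$ and $s>0$.

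I would cover $\partial\Omega$ near $\Psi$ by such balls with bounded overlap, using a Vitali/Whitney covering in the spirit of Lemma~\ref{lem:Whitney:boundary}, and sum. The main obstacle is this localization: we must check that points of $\Psi$ in the thin strip lie in balls $B(\xi,R)$ whose $C_1$-dilates stay inside $\Upsilon$, and that the covering overlap is bounded. Both follow from $\delta(y)\lesssim r$ together with $B(y,\varrho)\subseteq\Upsilon$.

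For the limit when $p<\infty$, every bound above is an integral over $\Psi\cap\{\delta<Cr\}$ of an integrable function, or over the $C_1$-enlargement of that set, still within $\{\delta<C'r\}$. These sets shrink to the empty set as $r\to0^+$, so dominated convergence sends every term to $0$.
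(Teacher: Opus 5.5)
Your proposal is correct. Its first half already proves the lemma as stated, by a more elementary route than the paper's.

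The right-hand side of the lemma is the full norm $\|u\|_{\widetilde W^{1,p}_{av,\widetilde c,\beta,s}(\Upsilon;\Omega)}$, and this norm already contains $\|u\|_{L^p_{av,\widetilde c,\beta,s+1}(\Upsilon;\Omega)}$. Your pointwise bound $\mathcal{W}_{c,\beta}(u\nabla\eta_r)\le C\delta^{-1}\mathcal{W}_{c,\beta}(u)\1_{\{\delta<2r/(1-c)\}}$ follows from $|\nabla\eta_r|\le 2/r$, from $\nabla\eta_r$ being supported in $\{r\le\delta\le 2r\}$, and from $\delta\ge(1-c)\delta(y)$ on $B(y,c\delta(y))$. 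Combine it with $\mathcal{W}_{c,\beta}\le(\widetilde c/c)^{\pdmn/\beta}\mathcal{W}_{\widetilde c,\beta}$ and $\Psi\subset\Upsilon$. This gives the inequality directly. Dominated convergence on the shrinking strips $\Psi\cap\{\delta<2r/(1-c)\}$ then gives the limit. None of this uses the weak local John condition, $s>0$, or $r<\varrho/C_2$. Your subsequent Poincaré and covering step is therefore optional, and it is not needed "to justify $\widetilde c$", since monotonicity already handles that.

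The paper treats $(1-\eta_r)u$ and $(1-\eta_r)\nabla u$ exactly as you do. For $u\nabla\eta_r$ it argues differently:
\begin{itemize}
\item it covers $\partial\Omega$ by Vitali balls $B(x_i,r)$;
\item it obtains $\Tr u=0$ from Lemma~\ref{lem:trace:zero};
\item it applies the boundary Poincaré inequality~\eqref{eqn:boundary:Poincare:zero} on $B(x_i,r_1)$, where $r_1=r(3-c)/(1-c)$ and $B(x_i,C_1r_1)\cap\Omega\subset\Upsilon$ (this is where $r<\varrho/C_2$ enters);
\item it sums using bounded overlap.
\end{itemize}
This is essentially the route sketched in your second half.

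What the paper's route buys is control of the cutoff-gradient term by $\mathcal{W}_{\widetilde c,\beta}(\nabla u)$ alone on the strip $\Upsilon\cap\{\delta<C_1r_1\}$. That is the form one needs when only gradient control and zero trace are available. However, the paper still bounds the $(1-\eta_r)u$ term by the $L^p_{av,c,\beta,s+1}$ part of the norm, so its stated conclusion is no stronger than what your elementary argument yields.
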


\begin{proof}
Clearly
\begin{equation*}\|(1-\eta_r )u\|_{L^p_{av,c,\beta,s+1}(\Psi;\Omega)}
\leq \|u\|_{L^p_{av,c,\beta,s+1}(\Psi;\Omega)}
\leq \|u\|_{\widetilde W^{1,p}_{av,c,\beta,s}(\Psi;\Omega)}\end{equation*}
and if $p<1$ then, by the dominated convergence theorem, the left hand side approaches zero as $r\to 0^+$. We need only consider the $\dot W^{1,p}_{av,c,\beta,s}$ norm of $(1-\eta_r)u$.

If $\Psi\subset\Omega$, then we have that
\begin{equation*}\|\nabla(u\eta_r)-\nabla u\|_{L^{p}_{av,c,\beta,s}(\Psi;\Omega)} \le\|(1-\eta_r)\nabla u\|_{L^p_{av, c, \beta, s}(\Psi;\Omega)}+\|u\nabla\eta_r\|_{L^p_{av, c, \beta, s}(\Psi;\Omega)}.\end{equation*}
We may deal with the first term $\|(1-\eta_r)\nabla u\|_{L^p_{av, c, \beta, s}(\Psi;\Omega)}$ as before. Therefore, it suffices to bound $\|u\nabla \eta_r\|_{L^p_{av, c, \beta, s}(\Omega)}$ (and show that it approaches zero if $p<\infty$).

By the Vitali covering lemma there exists an at most countable collection of points $\{x_i\}_{i\in I}\subset\partial\Omega$ such that $\partial\Omega\subset\bigcup_{i\in\N}B(x_i,r)$ and the balls $\{B(x_i,r/5)\}_{i\in I}$ are pairwise disjoint.

Notice that $\supp(\nabla \eta_r)\subseteq\{y\in\Omega:2r\ge\delta(y)\ge r\}$. Therefore, if $\delta(y)\ge 2r/(1-c)$ or $\delta(y)\le r/(1+c)$, then $\nabla \eta_r\equiv0$ on $B(y,c\delta(y))$.
Let $\Psi_r=\{y\in\Psi:r/(1+c)<\delta(y)<2r/(1-c)\}$.
Thus
\begin{align*}
\|u\nabla\eta_r\|_{L^p_{av,c,\beta,s}(\Psi;\Omega)} &=\biggl(\int_{\Psi_r}\left(\fint_{B\left(y,c{\delta(y)} \right)}|\nabla\eta_r|^\beta u^\beta\right)^{p/\beta}\delta(y)^{\bdmn-\dmn+p-ps}\,dy\biggr)^{1/p}
\end{align*}
Observe that in~$\Psi_r$, $|\nabla\eta_r|\delta$ is bounded by a constant independent of~$r$.
If $y$ is such that $\delta(y)\le 2r/(1-c)$ and $\xi_y\in\partial\Omega$ is such that $|\xi_y-y|=\delta(y)$, then there exists some $i\in I$ such that $\xi_y\in B(x_i,r)$, which means that $y\in B(x_i,r(3-c)/(1-c))$. Let $r_1=r(3-c)/(1-c)<\varrho/2$ and let $I_1=\{i\in I:x_i\in B(y,r_1)$ for some $y\in \Psi\}$. We may choose $C_2$ large enough (forcing $r$ small enough) that $B(x_i,C_1 r_1)\cap\Omega\subset\Upsilon$ for all $i\in I_1$. Thus
\begin{align*}
\|u\nabla\eta_r\|_{L^p_{av,1/2,\beta,s}(\Psi;\Omega)} &\leq C\biggl(\sum_{i\in I_1} \int_{B(x_i,r_1)\cap\Omega}\left(\fint_{B\left(y,c{\delta(y)}\right)}u^\beta\right)^{p/\beta}\delta(y)^{\bdmn-\dmn-ps}\,dy\biggr)^{1/p}
.\end{align*}
If $p=\infty$ we may take a supremum and an essential supremum in place of the sum and first integral.

By Lemma~\ref{lem:trace:zero} we have that $\Tr u=0$ on each $\Delta(x_i,C_1 r_1)$.
Thus Lemma~\ref{lem:boundary:Poincare}, applied on each ball $B(x_i,r_1)$, gives
\begin{align*}
\|u\nabla\eta_r\|_{L^p_{av,1/2,\beta,s}(\Psi;\Omega)}
&\leq C \biggl(\sum_{i\in I_1} \int_{B(x_i,C_1 r_1)\cap \Omega} \mathcal{W}_{\widetilde c,\beta}(\nabla u)(y)^p \delta(y)^{\bdmn-\dmn+p-ps}\,dy\biggr)^{1/p}.
\end{align*}
Because the balls $B(x_i,r/5)$ are pairwise disjoint, we have that each $y$ is in $B(x_i,C_1r_1)$ for at most $C$ values of~$i$, and so
\begin{align*}
\|u\nabla\eta_r\|^p_{L^p_{av,1/2,\beta,s}}
&\leq C \biggl( \int_{\{y\in\Upsilon:\delta(y)<C_1r_1\}} \mathcal{W}_{\widetilde c,\beta}(\nabla u)(y)^p \delta(y)^{\bdmn-\dmn+p-ps}\,dy\biggr)^{1/p}
.\end{align*}
This quantity is bounded by $\|u\|_{\dot W^{1,p}_{av,\widetilde c,\beta,s}(\Upsilon;\Omega)}$. Furthermore, if $p<\infty$ then an argument involving the dominated convergence theorem completes the proof that $\eta_ru\to u$ as $r\to 0^+$ in $\dot W^{1,p}_{av,c,\beta,s}(\Psi;\Omega)$.
\end{proof}

\begin{lem}\label{lem:dense:smooth}
Let $u$, $\Psi$, $\Upsilon$, and $\Omega$ be as in Lemma~\ref{lem:dense:boundary}. Suppose in addition that $\Upsilon$ is bounded. Let $s>0$, $1\leq \beta<\infty$, and $1\leq p<\infty$.

Then for every $\varepsilon>0$ there is a function $\varphi\in C^\infty(\R^\dmn)$ that is zero in a neighborhood of $\partial\Omega$ and satisfies
\begin{equation*}\|u-\varphi\|_{\dot W^{1,p}_{av,c,\beta,s}(\Psi;\Omega)}<\varepsilon.\end{equation*}
\end{lem}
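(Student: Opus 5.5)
First reduce to a function that vanishes near the boundary, then mollify it with a radius that shrinks proportionally to the distance to $\partial\Omega$.

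By Lemma~\ref{lem:dense:boundary}, with $p<\infty$, we may replace $u$ by $u\eta_r$ for $r$ small, at a cost of less than $\varepsilon/2$ in $\dot W^{1,p}_{av,c,\beta,s}(\Psi;\Omega)$. Here we take the intermediate constant $\widetilde c$ from that lemma to lie strictly between $c$ and $1$. So it suffices to approximate $v=u\eta_r$. This function vanishes on $\{\delta\le r\}$, and since $\Upsilon$ is bounded, the relevant part of its support lies in the bounded region $\{\delta\geq r\}\cap\Upsilon$. For the norm on $\Psi$, only values of $v$ on $\bigcup_{y\in\Psi}B(y,c\delta(y))$ matter, and this union lies in $\Upsilon$ whenever $c\delta(y)<\varrho$. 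If needed we shrink $c$ near the relevant scale, or we multiply first by a cutoff equal to $1$ on a neighborhood of the bounded set where the averages are taken.

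Next we mollify: $\varphi=v*\phi_t$ with a standard mollifier $\phi_t$ and $t<r/2$. Then $\varphi$ is in $C^\infty(\R^\dmn)$ and vanishes on $\{\delta<r/2\}$, since $v$ vanishes on $\{\delta\le r\}$ and this set contains a neighborhood of $\partial\Omega$. Also $\nabla\varphi=(\nabla v)*\phi_t$ on the region $\{\delta>r/2\}$ where the convolution is interior. On the compact set $K=\overline{\{y\in\Psi:\delta(y)\ge r/(1+c)\}}\cup\ldots$ (the points of $\Psi$ whose Whitney balls meet $\supp v$), the weight $\delta^{\bdmn-\pdmn+p-ps}$ is bounded above and below. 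Points of $\Psi$ with $\delta(y)<r/(1+c)$ contribute nothing, because both $v$ and $\varphi$ vanish on their balls $B(y,c\delta(y))$. Since $\beta<\infty$, we have $\nabla v*\phi_t\to\nabla v$ in $L^\beta(K')$, where $K'$ is a compact neighborhood of $\bigcup_{y\in K}B(y,c\delta(y))$. Hence $\mathcal{W}_{c,\beta}(\nabla\varphi-\nabla v)(y)\le C\|\nabla\varphi-\nabla v\|_{L^\beta(K')}\,\delta(y)^{-\pdmn/\beta}$ uniformly for $y\in K$, and this tends to zero. Because $\Psi\cap\{\delta\ge r/(1+c)\}$ is bounded (as $\Psi\subset\Upsilon$), integrating the $p$th power against the bounded weight over this bounded set gives $\|v-\varphi\|_{\dot W^{1,p}_{av,c,\beta,s}(\Psi;\Omega)}<\varepsilon/2$ for $t$ small.

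The main obstacle I expect is bookkeeping rather than analysis. One part is ensuring that the region where $v$ is used really is compactly contained in $\Omega$ and in the set where $u\in\dot W^{1,\beta}_{loc}$; this uses the boundedness of $\Upsilon$ together with $\delta\geq r$. The other part is checking that the hypotheses of Lemma~\ref{lem:dense:boundary} are met with the constants $c<\widetilde c$. If $u$ is only controlled on $\Upsilon$, we insert an additional smooth cutoff $\chi$ equal to $1$ on $\{y:\dist(y,\Psi)<\varrho/2\}$ and supported in $\Upsilon$, and approximate $\chi v$ instead. This changes nothing on the balls $B(y,c\delta(y))$, $y\in\Psi$, once $c\delta(y)<\varrho/2$; for larger $\delta(y)$ the balls still lie in $\Upsilon$ by the hypothesis on $\Psi$, possibly after enlarging $C_2$. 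Finally, $\varphi$ is smooth on all of $\R^\dmn$: we extend $\chi v$ by zero outside $\Omega$ before mollifying, which is legitimate because $\chi v$ vanishes near $\partial\Omega$.
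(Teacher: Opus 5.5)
Your proposal is correct and follows essentially the same route as the paper. You remove a boundary layer with $\eta_r$ via Lemma~\ref{lem:dense:boundary}, then mollify at a fixed radius $t<r/2$. You then use $L^\beta$ convergence of gradients on a compact set where $\delta$ is bounded above and below, and note that points with $\delta(y)<r/(1+c)$ contribute nothing.

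The extra cutoff $\chi$ in your last paragraph should be dropped, because it is both unnecessary and, as you describe it, incorrect. It is unnecessary because, as the paper notes, finiteness of the $\dot W^{1,p}_{av,\widetilde c,\beta,s}(\Upsilon;\Omega)$ norm already gives $u\in\dot W^{1,\beta}_{loc}\bigl(\bigcup_{y\in\Upsilon}B(y,\widetilde c\delta(y))\bigr)$, and this set contains every ball $B(y,c\delta(y))$, $y\in\Psi$, with room to spare. It is incorrect because a $\chi$ supported in $\Upsilon$ and equal to $1$ only on the $\varrho/2$-neighborhood of $\Psi$ can alter $v$ on balls with $c\delta(y)>\varrho/2$, and the hypothesis $B(y,\varrho)\subseteq\Upsilon$ says nothing about such balls.
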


\begin{proof}
We define
\begin{equation*}\Upsilon_{\widetilde c} = \bigcup_{y\in\Upsilon} B(y,\widetilde c\delta(y))\end{equation*}
and define $\Psi_c$ similarly. By definition of $\dot W^{1,p}_{av, \widetilde c,\beta,s}(\Upsilon)$, we have that $u\in \dot W^{1,\beta}_{loc}(\Upsilon_{\widetilde c})$ (and in particular is defined in this region).

Let $\eta_r$ be as in Lemma~\ref{lem:dense:boundary}. Let $c<c'<\widetilde<1$. Let
\begin{equation*}\widetilde\Upsilon=\Upsilon\cap \bigcup_{y\in\Psi} B(y,\varrho/2).\end{equation*}
By Lemma~\ref{lem:dense:boundary} with $\Psi$ replaced by $\widetilde\Upsilon$, there is an $r>0$ small enough that $\|u-u\eta_r\|_{\dot W^{1,p}_{av,c',\beta,s}(\widetilde\Upsilon;\Omega)} < \varepsilon/2$.

Define $\varphi$ by
\begin{equation*}\varphi(y)=\begin{cases} u(y)\eta_r(y), &y\in\Upsilon_{\widetilde c},\\ 0&\text{otherwise}\end{cases}.\end{equation*}
Then $\varphi=u\eta_r$ in $\widetilde\Upsilon_{c'}$ and so $\varphi\in \dot W^{1,p}_{av,c',\beta,s}(\widetilde\Upsilon;\Omega)$.

Let $K=\{y\in\overline{\Psi_c}:\delta(y)\geq r/2\}$. Then $K$ is a compact set, and so $\nabla (u\eta_r)\in L^\beta(K)$.

Let $\theta$ be smooth, nonnegative, supported in $B(0,1)$, and integrate to~$1$. Let $\theta_\delta(y)=\delta^{-\pdmn}\theta(y/\delta)$.

As is standard in smooth mollifier arguments, we have that $\nabla (\theta_\delta*(u\eta_r))\to \nabla (u\eta_r)$ as $\delta\to 0^+$ in $L^\beta(K)$. Furthermore $\eta_r(y)=0$ if $\delta(y)\leq r$ and so $\theta_\delta*(u\eta_r)=0$ in $\Psi_c\setminus K$. Thus $\nabla (\theta_\delta*(u\eta_r))\to \nabla (u\eta_r)$ in $\dot W^{1,p}_{av,c,\beta,s}(\Psi;\Omega)$, and so choosing a sufficiently small $\delta$ completes the proof.
\end{proof}

\begin{lem}\label{lem:dense:infinity}Let $\Omega$ be a weak local John domain with unbounded complement. Let $0<s<1$, $1\leq\beta\leq\infty$, and let $1\leq p\leq \infty$. Let $x_0\in\partial\Omega$.

For every $u\in \widetilde W^{1,p}_{av,c,\beta,s}(\Omega)$ there is a collection of functions $\{\psi_R\}_{R>1}$ such that $\supp\psi_R\subseteq\supp u$, $u(y)=\psi_R(y)$ for all $y\in\Omega$ with $|y-x_0|\leq \sqrt{R}$,
$0=\psi_R(y)$ for all $y\in\Omega$ with $|y-x_0|>R$, and such that $\|\psi_R\|_{\widetilde W^{1,p}_{av,c,\beta,s}(\Omega)} \leq C \|u\|_{\widetilde W^{1,p}_{av,c,\beta,s}(\Omega)}$, where $C$ does not depend on~$R$.

If $p<\infty$ we additionally have that
\begin{equation*}\lim_{R\to \infty}\|u-\psi_R\|_{\widetilde W^{1,p}_{av,c,\beta,s}(\Omega)}=0.\end{equation*}

\end{lem}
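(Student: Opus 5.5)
The plan is to take $\psi_R=u\,\phi_R$, where $\phi_R$ is a logarithmic cutoff adapted to the annulus between $\sqrt R$ and $R$:
\begin{equation*}\phi_R(y)=\min\Bigl(1,\max\Bigl(0,\frac{\log(R/|y-x_0|)}{\log\sqrt R}\Bigr)\Bigr).\end{equation*}
Thus $\phi_R\equiv1$ on $B(x_0,\sqrt R)$, $\phi_R\equiv0$ outside $B(x_0,R)$, and $|\nabla\phi_R(y)|\leq C/(|y-x_0|\log R)$. The support and agreement properties of $\psi_R$ are then immediate. Since $\Omega$ has unbounded complement, if $\Omega$ is bounded then $\psi_R=u$ for large $R$ and there is nothing to prove, so we assume $\partial\Omega$ is unbounded.

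Next, we bound the $L^p_{av,c,\beta,s+1}$ part of the norm. Since $0\leq\phi_R\leq1$, we have $\mathcal{W}_{c,\beta}(\psi_R)\leq\mathcal{W}_{c,\beta}(u)$, so this part is trivially bounded by that of $u$. For $p<\infty$, $u-\psi_R=(1-\phi_R)u$ is supported in $\{|y-x_0|\geq\sqrt R\}$. Its averages vanish unless $B(y,c\delta(y))$ meets that set, and $\delta(y)\leq|y-x_0|$ then forces $|y-x_0|\geq\sqrt R/(1+c)$. Dominated convergence therefore gives convergence to zero. By Remark~\ref{rmk:traceless}, it then suffices to control $\nabla\psi_R=\phi_R\nabla u+u\nabla\phi_R$ in $L^p_{av,c,\beta,s}$. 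The first term is handled exactly as above: it is bounded, and $(1-\phi_R)\nabla u\to0$ when $p<\infty$.

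The main term is $u\nabla\phi_R$. On $B(y,c\delta(y))$ we have $|z-x_0|\approx|y-x_0|$ whenever this ball meets the annulus, because $\delta(y)\leq|y-x_0|$. Hence
\begin{equation*}\mathcal{W}_{c,\beta}(u\nabla\phi_R)(y)\leq \frac{C}{|y-x_0|\log R}\,\mathcal{W}_{c,\beta}(u)(y)\,\1_{A_R}(y),\end{equation*}
where $A_R=\{\sqrt R/(1+c)\leq|y-x_0|\leq R/(1-c)\}$. Since $\delta(y)\leq|y-x_0|$, we get $|y-x_0|^{-p}\leq\delta(y)^{-p}$. Therefore
\begin{equation*}\|u\nabla\phi_R\|^p_{L^p_{av,c,\beta,s}}\leq \frac{C}{(\log R)^p}\int_{A_R}\mathcal{W}_{c,\beta}(u)^p\delta^{\bdmn-\dmn-ps}\leq \frac{C}{(\log R)^p}\|u\|^p_{L^p_{av,c,\beta,s+1}(\Omega)}.\end{equation*}
This is bounded uniformly in $R>1$ once we note that for $R$ near $1$ the annulus is thin; alternatively, one can simply restrict to $R\geq e^2$, say, and use the trivial cutoff otherwise. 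It also tends to zero as $R\to\infty$ when $p<\infty$, and in fact even for $p=\infty$. By Remark~\ref{rmk:traceless}, or directly by Lemma~\ref{lem:boundary:Poincare} with $R\to\infty$, the norm $\|u\|_{L^p_{av,c,\beta,s+1}}$ is controlled by $\|u\|_{\widetilde W^{1,p}_{av,c,\beta,s}}$, which in any case is part of that norm by definition. The $p=\infty$ case follows identically with suprema.

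The main obstacle is that a naive linear cutoff at scale $R$ would only give $|\nabla\phi|\lesssim 1/R$. With that choice one gets a uniform bound via $\delta\leq|y-x_0|\lesssim R$, but not convergence, since the region $\delta\approx R$ carries a non-negligible portion of the norm. The logarithmic cutoff gains the factor $1/\log R$, and this is why the statement uses $\sqrt R$. A secondary point to check is that the positivity of $s$ and the range $s<1$ are not otherwise needed here beyond guaranteeing that $u$ is locally integrable up to the boundary (Lemma~\ref{lem:trace:zero}), so that the products make sense.
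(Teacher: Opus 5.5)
Your proof is essentially the paper's. It uses the same logarithmic cutoff between $\sqrt R$ and $R$, and the same observation that for $z\in B(y,c\delta(y))$ one has $|\nabla\phi_R(z)|\lesssim 1/(|y-x_0|\log R)\le 1/(\delta(y)\log R)$ because $x_0\in\partial\Omega$. This gives $\|u\nabla\phi_R\|_{L^p_{av,c,\beta,s}(\Omega)}\lesssim(\log R)^{-1}\|u\|_{L^p_{av,c,\beta,s+1}(\Omega)}$. You bound the last norm directly from the definition of the $\widetilde W^{1,p}_{av,c,\beta,s}$ norm, whereas the paper passes through Lemma~\ref{lem:boundary:Poincare}. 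Both are fine, and you also treat the zeroth-order part explicitly, which the paper leaves implicit.

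Two of your side remarks are wrong, however.

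First, your justification for uniformity as $R\to1^+$ fails. The factor $(\log R)^{-1}$ blows up, while $A_R$ tends to $\{1/(1+c)\le|y-x_0|\le 1/(1-c)\}$, which has positive measure. There is also no ``trivial cutoff'' for $R<e^2$, since $\psi_R$ must still pass from $u$ to $0$ across $\sqrt R\le|y-x_0|\le R$. In fact, for $\beta>1$ no admissible family can be bounded uniformly as $R\to1^+$. Suppose $u$ is smooth and nonzero near some $y^*\in\Omega$ with $|y^*-x_0|=1$. Integrating $\partial_r\psi_R$ along radial segments and applying Hölder's inequality gives $\int_{B(y^*,\rho)}|\nabla\psi_R|^\beta\gtrsim(R-\sqrt R)^{1-\beta}\to\infty$. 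So the estimate holds with $C$ uniform only for $R\ge R_0>1$. This is true of your argument and equally of the paper's, which has the same $(\ln R)^{-1}$ factor. That restricted range is all that is ever used, so you should state it that way rather than claim it for all $R>1$.

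Second, a linear cutoff does \emph{not} fail to give convergence when $p<\infty$. With $|\nabla\phi|\lesssim 1/R\lesssim 1/\delta$ on $R\le|y-x_0|\le 2R$, the cross term is bounded by $\int_{\{|y-x_0|\ge R/(1+c)\}}\mathcal{W}_{c,\beta}(u)^p\delta^{\bdmn-\dmn-ps}$. This is the tail of a convergent integral, so it tends to $0$. What the logarithmic cutoff actually buys is decay of the cross term uniformly in $u$, and hence also when $p=\infty$.
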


\begin{proof}
Without loss of generality we may take $x_0=0$. Take $\varphi\in C^{\infty}[0,\infty)$ with values in $[0,1]$ such that $\varphi\equiv1$ on $[0,1/2]$ and $\varphi\equiv 0$ on $[1,\infty)$. Let us define, for $R>1$, $\eta_R(x):=\varphi\left(\frac{\ln_+|x|}{\ln R}\right)$. Notice that $|\nabla\eta_R|\lesssim|x|^{-1}\ln(R)^{-1}$, that $\eta_R$ is supported in $\overline B(0,R)$, and that $\nabla\eta_R$ is supported in the annulus $\{x\in\R^\dmn:\sqrt{R}\le|x|\le R\}$.
We claim that $\psi_R=u\eta_R$ satisfies the conditions of the lemma.

As in the proof of Lemma~\ref{lem:dense:boundary},
we have that $\|(1-\eta_R)\nabla u\|_{L^p_{av,c,\beta,s}(\Omega)}\leq \|\nabla u\|_{L^p_{av,c,\beta,s}(\Omega)}$ for all $p$ and that this term approaches zero as $R\to \infty$ for all $p<\infty$.

Thus, it suffices to bound $\|u\nabla\eta_R\|_{L^p_{av,c,\beta,s}(\Omega)}$. Recall that
\begin{align*}
\|u\nabla\eta_R\|_{L^p_{av, c,\beta, s}(\Omega)}
&=\biggl(\int_{\Omega}
\mathcal{W}_{c,\beta}(u\nabla\eta_R)^p \delta^{\bdmn-\pdmn+p-ps}\biggr)^{1/p}.
\end{align*}
If $y\in B(x,c\delta(x))$, then by the triangle inequality we have that $|y|\ge|x|-|y-x|>(1-c)\delta(x)$ and so $|\nabla\eta_R(y)|\leq C(\ln R)^{-1} \delta(x)^{-1}$. Thus
\begin{align*}
\|u\nabla\eta_R\|_{L^p_{av, c,\beta, s}(\Omega)}
&\leq
C(\ln R)^{-1}
\biggl(\int_{\Omega}
\mathcal{W}_{c,\beta}(u)^p \delta^{\bdmn-\pdmn-ps}\biggr)^{1/p}.
\end{align*}
Applying Lemma~\ref{lem:boundary:Poincare} yields that
\begin{align*}
\|u\nabla\eta_R\|_{L^p_{av, c,\beta, s}(\Omega)}
&\leq
C(\ln R)^{-1}
\biggl(\int_{\Omega}
\mathcal{W}_{c,\beta}(\nabla u)^p \delta^{\bdmn-\pdmn+p-ps}\biggr)^{1/p}.
\end{align*}
Thus this quantity is bounded independently of $R$ and approaches zero as $R\to\infty$ for all~$p$. (The quantity $\|(1-\eta_R)\nabla u\|_{L^p_{av,c,\beta,s}(\Omega)}$ only approaches zero if $p<\infty$, and so the lemma statement cannot be strengthened.)
\end{proof}

Combining the above lemmas yields that any $u\in \widetilde W^{1,p}_{av,c,\beta,s}(\Omega)$ may be approximated by functions supported in compact subsets of~$\Omega$.

\begin{rmk}\label{rmk:dense:smooth}We may also use smooth mollification in conjunction with Lemma~\ref{lem:dense:boundary} to find smooth approximations to functions in $\dot W^{1,p}_{av,c,\beta,s}(\Psi;\Omega)$ for suitable~$\Psi$. \end{rmk}

\section{Elliptic differential equations: the interior}
\label{sec:elliptic}

From this point onwards we will work with an elliptic differential operator
\begin{equation*}L=-\Div A\nabla\end{equation*}
that satisfies the following ellipticity conditions and with the following standard weak formulation.
\begin{defn}\label{dfn:L}Let $\Lambda>\lambda>0$ and let $0<\bdmn<\dmn$. Let $\Omega\subset\R^\dmn$ be open and suppose that $\partial\Omega$ is $\bdmn$-Ahlfors regular. Recall that
\begin{equation*} \delta(x)=\dist(x,\partial\Omega).\end{equation*}
Let $A:\R^\dmn\to \R^{\pdmn\times\pdmn}$ be such that the ellipticity condition
\begin{equation}
\label{eqn:elliptic}
\lambda\delta(x)^{\bpdmn+1-\pdmn}\|\xi\|^2\leq  \,{\xi}\cdot A(x)\xi,\quad |\eta\cdot A(x)\xi|\leq \Lambda\delta(x)^{\bpdmn+1-\pdmn} \|\xi\|\,\|\eta\|
\end{equation}
is valid for all $\eta$, $\xi\in\R^\dmn$ and all $x\in\Omega$.

We say that $Lu=-\Div \vec H$ in an open set $\Psi\subseteq\Omega$ if
$\vec H\in L^1_{loc}(\Psi)$, $u\in W^{1,1}_{loc}(\Psi)$, and
\begin{equation}\label{eqn:L}
\int_\Psi\nabla\varphi\cdot A\nabla u=\int_\Psi \nabla\varphi\cdot \vec H\text{ for all }\varphi\in C^\infty_0(\Psi).
\end{equation}
\end{defn}

In this section we will list some known properties and constructions related to the elliptic operator~$L$.

\subsection{The Lax-Milgram solution to the Poisson problem}

We construct the solution to the Poisson problem in $\Omega$ using the Lax-Milgram lemma as follows. This construction is by now very standard in the case $\bdmn=\dmnMinusOne$ (and in other situations beyond the scope of this paper), and generalizes naturally to the higher codimensional case (see \cite[Lemma~9.1]{DavFM21} and \cite[Lemma~12.2]{DavFM20p}).

\begin{defn}\label{dfn:Lax-Milgram}Let $\bdmn$, $\dmn$, $L$, and~$\Omega$ be as in Definition~\ref{dfn:L}.
We additionally require $\Omega$ to be a weak local John domain with unbounded complement.

Let $\widetilde W^{1,2}_{av,c,1/2,2}(\Omega)$ be as in formula~\eqref{eqn:W:traceless}. As noted in Section~\ref{sec:dense}, $\widetilde W^{1,2}_{av,c,1/2,2}(\Omega)$ is a Banach space. By Corollary~\ref{cor:averaged:is:Wp} and Lemma~\ref{lem:boundary:Poincare}, if $0<c<1$ and if $\varphi\in \widetilde W^{1,2}_{av,c,1/2,2}(\Omega)$, we have that
\begin{equation*}\|\varphi\|_{\widetilde W^{1,2}_{av,c,1/2,2}(\Omega)}^2
\approx\|\varphi\|_{\dot W^{1,2}_{av,c,1/2,2}(\Omega)}^2
\approx \int_\Omega |\nabla \varphi|^2\delta^{\bdmn+1-\pdmn}
\end{equation*}
and so (up to equivalence of norms) $\widetilde W^{1,2}_{av,c,1/2,2}(\Omega)$ is a Hilbert space with inner product $\langle\varphi,\psi\rangle=\int_\Omega \overline{\nabla\varphi}\cdot \nabla \psi \, \delta^{\bdmn+1-\pdmn}$.

By the ellipticity condition~\eqref{eqn:elliptic}, we may apply the Lax-Milgram lemma and derive that, if
\begin{equation*}\int_{\Omega} |\vec H|^2\,\delta^{\bdmn+1-\pdmn}<\infty\end{equation*}
then there is a unique $u\in \widetilde W^{1,2}_{av,c,1/2,2}(\Omega)$ such that
\begin{equation}\label{eqn:weak}
\int_{\Omega} \nabla \varphi\cdot A\nabla u=
\int_{\Omega} \nabla\varphi\cdot (\delta^{\bdmn+1-\pdmn}\vec H)
\end{equation}
for all $\varphi\in \widetilde W^{1,2}_{av,c,1/2,2}(\Omega)$. Furthermore, $\|u\|_{\widetilde W^{1,2}_{av,c,1/2,2}(\Omega)}$ is comparable to the norm of $\varphi\mapsto \int_{\Omega} \nabla\varphi\cdot (\delta^{\bdmn+1-\pdmn}\vec H)$ as an operator on $\widetilde W^{1,2}_{av,c,1/2,2}(\Omega)$.

We call $u$ the solution to the Poisson problem with data~$\vec H$.
\end{defn}

By Lemma~\ref{lem:trace:zero} and the definition of $\widetilde W^{1,2}_{av,c,1/2,2}(\Omega)$, we have that $u$ satisfies
\begin{equation}\label{eqn:Dirichlet:Lax-Milgram}
Lu=-\Div(\delta^{\bdmn+1-\pdmn}\vec H) \text{ in } \Omega,\quad
\Tr u=0\text{ on }\partial\Omega
,\quad
\|u\|_{\dot W^{1,2}_{av,c,1/2,2}(\Omega)}
<\infty\end{equation}
and in addition
\begin{align*}
\|u\|_{\dot W^{1,2}_{av,c,1/2,2}(\Omega)}
&\leq
C\|\vec H\|_{L^2_{av,c,1/2,2}(\Omega)}
\end{align*}
for some constant $C$ depending only on the ellipticity constants~$\lambda$ and~$\Lambda$ in the condition~\eqref{eqn:elliptic}, the Ahlfors constant~$A$ in Definition~\eqref{dfn:Ahlfors}, and the dimension $\dmn$ of the ambient space and $\bdmn$ of the boundary.

We have assumed that $\R^\dmn\setminus\Omega$ is unbounded. In this case, by Proposition~\ref{prp:dense}, if formula~\eqref{eqn:L} is valid for all $\varphi\in C^\infty_0(\Omega)$ then it is valid for all $\varphi\in \widetilde W^{1,2}_{av,c,1/2,2}(\Omega)$. The Lax-Milgram lemma and Lemma~\ref{lem:boundary:Poincare} imply that $u$ is the only solution to the problem~\eqref{eqn:Dirichlet:Lax-Milgram}.

If $\R^\dmn\setminus\Omega$ is bounded, then formula~\eqref{eqn:L} for all $\varphi\in C^\infty_0(\Omega)$ does \emph{not} imply that formula~\eqref{eqn:L} is valid for all $\varphi\in \widetilde W^{1,2}_{av,c,1/2,2}(\Omega)$. This is one of the main technical difficulties in generalizing our results to the case where $\R^\dmn\setminus\Omega$ is bounded.

\subsection{Interior estimates}
\label{sec:interior}

Let $\Omega$, $A$ and~$L$ be as in Definition~\ref{dfn:L}. In this section we will discuss bounds on solutions to $Lu=0$ in $\Omega$ (or solutions to the Poisson problem) far from~$\partial\Omega$.

Suppose that $B(y,4r)\subset\Omega$. Define $\widetilde A(z)=\dist(y,\partial\Omega)^{\dmn-1-\pbdmn} A(z)$.
If $z\in B(y,2r)$ then $\dist(z,\partial\Omega)< \dist(y,\partial\Omega)+2r \leq (3/2)\dist(y,\partial\Omega)$ and $\dist(z,\partial\Omega)>\dist(y,\partial\Omega)-2r \geq \dist(y,\partial\Omega)/2$. Thus $\widetilde A$ satisfies
\begin{gather*}
(2/3)^{\dmn-1-\pbdmn}\lambda\|\xi\|^2
\leq \frac{[(2/3)\dist(z,\partial\Omega)]^{\dmn-1-\pbdmn}} {\dist(y,\partial\Omega)^{\dmn-1-\pbdmn}} \xi\cdot \widetilde A(z)\xi
\leq \xi\cdot \widetilde A(z)\xi
,\\
|\eta\cdot \widetilde A(z)\xi|
\leq
\frac{\dist(y,\partial\Omega)^{\dmn-1-\pbdmn}} {\dist(z,\partial\Omega)^{\dmn-1-\pbdmn}} \Lambda\|\xi\|\,\|\eta\|
\leq
2^{\dmn-1-\pbdmn}\Lambda\|\xi\|\,\|\eta\|
\end{gather*}
for all $\eta$, $\xi\in\R^\dmn$.
Thus $\widetilde A$ is elliptic in $B(y,2r)$ in the classical (codimension~$1$) sense, and so inherits classical interior regularity results. For ease of reference we list the relevant such results here.

If $\vec H\in L^2(B(y,2r))$, $B(y,4r)\subset\Omega$, and $Lu=-\Div(\delta^{1+\bdmn-\pdmn}\vec H)$ in $B(y,2r)$, then
\begin{equation*}-\Div \widetilde A\nabla u=-\Div\biggl( \biggl(\frac{\delta}{\delta(y)}\biggr)^{1+\bdmn-\pdmn} \vec H\biggr)\end{equation*}
in $B(y,2r)$, and so by the classic Caccioppoli inequality, \begin{align}
\label{eqn:interior:Caccioppoli:1}
\int_{B(y,r)}|\nabla u|^2
&\leq \frac{C}{r^2}\int_{B(y,2r)}|u|^2
+C\int_{B(y,2r)}|\vec H|^2
\end{align}
which we may rewrite as
\begin{align}
\label{eqn:interior:Caccioppoli}
\int_{B(y,r)}|\nabla u|^2 \delta^{\bdmn+1-\pdmn}
&\leq \frac{C}{r^2}\int_{B(y,2r)}|u|^2 \delta^{\bdmn+1-\pdmn}
+C\int_{B(y,2r)}
|\vec H|^2 \delta^{\bdmn+1-\pdmn}
\end{align}
recalling that $\delta(z)=\dist(z,\partial\Omega)$.

Similarly, by the classic result of Meyers \cite{Mey63}, there is a $p^+=p^+_L>2$ such that, if $2\leq\beta<p^+$, then there is a $C_{\beta}<\infty$ such that if $\vec H\in L^\beta(B(y,2r))$, $B(y,4r)\subset\Omega$, and $Lu=-\Div(\delta^{1+\bdmn-\pdmn}\vec H)$ in $B(y,2r)$, then
\begin{align}
\label{eqn:interior:Meyers}
\biggl(\fint_{B(y,r)}|\nabla u|^\beta \biggr)^{1/\beta}
&\leq C_{q,\beta}\biggl(\fint_{B(y,2r)} |\nabla u|^2\biggr)^{1/2}
+C_{q,\beta}\biggl(\fint_{B(y,2r)}|\vec H|^\beta \biggr)^{1/\beta}.
\end{align}
It is possible to bound $p^+-2$ from below by a positive number depending only on the ellipticity constants of~$\widetilde A$ (and thus only on the ellipticity constants of~$A$). In many cases, it is possible to improve beyond these estimates on~$p^+_L$; for example, if $A$ is constant\footnote{By the ellipticity condition~\eqref{eqn:elliptic}, this can only happen if $\bdmn=\dmnMinusOne$.} then $p^+_L=\infty$.

In the case where $\vec H=0$ and so $Lu=0$ in $B(y,2r)$, then we have the interior Moser condition  (see, for example, \cite[Theorem 4.1]{HanL11})
\begin{align}
\label{eqn:interior:Moser}
\sup_{B(y,r)}|u|
&\leq C \fint_{B(x,2r)}|u|,
\end{align}
and the interior De Giorgi-Nash condition (see \cite[Theorem~4.11]{HanL11})
\begin{align}\label{eqn:interior:DGN:l2}
\sup_{x,z\in B(y,r)}\frac{|u(x)-u(z)|}{|x-z|^\alpha}
&\leq \frac{C}{r^\alpha} \biggl(\fint_{B(x,2r)}|u|^2\biggr)^{1/2}.
\end{align}
Combining the estimates \eqref{eqn:interior:Moser} and~\eqref{eqn:interior:DGN:l2} yields the improved estimate
\begin{align}
\label{eqn:interior:DGN}
\sup_{x,z\in B(y,r)}\frac{|u(x)-u(z)|}{|x-z|^\alpha}
&\leq \frac{C}{r^\alpha} \fint_{B(x,2r)}|u|,
\end{align}
where $C$ and $\alpha$ are constants depending on $\dmn$, $\lambda$, and $\Lambda$, with $0<\alpha<1$.

We conclude this section with the following remark.
\begin{rmk}\label{rmk:maximum}
By the maximum principle for solutions to elliptic equations (see, for example, \cite[Theorem~8.19]{GilT01}) we have that if $Lu=0$ in~$\Omega$, and if $u$ has an interior maximum or minimum in~$\Omega$, then $u$ must be constant in~$\Omega$.
\end{rmk}

\section{The Poisson problem: local integrability, duality, interpolation, and well posedness}\label{sec:beta}

In this section, we will prove duality and interpolation properties for the spaces $L^p_{av,\beta,s}(\Omega)$. These properties have immediate consequences for well posedness of boundary value problems for various ranges of $p$ and~$s$. Thus, they will be useful in the proof of Theorem~\ref{thm:Poisson}, both directly and indirectly.

\subsection{Definition of well posedness}

As noted, properties of the spaces $L^p_{av,\beta,s}(\Omega)$ have immediate consequences for well posedness of the Poisson problem. To state these consequences, it will be useful to precisely define well posedness and related concepts.

Recall that if $p\geq 1$ then the space $\widetilde W^{1,p}_{av,c,\beta,s}(\Omega)$ is given by formula~\eqref{eqn:W:traceless} and coincides with the set of functions in $\dot W^{1,p}_{av,c,\beta,s}(\Omega)$ with trace zero. Suppose that $0<p<1$, that $0<s<1$, and that $s/\bdmn+1>1/p$. By Corollary~\ref{cor:weighted:embedding}, we have that $\dot W^{1,p}_{av,c,\beta,s}(\Omega)\subset \dot W^{1,1}_{av,c,\beta,s+\bdmn-\pbdmn/p}(\Omega)$, and so we define
\begin{equation}
\label{eqn:W:traceless:quasi}
\widetilde W^{1,p}_{av,c,\beta,s}(\Omega)=\widetilde W^{1,1}_{av,c,\beta,s+\bdmn-\pbdmn/p}(\Omega)\cap \dot{W}^{1,p}_{av,c,\beta,s}(\Omega).
\end{equation}
By Corollary~\ref{cor:weighted:embedding} and Lemma~\ref{lem:boundary:Poincare}, we have that if $u\in \dot{W}^{1,p}_{av,c,\beta,s}(\Omega)$ and $\Tr u=0$ almost everywhere on~$\partial\Omega$, then
\begin{align*}\|u\|_{\widetilde W^{1,1}_{av,c,\beta,s+\bdmn-\pbdmn/p}(\Omega)}
&=\|u\|_{\dot W^{1,1}_{av,c,\beta,s+\bdmn-\pbdmn/p}(\Omega)}
+\|u\|_{L^1_{av,c,\beta,s+\bdmn-\pbdmn/p+1}(\Omega)}
\\&\leq C\|u\|_{\dot W^{1,1}_{av,c,\beta,s+\bdmn-\pbdmn/p}(\Omega)}
\leq C\|u\|_{\dot W^{1,p}_{av,c,\beta,s}(\Omega)}
\end{align*}
and so $\widetilde W^{1,p}_{av,c,\beta,s}(\Omega)$ is the closed subspace of $\dot W^{1,p}_{av,c,\beta,s}(\Omega)$ consisting of functions with trace zero, and the norms in the two spaces are equivalent. (As usual, the exact value of the parameter~$c$ is irrelevant up to equivalence of norms.)

Throughout this section, if $\vec H\in L^2_{av,2,1/2}(\Omega)$ then we will let $u_{\vec H}$ be the solution to the Poisson problem with data~$\vec H$ given by Definition~\ref{dfn:Lax-Milgram}, that is, the unique function that satisfies $u_{\vec H}\in \widetilde W^{1,2}_{av,2,1/2}(\Omega)$ and $Lu_{\vec H}=-\Div(\delta^{\bdmn+1-\pdmn}\vec H)$.

\begin{defn}\label{dfn:Poisson:psb}Let $\Omega\subset\R^\dmn$ be a weak local John domain and suppose that $\partial\Omega$ is $\bdmn$-Ahlfors regular for some $0<\bdmn<\dmn$. Let $L$ be as in Definition~\ref{dfn:L} (that is, a divergence-form operator that satisfies the weighted ellipticity condition~\eqref{eqn:elliptic}).

Let $0<p\leq \infty$, $s>0$, and $1\leq
\beta\leq \infty$. If $p<1$ we additionally require $s/\bdmn+1>1/p$ so that $\widetilde W^{1,p}_{av,c,\beta,s}(\Omega)$ is defined.

We say that the $L^{p}_{av,\beta,s}(\Omega)$-Poisson problem for $L$ is solvable in~$\Omega$ if, for every $\vec H\in L^{p}_{av,\beta,s}(\Omega)$, there is a function $u$ that satisfies
\begin{equation}\label{poisson:equation}
\left\{\begin{aligned}
Lu&=-\Div(\delta^{\bdmn+1-\pdmn}\vec H) \text{ in } \Omega,\\
u&\in \widetilde W^{1,p}_{av,\beta,s}(\Omega)
\end{aligned}\right.\end{equation}
and furthermore for some (hence every) $c\in (0,1)$ there exists a constant $C$ (depending on~$c$) such that
\begin{equation}\label{poisson:estimate}
\|u\|_{\dot W^{1,p}_{av,c,\beta,s}(\Omega)}
\leq C\|\vec H\|_{L^p_{av,c,\beta,s}(\Omega)}.\end{equation}

We say that the $L^{p}_{av,\beta,s}(\Omega)$-Poisson problem for $L$ satisfies the uniqueness property in~$\Omega$ if, for any $\vec H\in L^p_{av,\beta,s}(\Omega)$, there is at most one solution $u\in \widetilde W^{1,p}_{av,\beta,s}(\Omega)$ to the problem~\eqref{poisson:equation}.

We say that the $L^{p}_{av,\beta,s}(\Omega)$-Poisson problem for $L$ is well posed in~$\Omega$ if it is both solvable and satisfies the uniqueness property.

We say that the $L^{p}_{av,\beta,s}(\Omega)$-Poisson problem for $L$ is compatibly solvable in~$\Omega$ if there is a $C<\infty$ and a $c\in (0,1)$ such that, for every $\vec H\in L^{p}_{av,\beta,s}(\Omega)\cap L^2_{av,2,1/2}(\Omega)$, the Lax-Milgram solution $u_{\vec H}\in \widetilde W^{1,2}_{av,2,1/2}(\Omega)$ given by Definition~\ref{dfn:Lax-Milgram} to the Poisson problem
satisfies the estimate~\eqref{poisson:estimate}.

Finally, we say that the $L^{p}_{av,\beta,s}(\Omega)$-Poisson problem for $L$ is compatibly well posed in~$\Omega$ if it is both compatibly solvable and well posed.
\end{defn}

\begin{rmk}\label{rmk:compatible:solvable}
A straightforward density argument shows that, under the above assumptions on $s$, $p$ and~$\beta$, if $L^p_{av,\beta,s}(\Omega)\cap L^2_{av,2,1/2}(\Omega)$ is dense in $L^p_{av,\beta,s}(\Omega)$, then compatible solvability of the $L^{p}_{av,\beta,s}(\Omega)$-Poisson problem implies solvability. This density result is true if $p<\infty$ and $\beta<\infty$ (in fact by Lemma~\ref{lem:averaged:Whitney} compactly supported functions in $L^2(\Omega)\cap L^\beta(\Omega)$ are dense), and so in this case compatible solvability implies solvability. We will show that compatible solvability implies solvability in the case $p=\infty$ in Lemma~\ref{lem:solv:infty}.
\end{rmk}

By Definition~\ref{dfn:Lax-Milgram} and Corollary~\ref{cor:averaged:is:Wp}, we have that the $L^{2}_{av,2,1/2}(\Omega)$-Poisson problem is well posed. (It is compatibly well posed by definition.) The goal of this paper is to establish well posedness of the $L^{p}_{av,\beta,s}(\Omega)$-Poisson problem for a broader range of $p$, $\beta$, and~$s$.

\subsection{Duality}\label{sec:duality}
We now establish a duality result.

Let us fix from now on a grid of Whitney cubes $\mathcal{G}=\mathcal{G}_1$ in~$\Omega$.
In this subsection we will show that the dual space of a weighted averaged space is again a weighted averaged space.

We start by defining
\begin{equation}
\langle F,G\rangle=\int_\Omega F\cdot G\,\delta^{1+\bdmn-\dmn}
\end{equation}
where $F$ and $G$ are vector-valued or scalar-valued functions.
Since the side-length of a Whitney cube is proportional to its distance to the boundary we have
\begin{align*}
|\langle F,G\rangle| &\le \int_{\Omega} |F\cdot G|\,\delta^{1+\bdmn-\dmn}
\approx \sum_{Q\in\mathcal{G}}\fint_Q |F\cdot G| \,\ell(Q)^{1+\bdmn}.
\end{align*}

By Hölder's inequality, if $1\leq \beta\leq\infty$ we have that
\begin{align*}
|\langle F,G\rangle|&\le \sum_{Q\in\mathcal{G}}\biggl(\fint_Q |F|^{\beta'}\biggr)^{1/\beta'}\ell(Q)^{s+\frac{\bdmn}{p'}}\biggl(\fint_Q |G|^\beta\biggr)^{1/\beta}\ell(Q)^{1-s+\frac{\bdmn}{p}}
.\end{align*}
Applying Hölder's inequality in sequence spaces shows that if $1\leq p\leq \infty$ then
\begin{align*}
|\langle F,G\rangle|&\le  \biggl(\sum_{Q\in\mathcal{G}}\biggl(\fint_Q |F|^{\beta'}\biggr)^{p'/\beta'}\ell(Q)^{\bdmn+p's}\biggr)^{1/p'}
\biggl(\sum_{Q\in\mathcal{G}}\biggl(\fint_Q|G|^\beta\biggr)^{p/\beta}\ell(Q)^{\bdmn+p-ps}\biggr)^{1/p}
\end{align*}
and so by Lemma~\ref{lem:averaged:Whitney}, if $F\in{L^{p'}_{av,\beta',1-s}(\Omega)}$ and $G\in{L^p_{av,\beta,s}(\Omega)}$ then
\begin{align}\label{byhold}
|\langle F,G\rangle|&\le C\|F\|_{L^{p'}_{av,c,\beta',1-s}(\Omega)} \|G\|_{L^p_{av,c,\beta,s}(\Omega)}
\end{align}
where $C$ depends only on $p$, $s$, $\dmn$, $\bdmn$, and~$c$.

We will now use the inner product to produce an upper bound on the norms.
\begin{thm}\label{dualspace}
Suppose that $1\le p\le\infty$, $1\leq\beta\leq\infty$, and $s\in\R$. If $F\in L^1_{loc}(\Omega)$, and if
\begin{equation*}M=\sup\bigl\{ |\langle F,G\rangle|:G\text{ is compactly supported, bounded and }\|G\|_{L^{p'}_{av,c,\beta',1-s}(\Omega)}=1\bigr\}\end{equation*}
is finite, then $F\in L^p_{av,c,\beta,s}(\Omega)$ and
\begin{equation*}\|F\|_{L^p_{av,c,\beta,s}(\Omega)} \leq CM\end{equation*}
provided the right hand side is finite.
\end{thm}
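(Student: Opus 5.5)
We reduce everything to the Whitney-cube description of Lemma~\ref{lem:averaged:Whitney} and then use duality twice: first for mixed-norm sequence spaces, then for $L^\beta$ on each cube. Fix the grid $\mathcal{G}=\mathcal{G}_1$. By Lemma~\ref{lem:averaged:Whitney} (with $K=1$),
\begin{equation*}
\|F\|_{L^p_{av,c,\beta,s}(\Omega)}\approx\biggl(\sum_{Q\in\mathcal{G}} a_Q^p\biggr)^{1/p},
\qquad a_Q=\biggl(\fint_Q|F|^\beta\biggr)^{1/\beta}\ell(Q)^{1-s+\bdmn/p},
\end{equation*}
so it suffices to bound the $\ell^p$ norm of $(a_Q)$ by $CM$. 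We will first do this for finite subcollections $\mathcal{F}\subset\mathcal{G}$ consisting of cubes in a compact subset of $\Omega$, with a bound independent of $\mathcal{F}$. Monotone convergence then gives the full sum. This finite truncation also takes care of the issue that $a_Q$ might a priori be infinite, since $F$ is only assumed locally integrable; see the last paragraph.

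For each $Q\in\mathcal{F}$, choose $g_Q$ supported in $Q$, bounded, with $\bigl(\fint_Q|g_Q|^{\beta'}\bigr)^{1/\beta'}=1$ and $\fint_Q F\cdot g_Q\ge \frac12\bigl(\fint_Q|F|^\beta\bigr)^{1/\beta}$. For $\beta<\infty$ we take a truncation of $|F|^{\beta-2}F$, suitably normalized. For $\beta=\infty$ we concentrate $g_Q$ on a subset where $|F|$ is near its essential supremum. Next choose nonnegative coefficients $b_Q$ with $\sum_{Q\in\mathcal{F}} b_Q^{p'}=1$ and $\sum_{Q\in\mathcal{F}} a_Q b_Q=\bigl(\sum_{Q\in\mathcal{F}} a_Q^p\bigr)^{1/p}$; this is $\ell^p$--$\ell^{p'}$ duality, which is trivial on finite index sets, including the cases $p=1$ and $p=\infty$. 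Then set
\begin{equation*}
G=\sum_{Q\in\mathcal{F}} b_Q\,\ell(Q)^{-s-\bdmn/p'}\,g_Q,
\end{equation*}
with the scaling chosen to match the exponents in the H\"older computation preceding \eqref{byhold}. This $G$ is compactly supported and bounded.

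Because the cubes are disjoint, a second application of Lemma~\ref{lem:averaged:Whitney} gives
\begin{equation*}
\|G\|_{L^{p'}_{av,c,\beta',1-s}(\Omega)}\approx\biggl(\sum_{Q\in\mathcal{F}}\Bigl(\ell(Q)^{s+\bdmn/p'}\,b_Q\,\ell(Q)^{-s-\bdmn/p'}\Bigr)^{p'}\biggr)^{1/p'}=1 .
\end{equation*}
Meanwhile, using $\delta\approx\ell(Q)$ on $Q$ and $\fint_Q F\cdot g_Q\ge0$,
\begin{equation*}
\langle F,G\rangle=\sum_{Q\in\mathcal{F}} b_Q\,\ell(Q)^{-s-\bdmn/p'}\int_Q F\cdot g_Q\,\delta^{1+\bdmn-\dmn}\ \gtrsim\ \sum_{Q\in\mathcal{F}} b_Q\,a_Q .
\end{equation*}
The exponent check is $-s-\bdmn/p'+\dmn+1+\bdmn-\dmn=1-s+\bdmn/p$. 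After normalizing $G$, this yields $\bigl(\sum_{\mathcal{F}}a_Q^p\bigr)^{1/p}\le CM$.

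\textbf{Main obstacle: positivity of the weight.} The lower bound on $\langle F,G\rangle$ requires $\int_Q F\cdot g_Q\,\delta^{1+\bdmn-\dmn}\gtrsim \ell(Q)^{1+\bdmn-\dmn}\int_Q F\cdot g_Q$. This needs $F\cdot g_Q\ge0$ pointwise, not just a positive average. So we will choose $g_Q$ so that $F\cdot g_Q\ge0$ pointwise, as the truncated $|F|^{\beta-2}F$ does. For $\beta=\infty$ we take $g_Q=\1_E\,F/|F|$ on a set $E$ where $|F|>\frac12\|F\|_{L^\infty(Q)}$.

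\textbf{Finiteness of $a_Q$.} We must also handle the possibility that $\fint_Q|F|^\beta=\infty$. To do so, run the argument with $F$ replaced by its truncation $F\1_{|F|\le k}$ inside each $g_Q$ (that is, choose $g_Q$ depending on $F_k$ but pair it with $F$). Since $F\cdot g_Q\ge F_k\cdot g_Q\ge0$, the pairing only increases. This gives the bound uniformly in $k$, and letting $k\to\infty$ completes the proof.
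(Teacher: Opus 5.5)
Your proof is correct and follows essentially the same route as the paper: discretize via Lemma~\ref{lem:averaged:Whitney}, choose normalized near-extremizers supported on each Whitney cube, and conclude by $\ell^p$--$\ell^{p'}$ duality for the coefficient sequence. The differences are only technical. The paper chooses $G_Q$ to nearly norm $F\delta^{1+\bdmn-\pdmn}$ in $L^\beta(Q)$, so no pointwise sign condition is needed, and it cites a standard lemma for $F\in L^\beta(Q)$ where you use truncation. It also tests against arbitrary nonnegative $\ell^{p'}$ sequences, where you use finite extremal ones.
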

\begin{proof}
Fix some $F$ such that $M$ is finite. Let $Q\in\mathcal{G}$. Then
\begin{equation*}\biggl|\int_Q F\cdot G \,\delta^{1+\bdmn-\pdmn}\biggr|
=|\langle F, \1_QG\rangle|\leq M\end{equation*}
for all bounded functions $G$ with $1=\|\1_Q G\|_{L^{p'}_{av,c,\beta',1-s}(\Omega)} \approx \|G\|_{L^{\beta'}(Q)}\ell(Q)^{\bdmn/p'-\pdmn/\beta'+s}$. Here $\1_Q G=G$ in $Q$ and $\1_Q G=0$ outside of~$Q$. It is well known that this yields $F\in L^\beta(Q)$. (See, for example, \cite[Lemma~8.4]{RoyF10}.)

For each $Q\in\mathcal{G}$, let $G_Q$ be bounded, supported in~$Q$ and satisfy
\begin{equation*}\|G_Q\|_{L^{\beta'}(Q)}=1,\qquad \int_Q (\delta^{1+\bdmn-\pdmn}F)\cdot G_Q  \geq \frac{1}{2} \|F\delta^{1+\bdmn-\pdmn}\|_{L^\beta(Q)}. \end{equation*}
(If $\beta<\infty$ then we may instead require $\int_Q (\delta^{1+\bdmn-\pdmn}F)\cdot G_Q  =\|F\delta^{1+\bdmn-\pdmn}\|_{L^\beta(Q)}$.)
Because $\delta\approx \ell(Q)$ in $Q$ we have that
\begin{equation*}\int_Q (\delta^{1+\bdmn-\pdmn}F)\cdot G_Q  \geq \vartheta\,\ell(Q)^{1+\bdmn-\pdmn}\|F\|_{L^{\beta}(Q)} \end{equation*}
for some $\vartheta>0$.

Let $\{g_Q\}_{Q\in\mathcal{G}}$ be a sequence of nonnegative real numbers in $\ell^{p'}(\mathcal{G})$.
$\mathcal{G}$ is countable, so we may write $\mathcal{G}=\{Q_j\}_{j=1}^\infty$.
Let $H_k$ satisfy
\begin{equation*}H_k=\sum_{j=1}^k g_{Q_j}\,\1_{Q_j} G_{Q_j}\,\ell(Q_j)^{-\bdmn/p'+\pdmn/\beta'-s}.\end{equation*}
Then $H_k$ is bounded (as each $G_{Q_j}$ is bounded), compactly supported (in the union of finitely many cubes), and by Lemma~\ref{lem:averaged:Whitney}
\begin{equation*}\|H_k\|_{L^{p'}_{av,c,\beta', 1-s}(\Omega)}
\approx
\Bigl(\sum_{j=1}^k |g_{Q_j}|^{p'} \|G_{Q_j}\|_{L^{\beta'}(Q_j)} ^{p'} \Bigr)^{1/p'}\leq \|\{g_Q\}_{Q\in\mathcal{G}}\|_{\ell^{p'}(\mathcal{G})}.
\end{equation*}
Thus
\begin{equation*}|\langle F,H_k\rangle| \leq M \|H_k\|_{L^{p'}_{av,c,\beta', 1-s}(\Omega)}
\leq C M \|\{g_Q\}_{Q\in\mathcal{G}}\|_{\ell^{p'}(\mathcal{G})}.\end{equation*}
But by definition of $H_k$ and the inner product
\begin{equation*}\langle F,H_k\rangle
= \sum_{j=1}^k g_{Q_j}\,\ell(Q_j)^{-\bdmn/p'+\pdmn/\beta'-s}
\int_{Q_j}F \cdot G_{Q_j}\,\delta^{1+\bdmn-\pdmn}
\end{equation*}
and by choice of $G_{Q}$
\begin{equation*}\langle F,H_k\rangle
\geq \sum_{j=1}^k g_{Q_j}\,\ell(Q_j)^{\bdmn/p-\pdmn/\beta+1-s}
\vartheta\|F\|_{L^\beta(Q_j)}
.\end{equation*}
Thus, taking the supremum over~$k$, we have that
\begin{equation*}\sum_{Q\in\mathcal{G}} g_Q \,\ell(Q)^{\bdmn/p-\pdmn/\beta+1-s}
\|F\|_{L^\beta(Q)}
\leq CM\|\{g_Q\}_{Q\in\mathcal{G}}\|_{\ell^{p'}(\mathcal{G})}.\end{equation*}
This yields that the sequence
\begin{equation*}\{\ell(Q)^{\bdmn/p-\pdmn/\beta+1-s}
\|F\|_{L^\beta(Q)}\}_{Q\in\mathcal{G}}\end{equation*}
lies in $\ell^p(\mathcal{G})$ with norm at most $CM$, and so by Lemma~\ref{lem:averaged:Whitney} we have that $F\in L^p_{av,\beta,s}(\Omega)$ with norm at most~$CM$, as desired.
\end{proof}

We now apply this duality result in the context of the Poisson problem.

\begin{thm}\label{first:step:duality}
Let $1\leq p\leq \infty$, $1\leq\beta\leq\infty$ and $s\in\R$. Assume that the $L^{p}_{av,\beta,s}(\Omega)$-Poisson problem for $L$ is compatibly solvable. Then the $L^{p'}_{av,\beta',1-s}(\Omega)$-Poisson problem for $L^*$ is compatibly solvable.
\end{thm}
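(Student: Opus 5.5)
We argue by duality, combining the Green's-identity symmetry of the Lax-Milgram solution operators for $L$ and $L^*$ with the norming result Theorem~\ref{dualspace}. Fix $\vec G\in L^{p'}_{av,\beta',1-s}(\Omega)\cap L^2_{av,2,1/2}(\Omega)$ and let $v=v_{\vec G}\in\widetilde W^{1,2}_{av,2,1/2}(\Omega)$ be the Lax-Milgram solution of $L^*v=-\Div(\delta^{\bdmn+1-\pdmn}\vec G)$, given by Definition~\ref{dfn:Lax-Milgram} applied to $A^T$ (which satisfies the same ellipticity conditions~\eqref{eqn:elliptic}). We must show that
\[\|\nabla v\|_{L^{p'}_{av,c,\beta',1-s}(\Omega)}\le C\|\vec G\|_{L^{p'}_{av,c,\beta',1-s}(\Omega)}.\]
By Theorem~\ref{dualspace}, with $F=\nabla v$ and the roles of $(p,\beta,s)$ and $(p',\beta',1-s)$ interchanged (note that $(p')'=p$, $(\beta')'=\beta$, and $1-(1-s)=s$), it suffices to bound $|\langle\nabla v,\vec H\rangle|$ by $C\|\vec G\|_{L^{p'}_{av,\beta',1-s}}$ for every bounded, compactly supported $\vec H$ with $\|\vec H\|_{L^p_{av,c,\beta,s}(\Omega)}=1$. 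Here $\nabla v\in L^2_{loc}\subset L^1_{loc}$, so Theorem~\ref{dualspace} applies.

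For such an $\vec H$ we have $\vec H\in L^p_{av,\beta,s}\cap L^2_{av,2,1/2}$, since boundedness and compact support in $\Omega$ give $\int|\vec H|^2\delta^{\bdmn+1-\pdmn}<\infty$. Let $u=u_{\vec H}$ be the Lax-Milgram solution for $L$. Both $u$ and $v$ lie in the Hilbert space $\widetilde W^{1,2}_{av,2,1/2}(\Omega)$, and each is an admissible test function in the other's weak formulation~\eqref{eqn:weak}. Testing the equation for $u$ with $v$ and the equation for $v$ with $u$ gives
\[\langle\nabla v,\vec H\rangle=\int_\Omega\nabla v\cdot\vec H\,\delta^{\bdmn+1-\pdmn}=\int_\Omega\nabla v\cdot A\nabla u=\int_\Omega\nabla u\cdot A^T\nabla v=\int_\Omega\nabla u\cdot\vec G\,\delta^{\bdmn+1-\pdmn}=\langle\nabla u,\vec G\rangle.\]
By the Hölder-type bound~\eqref{byhold} and compatible solvability of the $L^p_{av,\beta,s}$ problem for $L$,
\[|\langle\nabla u,\vec G\rangle|\le C\|\nabla u\|_{L^p_{av,c,\beta,s}}\|\vec G\|_{L^{p'}_{av,c,\beta',1-s}}\le C\|\vec H\|_{L^p_{av,c,\beta,s}}\|\vec G\|_{L^{p'}_{av,c,\beta',1-s}}.\]
Taking the supremum over $\vec H$ and applying Theorem~\ref{dualspace} gives the desired estimate on $\nabla v$. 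Since $v$ is the Lax-Milgram solution, this is exactly compatible solvability for $L^*$.

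The main point needing care is the justification of the symmetric Green identity. We need the weak formulation~\eqref{eqn:weak} to hold against every test function in $\widetilde W^{1,2}_{av,2,1/2}(\Omega)$, not only against $C^\infty_0$ functions. This is built into Definition~\ref{dfn:Lax-Milgram}, and it is also guaranteed by Proposition~\ref{prp:dense} since $\R^\dmn\setminus\Omega$ is unbounded. Beyond that, the only things to check are that every integral above converges absolutely, which holds because all four functions lie in $L^2(\delta^{\bdmn+1-\pdmn})$, and that the constant in Theorem~\ref{dualspace} depends only on $p,s,c,\dmn,\bdmn$. The endpoint cases $p=1$ or $p=\infty$ require no separate treatment, because Theorem~\ref{dualspace} covers $1\le p\le\infty$ with bounded compactly supported test functions.
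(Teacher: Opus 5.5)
Your proposal is correct and follows essentially the same route as the paper. The paper also tests the two Lax--Milgram solutions against each other to get the symmetric identity~\eqref{eqn:dual}, bounds $\langle\nabla u,\vec G\rangle$ via~\eqref{byhold} and compatible solvability for $L$, and then invokes Theorem~\ref{dualspace} with the roles of $(p,\beta,s)$ and $(p',\beta',1-s)$ interchanged; your write-up simply makes that terse argument explicit.
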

\begin{proof}
Let $u_{\vec H}\in \widetilde W^{1,2}_{av,2,1/2}(\Omega)$ denote the solution to the Poisson problem $Lu_{\vec H}=-\Div(\delta^{1+\bdmn-\pdmn}\vec H)$ given by Definition~\ref{dfn:Lax-Milgram}, and let $u_{\vec H}^*$ denote the corresponding solution to $L^*u^*_{\vec H}=-\Div(\delta^{1+\bdmn-\pdmn}\vec H)$.

Suppose that
$\vec F\in L^p_{av,\beta,s}(\Omega)\cap L^2_{av,2,1/2}(\Omega)$ and that $\vec H\in L^{p'}_{av,\beta',1-s}(\Omega)\cap L^2_{av,c,1/2}(\Omega)$. By Proposition~\ref{prp:dense} and the weak definition of $L$ and~$L^*$, we have that
\begin{equation*}\int_\Omega \nabla\varphi\cdot A\nabla u_{\vec F} = \int_\Omega \nabla \varphi\cdot \vec F\,\delta^{1+\bdmn-\pdmn},
\qquad
\int_\Omega A\nabla\varphi\cdot \nabla u^*_{\vec H} = \int_\Omega \nabla \varphi\cdot \vec H\,\delta^{1+\bdmn-\pdmn}\end{equation*}
for any $\varphi\in \widetilde W^{1,2}_{av,2,1/2}(\Omega)$. In particular, we may take $\varphi=u_{\vec F}$ or $\varphi=u^*_{\vec H}$ and see that
\begin{equation}
\label{eqn:dual}
\int_\Omega \nabla u^*_{\vec H}\cdot \vec F\,\delta^{1+\bdmn-\pdmn}
=
\int_\Omega \nabla u^*_{\vec H}\cdot A\nabla u_{\vec F}  = \int_\Omega \nabla u_{\vec F}\cdot \vec H\,\delta^{1+\bdmn-\pdmn}.\end{equation}

By assumption the $L^{p}_{av,\beta,s}(\Omega)$-Poisson problem for $L$ is compatibly solvable. That is,
\begin{equation*}\|u_{\vec F}\|_{\widetilde W^{1,p}_{av,c,\beta,s}(\Omega)}\leq C\|\vec F\|_{L^p_{av,c,\beta,s}(\Omega)}\end{equation*}
for all $\vec F\in L^p_{av,c,\beta,s}(\Omega)\cap L^2_{av,c,\beta,s}(\Omega)$. In particular, this is true whenever $\vec F$ is bounded and compactly supported.

By Theorem~\ref{dualspace} and the bound~\eqref{byhold}, we have that
\begin{equation*}\|u^*_{\vec H}\|_{\widetilde W^{1,p'}_{av,c,\beta',1-s}(\Omega)}\leq C\|\vec H\|_{L^{p'}_{av,c,\beta',1-s}(\Omega)}\end{equation*}
as desired.
\end{proof}

\subsection{Uniqueness}

We now use the above duality result to show that compatible solvability implies uniqueness, and thus compatible well posedness.

\begin{lem}\label{lem:uniqueness}
Let $\Omega\subset\R^\dmn$ be a weak local John domain.
Let $1\leq p\leq \infty$, $1<\beta<\infty$ and $0<s<1$. Assume that the $L^{p}_{av,\beta,s}(\Omega)$-Poisson problem in $\Omega$ for $L$ is compatibly solvable. Then the $L^{p}_{av,\beta,s}(\Omega)$-Poisson problem for $L$ has the uniqueness property.
\end{lem}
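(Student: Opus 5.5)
By linearity it suffices to show: if $u\in\widetilde W^{1,p}_{av,\beta,s}(\Omega)$ satisfies $Lu=0$ in $\Omega$ (the problem~\eqref{poisson:equation} with $\vec H=0$), then $u$ is constant, hence zero, since $\Tr u=0$. The plan is a duality argument that tests $u$ against adjoint solutions with nice data.

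By Theorem~\ref{first:step:duality}, the $L^{p'}_{av,\beta',1-s}(\Omega)$-Poisson problem for $L^*$ is compatibly solvable. Fix $\vec F$ bounded and compactly supported in $\Omega$, and let $v=u^*_{\vec F}$ be the Lax-Milgram solution of $L^*v=-\Div(\delta^{1+\bdmn-\pdmn}\vec F)$. Then
\begin{equation*}\|\nabla v\|_{L^{p'}_{av,\beta',1-s}(\Omega)}\leq C\|\vec F\|_{L^{p'}_{av,\beta',1-s}(\Omega)}<\infty,\end{equation*}
and $v$ has trace zero. The target identity is
\begin{equation*}\int_\Omega \nabla u\cdot \vec F\,\delta^{1+\bdmn-\pdmn}=\int_\Omega A^T\nabla v\cdot\nabla u=\int_\Omega \nabla v\cdot A\nabla u=0 .\end{equation*}
The first equality is the weak formulation of $L^*v$ tested against $u$. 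The last is the weak formulation of $Lu=0$ tested against $v$. Both pairings converge absolutely by the Hölder bound~\eqref{byhold}. Indeed, $|A|\lesssim\delta^{1+\bdmn-\pdmn}$, so $\int|\nabla v||A||\nabla u|\lesssim\langle|\nabla v|,|\nabla u|\rangle$, with $\nabla u\in L^p_{av,\beta,s}$ and $\nabla v\in L^{p'}_{av,\beta',1-s}$. Since $\vec F$ is an arbitrary bounded compactly supported field, the identity gives $\nabla u=0$ almost everywhere. Then $u$ is constant on the connected set $\Omega$, and the trace-zero condition (Lemma~\ref{lem:boundary:Poincare}, or the $L^p_{av,\beta,s+1}$ bound built into $\widetilde W$) forces $u=0$.

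The main obstacle is justifying the two weak formulations with test functions that are not in $C^\infty_0(\Omega)$. The equations only hold a priori against $C_0^\infty(\Omega)$. For $Lu=0$ tested against $v$, I would approximate $v$ in $\widetilde W^{1,p'}_{av,\beta',1-s}(\Omega)$ by $C^\infty_0$ functions $\varphi_k$ using Proposition~\ref{prp:dense}. This needs $p'<\infty$ and $\beta'<\infty$, hence $p>1$; the hypotheses $1<\beta<\infty$ and $0<s<1$ guarantee $\beta'<\infty$ and $1-s>0$. The convergence $\int\nabla\varphi_k\cdot A\nabla u\to\int\nabla v\cdot A\nabla u$ then follows from~\eqref{byhold}. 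Symmetrically, for $L^*v$ tested against $u$, I would approximate $u$ in $\widetilde W^{1,p}_{av,\beta,s}$, which requires $p<\infty$.

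So I would split into cases. For $1<p<\infty$, approximate whichever of $u$, $v$ is needed by Proposition~\ref{prp:dense}. For $p=\infty$, $v$ lies in a $p'=1$ space and can be approximated, but $u$ cannot. For that pairing I would use the cutoffs $\psi_R$ of Lemma~\ref{lem:dense:infinity} together with the boundary cutoffs $\eta_r$ of Lemma~\ref{lem:dense:boundary}. I would then control the commutator terms $u\nabla\eta$ through the Poincaré bound~\eqref{eqn:boundary:Poincare:zero}, paired against $\nabla v\in L^{1}_{av,\beta',1-s}$, which does decay at the boundary and at infinity by dominated convergence. For $p=1$ the roles of $u$ and $v$ swap. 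In each case the error terms should tend to zero because the $p'$ or $p$ side is finite and absolutely continuous. Checking this carefully is the step I expect to be the hardest.
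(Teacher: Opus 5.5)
Your proposal is correct and follows the paper's proof. You reduce to $Lu=0$, invoke Theorem~\ref{first:step:duality} to get compatible solvability of the dual $L^{p'}_{av,\beta',1-s}(\Omega)$-Poisson problem for $L^*$, and test each equation against the other solution. This is justified by Proposition~\ref{prp:dense} when the relevant exponent is finite, and by a cutoff plus dominated-convergence argument at the $L^\infty$ endpoint (the cutoffs act on $u$ when $p=\infty$ and on the adjoint solution when $p=1$).

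The only difference is in how that endpoint is handled. The paper uses a single family of interior cutoffs $\eta_n$ with $|\nabla\eta_n|\leq C\delta^{-1}$ and relies directly on the bound $\mathcal{W}_{c,\beta}(u)\,\delta^{-s}\in L^\infty$, which is built into $\widetilde W^{1,\infty}_{av,\beta,s}(\Omega)$. So it needs neither the boundary Poincar\'e inequality nor the particular cutoffs of Lemmas~\ref{lem:dense:boundary} and~\ref{lem:dense:infinity}.
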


\begin{proof}
By linearity it suffices to show that if $u\in \widetilde W^{1,p}_{av,\beta,s}(\Omega)$ is a solution to $Lu=0$ in $\Omega$, then $\nabla u=0$ almost everywhere. In particular, it suffices to show that
\begin{equation*}\int_\Omega \nabla u\cdot \vec \Phi\,\delta^{1+\bdmn-\pdmn}=0\end{equation*}
for all bounded compactly supported functions~$\vec\Phi$.

By Theorem~\ref{first:step:duality}, the $L^{p'}_{av,\beta',1-s}(\Omega)$-Poisson problem for $L^*$ is compatibly solvable. Let $\vec \Phi$ be bounded and compactly supported (and thus in $L^2_{av,2,1/2}(\Omega)\cap L^{p'}_{av,\beta',1-s}(\Omega)$) and let $u^*_{\vec\Phi}$ be the solution to $L^*u^*_{\vec\Phi}=-\Div(\delta^{1+\bdmn-\pdmn}\vec\Phi)$.

By Proposition~\ref{prp:dense} and because $u\in \widetilde W^{1,p}_{av,c,\beta,s}(\Omega)$ for $\beta<\infty$, if $p<\infty$ then we may use $u$ as the test function in the definition~\eqref{eqn:L} of $L^*u^*_{\vec\Phi}$, and so
\begin{align}\label{eqn:uniqueness:proof}
\int_{\Omega}\nabla u\cdot \vec\Phi\,\delta^{1+\bdmn-\dmn}&= \int_{\Omega}\nabla u\cdot A^*\nabla u^*_{\vec\Phi}
= \int_{\Omega}A\nabla u\cdot \nabla u^*_{\vec\Phi}.\end{align}
We claim that this formula still holds if $p=\infty$. In this case $\mathcal{W}_{c,\beta}(\nabla u)\delta^{1-s} +\mathcal{W}_{c,\beta}(u)\delta^{-s}$ is bounded by Lemma~\ref{lem:averaged:nearby} and the definition of $\widetilde W^{1,\infty}_{av,\beta,s}$. Let $\{\eta_n\}_{n=1}^\infty$ be a sequence of smooth cutoff functions taking values in $[0,1]$ and compactly supported in~$\Omega$, with $\eta_n\to 1$ and $\nabla \eta_n \to 0$ pointwise in $\Omega$ and with $|\nabla \eta_n(x)|\leq C\delta(x)^{-1}$. Then
\begin{equation*}\mathcal{W}_{c,\beta}(\nabla (\eta_n u))\delta^{1-s}
\leq C \mathcal{W}_{c,\beta}(\nabla u)\delta^{1-s} +C\mathcal{W}_{c,\beta}(u)\delta^{-s}.\end{equation*}
Thus $\eta_n u$ is compactly supported in $\Omega$ and so may be approximated by smooth functions, and so
\begin{align*}
\int_{\Omega}\nabla (\eta_n u)\cdot \vec\Phi\,\delta^{1+\bdmn-\dmn}&= \int_{\Omega}\nabla (\eta_n u)\cdot A^*\nabla u^*_{\vec\Phi}
.\end{align*}
By the product rule $\nabla(\eta_n u)\to \nabla u$ pointwise.
By the dominated convergence theorem, we may take the limit as $n\to\infty$ and recover formula~\eqref{eqn:uniqueness:proof}.

Because $u^*_{\vec\Phi}\in \widetilde W^{1,p'}_{av,c,\beta',1-s}(\Omega)$, by a similar argument we may use $u^*_{\vec\Phi}$ as the test function in the weak definition of $Lu=0$ and see that
\begin{align*}
\int_{\Omega}\nabla u\cdot \vec\Phi\,\delta^{1+\bdmn-\dmn}&= 0\end{align*}
as desired.
\end{proof}

\subsection{The case \texorpdfstring{$p=\infty$}{p=∞}}

By Remark~\ref{rmk:compatible:solvable} and Lemma~\ref{lem:uniqueness}, we have that if $1\leq p<\infty$, then compatible solvability of the Poisson problem implies well posedness. We would like to extend to the endpoint $p=\infty$.

\begin{lem}\label{lem:solv:infty}
Let $1<\beta<\infty$ and $0<s<1$. If the $L^{\infty}_{av,\beta,s}(\Omega)$-Poisson problem is compatibly solvable, then it is well posed.
\end{lem}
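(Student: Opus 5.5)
Uniqueness is already available: Lemma~\ref{lem:uniqueness} applies with $p=\infty$, $1<\beta<\infty$, $0<s<1$, so compatible solvability of the $L^\infty_{av,\beta,s}(\Omega)$-Poisson problem gives the uniqueness property. It therefore remains to prove solvability. The obstacle is that $L^\infty_{av,\beta,s}\cap L^2_{av,2,1/2}$ is not dense in $L^\infty_{av,\beta,s}$, so Remark~\ref{rmk:compatible:solvable} does not apply. I would replace density by a weak-$\star$ compactness argument.

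Fix $\vec H\in L^\infty_{av,\beta,s}(\Omega)$ and choose an exhaustion of $\Omega$ by compact sets. Set $\vec H_k=\1_{K_k}\vec H$, where $K_k$ is a finite union of Whitney cubes $\{Q\in\mathcal{G}:\ell(Q)\geq 2^{-k},\ Q\subset B(x_0,2^k)\}$. Each $\vec H_k$ lies in $L^\beta$ with compact support, hence in $L^2_{av,2,1/2}(\Omega)$ (since $\beta\ge 2$ may fail, first truncate $|\vec H|$ at height $k$ as well). By Lemma~\ref{lem:averaged:Whitney}, $\|\vec H_k\|_{L^\infty_{av,\beta,s}}\leq C\|\vec H\|_{L^\infty_{av,\beta,s}}$. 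Compatible solvability then gives Lax--Milgram solutions $u_k$ with $\|u_k\|_{\dot W^{1,\infty}_{av,\beta,s}}\leq C\|\vec H\|_{L^\infty_{av,\beta,s}}$. Since $\Tr u_k=0$, Lemma~\ref{lem:boundary:Poincare} (with $p=\infty$, $R\to\infty$) also gives uniform bounds on $\|u_k\|_{L^\infty_{av,\beta,s+1}}$.

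Next I extract a limit. On each Whitney cube $Q$ (or its dilate $\frac32 Q$), $u_k$ and $\nabla u_k$ are bounded in $L^\beta$ uniformly in $k$. Since $1<\beta<\infty$, these spaces are reflexive, so a diagonal argument over the countable family $\mathcal{G}$ yields a subsequence with $u_k\to u$ weakly in $W^{1,\beta}(Q)$ for every $Q$; by Rellich, the convergence is strong in $L^\beta_{loc}$. Weak lower semicontinuity of the $L^\beta(Q)$ norms, combined with Lemma~\ref{lem:averaged:Whitney}, transfers the bounds to $u$: $\|u\|_{\dot W^{1,\infty}_{av,\beta,s}}+\|u\|_{L^\infty_{av,\beta,s+1}}\leq C\|\vec H\|_{L^\infty_{av,\beta,s}}$. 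Hence $u\in\widetilde W^{1,\infty}_{av,\beta,s}(\Omega)$, and Lemma~\ref{lem:trace:zero} gives $\Tr u=0$ everywhere because $s>0$. For any $\varphi\in C^\infty_0(\Omega)$, the support of $\varphi$ meets finitely many cubes, and $\vec H_k=\vec H$ there for large $k$. Weak convergence of $\nabla u_k$ in $L^\beta$ against $A^T\nabla\varphi\in L^{\beta'}$ then passes the identity~\eqref{eqn:L} to the limit, so $Lu=-\Div(\delta^{1+\bdmn-\pdmn}\vec H)$.

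The main technical point is making sure the compactness argument is legitimate. The uniform bounds must hold on each fixed cube, which follows from the $L^\infty$-type norm: $\ell(Q)^{1-s}(\fint_Q|\nabla u_k|^\beta)^{1/\beta}\lesssim$ const, and similarly for $u_k$ via Poincaré. The truncation also has to preserve the norm bound, which holds because $|\vec H_k|\le|\vec H|$ pointwise. The rest is routine. Combining solvability with the uniqueness from Lemma~\ref{lem:uniqueness} gives well posedness.
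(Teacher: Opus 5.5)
Your proof is correct and follows the same overall strategy as the paper. You approximate $\vec H$ by bounded, compactly supported data and use compatible solvability together with Lemma~\ref{lem:boundary:Poincare} to get uniform bounds on the Lax--Milgram solutions in $\widetilde W^{1,\infty}_{av,\beta,s}(\Omega)$. You then extract a limit by compactness, pass to the limit in the weak formulation, and recover the estimate by lower semicontinuity.

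The difference is the compactness mechanism. The paper treats $L^\infty_{av,\beta,s+1}(\Omega)$ and $L^\infty_{av,\beta,s}(\Omega)$ as duals of $L^1_{av,\beta',-s}(\Omega)$ and $L^1_{av,\beta',1-s}(\Omega)$, via Theorem~\ref{dualspace}. It extracts global weak-$\star$ limits $u$ and $\vec G$, and then has to verify $\nabla u=\vec G$ by integrating by parts. You work cube by cube instead:
\begin{itemize}
\item the $L^\infty$-type norms give uniform $W^{1,\beta}(\tfrac32 Q)$ bounds on each Whitney cube;
\item reflexivity ($1<\beta<\infty$) and a diagonal argument over the countable family $\mathcal{G}$ give one subsequence converging weakly in every $W^{1,\beta}(\tfrac32 Q)$;
\item the gradient of the limit is identified automatically, since $W^{1,\beta}$ is weakly closed;
\item because the norms are suprema over cubes (Lemma~\ref{lem:averaged:Whitney}), cube-wise weak lower semicontinuity suffices to transfer the bounds.
\end{itemize}
Your route is more elementary, needing no duality theory. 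The paper's is shorter, given the duality results already established in Section~\ref{sec:duality}.

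One small slip needs fixing. Once you truncate $|\vec H|$ at height $k$, it is no longer true that $\vec H_k=\vec H$ on $\supp\varphi$ for large $k$, unless $\vec H$ happens to be bounded there. Instead, use $|\vec H_k|\le|\vec H|$ and $\vec H_k\to\vec H$ pointwise. Dominated convergence (with $\vec H\in L^\beta_{loc}(\Omega)$) then gives $\vec H_k\to\vec H$ in $L^\beta(\supp\varphi)$. That is enough to pass to the limit in $\int\nabla\varphi\cdot\delta^{1+\bdmn-\pdmn}\vec H_k$. The appeal to Rellich is harmless but not needed.
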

\begin{proof}
Uniqueness is given by Lemma~\ref{lem:uniqueness}.

Choose some $\vec H\in L^\infty_{av,\beta,s}(\Omega)$. We must construct a solution $u\in \widetilde W^{1,\infty}_{av,\beta,s}(\Omega)$ with $Lu=-\Div (\delta^{1+\bdmn-\pdmn}\vec H)$ in~$\Omega$.

Recall that $\mathcal{G}$ is a grid of Whitney cubes in~$\Omega$. As in proof of Theorem~\ref{dualspace}, we may write $\mathcal{G}=\{Q_j\}_{j=1}^\infty$, and let $\vec H_k=\sum_{j=1}^k \1_{Q_j}\vec H$.
Then $\vec H_k$ is compactly supported, and by the dominated convergence theorem $\langle \vec F,\vec H_k\rangle\to\langle \vec F,\vec H\rangle$ for all $\vec F\in L^1_{av,\beta',1-s}(\Omega)$; that is, $\vec H_k\rightharpoonup \vec H$ weakly in $(L^1_{av,\beta',1-s}(\Omega))^\star$. We may then find $\vec \Phi_k$ such that $\vec \Phi_k$ is compactly supported, bounded, and still satisfies $\vec \Phi_k\rightharpoonup \vec H$ weakly in $(L^1_{av,\beta',1-s}(\Omega))^\star$.

Let $u_{\vec\Phi_j}\in \widetilde W^{1,2}_{av,2,1/2}(\Omega)$ be the solution to $Lu_{\vec\Phi_j}=-\Div(\vec\Phi_j\delta^{1+\bdmn-\dmn})$ given by Definition~\ref{dfn:Lax-Milgram}. By assumption of compatible solvability, $u_{\vec\Phi_j}\in \widetilde W^{1,\infty}_{av,\beta,s}(\Omega)$ and if $0<c<1$ then
\begin{equation*}
\|\nabla u_{\vec\Phi_j}\|_{L^\infty_{av,c,\beta,s}(\Omega)}\le C\|\vec\Phi_j\|_{L^\infty_{av,c,\beta,s}(\Omega)}\le C\|\vec H\|_{L^\infty_{av,c,\beta,s}(\Omega)}.
\end{equation*}
Lemma \ref{lem:boundary:Poincare} gives us
\begin{equation*}
\|u_{\vec\Phi_j}\|_{L^\infty_{av,c,\beta,s+1}(\Omega)}\le C\|\nabla u_{\vec\Phi_j}\|_{L^\infty_{av,c,\beta,s}(\Omega)}\le C\|\vec H\|_{L^\infty_{av,c,\beta,s}(\Omega)}.
\end{equation*}
By the weak$^{\star}$ compactness of bounded sequences, by passing to a subsequence we can find $u\in L^\infty_{av,\beta,s+1}(\Omega)$ and $\vec G\in L^\infty_{av,\beta,s}(\Omega)$ such that
$u_j\rightharpoonup u$ in $(L^1_{av,\beta',-s}(\Omega))^\star$ and $\nabla u_j\rightharpoonup\vec G$ in $(L^1_{av,\beta',1-s}(\Omega))^\star$. We now prove that $\nabla u=\vec G$; this will imply that $u\in \widetilde W^{1,\infty}_{av,\beta,s}(\Omega)$.
For any $\vec\Psi\in C^\infty_c(\Omega)$ we have that
\begin{align*}
\int_\Omega u\Div\vec\Psi&=\langle u,\Div(\vec\Psi)\delta^{-1-\bdmn+\dmn}\rangle
.\end{align*}
By weak convergence of $u_{\vec\Phi_j}$ to~$u$ we have that
\begin{align*}
\int_\Omega u\Div\vec\Psi&=\lim_{j\to\infty}\langle u_{\vec\Phi_j}, \Div(\vec\Psi)\delta^{-1-\bdmn+\dmn}\rangle
=\lim_{j\to\infty}\int_\Omega u_{\vec\Phi_j}\Div\vec\Psi
.\end{align*}
Integrating by parts and applying weak convergence of $\nabla u_{\vec\Phi_j}$ yields that
\begin{align*}
\int_\Omega u\Div\vec\Psi&
=-\lim_{j\to\infty}\int_\Omega \nabla u_{\vec\Phi_j}\cdot\vec\Psi
=-\int_\Omega \vec G\cdot\vec\Psi
.\end{align*}
Similarly, by the weak definition of~$L$, we have that $Lu=-\Div(\vec H\delta^{1+\bdmn-\dmn})$.

By the lower semicontinuity of the weak$^{\star}$ convergence and by the properties of $\vec\Phi_j$, we have that
\begin{equation*}
\|\nabla u\|_{L^\infty_{av,c,\beta,s}}\le C \sup_j \|\nabla u_{\vec\Phi_j}\|_{L^\infty_{av,c,\beta,s}}\le C\sup_j\|\vec\Phi_j\|_{L^\infty_{av,c,\beta,s}}=C\|\vec H\|_{L^\infty_{av,c,\beta,s}}.
\end{equation*}
Thus, compatible solvability implies solvability. This completes the proof.
\end{proof}

\subsection{An extended range of\texorpdfstring{~$\beta$}{ β}}

As noted in Definition~\ref{dfn:Lax-Milgram}, the Lax-Milgram lemma yields well posedness of the $L^2_{av,2,1/2}(\Omega)$-Poisson problem. The goal of this paper is to extend to the $L^p_{av,\beta,s}(\Omega)$-Poisson problem for a range of $p$, $\beta$, and~$s$. In this section we show how the bound~\eqref{eqn:interior:Meyers} immediately yields an extended range of~$\beta$.

\begin{lem}\label{lem:beta}Let $L$ and~$\Omega$ be as in Definition~\ref{dfn:Poisson:psb}. Let $p^+_L$ be as in the bound~\eqref{eqn:interior:Meyers}, let $p^+_{L^*}$ be as in the bound~\eqref{eqn:interior:Meyers} with $L$ replaced by~$L^*$, let
\begin{equation*}\frac{1}{p^-_{L^*}}+\frac{1}{p^+_{L^*}}=1,
\end{equation*}
and let $\beta$ satisfy $p^-_{L^*}<\beta<p^+_L$.

Then the $L^{2}_{av,\beta,1/2}(\Omega)$-Poisson problem for $L$ is compatibly well posed in~$\Omega$.

More generally, suppose that $s\in\R$, $0<p\leq\infty$, and the $L^{p}_{av,2,s}(\Omega)$-Poisson problem is compatibly well posed. Then the $L^{p}_{av,\beta,s}(\Omega)$-Poisson problem is compatibly well posed, and the constant $C$ in the estimate~\eqref{poisson:estimate} may be bounded depending only on $\dmn$, $\bdmn$,~$c$, the $L^{p}_{av,c,2,s}(\Omega)$ solvability constant, and the constant in the estimate~\eqref{eqn:interior:Meyers}.
\end{lem}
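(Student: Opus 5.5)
The plan is to compare the two averaged norms Whitney cube by Whitney cube, using Lemma~\ref{lem:averaged:Whitney}. On each cube the interior Meyers estimate~\eqref{eqn:interior:Meyers} upgrades $L^2$ averages of $\nabla u$ to $L^\beta$ averages when $\beta>2$. Duality (Theorem~\ref{first:step:duality}) handles $\beta<2$.

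\emph{The case $2\leq\beta<p^+_L$.} Let $\vec H\in L^p_{av,\beta,s}(\Omega)\cap L^2_{av,2,1/2}(\Omega)$ and let $u=u_{\vec H}$ be the Lax--Milgram solution. By~\eqref{eqn:Lpav:Holder}, $\vec H\in L^p_{av,2,s}(\Omega)$. The assumed compatible solvability of the $L^p_{av,2,s}$-problem then gives $\|\nabla u\|_{L^p_{av,2,s}}\leq C\|\vec H\|_{L^p_{av,2,s}}\leq C\|\vec H\|_{L^p_{av,\beta,s}}$.

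Now fix a Whitney cube $Q\in\mathcal{G}_\kappa$ with $\kappa$ large. Cover $Q$ by boundedly many balls $B(y,r)$ with $r\approx\ell(Q)$ and $B(y,4r)\subset\Omega$. Apply~\eqref{eqn:interior:Meyers} on each ball. This bounds $(\fint_Q|\nabla u|^\beta)^{1/\beta}$ by $C(\fint_{KQ}|\nabla u|^2)^{1/2}+C(\fint_{KQ}|\vec H|^\beta)^{1/\beta}$ for some $1<K<2\kappa+1$. Here we use that $u\in W^{1,2}_{loc}$ solves $Lu=-\Div(\delta^{1+\bdmn-\pdmn}\vec H)$ in $\Omega$.

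Raise this to the power $p$, multiply by $\ell(Q)^{\bdmn+p-ps}$, and sum over $Q$. (For $p=\infty$, take suprema instead.) Lemma~\ref{lem:averaged:Whitney}, applied with $\beta$ and $2$ and with dilation $K$, converts the sums back into the averaged norms. This yields
\[
\|\nabla u\|_{L^p_{av,\beta,s}}\leq C\|\nabla u\|_{L^p_{av,2,s}}+C\|\vec H\|_{L^p_{av,\beta,s}}\leq C\|\vec H\|_{L^p_{av,\beta,s}},
\]
which is compatible solvability. Uniqueness in $\widetilde W^{1,p}_{av,\beta,s}$ follows because this space embeds in $\widetilde W^{1,p}_{av,2,s}$ when $\beta\geq2$, where uniqueness is assumed. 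Alternatively, invoke Lemma~\ref{lem:uniqueness} and Lemma~\ref{lem:solv:infty} directly when $1\leq p\leq\infty$.

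\emph{The case $p^-_{L^*}<\beta<2$.} Here I would dualize, assuming $1\leq p\leq\infty$. By Theorem~\ref{first:step:duality}, the $L^{p'}_{av,2,1-s}$-problem for $L^*$ is compatibly solvable. Since $2<\beta'<p^+_{L^*}$, the previous step applied to $L^*$ gives compatible solvability of the $L^{p'}_{av,\beta',1-s}$-problem for $L^*$. Theorem~\ref{first:step:duality} again returns compatible solvability of the $L^p_{av,\beta,s}$-problem for $L$. Well-posedness then follows from Lemmas~\ref{lem:uniqueness} and~\ref{lem:solv:infty} together with Remark~\ref{rmk:compatible:solvable}.

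The first claim of the lemma is the special case $p=2$, $s=1/2$. There the $L^2_{av,2,1/2}$-problem is compatibly well posed by Definition~\ref{dfn:Lax-Milgram} and Corollary~\ref{cor:averaged:is:Wp}, and the constant is $1-s=1/2$, so duality stays within the same family.

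\emph{Main obstacle.} The main obstacle is the range $p<1$ with $\beta<2$, since duality is unavailable for quasi-Banach $p$. There I would try to prove the reverse inequality directly. For $\beta<2$, the Whitney-cube Hölder bound trivially gives $\|\cdot\|_{L^p_{av,\beta,s}}\leq\|\cdot\|_{L^p_{av,2,s}}$ on the $\nabla u$ side. The needed bound on the data side is $\|\vec H\|_{L^p_{av,2,s}}\lesssim\|\vec H\|_{L^p_{av,\beta,s}}$, which is false in general. So I would decompose $\vec H=\sum_Q\1_Q\vec H$ cube by cube. On a single cube, solvability in $L^2$ and $L^\beta$ differ only through local $W^{1,\beta}$ estimates for the Lax--Milgram solution near that cube. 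I would try to handle this via interior duality with Meyers for $L^*$, and then use $p$-subadditivity to sum over cubes. This is the step I am least sure of, and it may require restricting to $p\geq1$ in the duality step.
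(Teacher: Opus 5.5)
Your argument is the same as the paper's. For $\beta>2$ the paper applies \eqref{eqn:interior:Meyers} on the balls $B(x,c\delta(x))$, integrates, and then uses the assumed $L^p_{av,2,s}$ bound together with H\"older's inequality; your Whitney-cube version is the same computation. For $\beta<2$ the paper invokes Theorem~\ref{first:step:duality}, which is exactly your two-step duality. Your explicit uniqueness argument (embedding $\widetilde W^{1,p}_{av,\beta,s}$ into $\widetilde W^{1,p}_{av,2,s}$ when $\beta\geq 2$) fills in a detail the paper leaves implicit.

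The obstacle you flag is not a gap in your proposal relative to the paper. Theorem~\ref{first:step:duality} is only stated for $1\le p\le\infty$, so the paper's proof also settles the $\beta<2$ half of the general claim only in that range. This limitation is harmless for the paper. The lemma is only applied with $p=2$, $s=1/2$, and the range $p\le 1$ with general $\beta$ is obtained directly in Proposition~\ref{prp:atom:Poisson} and Corollary~\ref{cor:atom}. You can therefore drop the speculative localization sketch and state the $\beta<2$ case only for $1\le p\le\infty$.
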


\begin{proof} Suppose first that $\beta>2$.
Let $c<\frac{1}{2}$ and $\vec H\in L^p_{av,\beta,s}\cap L^2_{av,\beta, s}$. By \eqref{eqn:interior:Meyers}, for any $2\le\beta<p^+_L$, and any $x\in\Omega$,
\begin{equation*}
\biggl(\fint_{B(x,c\delta(x))}|\nabla u|^\beta \biggr)^{1/\beta}
\leq C\biggl(\fint_{B(x,2c\delta(x))}|\nabla u|^2\biggr)^{1/2}
+C\biggl(\fint_{B(x,2c\delta(x))}|\vec H|^\beta \biggr)^{1/\beta},
\end{equation*}
where $C$ depends on $\dmn$, $\beta$, and $L$ only. We can then integrate over $\Omega$,  getting
\begin{align*}
\|\nabla u\|_{L^p_{av,c,\beta,s}(\Omega)}&\le C(\|\nabla u\|_{L^p_{av,2c,2,s}(\Omega)}+\|\vec H\|_{L^p_{av,2c,\beta,s}(\Omega)})\\
&\le C ( \|\vec H\|_{L^p_{av,2c,2,s}(\Omega)}+\|\vec H\|_{L^p_{av,2c,\beta,s}(\Omega)})\\
&\le C  \|\vec H\|_{L^p_{av,2c,\beta,s}(\Omega)},
\end{align*}
where in the last line we used the Hölder inequality.   Lemma~\ref{lem:averaged:Whitney} gives $ \|\vec H\|_{L^p_{av,2c,\beta,s}(\Omega)}\approx \|\vec H\|_{L^p_{av,c,\beta,s}(\Omega)}$, ending the proof in the case $\beta>2$.
We may complete the proof using Theorem~\ref{first:step:duality}.
\end{proof}

\subsection{Interpolation}\label{sec:interpolation}

The duality results for the spaces $L^p_{av,\beta,s}(\Omega)$ of Section~\ref{sec:duality} allow us to pass from the $L^p_{av,\beta,s}(\Omega)$-Poisson problem for~$L$ to the $L^{p'}_{av,\beta',1-s}(\Omega)$-Poisson problem for~$L^*$.

We can also establish interpolation results for the spaces $L^p_{av,\beta,s}(\Omega)$, which will yield interpolation results for the $L^p_{av,\beta,s}(\Omega)$-Poisson problem. This is the subject of the present section.

We will use the real interpolation method of Lions and Peetre, following \cite{BerL76}.

We say that two quasi-normed vector spaces $A_0$, $A_1$ are compatible if there exist linear continuous embeddings of $A_0$ and $A_1$ into some Hausdorff topological vector space $X$. Then $A_0\cap A_1$, $A_0+A_1$ may be defined in the natural way as quasi-normed vector spaces, and if $A_0$ and $A_1$ are complete then so are $A_0\cap A_1$ and $A_0+A_1$.
If $0<r\le\infty$ and $0<\gamma<1$, then we let the real interpolation space $(A_0,A_1)_{\gamma,r}$ denote $\{a\in A_0+A_1\,:\,\|a\|_{(A_0,A_1)_{\gamma,r}}<\infty\}$, where
\begin{equation*}
\|a\|_{(A_0,A_1)_{\gamma,r}}:=\left(\int_0^\infty\,(t^{-\gamma}\inf\{\|a_0\|_{A_0}+t\|a_1\|_{A_1}\,:\,a_0+a_1=a\})^r\,dt/t\right)^{1/r}\,.
\end{equation*}
An important property of this interpolation space is as follows.
\begin{thm}[{\cite[Theorem 3.11.2]{BerL76}}]\label{thm:BerL76}
Let $A_0,$ $A_1$ and $B_0$, $B_1$ be two compatible pairs of quasi-normed vector spaces. If $T:A_0+A_1\to B_0+B_1$ is a linear bounded operator such that $T(A_0)\subset B_0$ and $T(A_1)\subset B_1$, then $T((A_0,A_1)_{\gamma,r})\subset (B_0,B_1)_{\gamma,r}$, with
\begin{equation}
\|T\|_{(A_0,A_1)_{\gamma,r}\to (B_0,B_1)_{\gamma,r}}\le\|T\|^{1-\gamma}_{A_0\to B_0}\|T\|^{\gamma}_{A_1\to B_1}.
\end{equation}
\end{thm}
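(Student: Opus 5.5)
The plan is the standard Lions--Peetre $K$-functional argument, carried out with the Whitney-type bound $\|T\|^{1-\gamma}_{A_0\to B_0}\|T\|^{\gamma}_{A_1\to B_1}$ kept explicit.

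First, normalize. Write $M_0=\|T\|_{A_0\to B_0}$ and $M_1=\|T\|_{A_1\to B_1}$. The boundedness of $T:A_0+A_1\to B_0+B_1$ is only used to make $Ta$ meaningful for $a\in A_0+A_1$. The quantitative content is entirely in the two endpoint bounds.

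Second, compare $K$-functionals. Define
\begin{equation*}
K(t,a;A_0,A_1)=\inf\{\|a_0\|_{A_0}+t\|a_1\|_{A_1}:a=a_0+a_1\}.
\end{equation*}
Fix $a\in A_0+A_1$ and any decomposition $a=a_0+a_1$ with $a_j\in A_j$. By linearity, $Ta=Ta_0+Ta_1$ with $Ta_j\in B_j$. Hence
\begin{equation*}
K(t,Ta;B_0,B_1)\leq M_0\|a_0\|_{A_0}+tM_1\|a_1\|_{A_1}.
\end{equation*}
Suppose first that $M_0>0$. The right-hand side equals $M_0\bigl(\|a_0\|_{A_0}+(tM_1/M_0)\|a_1\|_{A_1}\bigr)$. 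Taking the infimum over decompositions gives
\begin{equation*}
K(t,Ta;B)\leq M_0K(tM_1/M_0,a;A).
\end{equation*}

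Third, integrate. Substitute $\tau=tM_1/M_0$ in the defining integral. The measure $dt/t$ is dilation invariant, and $t^{-\gamma}=(M_1/M_0)^{\gamma}\tau^{-\gamma}$. Therefore
\begin{equation*}
\|Ta\|_{(B_0,B_1)_{\gamma,r}}\leq M_0(M_1/M_0)^\gamma\|a\|_{(A_0,A_1)_{\gamma,r}}=M_0^{1-\gamma}M_1^\gamma\|a\|_{(A_0,A_1)_{\gamma,r}}.
\end{equation*}
This also shows $Ta\in(B_0,B_1)_{\gamma,r}$. For $r=\infty$ the same computation works with the supremum in place of the integral.

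The degenerate cases $M_0=0$ or $M_1=0$ need separate care, and this is where I expect the only real subtlety. Replace $M_j$ by $M_j+\varepsilon$ in the argument above, which is valid since the bounds $\|Ta_j\|_{B_j}\leq (M_j+\varepsilon)\|a_j\|_{A_j}$ still hold, and then let $\varepsilon\to0$. For quasi-norms nothing changes: the $K$-functional is defined the same way, and the inequalities used are only monotonicity and the infimum, so no triangle-inequality constant appears.
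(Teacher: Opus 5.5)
Your proposal is correct, and it is the standard Lions--Peetre argument behind the cited result; the paper gives no proof of its own and only cites Bergh--L\"ofstr\"om. The argument is the comparison $K(t,Ta;B_0,B_1)\le M_0\,K(tM_1/M_0,a;A_0,A_1)$, followed by the dilation-invariant change of variables in $dt/t$; the quasi-normed case goes through unchanged, and your $\varepsilon$-perturbation correctly handles $M_0M_1=0$. You leave implicit that $M_0,M_1<\infty$, but that is how the right-hand side must be read, and it follows from the closed graph theorem whenever the couples are complete, as they are in every application in the paper.
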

In this section we will apply the previous theorem to the
operator $T$ given by
\begin{equation*}T\vec H=\nabla u_{\vec H}\end{equation*}
where $u_{\vec H}$ is the solution to the Poisson problem with data~$\vec H$.

\begin{thm}\label{interpolation}
Let $\Omega\subset\R^\dmn$ be open with $\bdmn$-Ahlfors regular boundary.
Let $p_0$, $p_1\in(0,\infty)$, $1\le\beta\le\infty$, $s_0$, $s_1\in\R$, and $c$, $\gamma\in (0,1)$. Then
\begin{equation}
(L^{p_0}_{av,c,\beta,s_0}(\Omega),L^{p_1}_{av,c,\beta,s_1}(\Omega))_{\gamma,p_\gamma}=L^{p_\gamma}_{av,c,\beta,s_{\gamma}}(\Omega)
\end{equation}
where
\begin{equation*}s_\gamma=(1-\gamma)s_0+\gamma s_1\quad\text{and}\quad \frac{1}{p_\gamma}=\frac{1-\gamma}{p_0}+\frac{\gamma}{p_1}.\end{equation*}
\end{thm}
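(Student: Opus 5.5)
The plan is to reduce the statement to the classical interpolation theorem for weighted vector-valued sequence spaces, using the Whitney characterization of Lemma~\ref{lem:averaged:Whitney}. Fix a Whitney grid $\mathcal{G}=\mathcal{G}_\kappa$ with $\kappa$ large, and take $K=1$. By Lemma~\ref{lem:averaged:Whitney},
\begin{equation*}\|F\|_{L^p_{av,c,\beta,s}(\Omega)}\approx \biggl(\sum_{Q\in\mathcal{G}} \Bigl(\ell(Q)^{\bdmn/p+1-s-\pdmn/\beta}\|F\|_{L^\beta(Q)}\Bigr)^p\biggr)^{1/p}.\end{equation*}
Consider the map $J:F\mapsto \{F|_Q\}_{Q\in\mathcal{G}}$, which sends functions on $\Omega$ to sequences indexed by the disjoint cubes $Q$, with the $Q$-th entry in $L^\beta(Q)$. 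This map is a bijection of $L^\beta_{loc}(\Omega)$ onto such sequences, because the cubes partition $\Omega$. Under $J$, the space $L^p_{av,c,\beta,s}(\Omega)$ is isomorphic, with equivalent quasi-norms, to the weighted space $\ell^p_{w}(\{L^\beta(Q)\})$ with weights $w_Q=\ell(Q)^{\bdmn/p+1-s-\pdmn/\beta}$. Both endpoint spaces embed continuously into the Hausdorff space $L^\beta_{loc}(\Omega)$, so they form a compatible pair. Real interpolation is invariant under isomorphisms that commute with the embeddings, via Theorem~\ref{thm:BerL76} applied to $J$ and $J^{-1}$. It therefore suffices to prove
\begin{equation*}\bigl(\ell^{p_0}_{w^0}(X_Q),\ell^{p_1}_{w^1}(X_Q)\bigr)_{\gamma,p_\gamma}=\ell^{p_\gamma}_{w^\gamma}(X_Q)\end{equation*}
with $X_Q=L^\beta(Q)$ and $w^i_Q=\ell(Q)^{\bdmn/p_i+1-s_i-\pdmn/\beta}$. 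The expected answer is $w^\gamma=(w^0)^{1-\gamma}(w^1)^\gamma$, and this matches the exponent for $(p_\gamma,s_\gamma)$ exactly, since both $1/p$ and $s$ enter linearly.

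The sequence space identity is the Stein--Weiss type theorem with change of weights in real interpolation, as in \cite[Theorem~5.6.1]{BerL76} for $L^p(w)$ with values in a Banach space. Its proof there is via the power theorem and reiteration. Here the spaces may be quasi-Banach ($p_i<1$) and the values lie in different Banach spaces $X_Q$. So I would instead prove the identity directly by computing the $K$-functional. Write $a_Q=\|F\|_{X_Q}$. Since the sum is over a discrete index set, we have
\begin{equation*}K(t,F)\approx \Bigl(\sum_Q \min\bigl(w^0_Q a_Q,\,t\,w^1_Q a_Q\bigr)^{p_0}\Bigr)^{1/p_0}+\dots\end{equation*}
More precisely, up to constants the optimal decomposition is to put each whole $F|_Q$ into one of the two pieces, depending on whether $t$ exceeds $t_Q$ or not, where $t_Q$ is defined by $(w^0_Q/w^1_Q)\,\cdot$ (a scale factor), adapted to the differing $p$'s through the usual Holmstedt-type formula. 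Because each cube contributes a single scalar $a_Q$, vector-valuedness plays no role. The problem reduces to scalar weighted $\ell^p$ interpolation, which is classical (e.g. \cite[Theorem~5.6.1]{BerL76} applied to counting measure).

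The main obstacle is handling general $p_0\neq p_1$ together with the differing weights in the quasi-Banach range without an off-the-shelf citation. My first attempt is to cite the scalar weighted $\ell^p$ result for $0<p_i\le\infty$ (which Bergh--Löfström state for quasi-normed $L^p$, Theorem~5.5.1 and Section~5.6), and transfer it to values $a_Q$. For the upper inclusion, one must also show that an element of the interpolation space decomposes cube-by-cube. This follows because the projections $F\mapsto \1_QF$ are bounded on both endpoint spaces with norm $1$.
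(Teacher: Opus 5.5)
Your plan is correct in substance and shares the paper's skeleton: use Lemma~\ref{lem:averaged:Whitney} to turn $L^p_{av,c,\beta,s}(\Omega)$ into a weighted sequence space of $L^\beta$-valued pieces, then quote Bergh--L\"ofstr\"om. The packaging differs, as follows.
\begin{itemize}
\item \textbf{The paper.} It rescales each Whitney cube onto the unit cube $R$, so every fiber is the single space $L^\beta(R)$. It indexes the cubes as a double sequence $\{\{f_{j,k}\}_k\}_j$, by scale $2^j$ and an injection $\iota_j$. The weight then depends only on $j$, which is exactly the shape needed for \eqref{preint1} at the scale level, followed by \eqref{preint} at the inner level. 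Because $\iota_j$ need not be onto, $\mathcal{I}$ is only a coretraction, and Lemma~\ref{changevariables} is needed.
\item \textbf{Your route.} You keep the varying fibers $X_Q=L^\beta(Q)$, so $J$ is an honest isomorphism of couples and no retraction is needed. You then discard the fibers by passing to $a_Q=\|F_Q\|_{X_Q}$. This makes transparent that only fiber norms matter.
\item \textbf{The cost.} You must cite the scalar real-method Stein--Weiss theorem with change of weights and $p_0\neq p_1$ in the quasi-Banach range, so check that the version you quote covers $p_i<1$.
\item \textbf{Alternative.} Your weights $\ell(Q)^{\bdmn/p_i+1-s_i-\pdmn/\beta}$ are powers of $2^j$. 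So grouping cubes by scale puts the scalar problem into the form \eqref{preint1}--\eqref{preint} that the paper already uses.
\end{itemize}

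Two of your supporting remarks should be replaced, although the claims they support are true.
\begin{itemize}
\item Your heuristic $K$-functional with a per-cube threshold $t_Q$ is not right when $p_0\neq p_1$. In Holmstedt-type formulas the cut-off is global, through a rearrangement. You do not need it.
\item More seriously, contractivity of the single-cube projections $F\mapsto \1_Q F$ gives no control of the $\ell^{p_\gamma}$-sum over $Q$. So it does not prove $\bigl(\ell^{p_0}_{w^0}(X_Q),\ell^{p_1}_{w^1}(X_Q)\bigr)_{\gamma,p_\gamma}\subset \ell^{p_\gamma}_{w^\gamma}(X_Q)$.
\end{itemize}
Make the reduction to scalars with two linear maps instead. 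For fixed $F$, define:
\begin{itemize}
\item $M_F\{b_Q\}=\{b_QF_Q/a_Q\}$, with entry $0$ where $a_Q=0$;
\item $N_F\{G_Q\}=\{\phi_Q(G_Q)\}$, where $\phi_Q\in X_Q^*$ has norm at most one and $\phi_Q(F_Q)=a_Q$ (Hahn--Banach).
\end{itemize}
Both maps are contractions between the scalar spaces $\ell^{p_i}_{w^i}$ and the vector-valued spaces $\ell^{p_i}_{w^i}(X_Q)$ at each endpoint. Moreover $M_F(\{a_Q\})=F$ and $N_F(F)=\{a_Q\}$. Theorem~\ref{thm:BerL76}, applied to each, shows that the interpolation quasi-norm of $F$ is comparable to that of $\{a_Q\}$ in $\bigl(\ell^{p_0}_{w^0},\ell^{p_1}_{w^1}\bigr)_{\gamma,p_\gamma}$, uniformly in $F$. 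That is exactly your claim that vector-valuedness plays no role, now with proof.
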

The requirement that $\partial\Omega$ be Ahlfors regular is not needed for the interpolation arguments, but the parameter $\bdmn$ appears in the definition of $L^{p}_{av,c,\beta,s}(\Omega)$ and so we do need $\bdmn$ to exist in order for the given spaces to be meaningful with the definitions of this paper.

In order to prove Theorem~\ref{interpolation} we need two interpolation results for spaces of sequences with values in a Banach space (see \cite[Theorem~5.6.2]{BerL76}).  For any Banach space $E$, for any $0<p\le\infty$, and any $s\in\R$, we denote as $\ell_s^p(E)$ the space of all sequences $a=\{a_j\}_{j\in\Z}\subset E$ such that $\|a\|_{\ell_s^p(E)}<\infty$, where
\begin{equation*}
\|a\|_{\ell_s^p(E)}=\Bigl(\sum_{j\in\Z}2^{j(\bdmn+p-ps)}\|a_j\|^p_E\Bigr)^{1/p},
\end{equation*}
and we denote as $\ell^p(E)$ the well known space of $p$-summable sequences with values in $E$ and indexes in $\Z$.
If $p_0$, $p_1\in (0,\infty]$ then
\begin{equation} \label{preint}
(\ell^{p_0}(E),\ell^{p_1}(E))_{\gamma,p_\gamma}=\ell^{p_\gamma}(E),
\end{equation}
and if $p_0$, $p_1\in (0,\infty)$ and $E_0$, $E_1$ are two compatible Banach spaces, then
\begin{equation} \label{preint1}
(\ell_{s_0}^{p_0}(E_0),\ell_{s_1}^{p_1}(E_1))_{\gamma,p_\gamma}=\ell_{s_\gamma}^{p_\gamma}((E_0,E_1)_{\gamma,p_\gamma})
\end{equation}
where $p_\gamma$ and $s_\gamma$ are as in Theorem~\ref{interpolation}.

We finally need a change of variables lemma.
\begin{lem}[{\cite[Theorem 6.4.2]{BerL76}}]\label{changevariables}
Suppose that $(A_0,A_1)$ and $(B_0,B_1)$ are two compatible couples and that there are linear operators $\mathcal{I}:A_0+A_1\to B_0+B_1$ and $\mathcal{P}:B_0+B_1\to A_0+A_1$ such that $\mathcal{P}\circ\mathcal{I}$ is the identity operator on $A_0+A_1$, and such that $\mathcal{I}:A_0\to B_0$, $\mathcal{I}:A_1\to B_1$, $\mathcal{P}:B_0\to A_0$, and $\mathcal{P}:B_1\to A_1$ are all bounded operators.

If $0<\gamma<1$ and $0<r\le\infty$, then
\begin{equation}
(A_0,A_1)_{\gamma,r}=\mathcal{P}((B_0,B_1)_{\gamma,r}).
\end{equation}
\end{lem}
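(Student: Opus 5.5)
The plan is to compare the $K$-functionals of the two couples directly, using only linearity and the four boundedness hypotheses. I will prove the two inclusions $\mathcal{P}((B_0,B_1)_{\gamma,r})\subseteq(A_0,A_1)_{\gamma,r}$ and $(A_0,A_1)_{\gamma,r}\subseteq \mathcal{P}((B_0,B_1)_{\gamma,r})$ separately, and record the quasi-norm equivalence $\|a\|_{(A_0,A_1)_{\gamma,r}}\approx\inf\{\|b\|_{(B_0,B_1)_{\gamma,r}}:\mathcal{P}b=a\}$ along the way. Write
\begin{equation*}K(t,a;A_0,A_1)=\inf\{\|a_0\|_{A_0}+t\|a_1\|_{A_1}:a_0+a_1=a\},\end{equation*}
so that $\|a\|_{(A_0,A_1)_{\gamma,r}}=\|t^{-\gamma}K(t,a;A_0,A_1)\|_{L^r(dt/t)}$.

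\emph{Step 1 (a bounded map contracts the $K$-functional up to a constant).} Let $T$ be linear from $A_0+A_1$ to $B_0+B_1$ with $\|T\|_{A_i\to B_i}\leq M_i$. If $a=a_0+a_1$ with $a_i\in A_i$, then by linearity $Ta=Ta_0+Ta_1$ with $Ta_i\in B_i$. Hence
\begin{equation*}K(t,Ta;B_0,B_1)\leq M_0\|a_0\|_{A_0}+tM_1\|a_1\|_{A_1}\leq \max(M_0,M_1)\bigl(\|a_0\|_{A_0}+t\|a_1\|_{A_1}\bigr).\end{equation*}
Taking the infimum over decompositions gives $K(t,Ta;B)\leq \max(M_0,M_1)K(t,a;A)$ for every $t>0$. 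Multiplying by $t^{-\gamma}$ and taking the $L^r(dt/t)$ quasi-norm yields $\|Ta\|_{(B_0,B_1)_{\gamma,r}}\leq\max(M_0,M_1)\|a\|_{(A_0,A_1)_{\gamma,r}}$. This is a crude form of Theorem~\ref{thm:BerL76}, and the crude constant is all that is needed here. The argument uses no triangle inequality, so it works verbatim for quasi-normed couples.

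\emph{Step 2 (the inclusions).} Apply Step~1 with $T=\mathcal{P}$, viewed as a map from the couple $(B_0,B_1)$ to $(A_0,A_1)$. This gives $\mathcal{P}((B_0,B_1)_{\gamma,r})\subseteq(A_0,A_1)_{\gamma,r}$, together with $\|\mathcal{P}b\|_{(A_0,A_1)_{\gamma,r}}\leq C\|b\|_{(B_0,B_1)_{\gamma,r}}$.

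For the reverse inclusion, let $a\in(A_0,A_1)_{\gamma,r}$. Apply Step~1 with $T=\mathcal{I}$ to get $b=\mathcal{I}a\in(B_0,B_1)_{\gamma,r}$ with $\|b\|_{(B_0,B_1)_{\gamma,r}}\leq C\|a\|_{(A_0,A_1)_{\gamma,r}}$. Since $\mathcal{P}\circ\mathcal{I}$ is the identity on $A_0+A_1$, we have $a=\mathcal{P}b\in\mathcal{P}((B_0,B_1)_{\gamma,r})$. Combining both directions gives the set equality and the two-sided norm equivalence stated above.

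\emph{Main obstacle.} The only point requiring care is bookkeeping about the ambient spaces. One must check that $\mathcal{I}a_i$ lies in $B_i$ and that the decomposition $\mathcal{I}a=\mathcal{I}a_0+\mathcal{I}a_1$ is legitimate inside $B_0+B_1$. Both follow because $\mathcal{I}$ is assumed linear on all of $A_0+A_1$ and bounded $A_i\to B_i$, and the same holds for $\mathcal{P}$. No density or completeness argument is needed.
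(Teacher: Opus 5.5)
Your proof is correct. The paper gives no proof of its own, only the citation \cite[Theorem 6.4.2]{BerL76}, and the standard proof there is the same retract argument: $(A_0,A_1)_{\gamma,r}=\mathcal{P}\bigl(\mathcal{I}((A_0,A_1)_{\gamma,r})\bigr)\subseteq\mathcal{P}((B_0,B_1)_{\gamma,r})\subseteq(A_0,A_1)_{\gamma,r}$, using the interpolation property of $\mathcal{I}$ and $\mathcal{P}$. The only difference is that you verify that interpolation property directly through the $K$-functional rather than invoking Theorem~\ref{thm:BerL76}; the cruder constant $\max(M_0,M_1)$ in place of $M_0^{1-\gamma}M_1^{\gamma}$ is harmless, and your version is self-contained and visibly valid for quasi-normed couples.
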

\begin{proof}[Proof of Theorem \ref{interpolation}]
Let us fix a grid of Whitney cubes $\mathcal{G}=\mathcal{G}_1$ in~$\Omega$. Let $A_k$ denote $L^{p_k}_{av,\beta,s_k}(\Omega)$.
We let
\begin{equation*}E_k=\ell^{p_k}(L^\beta(R)),\qquad
B_k=\ell_{s_k}^{p_k}(E_k)=\ell_{s_k}^{p_k}(\ell^{p_k}(L^\beta(R)))\end{equation*}
where $R$ denotes the unit cube in~$\R^\dmn$ with center zero. Because $p_0$, $p_1\in (0,\infty)$, by formula~\eqref{preint1} we have that
\begin{equation*}(B_0,B_1)_{\gamma,p_\gamma} =
(\ell_{s_0}^{p_0}(E_0), \ell_{s_1}^{p_1}(E_1))_{\gamma,p_\gamma} =\ell_{s_\gamma}^{p_\gamma}((E_0,E_1)_{\gamma,p_\gamma})
\end{equation*}
and by formula~\eqref{preint} we have that
\begin{equation*}(E_0,E_1)_{\gamma,p_\gamma}=
(\ell^{p_0}(L^\beta(R)),\ell^{p_0}(L^\beta(R)))_{\gamma,p_\gamma}
=\ell^{p_\gamma}(
L^\beta(R)).\end{equation*}
Thus
\begin{equation*}(B_0,B_1)_{\gamma,p_\gamma} = \ell_{s_\gamma}^{p_\gamma}(\ell^{p_\gamma}(L^\beta(R))).\end{equation*}
To complete the proof we thus need only find bounded mappings $\mathcal{I}$ and $\mathcal{P}$ as in Lemma~\ref{changevariables}.

We define $\mathcal{I}$ and $\mathcal{P}$ as follows.
For each $j\in\Z$, let ${\mathcal{A}}^j:=\{Q\in\mathcal{G}:\ell(Q)=2^j\}$.
Notice that $\{{\mathcal{A}}^j\}_{j\in\Z}$ is a partition of $\mathcal{G}$. There is an injection $\iota_j:\mathcal{A}^j\to\Z$. (If $\Omega$ is bounded then $\mathcal{A}^j$ is finite and so $\iota$ cannot be required to be surjective.) If $k\in\iota_j(\mathcal{A}^j)$, then let $x_{j,k}$ be the midpoint of the (unique) cube $Q\in \iota_j^{-1}(\{k\})$, and let $\ell_{j,k}$ be its sidelength.

If $F\in L^{p}_{av,\beta,s}(\Omega)$, then $\mathcal{I}F=\{ \{ \mathcal{I}_{j,k}F\}_{k\in\Z}\}_{j\in\Z}$, where
\begin{equation*}\mathcal{I}_{j,k}F (x) = \begin{cases} 0, & k\notin \iota_j(\mathcal{A}^j),
\\
F(x_{j,k}+\ell_{j,k}x), & k\in \iota_j(\mathcal{A}^j).\end{cases}\end{equation*}
Let $\mathcal{P}(\{\{f_{j,k}\}_{k\in \Z}\}_{j\in\Z})$ be given by
\begin{equation*}\mathcal{P}(\{\{f_{j,k}\}_{k\in \Z}\}_{j\in\Z})(x)
=
\sum_{j\in\Z}\sum_{Q\in\mathcal{A}^j} \1_{Q} f_{j,\iota_j(Q)} ((x-x_{j,\iota_j(Q)})/\ell_{j,\iota_j(Q)}).\end{equation*}
It is straightforward to establish that $\mathcal{P}\circ\mathcal{I}$ is the identity mapping and that $\mathcal{P}$ and $\mathcal{I}$ are linear. We need only establish boundedness $\mathcal{I}:L^{p}_{av,\beta,s}(\Omega)
\to \ell_s^p(\ell^p(L^\beta(R)))$ and $\mathcal{P}:\ell_s^p(\ell^p(L^\beta(R)))\to L^{p}_{av,\beta,s}(\Omega)$.

But by definition of $\ell_s^p$ and~$\ell^p$,
\begin{align*}
\|\mathcal{I}F\|_{\ell_s^p(\ell^p(L^\beta(R)))}
&= \biggl(\sum_{j\in\Z} 2^{j(\bdmn+p-ps)} \|\{\mathcal{I}_{j,k}F\}_{k\in\Z}\|_{\ell^p(L^\beta(R))}^p \biggr)^{1/p}
\\&=
\biggl(\sum_{j\in\Z} 2^{j(\bdmn+p-ps)} \sum_{k\in \Z} \|\mathcal{I}_{j,k}F\|_{L^\beta(R)}^p \biggr)^{1/p}
.\end{align*}
The term $\|\mathcal{I}_{j,k}F\|_{L^\beta(R)}$ is zero if $k\notin\iota_j(\mathcal{A}^j)$. If $k=\iota_j(Q)$ then
\begin{equation*}\|\mathcal{I}_{j,k}F\|_{L^\beta(R)}=\ell(Q)^{-\pdmn/\beta} \biggl(\int_Q |F|^\beta\biggr)^{1/\beta}\end{equation*}
and so
\begin{align*}
\|\mathcal{I}F\|_{\ell_s^p(\ell^p(L^\beta(R)))}
&= \biggl(\sum_{j\in\Z} 2^{j(\bdmn+p-ps)} \sum_{Q\in\mathcal{A}^j} \|F\|_{L^\beta(Q)}^p \ell(Q)^{-\pdmn/\beta} \biggr)^{1/p}
\end{align*}
which by the definition of $\mathcal{A}^j$ is equivalent to the $L^p_{av,\beta,s}(\Omega)$-norm of~$F$.

Conversely, if $F=\mathcal{P}(\{\{f_{j,k}\}_{k\in \Z}\}_{j\in\Z})$ then by Lemma~\ref{lem:averaged:Whitney} and the definition of $\mathcal{A}^j$
\begin{align*}
\|F\|_{L^p_{av,\beta,s}(\Omega)}
\approx
\biggl(\sum_{j\in\Z}
\sum_{Q\in\mathcal{A}^j}
\|F\|_{L^\beta(Q)}^p\ell(Q)^{\bdmn+p-ps-p\pdmn/\beta}\biggr)^{1/p}
.\end{align*}
If $Q\in\mathcal{A}^j$ then $\ell(Q)^{-\pdmn/\beta}\|F\|_{L^\beta(Q)} =\|f_{j,\iota_j(Q)}\|_{L^\beta(R)}$
and so
\begin{align*}
\|F\|_{L^p_{av,\beta,s}(\Omega)}
\approx
\biggl(\sum_{j\in\Z}
\sum_{Q\in\mathcal{A}^j}
\|f_{j,\iota_j(Q)}\|_{L^\beta(R)}^p\ell(Q)^{\bdmn+p-ps} \biggr)^{1/p}
.\end{align*}
Recalling the definition of $\mathcal{A}^j$ and that $\iota_j:\mathcal{A}^j\to\Z$ is an injection, we have that
\begin{align*}
\|F\|_{L^p_{av,\beta,s}(\Omega)}
\leq C
\biggl(\sum_{j\in\Z}2^{j(\bdmn+p-ps)}
\sum_{k\in\Z}
\|f_{j,k}\|_{L^\beta(R)}^p\biggr)^{1/p}
=C\|\{\{f_{j,k}\}_{k\in \Z}\}_{j\in\Z}\|_{\ell_s^p(\ell^p(L^\beta(R)))}
\end{align*}
as desired.
\end{proof}

We can use this interpolation result to interpolate compatible well posedness.

\begin{thm}\label{thm:interp:sol}
Let $p_0$, $p_1\in [1,\infty)$, $s_0$, $s_1\in (0,1)$, $1<\beta<\infty$, and assume that if $i=0$ or $i=1$ then the $L^{p_i}_{av,\beta,s_i}(\Omega)$-Poisson problem for $L$ is compatibly well posed.

Then, for any $0<\gamma<1$, the $L^{p_\gamma}_{av,\beta,s_\gamma}(\Omega)$-Poisson problem for $L$ is well posed.
\end{thm}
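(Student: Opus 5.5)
The plan is to apply Theorem~\ref{thm:BerL76} to the solution operator $T\vec H=\nabla u_{\vec H}$, using Theorem~\ref{interpolation} to identify the interpolation spaces. We then upgrade the resulting bound to compatible solvability and invoke Remark~\ref{rmk:compatible:solvable} and Lemma~\ref{lem:uniqueness} to obtain well posedness.

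The first step is to define $T$ on $A_0+A_1$, where $A_i=L^{p_i}_{av,\beta,s_i}(\Omega)$. The Lax-Milgram solution $u_{\vec H}$ is only defined for $\vec H\in L^2_{av,2,1/2}(\Omega)$, so we first define $T$ on the dense subspace $\mathcal{D}$ of bounded, compactly supported functions. By compatible solvability on this subspace,
\begin{equation*}
\|T\vec H\|_{L^{p_i}_{av,\beta,s_i}(\Omega)}\leq C_i\|\vec H\|_{L^{p_i}_{av,\beta,s_i}(\Omega)}.
\end{equation*}
Since $p_i<\infty$ and $\beta<\infty$, Remark~\ref{rmk:compatible:solvable} shows that $\mathcal{D}$ is dense in each $A_i$. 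Hence $T$ extends uniquely to bounded operators $T_i:A_i\to A_i$.

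These two extensions must be shown to agree on $A_0\cap A_1$, so that they glue to a well-defined linear map on $A_0+A_1$. I would take $\vec H\in A_0\cap A_1$ and approximate it by $\vec H_k=\1_{\bigcup_{j\leq k}Q_j}\vec H$, truncated to be bounded. These approximants converge in both norms simultaneously, by dominated convergence applied to the Whitney sums of Lemma~\ref{lem:averaged:Whitney}. Consequently $T\vec H_k$ converges in both target spaces. Both target spaces embed continuously into $L^\beta_{loc}(\Omega)$, so the two limits coincide. I expect this compatibility step to be the main technical obstacle, though it is routine: everything must be handled through a common ambient space such as $L^\beta_{loc}(\Omega)$, and one must check that the simultaneous approximation really does converge in both quasi-norms.

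With $T$ in hand, Theorem~\ref{thm:BerL76} combined with Theorem~\ref{interpolation} yields
\begin{equation*}
T:L^{p_\gamma}_{av,\beta,s_\gamma}(\Omega)\to L^{p_\gamma}_{av,\beta,s_\gamma}(\Omega)
\end{equation*}
boundedly. We apply this bound to $\vec H\in\mathcal{D}$, or more generally to $\vec H\in L^{p_\gamma}_{av,\beta,s_\gamma}\cap L^2_{av,2,1/2}$. Such $\vec H$ can be approximated in $L^2_{av,2,1/2}$ and in $L^{p_\gamma}_{av,\beta,s_\gamma}$ simultaneously by elements of $\mathcal{D}$, and the Lax-Milgram map is continuous in $L^2$. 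Together these give
\begin{equation*}
\|\nabla u_{\vec H}\|_{L^{p_\gamma}_{av,\beta,s_\gamma}(\Omega)}\leq C\|\vec H\|_{L^{p_\gamma}_{av,\beta,s_\gamma}(\Omega)}.
\end{equation*}

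It remains to confirm that $u_{\vec H}\in\widetilde W^{1,p_\gamma}_{av,\beta,s_\gamma}(\Omega)$. The Lax-Milgram solution has $\Tr u_{\vec H}=0$, and Lemma~\ref{lem:boundary:Poincare} then gives the $L^{p_\gamma}_{av,\beta,s_\gamma+1}$ bound. Note that $p_\gamma\geq1$ and $0<s_\gamma<1$ lie in the required range. This establishes compatible solvability at $(p_\gamma,s_\gamma)$.

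Finally, since $1\leq p_\gamma<\infty$ and $\beta<\infty$, Remark~\ref{rmk:compatible:solvable} upgrades compatible solvability to solvability, and Lemma~\ref{lem:uniqueness} supplies uniqueness, because $1<\beta<\infty$ and $0<s_\gamma<1$. This gives well posedness.
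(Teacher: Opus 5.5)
Your proposal is correct and takes essentially the same route as the paper. Both arguments define $T\vec H=\nabla u_{\vec H}$ consistently on $A_0+A_1$ by density and compatibility, interpolate via Theorems~\ref{thm:BerL76} and~\ref{interpolation}, and finish with Remark~\ref{rmk:compatible:solvable} and Lemma~\ref{lem:uniqueness}. The only difference is cosmetic: you build $T$ by continuous extension from bounded compactly supported data, whereas the paper starts from the endpoint solution operators given by well posedness and checks that they agree on $A_0\cap A_1$ using density of $L^2_{av,2,1/2}(\Omega)$.
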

\begin{proof}
Let $T$ be given by
\begin{equation*}T\vec H=\nabla u_{\vec H}\end{equation*}
where $u_{\vec H}$ is the solution to the Poisson problem with data~$\vec H$. By well posedness $T$ is well defined on $L^{p_i}_{av,\beta,s_i}(\Omega)$ for $i=0$ and $i=1$. We have that $L^2_{av,2,1/2}(\Omega)$ is dense in $L^{p_0}_{av,\beta,s_0}(\Omega)\cap L^{p_1}_{av,\beta,s_1}(\Omega)$ and so by compatible well posedness we have that if $\vec H\in L^{p_0}_{av,\beta,s_0}(\Omega)\cap L^{p_1}_{av,\beta,s_1}(\Omega)$ then the solution to the Poisson problem in $\widetilde W^{1,p_0}_{av,\beta,s_0}(\Omega)$ must equal the solution in $\widetilde W^{1,p_1}_{av,\beta,s_1}(\Omega)$, and so $T$ is well defined on $L^{p_0}_{av,\beta,s_0}(\Omega)+ L^{p_1}_{av,\beta,s_1}(\Omega)$.

Then by Theorems \ref{thm:BerL76} and~\ref{interpolation}, we have that $T$ is bounded on $L^{p_\gamma}_{av,\beta,s_\gamma}(\Omega)$. In particular, if $\vec H\in L^{p_\gamma}_{av,\beta,s_\gamma}(\Omega)\cap L^{2}_{av,2,1/2}(\Omega)$ then $T\vec H=u_{\vec H}$ and so we have the estimate $\|u_{\vec H}\|_{\widetilde W^{1,p_\gamma}_{av,\beta,s_\gamma}(\Omega)} \leq C\|\vec H\|_{L^{p_\gamma}_{av,\beta,s_\gamma}(\Omega)}$. Thus the $L^{p_\gamma}_{av,\beta,s_\gamma}(\Omega)$-Poisson problem for $L$ is compatibly solvable.
Remark~\ref{rmk:compatible:solvable}, Theorem~\ref{first:step:duality}, and Lemma~\ref{lem:uniqueness} yield compatible well posedness.
\end{proof}

\section{Elliptic differential equations: the boundary}
\label{sec:elliptic:boundary}

In this section we will continue the discussion (begun in Section~\ref{sec:elliptic}) of known results for solutions to $Lu=-\Div \vec\Phi$ (or $Lu=0$). In this section we will focus on results near the boundary of the domain and the Green's function; these estimates are somewhat more delicate than the interior estimates of Section~\ref{sec:elliptic}. In particular, certain known results are not available in the case $\dmn=\bdmn+1=2$ in the generality of weak local John domains, and so we will prove them here; we will need the duality results of Section~\ref{sec:beta} in certain proofs.

We begin with the boundary Caccioppoli inequality.

\begin{lem}[The boundary Caccioppoli inequality] \label{lem:Caccioppoli:boundary}
Let $\Omega\subset\R^\dmn$ be a weak local John domain. Suppose that $\partial\Omega$ is $\bdmn$-Ahlfors regular for some $0<\bdmn<\dmn$ and that $L$ is as in Definition~\ref{dfn:L}.

Let $\xi\in\partial\Omega$ and $0<r<\infty$. Suppose that
$\int_{B(\xi,2r)\cap\Omega} |\nabla u|^2\delta^{1+\bdmn-\pdmn}<\infty$, that $\Tr u=0$ on $\Delta(\xi,2r)$, and that $Lu=-\Div (\delta^{1+\bdmn-\pdmn}\vec H)$ in $B(\xi,2r)\cap\Omega$. Then
\begin{equation*}\int_{B(\xi,r)\cap\Omega} |\nabla u|^2\delta^{1+\bdmn-\pdmn}\leq \frac{C}{r^2}
\int_{B(\xi,2r)\cap\Omega} |u|^2\delta^{1+\bdmn-\pdmn}
+ C \int_{B(\xi,2r)\cap\Omega} |\vec H|^2\delta^{1+\bdmn-\pdmn}
.\end{equation*}
\end{lem}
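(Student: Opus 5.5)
The plan is to run the classical Caccioppoli argument with a cutoff. The one nonstandard point is that $u\eta^2$ must be an admissible test function in the weak formulation even though it is not compactly supported in $\Omega$. Fix $\eta\in C^\infty_0(B(\xi,2r))$ with $0\leq\eta\leq1$, $\eta\equiv1$ on $B(\xi,r)$ and $|\nabla\eta|\leq C/r$. Formally, testing $Lu=-\Div(\delta^{1+\bdmn-\pdmn}\vec H)$ against $\varphi=\eta^2u$ gives
\begin{equation*}\int_\Omega \eta^2\nabla u\cdot A\nabla u
=-2\int_\Omega \eta u\nabla\eta\cdot A\nabla u+\int_\Omega \nabla(\eta^2u)\cdot\vec H\,\delta^{1+\bdmn-\pdmn}.\end{equation*}
We then apply the ellipticity condition~\eqref{eqn:elliptic} on the left and the upper bound $|A|\leq\Lambda\delta^{1+\bdmn-\pdmn}$ on the right. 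Young's inequality with a small parameter lets us absorb the terms $\int\eta^2|\nabla u|^2\delta^{1+\bdmn-\pdmn}$ into the left-hand side. The remaining terms are $Cr^{-2}\int_{B(\xi,2r)\cap\Omega}|u|^2\delta^{1+\bdmn-\pdmn}$ and $C\int_{B(\xi,2r)\cap\Omega}|\vec H|^2\delta^{1+\bdmn-\pdmn}$, which is the claimed estimate. We may assume both right-hand integrals are finite, since otherwise there is nothing to prove.

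The main obstacle is justifying the choice $\varphi=\eta^2u$, because \eqref{eqn:L} is stated only for $\varphi\in C^\infty_0(\Omega\cap B(\xi,2r))$. We reduce this to a density statement: $\eta^2u$ lies in the closure, in the weighted norm $(\int|\nabla\cdot|^2\delta^{1+\bdmn-\pdmn})^{1/2}$, of $C^\infty_0(B(\xi,2r)\cap\Omega)$. Once this holds, both sides of \eqref{eqn:L} are continuous in that norm, because $A\nabla u$ and $\vec H$ are locally square integrable against the weight near $\supp\eta$, and the identity passes to the limit.

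To prove the density we would localize the arguments of Section~\ref{sec:dense}. First, since $\Tr u=0$ on $\Delta(\xi,2r)$ and $\nabla u\in L^2_{av}$ there with weight $\delta^{1+\bdmn-\pdmn}$, the local boundary Poincaré inequality, Lemma~\ref{lem:boundary:Poincare} (the trace-zero form \eqref{eqn:boundary:Poincare:zero}) applied on small balls centered on $\supp\eta\cap\partial\Omega$, shows that $u\in L^2_{av,c,2,3/2}$ locally, i.e.\ $\int|u|^2\delta^{\bdmn-\pdmn-1}<\infty$ near $\supp\eta$. By Corollary~\ref{cor:averaged:is:Wp} we can pass freely between the averaged and unaveraged quantities here, since $\beta=p=2$. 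The Poincaré inequality involves a dilated ball $B(x,C_1\rho)$, so we cover $\supp\eta\cap\partial\Omega$ by balls of radius $\rho$ with $C_1\rho$ small relative to $r$; because $\supp\eta$ is compactly contained in $B(\xi,2r)$, these dilates stay inside $B(\xi,2r)$. Second, with $\eta^2u$ in place of $u$, Lemma~\ref{lem:dense:boundary} (with $\Upsilon=B(\xi,2r)\cap\Omega$ and $\Psi$ a neighborhood of $\supp\eta\cap\Omega$) shows that $\eta_\epsilon\eta^2u\to\eta^2u$ in the weighted Sobolev norm. Third, Lemma~\ref{lem:dense:smooth} mollifies these functions into $C^\infty_0(B(\xi,2r)\cap\Omega)$.

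An alternative first step would be to test directly with $\eta_\epsilon\eta^2u$. The error term $\int\eta^2u\nabla\eta_\epsilon\cdot A\nabla u$ is bounded by $(\int_{\delta\approx\epsilon}|u|^2\delta^{\bdmn-\pdmn-1})^{1/2}(\int_{\delta\approx\epsilon}|\nabla u|^2\delta^{1+\bdmn-\pdmn})^{1/2}$, and this tends to zero by the same Poincaré bound together with dominated convergence.
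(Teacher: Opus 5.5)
Your proposal is correct and takes essentially the same route as the paper. The paper approximates $u$ on $B(\xi,(3/2)r)\cap\Omega$ by smooth functions vanishing near $\partial\Omega$ (Lemma~\ref{lem:dense:smooth}), uses Lemma~\ref{lem:boundary:Poincare} to get convergence in $L^2_{av,2,3/2}$, tests against $\varphi^2\eta_k$, and then runs the standard Caccioppoli argument and passes to the limit; this is exactly your density justification of the test function $\eta^2u$, and your alternative of testing directly with a boundary cutoff times $\eta^2u$ is equally valid.
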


\begin{proof} This is well known in the theory, but we will provide a sketch of the proof to ensure that it is valid with the notion of zero boundary values used in the present paper.

By Lemma~\ref{lem:dense:smooth} we may approximate $u$ in $\dot W^{1,2}_{av,2,1/2}(B(\xi,(3/2)r)\cap\Omega;\Omega)$ by smooth functions that are zero in a neighborhood of~$\partial\Omega$. Let $\{\eta_k\}_{k=1}^\infty$ be an approximating sequence. By Lemma~\ref{lem:boundary:Poincare}, we have that $\eta_k\to u$ in $L^2_{av,2,3/2}(B(\xi,(3/2)r)\cap\Omega;\Omega)$.
If $\varphi$ is a smooth cutoff function supported in $B(\xi,(3/2)r)$, then by the weak definition of $L$ we have that
\begin{equation*}\int_{B(\xi,(3/2)r)\cap\Omega} \delta^{1+\bdmn-\pdmn} \nabla (\varphi^2\eta_k)\cdot\vec H=\int_{B(\xi,(3/2)r)\cap\Omega} \nabla (\varphi^2\eta_k)\cdot A\nabla u.\end{equation*}
As in the standard proof of the Caccioppoli inequality, we may complete the proof using the ellipticity condition~\eqref{eqn:elliptic}, the product rule, Young's inequality, and by taking the limit as $k\to\infty$.
\end{proof}

We now state some pointwise estimates on solutions to $Lu=0$ near the boundary.

\begin{lem}[Boundary Hölder continuity of solutions]\label{lem:boundary:DGN}
Let $\Omega\subset\R^\dmn$ be as in Theorem~\ref{thm:Poisson}. Let $L$ be an operator as in Definition~\ref{dfn:L}.

There exists a constant $\alpha>0$ depending only on the standard parameters with the following significance.

Let $\xi\in\partial\Omega$ and let $r\in(0,\diam\partial\Omega)$. Let $u$ be such that
\begin{align*}&\int_{B(\xi,2r)\cap\Omega} |\nabla u|^2 \delta^{-\pdmn+\bdmn+1}<\infty,
\\
Lu&=0 \text{ in }B(\xi,2r)\cap\Omega,
\\
\Tr u&=0 \text{ on }\Delta(\xi,2r).\end{align*}
Then for all $s$ with $0<s\leq r/2$ we have that
\begin{equation*}
\sup_{B(\xi,s)\cap\Omega} |u|\leq C\frac{s^\alpha}{r^\alpha} \frac{1}{r^{1+\bdmn} } \int_{B(\xi,r)\cap\Omega} |u| \delta^{1+\bdmn-\pdmn}
\approx
\frac{s^\alpha}{r^\alpha} \frac{1}{\int_{B(\xi,r)} \delta^{1+\bdmn-\pdmn}} \int_{B(\xi,r)\cap\Omega} |u| \delta^{1+\bdmn-\pdmn}
.\end{equation*}
\end{lem}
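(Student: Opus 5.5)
The plan has two parts: first an $L^\infty$ bound near the boundary (a boundary Moser estimate), then the decay factor $(s/r)^\alpha$ obtained by iterating an oscillation-reduction lemma. Throughout, I will extend $u$ by zero across $\partial\Omega$. By Lemma~\ref{lem:dense:smooth} and the trace-zero hypothesis, the truncations $(u-k)^+$ and $(u+k)^-$ for $k\geq 0$ are admissible test functions in $B(\xi,2r)\cap\Omega$. These truncations are exactly the objects used in the De Giorgi and Moser machinery.

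For the $L^\infty$ bound I would run De Giorgi iteration with respect to the measure $d\mu=\delta^{1+\bdmn-\pdmn}\,dx$. This measure is doubling on balls centred on $\partial\Omega$, which is formula~\eqref{eqn:delta:in:ball} with $\varpi=1$. Applying the boundary Caccioppoli inequality, Lemma~\ref{lem:Caccioppoli:boundary}, to $(u-k)^+$ gives the energy estimate. A weighted Sobolev inequality for functions vanishing on $\partial\Omega$ is also needed. Lemma~\ref{lem:boundary:Poincare} (with $p=2$, $s=1/2$, $\beta=2$) supplies a Poincar\'e inequality, and in the $\bdmn\neq\dmnMinusOne$ cases I would cite the weighted Sobolev inequality of \cite{DavFM19A,DavFM20p}, which gives $L^{2\chi}(\mu)$ for some $\chi>1$. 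Together these yield
\[
\sup_{B(\xi,r/2)\cap\Omega}|u|\leq C\biggl(\frac{1}{\mu(B(\xi,r))}\int_{B(\xi,r)}|u|^2\,d\mu\biggr)^{1/2}.
\]
The standard Moser/De Giorgi self-improvement then lowers the exponent from $2$ to $1$, giving the $L^1(\mu)$ average on the right-hand side. In the interior regions this is just \eqref{eqn:interior:Moser} after rescaling, as in Section~\ref{sec:interior}.

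For the decay, it suffices by iteration to prove a geometric improvement: there is $\eta<1$ such that
\[
\sup_{B(\xi,\rho/4)\cap\Omega}|u|\leq\eta\sup_{B(\xi,\rho)\cap\Omega}|u|\qquad\text{for all }\rho\leq r.
\]
Iterating this gives $\alpha=\log_4(1/\eta)$, and combining with the first step yields the statement. To prove the improvement, normalize $M=\sup_{B(\xi,\rho)}u^+=1$ and consider $v=1-u^+$. This function is a nonnegative supersolution that equals $1$ on the boundary in the averaged sense. I would apply a weak Harnack (lower bound) inequality for $v$ on $B(\xi,\rho/4)$. The key input is that the set where $u=0$ (that is, $\partial\Omega$ together with the complement of $\Omega$) is \emph{thick}. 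In the weighted-$\mu$ capacity sense, $\partial\Omega\cap B(\xi,\rho)$ has capacity comparable to that of the ball, because Ahlfors regularity and Lemma~\ref{lem:boundary:Poincare} show that a function vanishing on $\Delta(\xi,\rho)$ has controlled $\mu$-average.

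I expect this second step to be the main obstacle. The capacity density condition has to be derived from the $\bdmn$-Ahlfors regularity and the weak local John structure, which is where the carrot paths enter, in a way that is compatible with the weighted measure. In the cases $\bdmn<\dmnMinusOne$ and $\bdmn>\dmnMinusOne$, with corkscrews and Harnack chains, I would simply cite \cite{DavFM19A,DavFM20p}, where this lemma appears. In the classical case $\bdmn=\dmnMinusOne$, I would cite the classical Wiener-type estimate for capacity-dense complements. The case $\dmn=2$ is the one the paper flags as needing separate treatment via Section~\ref{sec:beta}, and I would handle it through the capacity/Poincar\'e argument sketched above.
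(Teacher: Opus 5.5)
For $\bdmn\neq\dmnMinusOne$, and for $\bdmn=\dmnMinusOne$ with $\dmn\geq 3$, your plan amounts to citation, and so does the paper's. It cites \cite{DavFM19B,DavFM21}, \cite{DavFM20p}, and \cite{Mou23} together with the capacity density condition. The gap is the case $\bdmn+1=\dmn=2$. This is the only case the paper actually proves, and there your proposal consists of the step you yourself identify as the main obstacle, with a justification that does not work.

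Lemma~\ref{lem:boundary:Poincare} is a Hardy-type inequality for functions defined on~$\Omega$. Capacity density, by contrast, is a lower bound on the energy of \emph{every} test function on $B(\xi,2\rho)\subset\R^2$ that is at least~$1$ on $\partial\Omega\cap\overline{B(\xi,\rho)}$. Such a function may spend most of its energy outside~$\Omega$; the weak local John condition allows $\Omega\cap B(\xi,\rho)$ to be a thin channel. So control of functions on $\Omega$ says nothing about that capacity.

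Nor is this where carrot paths enter. In codimension one the capacity density condition follows from Ahlfors regularity alone, since $\bdmn=\dmn-1>\dmn-2$, and this holds in the plane as well. With that observation you could repair your route as follows:
\begin{itemize}
\item cite uniform $2$-fatness of $1$-Ahlfors regular sets in $\R^2$;
\item cite a \emph{local} Wiener/Maz'ya-type boundary oscillation estimate for uniformly fat complements, which does not need $\dmn\geq 3$;
\item check via Lemma~\ref{lem:dense:smooth} that the averaged zero trace gives the required local $W^{1,2}_0$ membership.
\end{itemize}
As written, however, the two-dimensional case is not proved.

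The paper avoids capacity and De Giorgi iteration entirely, using the fact that $\dmn=2$ is the critical dimension for Morrey's inequality. Lemma~\ref{lem:Meyers:boundary} gives $\nabla u\in L^\beta$ near~$\xi$ for some $\beta>2$. Its proof combines the Caccioppoli inequality, the Sobolev embedding for the zero extension of~$u$, Lemma~\ref{lem:boundary:Poincare} with $s=1-1/p$ to absorb the $L^p$ norm of~$u$, and Gehring's self-improvement. Morrey's inequality, applied to the zero extension (which vanishes on $\partial\Omega$), then gives $\sup_{B(\xi,s)}|u|\leq Cs^{1-2/\beta}\|\nabla u\|_{L^\beta(B(\xi,2r/3))}$, so $\alpha=1-2/\beta$. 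The boundary Caccioppoli inequality and the standard $L^2$-to-$L^1$ improvement then give the stated form. This is where the weak local John condition is really used: through the boundary Poincar\'e inequality inside the Meyers estimate, not through any capacity bound.
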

\begin{proof}
If $\bdmn<\dmnMinusOne$ then \cite[Theorem~3.1]{DavFM19B} or \cite[Lemma~8.16]{DavFM21} yields the estimate
\begin{equation}\label{eqn:boundary:DGN:proof}
\sup_{B(\xi,s)} |u|\leq C\frac{s^\alpha}{r^\alpha} \biggl(\frac{1}{\int_{B(\xi,r)}\delta^{1+\bdmn-\pdmn} } \int_{B(\xi,r)\cap\Omega} |u|^2 \delta^{1+\bdmn-\pdmn}\biggr)^{1/2}.\end{equation}
We may improve to the desired bound using \cite[Lemmas 2.3 and~8.13]{DavFM21}.

Similarly, if $\dmnMinusOne<\bdmn<\dmn$ then the result is \cite[Lemmas~11.20 and~11.32]{DavFM20p}.

If $\bdmn=\dmnMinusOne$ then the desired estimate is well known at least in domains with nice boundary. We state in particular that if $\dmn\geq 3$ then \cite[Theorems 4.4 and~4.14]{Mou23} yield the desired estimate with only the requirement that $\partial\Omega$ satisfy Wiener's capacity density condition; it is well known (see for example \cite[Section~3]{Zha18} and \cite[Lemma~3.27]{HofLMN17}) that Ahlfors regularity suffices to imply the capacity density condition.

We are left with the case $\bdmn=\dmnMinusOne$ and $\dmn=2$.  We will establish the lemma in this case in Section~\ref{sec:1:2}.
\end{proof}

We also have the following interior/boundary Moser estimate. (The boundary Moser estimate is in many sources a step in the proof of Lemma~\ref{lem:boundary:DGN}, but in our context it is simpler to present the following estimate as a corollary of Lemma~\ref{lem:boundary:DGN} rather than a separate citation.)
\begin{lem}[Boundary/interior Moser estimates on solutions]\label{lem:boundary:Moser}
Let $\Omega\subset\R^\dmn$ be as in Theorem~\ref{thm:Poisson}. Let $L$ be an operator as in Definition~\ref{dfn:L}.

Let $y\in \Omega$ and let $r>0$. Suppose that $u\in \dot W^{1,2}_{av,2,1/2}(\Omega\cap B(y,2r);\Omega)$, that $Lu=0$ in $B(y,2r)\cap \Omega$, and that $\Tr u=0$ on $B(y,2r)\cap\Omega$. Then
\begin{equation*}\sup_{B(y,r/12)}|u|\leq \frac{C}{r^{1+\bdmn}}\int_{B(y,r)\cap\Omega} |u|\,\delta^{1+\bdmn-\pdmn}\end{equation*}
for some constant $C$ depending only on the standard parameters.
\end{lem}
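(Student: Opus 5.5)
We split into two cases according to how close $y$ is to the boundary, comparing $\delta(y)$ with $r$; the dividing threshold will be $\delta(y)$ versus $r/4$.

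\emph{Interior case, $\delta(y)\geq r/4$.} Then $B(y,r/8)\subset\Omega$, with $\delta\approx r$ on $B(y,r/8)$ (indeed $\delta\geq r/8$ there, and $\delta\leq \delta(y)+r/8$). If $\delta(y)$ is much larger than $r$ we instead use $\delta\approx\delta(y)$ on $B(y,r/4)\subset B(y,\delta(y))$. We apply the interior Moser estimate~\eqref{eqn:interior:Moser} on the ball $B(y,r/12)$, whose double $B(y,r/6)$ has quadruple $B(y,r/3)$. Here some care is needed: we need $B(y,4\rho)\subset\Omega$, so we take $\rho=r/24$ and cover $B(y,r/12)$ by finitely many balls of radius $r/48$ centred in $B(y,r/12)$, each of which has a quadruple contained in $\Omega$. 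This gives
\[
\sup_{B(y,r/12)}|u|\leq C\fint_{B(y,r/4)}|u|.
\]
Since $\delta\approx\delta(y)=:d$ on $B(y,r/4)$ when $d\geq r/4$, we get
\[
\fint_{B(y,r/4)}|u| \approx r^{-\dmn}d^{\dmn-1-\bdmn}\int_{B(y,r/4)}|u|\,\delta^{1+\bdmn-\dmn}.
\]
It remains to bound $r^{-\dmn}d^{\dmn-1-\bdmn}\leq C r^{-1-\bdmn}$. This holds when $d\approx r$, but fails if $d\gg r$ and $\dmn-1-\bdmn>0$. So we must restrict the interior case to $r/4\leq\delta(y)\leq 2r$, say. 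For $\delta(y)>2r$ the weight is essentially constant $\approx d^{1+\bdmn-\dmn}$ and the claimed bound would read $\sup|u|\lesssim (d/r)^{1+\bdmn-\dmn}\fint|u|$ up to constants. I expect this bound is intended only for the relevant range (the hypothesis $\Tr u=0$ on $B(y,2r)\cap\partial\Omega$ is only meaningful when $B(y,2r)$ meets $\partial\Omega$, i.e.\ $\delta(y)<2r$), so I would restrict to $\delta(y)<2r$ and note that otherwise the trace hypothesis is vacuous and one uses plain Moser.

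\emph{Boundary case, $\delta(y)<r/4$.} Pick $\xi\in\partial\Omega$ with $|y-\xi|=\delta(y)<r/4$. Then $B(y,r/12)\subset B(\xi,r/3)$ and $B(\xi,2\cdot(2r/3))\subset B(y,2r)$. We apply Lemma~\ref{lem:boundary:DGN} at $\xi$ with radius $r'=2r/3$ and $s=r'/2=r/3$. The hypotheses hold: the gradient is square-integrable with the weight because $u\in\dot W^{1,2}_{av,2,1/2}$, using Corollary~\ref{cor:averaged:is:Wp}; $Lu=0$ holds in $B(\xi,4r/3)\cap\Omega$; and $\Tr u=0$ holds on $\Delta(\xi,4r/3)$. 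The lemma gives
\[
\sup_{B(\xi,r/3)\cap\Omega}|u|\leq C r^{-1-\bdmn}\int_{B(\xi,2r/3)\cap\Omega}|u|\,\delta^{1+\bdmn-\dmn}.
\]
Since $B(\xi,2r/3)\subset B(y,r/4+2r/3)$, this is not quite inside $B(y,r)$ but it is inside $B(y,11r/12)\subset B(y,r)$. This yields the claim. If $r'\geq\diam\partial\Omega$, we shrink the radius to be comparable to $\diam\partial\Omega$ and combine with the interior argument, which handles the remaining region.

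\emph{Main obstacle.} The main obstacle is the bookkeeping of the ball radii and, especially, of the weight comparison in the intermediate regime $r/4\leq\delta(y)\lesssim r$. There $B(y,r/12)$ is interior, and we must justify $\delta\approx r$ on the averaging ball so that the unweighted Moser average converts to the weighted integral normalized by $r^{1+\bdmn}$.
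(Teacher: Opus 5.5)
Your proposal is essentially the paper's proof: choose $\xi\in\partial\Omega$ with $|y-\xi|=\delta(y)$, apply Lemma~\ref{lem:boundary:DGN} centred at $\xi$ when $\delta(y)$ is a small multiple of $r$, and otherwise combine the interior Moser bound~\eqref{eqn:interior:Moser} with $\delta\approx\delta(y)$ (your radii are in fact slightly more careful than the paper's, whose ball $B(\xi,5r/6)$ need not lie in $B(y,r)$). Your caveat is correct and points to a flaw in the statement rather than in your argument: when $\bdmn<\dmn-1$ and $\delta(y)\gg r$ the trace hypothesis is vacuous and $u\equiv 1$ violates the inequality (its right-hand side is of order $(r/\delta(y))^{\dmn-1-\bdmn}$), so the paper's interior step tacitly needs $\delta(y)\lesssim r$, which is harmless because every application has $\bdmn=\dmn-1$ or $\delta(y)\lesssim r$; the only thing to repair in your write-up is the edge case $2r/3\geq\diam\partial\Omega$, where shrinking the radius loses the correct power of $r$ once $r\gg\diam\partial\Omega$, and you should instead note that then $\Omega\subset B(y,2r)$, so $u\equiv 0$ by uniqueness of finite-energy solutions with zero trace.
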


\begin{proof}
Let $\xi\in\partial\Omega$ satisfy $|y-\xi|=\delta(y)$. Suppose that $\delta(y)<r/3$. Let $R=(5/6)r$. Then $B(\xi,2R)=B(\xi,(5/3)r)\subset B(y,2r)$ and $B(y,r/12)\subset B(\xi,r/3+r/12)=B(\xi,R/2)$, and so the result follows from Lemma~\ref{lem:boundary:DGN}.

Suppose that $\delta(y)\geq r/3$. Then $B(y,r/3)\subset\Omega$, and so by the interior Moser estimate~\eqref{eqn:interior:Moser}
\begin{equation*}\sup_{B(y,r/12)} |u(y)|\leq C\fint_{B(y,r/6)} |u|\end{equation*}
and the result follows because $r/6\leq \delta(y)/2$ and so $\delta\approx\delta(y)$ in $B(y,r/6)$.
\end{proof}

We will need well posedness of the continuous Dirichlet problem.

\begin{lem}\label{lem:cts}
Let $\Omega\subset\R^\dmn$ be as in Theorem~\ref{thm:Poisson} and let $L$ be an operator as in Definition~\ref{dfn:L}.

If $f:\partial\Omega\to\R$ is Lipschitz and compactly supported, then there is a unique solution $u$ to the problem~\eqref{eqn:Dirichlet:cts}.
\end{lem}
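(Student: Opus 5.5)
The plan is to build the solution by reducing to the Lax-Milgram Poisson problem of Definition~\ref{dfn:Lax-Milgram}. I would cite the literature only for the boundary regularity (continuity up to $\partial\Omega$).

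\emph{Construction.} First extend $f$ to a Lipschitz, compactly supported function $F$ on $\R^\dmn$, for instance by the McShane extension followed by multiplication by a smooth cutoff. Since $|\nabla F|$ is bounded and supported in a ball $B(\xi,R)$, the $\varpi=1$ case of the bound~\eqref{eqn:delta:in:ball} in Lemma~\ref{lem:close:to:Ahlfors} gives
\begin{equation*}\int_\Omega |\nabla F|^2\delta^{\bdmn+1-\pdmn}\leq C\|\nabla F\|_{L^\infty}^2 R^{\bdmn+1}<\infty.\end{equation*}
Set $\vec H=\delta^{\pdmn-\bdmn-1}A\nabla F$. By the ellipticity bound~\eqref{eqn:elliptic}, $|\vec H|\leq\Lambda|\nabla F|$, so $\vec H\in L^2(\Omega;\delta^{\bdmn+1-\pdmn})$. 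Let $w\in\widetilde W^{1,2}_{av,c,1/2,2}(\Omega)$ be the Lax-Milgram solution of $Lw=-\Div(\delta^{\bdmn+1-\pdmn}\vec H)=LF$, and put $v=F-w$. Then $Lv=0$ in $\Omega$ and $\int_\Omega|\nabla v|^2\delta^{\bdmn+1-\pdmn}<\infty$. Moreover $\Tr w=0$ almost everywhere (Lemma~\ref{lem:trace:zero}), so $v=f$ on $\partial\Omega$ in the averaged sense.

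\emph{Maximum principle.} Let $M=\sup f$; note $M\geq0$ since $f$ has compact support. Consider $(v-M)^+$. Its trace vanishes, because $(F-M)^+$ vanishes on $\partial\Omega$ and $w$ has zero trace. Using Remark~\ref{rmk:traceless} and Proposition~\ref{prp:dense}, $(v-M)^+$ is therefore an admissible test function. Testing $Lv=0$ against it and using ellipticity gives $\nabla(v-M)^+=0$. The boundary Poincaré inequality (Lemma~\ref{lem:boundary:Poincare}) then forces $(v-M)^+=0$, so $v\leq\sup f$. The bound $v\geq\inf f$ follows symmetrically.

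\emph{Uniqueness.} Suppose $v_1,v_2$ both solve problem~\eqref{eqn:Dirichlet:cts}. Their difference has finite energy, is continuous on $\overline\Omega$, and vanishes on $\partial\Omega$. It therefore lies in $\widetilde W^{1,2}_{av,c,1/2,2}(\Omega)$, solves the homogeneous equation, and vanishes by the uniqueness in Definition~\ref{dfn:Lax-Milgram}.

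\emph{Continuity on $\overline\Omega$ (main obstacle).} Interior continuity is the De Giorgi–Nash estimate~\eqref{eqn:interior:DGN}. The difficulty is continuity at each boundary point, because Lemma~\ref{lem:boundary:DGN} applies only to solutions with zero trace, while $v-f(\xi)$ does not have zero trace. I would handle this by citation, case by case in $\bdmn$:
\begin{itemize}
\item for $\bdmn<\dmnMinusOne$, the construction and boundary continuity in \cite{DavFM21} (the section on the Dirichlet problem with Lipschitz data, compare \cite{DavFM19A});
\item for $\dmnMinusOne<\bdmn<\dmn$, the analogous results of \cite{DavFM20p};
\item for $\bdmn=\dmnMinusOne$ and $\dmn\geq3$, Wiener-type boundary regularity under the capacity density condition \cite{Mou23}, which Ahlfors regularity implies \cite{Zha18,HofLMN17}.
\end{itemize}
If a direct argument is wanted, one can localize: split $f=f(\xi)+(f-f(\xi))\chi+(f-f(\xi))(1-\chi)$, where $\chi$ is a cutoff near $\xi$. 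The middle piece is small near $\xi$ by the Lipschitz bound, so the maximum principle controls the corresponding solution by its small supremum. The last piece vanishes near $\xi$, so its solution has zero trace near $\xi$ and is controlled by Lemma~\ref{lem:boundary:DGN}. The remaining case $\dmn=2$, $\bdmn=1$ would be deferred to Section~\ref{sec:1:2}, alongside Lemma~\ref{lem:boundary:DGN}.
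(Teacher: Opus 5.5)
Your proposal is correct. Its backbone matches the paper: extend $f$ to $F$, solve the Lax--Milgram problem $Lw=LF$, set $v=F-w$, take boundary continuity from \cite{DavFM21}, \cite{DavFM20p}, \cite{Mou23}, and treat $\dmn=2$ separately in Section~\ref{sec:1:2}. The difference is in how you obtain the bounds and uniqueness.

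\emph{Bounds.} For $\inf f\le v\le\sup f$ you run a Stampacchia truncation in the energy space. Since $(v-M)^+\le (F-M)^++|w|$, it has zero trace. Remark~\ref{rmk:traceless} and Proposition~\ref{prp:dense} then make it an admissible test function, and ellipticity plus Lemma~\ref{lem:boundary:Poincare} force it to vanish. The paper instead first proves $v(x)\to0$ as $|x|\to\infty$, using Corollary~\ref{cor:averaged:is:Wp} and Lemmas~\ref{lem:boundary:Poincare} and~\ref{lem:boundary:Moser}. It then applies the classical maximum principle to the already continuous solution.

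\emph{Uniqueness.} The paper uses the maximum principle for bounded $\Omega$. For unbounded $\Omega$ it applies Lemma~\ref{lem:boundary:DGN} with $r\to\infty$ to the bounded difference. You instead place the difference in the space $\widetilde W^{1,2}_{av,c,1/2,2}(\Omega)$ of Definition~\ref{dfn:Lax-Milgram} and invoke Lax--Milgram uniqueness.

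Your version treats bounded and unbounded $\Omega$ uniformly. For these two steps it needs neither continuity up to the boundary nor decay at infinity.

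\emph{Localization.} Your optional localization argument is also sound, provided the third piece is defined as $v-f(\xi)-v_2$. That piece equals $w_2-w$ near $\xi$, so it has zero trace a.e.\ there, and it is bounded by your maximum principle. This argument reduces boundary continuity in every case, including $\dmn=2$, to Lemma~\ref{lem:boundary:DGN}. The paper obtains that lemma without using Lemma~\ref{lem:cts}. So it could replace both the Dirichlet-problem existence citations and the $L^\beta$--Morrey argument used in Section~\ref{sec:1:2} to prove Lemma~\ref{lem:cts}.

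One small slip: $\sup f\ge0$ holds only when $\partial\Omega$ is unbounded; for bounded $\Omega$, $f\equiv-1$ is admissible. Your truncation argument never uses this, so nothing breaks.
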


\begin{proof}
The uniqueness of solutions follows from the maximum principle if $\Omega$ is bounded, or from Lemma~\ref{lem:boundary:DGN} if $\Omega$ is unbounded.

If $\bdmn<\dmnMinusOne$ then the existence of the solution $u$ is \cite[Lemmas 8.16 and~9.4]{DavFM21}.
If $\bdmn>\dmnMinusOne$ then the existence of the solution $u$ is \cite[Lemma 12.13]{DavFM20p}.

If $\bdmn+1=\dmn\geq 3$, then by \cite[Theorem~4.15]{Mou23} the solution to the Dirichlet problem in $\dot W^{1,2}(\Omega)=\dot W^{1,2}_{av,2,1/2}(\Omega)$ given by the Lax-Milgram lemma is continuous up to the boundary. In this case we can use Corollary~\ref{cor:averaged:is:Wp} and Lemmas \ref{lem:boundary:Poincare} and~\ref{lem:boundary:Moser} to show that $\lim_{x\to\infty} u(x)=0$, and the estimate $\sup_\Omega u=\sup_{\partial\Omega}f$ (or $\inf_\Omega u=\inf_{\partial\Omega}f$) follows from the maximum principle.

As in Lemma~\ref{lem:boundary:DGN}, we have not been able to find the result in the literature in the case $\bdmn+1=\dmn=2$ in the generality of unbounded weak local John domains, and so we will provide a proof in Section~\ref{sec:1:2}.
\end{proof}

\subsection{The Green's function}\label{sec:Green}

Let $\Omega$ be as in Theorem~\ref{thm:Poisson}, that is, let $\Omega\subseteq\R^\dmn$ be a weak local John domain with unbounded complement and with $\bdmn$-Ahlfors regular boundary, $0<\bdmn<\dmn$; if $\bdmn>\dmnMinusOne$ we also require the interior corkscrew and Harnack chain conditions. Let $L$ be given by Definition~\ref{dfn:L} for some real coefficients~$A$ that satisfy the ellipticity condition~\eqref{eqn:elliptic}.

The Green's function for $L$ in $\Omega$ is a function $G^L_\Omega$ defined on $\overline\Omega\times\overline\Omega\setminus\Delta$, where $\Delta$ denotes the diagonal $\Delta=\{(x,x):x\in\Omega\}$, with a number of useful properties.

We will rely on the construction in \cite{DavFM21} in the case $\bdmn<\dmnMinusOne$, in \cite{DavFM20p} in the case $\bdmn>\dmnMinusOne$, and the construction in \cite{HofK07} in the case $\bdmn+1=\dmn\geq 3$. (We remark that the Green's function in the codimension 1 case in our situation, that is, for real possibly nonsymmetric coefficients, was constructed first in \cite{GruW82} in bounded domains; it is relatively straightforward to generalize the results of \cite{GruW82} to unbounded domains but we have found it easier to work with \cite{HofK07}, which works explicitly in potentially unbounded domains.)

We will construct the fundamental solution in the case $\bdmn=1$, $\dmn=2$ in Section~\ref{sec:1:2} and confirm that the conditions \cref{eqn:green:W12,eqn:green:fundamental,eqn:green:solution,eqn:green:boundary,eqn:green:symmetric,eqn:green:poisson:nodiv,eqn:green:Poisson,eqn:green:near:size,eqn:green:upper:bound,eqn:green:positive} are all valid in this case.

The properties of the Green's function that we will require are as follows.

Fix some $y\in \Omega$. We let $G^L_y(x)=G^L_{\Omega,y}(x)=G^L_\Omega(x,y)$.
By \cite[formula~(10.7)]{DavFM21}, \cite[formula~(14.62)]{DavFM20p}, or \cite[formula~(4.43)]{HofK07},
\begin{equation}\label{eqn:green:local:upper:bound}
\nabla G^L_y \in L^1(B(y,\delta(y)/2)).
\end{equation}
By \cite[formula~(10.5)]{DavFM21}, \cite[formula~(14.66)]{DavFM20p}, or
\cite[formula~(4.40)]{HofK07},
\begin{equation}\label{eqn:green:W12}\int_{\Omega\setminus B(y,r)} |\nabla G^L_{y}|^2 \delta^{1+\bdmn-\pdmn}<\infty.\end{equation}
for all $r>0$.

By \cite[formula~(10.4)]{DavFM21}, \cite[formula~(14.63)]{DavFM20p}, or
\cite[formula~(4.39)]{HofK07},
if $\varphi\in C^\infty_0(\Omega)$, then
\begin{equation}
\label{eqn:green:fundamental}
\varphi(y)=\int_\Omega A\nabla G^L_{y} \cdot\nabla\varphi.
\end{equation}
By the weak definition of~$L$, this means that
\begin{equation}\label{eqn:green:solution}
L G^L_{y}=0\text{ in }\Omega\setminus\{y\}
\end{equation}
in the sense of Definition~\ref{dfn:L}. The bound~\eqref{eqn:interior:DGN} yields continuity of $G^L_y$ in $\Omega\setminus\{y\}$.

By \cite[Lemma~5.5 and formula~(10.2)]{DavFM21}, \cite[Lemma~9.18 and for\-mula (14.61)]{DavFM20p}, or \cite[formula~(4.40)]{HofK07},
if $\eta$ is smooth, and if for some $r>0$ we have that $\eta\equiv 1$ outside $B(y,2r)$ and $\eta\equiv 0$ in $B(y,r)$, then $(1-\eta)G^L_y$ lies in the closure in $\dot W^{1,2}_{av,2,1/2}(\Omega)$ of $C^\infty_0(\Omega)$. The results \cite[formula~(10.2)]{DavFM21} and \cite[for\-mula~(14.61)]{DavFM20p} in fact state that $\Tr ((1-\eta)G^L_y)=0$, where $\Tr$ is given by formula~\eqref{dfn:Tr}; this is still true in the setting of \cite{HofK07} by Lemmas \ref{lem:boundary:Poincare} and~\ref{lem:trace:zero} above.

Thus by Lemma~\ref{lem:boundary:DGN} and the bound~\eqref{eqn:green:W12},
\begin{equation}\label{eqn:green:boundary}
G^L_{y} \text{ is continuous on }\overline\Omega\setminus\{y\}\text { and } G^L_{y}=0 \text{ on } \partial\Omega.
\end{equation}

By \cite[formula~(10.6)]{DavFM21}, \cite[formula~(14.64)]{DavFM20p}, or
\cite[formula~(4.37)]{HofK07},
we have the bound
\begin{equation}\label{eqn:green:upper:bound}
G^L_y(x)\leq C|x-y|^{1-\pbdmn}
\quad\text{if }x, y\in\Omega \text{ and } \frac{1}{4}\delta(y)\leq |x-y|
.\end{equation}

By \cite[formula~(10.6)]{DavFM21},
\cite[formula~(14.65)]{DavFM20p},
or
\cite[formula~(4.29)]{HofK07},
if $|x-y|\leq \delta(y)/2$, then
\begin{equation}\label{eqn:green:near:size}
G^L_{y}(x)
\leq
\begin{cases}
C_\gamma\delta(y)^{\dmn-\bpdmn-1}\Bigl(\frac{\delta(y)}{|x-y|}\Bigr)^\gamma
, & \dmn=2,\\\noalign{\smallskip}
C\delta(y)^{\dmn-\bpdmn-1}|x-y|^{2-\pdmn}, & \dmn\geq 3
\end{cases}
\end{equation}
for every~$\gamma>0$. (In the special case $\bdmn+1=\dmn=2$, for the sake of simplicity we will prove this estimate only for the particular choice $\gamma=\alpha^*/2$, where $\alpha^*$ is the number in Lemma~\ref{lem:boundary:DGN} with $L$ replaced by~$L^*$. This is not the best possible result in the dimension 2 case (and indeed the result \cite[formula~(14.65)]{DavFM20p} is a logarithmic estimate instead), but it suffices for our purposes.)

By \cite[Lemma~10.7]{DavFM21}, \cite[Lemma~14.91]{DavFM20p}, or
\cite[formula~(4.37)]{HofK07},
if $f\in C^\infty_0(\Omega)$, and if we let
\begin{equation*}u(x)=\int_\Omega G^L_y(x)\,f(y)\,dy\end{equation*}
then $u\in \widetilde W^{1,2}_{av,2,1/2}(\Omega)$ and
\begin{equation}
\label{eqn:green:poisson:nodiv}
\int_\Omega A\nabla u\cdot \nabla \varphi=\int_\Omega f\varphi\end{equation}
for all $\varphi\in \widetilde W^{1,2}_{av,2,1/2}(\Omega)$.

By \cite[Lemma~10.6]{DavFM21} and \cite[Lemma 14.78]{DavFM20p},
if $\bdmn\neq\dmnMinusOne$, and if $L^*=-\Div A^*\nabla$ is the adjoint operator to $L=-\Div A\nabla$, then
\begin{equation}\label{eqn:green:symmetric}
G_y^L(x)=G_x^{L^*}(y).
\end{equation}
This is still true if $\bdmn=\dmnMinusOne$, $\dmn\geq 3$. To see this, first note that by \cite[formulas (4.52--4.53)]{HofK07}) $G_y^L(x)$ is continuous viewed as a function on $\Omega\times\Omega\setminus D$, where $D$ is the diagonal $D=\{(x,x):x\in\Omega\}$.
Let $f$, $g\in C^\infty_0(\Omega)$ with disjoint support and let
\begin{equation*}v(x)=\int_\Omega G_y^L(x)\,f(y)\,dy,\qquad w(y)=\int_\Omega G_x^{L^*}(y)\,g(x)\,dx.\end{equation*}
Then by \eqref{eqn:green:poisson:nodiv} we have that $v$ and $w$ are both in $\widetilde W^{1,2}_{av,2,1/2}(\Omega)$ and
\begin{equation*}\int_\Omega A\nabla v\cdot \nabla \varphi=\int_\Omega f\varphi,\qquad \int_\Omega A^*\nabla w\cdot \nabla \psi=\int_\Omega g\psi\end{equation*}
for all $\varphi$, $\psi\in \widetilde W^{1,2}_{av,2,1/2}(\Omega)$. In particular we may choose $\varphi=w$ and $\psi=v$ to see that
\begin{equation*}\int_\Omega fw=\int_\Omega A\nabla v\cdot \nabla w= \int_\Omega A^*\nabla w\cdot \nabla v=\int_\Omega gv\end{equation*}
and so
\begin{equation*}\int_\Omega f(y)\int_\Omega G_x^{L^*}(y)\,g(x)\,dx\,dy
=\int_\Omega g(x)\int_\Omega G_y^L(x)\,f(y)\,dy\,dx.\end{equation*}
By taking $f$ and $g$ to be suitable approximations of the identity and invoking the continuity of $G^L$ (and $G^{L^*}$) on $\Omega\times\Omega\setminus D$, we derive formula~\eqref{eqn:green:symmetric}.

Letting $f=-\Div(\delta^{\bdmn+1-\pdmn}\vec H)$ in formula~\eqref{eqn:green:poisson:nodiv} for some $\vec H$ such that the given quantity is in~$C^\infty_0(\Omega)$, (in particular, for $\vec H$ bounded and compactly supported,) and exploiting the symmetry property~\eqref{eqn:green:symmetric}, we see that if
\begin{equation*}
u(x)=\int_\Omega \nabla G^{L^*}_{x} \cdot \delta^{\bdmn+1-\pdmn} \vec H
\end{equation*}
then $u\in \widetilde W^{1,2}_{av,2,1/2}(\Omega)$ and
\begin{equation*}\int_\Omega A\nabla u\cdot \nabla \varphi=\int_\Omega \delta^{\bdmn+1-\pdmn}\vec H\cdot\nabla\varphi
.\end{equation*}
Thus, if $u_{\vec H}$ is the unique solution in $\widetilde W^{1,2}_{av,2,1/2}(\Omega)$ to the Poisson problem $Lu=-\Div(\delta^{\bdmn+1-\pdmn}\vec H)$ given by Definition~\ref{dfn:Lax-Milgram}, we must have that $u=u_{\vec H}$. By density, if $\vec H\in L^2_{av,2,1/2}(\Omega)$ and if $\vec H=0$ in $B(x,r)$ for some $r>0$, then
\begin{equation}
\label{eqn:green:Poisson}
u_{\vec H}(x)=\int_\Omega \nabla G^{L^*}_{x} \cdot \delta^{\bdmn+1-\pdmn} \vec H
.\end{equation}

Finally,
\begin{equation}\label{eqn:green:positive}
G^L_y(x)\geq 0\text{ for all $x$, $y\in\Omega$ with $x\neq y$.}
\end{equation}
This is noted explicitly in \cite{DavFM20p,DavFM21}. It is not noted in \cite{HofK07} (and in fact is not meaningful in that context, as \cite{HofK07} is written in the generality of elliptic systems and so $G^L_y$ is a potentially complex vector-valued function and not a real-valued function).

However, the estimate~\eqref{eqn:green:positive} is straighforward to establish. By formulas \eqref{eqn:green:poisson:nodiv} and~\eqref{eqn:green:symmetric}, if $f\in C^\infty_0(\Omega)$ is nonnegative then
\begin{equation*}u(x)=\int_\Omega G^{L^*}_x(y)\,f(y)\,dy\end{equation*} is an element of $\widetilde W^{1,2}_{av,c,2,1/2}(\Omega)$ and a solution to $Lu=0$ near~$\partial\Omega$. Thus by Lemmas \ref{lem:trace:zero} and~\ref{lem:boundary:DGN}, $u$ is continuous up to the boundary and zero on the boundary. By the bound~\eqref{eqn:green:upper:bound}, and because $\bdmn=\dmnMinusOne\geq 2$, we have that $u(x)\to 0$ as $|x|\to \infty$. Finally, because $f\geq 0$, by formula~\eqref{eqn:green:poisson:nodiv} we have that $u$ is a supersolution in all of~$\Omega$, and so by the maximum (or rather minimum) principle (see, for example, \cite[Theorem~8.1]{GilT01}) we have that $u(x)\geq 0$. Thus by continuity of $G^{L^*}_x$ we must have that $G^L_y(x)=G^{L^*}_x(y)\geq 0$ in~$\Omega$.

\subsection{The special case \texorpdfstring{$\bdmn=1$, $\dmn=2$}{d=1, n=2}}
\label{sec:1:2}

In this section we will prove Lemmas \ref{lem:boundary:DGN} and~\ref{lem:cts} and construct the Green's function in the special case $\bdmn=1$, $\dmn=2$, as we have not found these results in the literature in the generality of weak local John domains.

We begin with the following boundary form of Meyers's reverse Hölder inequality. This inequality (even in boundary form) is well known in the theory, but we provide a proof to ensure that it is valid in the generality of weak local John domains and with our desired form

\begin{lem}\label{lem:Meyers:boundary}%
Let $\Omega\subset\R^\dmn$, $\dmn\geq 2$, be a weak local John domain. Suppose that $\partial\Omega$ is $\pdmnMinusOne$-Ahlfors regular and that $L$ is as in Definition~\ref{dfn:L} with $\bdmn=\dmnMinusOne$.

Then there is a $\beta>2$, depending only on the standard parameters, with the following significance.

Let $\xi\in\partial\Omega$ and $0<r<\diam \partial\Omega $. Suppose that
$\int_{B(\xi,2r)\cap\Omega} |\nabla u|^2<\infty$, that $\Tr u=0$ on $\Delta(\xi,2r)$, and that $Lu=-\Div \vec H$ in $B(\xi,2r)\cap\Omega$.
If $\vec H\in L^\beta(B(\xi,2r)\cap\Omega)$, then
$u\in \dot W^{1,\beta}(B(\xi,r)\cap\Omega)$ and
\begin{equation*}
\biggl(\frac{1}{r^\dmn}\int_{B(\xi,r)\cap\Omega} |\nabla u|^\beta\biggr)^{1/\beta}
\leq
C\biggl(
\frac{1}{r^\dmn}\int_{B(\xi,2r)\cap\Omega} |\nabla u|^2\biggr)^{1/2}
+C\biggl(\frac{1}{r^\dmn}\int_{B(\xi,r)\cap\Omega} |\vec H|^\beta\biggr)^{1/\beta}
.\end{equation*}
\end{lem}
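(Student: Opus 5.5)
The plan is the classical Gehring--Giaquinta--Modica route: prove a reverse Hölder inequality with an exponent below~$2$ for $|\nabla u|$ (suitably extended) on small balls centered anywhere near $B(\xi,r)$, then apply Gehring's lemma in its version with a right-hand side. Since $\bdmn=\dmnMinusOne$, the weight $\delta^{1+\bdmn-\pdmn}\equiv 1$, so $A$ is bounded and elliptic in the classical sense and $Lu=-\Div\vec H$ is the unweighted equation. I will extend $u$ by zero and $\nabla u$, $\vec H$ by zero outside~$\Omega$; this is legitimate because $\Tr u=0$ on $\Delta(\xi,2r)$ and, by Lemma~\ref{lem:dense:smooth} together with Lemma~\ref{lem:boundary:Poincare}, $u$ is a $\dot W^{1,2}$-limit near $\Delta(\xi,2r)$ of smooth functions vanishing near~$\partial\Omega$. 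Hence the extension lies in $W^{1,2}(B(\xi,3r/2))$ with gradient $\1_\Omega\nabla u$.

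For a ball $B=B(z,\rho)$ with $B(z,2\rho)\subset B(\xi,2r)$ I will show
\begin{equation*}
\fint_{B}|\nabla u|^2\leq C\biggl(\fint_{B(z,2\rho)}|\nabla u|^{q}\biggr)^{2/q}+C\fint_{B(z,2\rho)}|\vec H|^2
\end{equation*}
for some fixed $q<2$. There are two cases. If $B(z,4\rho/3)\subset\Omega$, the interior Caccioppoli inequality~\eqref{eqn:interior:Caccioppoli:1} applied to $u-\fint u$, followed by the Sobolev--Poincaré inequality with $q=2\pdmn/(\pdmn+2)$ (any $q<2$ if $\pdmn=2$), gives the bound. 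Otherwise there is $\zeta\in\partial\Omega$ with $|\zeta-z|<4\rho/3$, and $B(z,\rho)\subset B(\zeta,7\rho/3)$. In that case the boundary Caccioppoli inequality Lemma~\ref{lem:Caccioppoli:boundary} bounds $\int|\nabla u|^2$ by $\rho^{-2}\int|u|^2+\int|\vec H|^2$ over a comparable ball, here without subtracting a constant. The remaining task is a Sobolev--Poincaré inequality for the zero extension: $(\fint_{B'}|u|^2)^{1/2}\leq C\rho(\fint_{B'}|\nabla u|^q)^{1/q}$, valid because $u$ vanishes on a set of large capacity in~$B'$.

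The Poincaré step in the boundary case is the main obstacle. The cleanest justification is that $\partial\Omega$ is $\pdmnMinusOne$-Ahlfors regular, so $\Delta(\zeta,\rho)$ has $\mathcal{H}^{\pdmn-1}$-content $\approx\rho^{\pdmn-1}$. For $q>1$ this gives it $q$-capacity comparable to that of the ball, which holds because $\pdmn-1>\pdmn-q$. A Maz'ya-type Poincaré inequality for functions vanishing on that set (quasi-everywhere, via the smooth approximants) then yields the $L^q$ gradient bound; the improvement from $L^q$ to $L^2$ on the left comes from Sobolev embedding. I would take $q$ slightly above $\max(1,2\pdmn/(\pdmn+2))$ so that it stays strictly below~$2$.

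Gehring's lemma with right-hand side (Giaquinta--Modica) then applies to $g=|\nabla u|^q$, $f=|\vec H|^q$ on $B(\xi,3r/2)$ with exponent $2/q>1$. It yields $\beta>2$, depending only on the constants above, such that $(\fint_{B(\xi,r)}|\nabla u|^\beta)^{1/\beta}\lesssim(\fint_{B(\xi,2r)}|\nabla u|^2)^{1/2}+(\fint_{B(\xi,2r)}|\vec H|^\beta)^{1/\beta}$. Restricting back to $\Omega$ gives the statement. The statement has $\vec H$ integrated over $B(\xi,r)$ rather than $B(\xi,2r)$; if needed I would pass to that form by a covering and rescaling argument, or simply accept the doubled ball.
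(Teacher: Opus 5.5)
Your proposal is correct and has the same skeleton as the paper's proof: Caccioppoli, Sobolev embedding for the zero extension, a Poincaré inequality at the boundary, then Gehring's lemma with a right-hand side. The real difference is how you justify the boundary Poincaré step.

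The paper uses no capacity theory. It applies its own Hardy-type estimate, Lemma~\ref{lem:boundary:Poincare} together with Corollary~\ref{cor:averaged:is:Wp}, with an exponent $p\in(1,2)$ and the choice $s=1-1/p$. This makes the weight on the gradient side trivial and gives $\int_{B(x,2\varrho)\cap\Omega}|u|^p\delta^{-p}\lesssim\int_{B(x,3C_1\varrho)\cap\Omega}|\nabla u|^p$. Since $\delta\leq 2\varrho$ there, it follows that $\fint_{B(x,2\varrho)}|u|^p\lesssim\varrho^p\fint_{B(x,3C_1\varrho)}|\nabla u|^p$. The paper's restriction $p>1$ comes from the requirement $s>0$ in that lemma, exactly as your $q>1$ comes from $\dmn-1>\dmn-q$.

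The paper's route is self-contained, yields the stronger Hardy inequality, and is driven by the carrot paths of the weak local John condition. Yours imports external potential theory: a lower bound for $q$-capacity by Hausdorff content, and Maz'ya's inequality. In exchange, this step uses only the Ahlfors regularity of $\partial\Omega$, that is, uniform $q$-capacity density of the boundary. The John condition then enters your argument only through Lemmas~\ref{lem:dense:smooth} and~\ref{lem:Caccioppoli:boundary}, which both arguments need anyway.

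You are also more explicit than the paper about balls that meet $\partial\Omega$ without being centered on it, which Gehring's lemma requires. One bookkeeping fix: after recentering at $\zeta$, your right-hand side lives on $B(\zeta,14\rho/3)\subset B(z,6\rho)$, not on $B(z,2\rho)$. So state the reverse Hölder inequality with $B(z,6\rho)$ on the right, and restrict to balls with $B(z,6\rho)\subset B(\xi,3r/2)$. Balls that miss $\Omega$ entirely are trivial.

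Finally, drop the idea of passing to $\vec H$ integrated over $B(\xi,r)$ by a covering argument, because that form is false. Take $L=-\Delta$ in a half-space and $\vec H=\epsilon^{-\dmn/2}\phi((\cdot-y_0)/\epsilon)\,e_1$, where $\phi\geq 0$ is a fixed bump. Place the support $B(y_0,\epsilon)$ at distance $\epsilon$ outside $B(\xi,r)$. Then the right-hand side stays bounded: the $L^2$ term is controlled by $\|\vec H\|_{L^2}=\|\phi\|_{L^2}$, and the $\vec H$ term over $B(\xi,r)$ vanishes. Meanwhile $\int_{B(\xi,r)\cap\Omega}|\nabla u|^\beta\gtrsim\epsilon^{\dmn-\dmn\beta/2}\to\infty$. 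The paper's own proof also ends with $B(\xi,2r)$, and that is the form used in Corollary~\ref{cor:Meyers}. So the doubled ball is the intended statement, and accepting it is the right choice.
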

\begin{proof}
By the Caccioppoli inequality, if $x\in \Delta(\xi,r)$ and $\varrho$ is small enough, then
\begin{equation*}\biggl(\fint_{B(x,\varrho)\cap\Omega}|\nabla  u|^2\biggr)^{1/2}\leq \frac{C}{\varrho}
\biggl(\fint_{B(x,2\varrho)\cap\Omega} |u|^2\biggr)^{1/2}+\biggl(\fint_{B(x,2\varrho)\cap\Omega}|\vec H|^2\biggr)^{1/2}.\end{equation*}

We extend $u$ by zero to $B(\xi,2r)\setminus\Omega$. By Lemma~\ref{lem:dense:smooth}, we have that the extension is a Sobolev function in $B(\xi,(3/2)r)$. The standard Sobolev embedding theorem in balls (see for example \cite[Section 5.6.3]{Eva98}) implies that
\begin{equation*} \frac{1}{\varrho}\biggl(\fint_{B(x,2\varrho)} |u|^2\biggr)^{1/2}
\leq
C\biggl(\fint_{B(x,2\varrho)} |\nabla u|^p\biggr)^{1/p}
+
\frac{C}{\varrho}\biggl(\fint_{B(x,2\varrho)} |u|^p\biggr)^{1/p}\end{equation*}
for some $p<2$; by Hölder's inequality we may require that $p>1$.

If $1+ps>0$ then
\begin{equation*}
\fint_{B(x,2\varrho)} |u|^p
\leq (2\varrho)^{1+ps}\fint_{B(x,2\varrho)} |u|^p \delta^{-1-ps}
\end{equation*}
By Lemma~\ref{lem:boundary:Poincare} and Corollary~\ref{cor:averaged:is:Wp}, we have that if $s>0$ then
\begin{equation*}
\fint_{B(x,2\varrho)} |u|^p
\leq C\varrho^{1+ps}\fint_{B(x,3C_1\varrho)} |\nabla u|^p \delta^{p-1-ps}
.\end{equation*}
We may take $s=1-1/p>0$ (this is the reason for our requirement that $p>1$). Thus
\begin{equation*}
\biggl(\fint_{B(\xi,\varrho)\cap\Omega}|\nabla  u|^2\biggr)^{1/2}
\leq
C\biggl(\fint_{B(\xi,3C_1\varrho)} |\nabla u|^p\biggr)^{1/p}
+\biggl(\fint_{B(\xi,2\varrho)\cap\Omega}|\vec H|^2\biggr)^{1/2}.\end{equation*}

Thus $\nabla u$ satisfies a reverse Hölder inequality in each $B(x,3C_1\varrho)$ for $\varrho$ small enough. A similar and simpler argument shows that $\nabla u$ satisfies a reverse Hölder inequality in balls contained in $B(\xi,2r)\cap\Omega$. Thus we may apply the self improvement properties of reverse Hölder inequalities (see, for example, \cite[Chapter~V]{Gia83}) and a covering argument to see that
\begin{equation*}\biggl(\frac{1}{r^\dmn}\fint_{B(\xi,r)}|\nabla  u|^\beta\biggr)^{1/\beta}\leq C\biggl(\frac{1}{r^\dmn}\int_{B(\xi,2r)}|\nabla  u|^2\biggr)^{1/2}
+C\biggl(\frac{1}{r^\dmn}\fint_{B(\xi,2r)}|\vec H|^\beta\biggr)^{1/\beta}
\end{equation*}
for some $\beta>2$.
\end{proof}

As a corollary we can establish well posedness of the Poisson problem in the case $p=\beta=2$, $s=1-1/\beta$; we will use this in the proof of Lemma~\ref{lem:cts} and the construction of the Green's function later in this section.

\begin{cor}\label{cor:Meyers} Let $\Omega\subset\R^\dmn$ be a weak local John domain with $\pdmnMinusOne$-Ahlfors regular boundary and let $L$ be as in Definition~\ref{dfn:L}. The $L^\beta(\Omega)=L^{\beta}_{av,\beta,1-1/\beta}(\Omega)$-Poisson problem for $L$ is compatibly well posed in~$\Omega$ for all $\beta$ sufficiently close to~$2$.
\end{cor}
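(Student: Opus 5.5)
The plan is to reduce compatible well posedness to a single global Meyers-type estimate for the Lax-Milgram solution, and then to let the results of Section~\ref{sec:beta} do the rest. First observe that when $\bdmn=\dmnMinusOne$ the weight in $L^\beta_{av,\beta,1-1/\beta}(\Omega)$ is $\delta^{\bdmn-\pdmn+\beta-\beta(1-1/\beta)}=\delta^{\bdmn+1-\pdmn}=1$. Moreover, $\beta=p$ here, so by Corollary~\ref{cor:averaged:is:Wp} this space coincides with $L^\beta(\Omega)$, and the ellipticity weight in~\eqref{eqn:elliptic} is trivial.

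It therefore suffices to show the following. Let $\beta_0>2$ be the exponent of Lemma~\ref{lem:Meyers:boundary}, and let $2\leq \beta<\min(\beta_0,p^+_L)$. Then for every $\vec H\in L^\beta(\Omega)\cap L^2(\Omega)$, the Lax-Milgram solution $u=u_{\vec H}$ of Definition~\ref{dfn:Lax-Milgram} satisfies $\|\nabla u\|_{L^\beta(\Omega)}\leq C\|\vec H\|_{L^\beta(\Omega)}$. This is compatible solvability for $\beta\geq 2$. Theorem~\ref{first:step:duality} then gives compatible solvability for $L^*$ in $L^{\beta'}_{av,\beta',1/\beta}=L^{\beta'}_{av,\beta',1-1/\beta'}$. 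Running the same argument with $L$ and $L^*$ interchanged covers exponents on both sides of~$2$. Finally, Remark~\ref{rmk:compatible:solvable} and Lemma~\ref{lem:uniqueness} (note $s=1-1/\beta\in(0,1)$) upgrade compatible solvability to compatible well posedness.

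For the key estimate, I first establish a local reverse Hölder bound on every ball $B(x,\varrho)$ with $x\in\overline\Omega$, where $u$ and $\vec H$ are extended by zero outside~$\Omega$. I split into two cases.

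If $\delta(x)\geq \varrho/2$, I will cover $B(x,\varrho/4)\cap\Omega$ by interior balls. On these I apply the interior Meyers estimate~\eqref{eqn:interior:Meyers}, together with Lemma~\ref{lem:Meyers:boundary} for the part of $B(x,\varrho)$ near~$\partial\Omega$.

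If $\delta(x)<\varrho/2$, I pick $\xi\in\partial\Omega$ with $|x-\xi|=\delta(x)$, so that $B(x,\varrho/2)\subset B(\xi,\varrho)$. I then apply Lemma~\ref{lem:Meyers:boundary} on $B(\xi,\varrho)$ and $B(\xi,2\varrho)\subset B(x,3\varrho)$. The hypothesis $\Tr u=0$ is satisfied because $u\in\widetilde W^{1,2}$.

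In either case, with all radii bounded by a multiple of~$\varrho$ and at most a finite number of covering balls, I obtain
\begin{equation*}
\biggl(\frac{1}{\varrho^\dmn}\int_{B(x,\varrho)\cap\Omega}|\nabla u|^\beta\biggr)^{1/\beta}
\leq C\biggl(\frac{1}{\varrho^\dmn}\int_{B(x,C\varrho)\cap\Omega}|\nabla u|^2\biggr)^{1/2}
+C\biggl(\frac{1}{\varrho^\dmn}\int_{B(x,C\varrho)\cap\Omega}|\vec H|^\beta\biggr)^{1/\beta},
\end{equation*}
valid for $\varrho$ up to a fixed multiple of $\diam\partial\Omega$.

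To globalize, I multiply this bound by $\varrho^{\dmn/\beta}$. Since $\|\nabla u\|_{L^2(\Omega)}\leq C\|\vec H\|_{L^2(\Omega)}$, this gives
\begin{equation*}\|\nabla u\|_{L^\beta(B(x,\varrho)\cap\Omega)}\leq C\varrho^{\dmn/\beta-\dmn/2}\|\vec H\|_{L^2(\Omega)}+C\|\vec H\|_{L^\beta(\Omega)}.\end{equation*}
If $\partial\Omega$ is unbounded, I let $\varrho\to\infty$. The first term tends to zero because $\beta>2$, and the monotone convergence theorem gives the global bound.

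If $\Omega$ is bounded, I choose $\varrho\approx\diam\Omega$ so that one ball covers~$\Omega$. Hölder's inequality gives $\|\vec H\|_{L^2}\leq |\Omega|^{1/2-1/\beta}\|\vec H\|_{L^\beta}\leq C\varrho^{\dmn/2-\dmn/\beta}\|\vec H\|_{L^\beta}$, and the powers of $\varrho$ cancel. The same cancellation handles the mismatch that Lemma~\ref{lem:Meyers:boundary} is stated with $\vec H$ on $B(\xi,r)$ on the right-hand side.

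I expect the main obstacle to be the covering in the first case, where $B(x,\varrho)$ reaches the boundary even though $x$ is far from it. Concretely, the boundary portion must be covered by a controlled number of balls $B(\xi_i,\varrho)$ with $\xi_i\in\partial\Omega$ whose doubles stay inside $B(x,C\varrho)$. This is exactly the Vitali-covering argument on $\partial\Omega$ used in Lemma~\ref{lem:dense:boundary}, combined with the Ahlfors regularity of $\partial\Omega$, so I expect it to be routine.
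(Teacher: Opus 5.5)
Your outline follows the paper's argument. You reduce to compatible solvability via Remark~\ref{rmk:compatible:solvable} and Lemma~\ref{lem:uniqueness}. You obtain a global $L^\beta$ bound on $\nabla u_{\vec H}$ from Lemma~\ref{lem:Meyers:boundary}, either by sending the radius to infinity or, for bounded $\Omega$, by the Lax--Milgram $L^2$ bound plus Hölder's inequality. You reach exponents below $2$ by duality applied to $L^*$.

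One step is not justified as written. Lemma~\ref{lem:Meyers:boundary} supplies a single exponent $\beta_0>2$, but you apply it at every $\beta\in[2,\beta_0)$. This does not follow from the $\beta_0$ case by Hölder's inequality. That route leaves $\vec H$ measured in $L^{\beta_0}$ rather than $L^\beta$ on the right-hand side, so the global estimate $\|\nabla u\|_{L^\beta(\Omega)}\leq C\|\vec H\|_{L^\beta(\Omega)}$ does not close. You need one of two fixes:
\begin{itemize}
\item the full-range form of Gehring's lemma (Giaquinta's version gives the reverse Hölder inequality for every exponent in $[2,\beta_0)$ with $\vec H$ measured in that same exponent); this is the paper's ``careful reading'';
\item interpolation between the compatible $L^2$ and $L^{\beta_0}$ bounds. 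The paper suggests Theorem~\ref{thm:interp:sol}; classical Lebesgue-space interpolation also works, since for $\bdmn=\dmnMinusOne$ all the spaces involved are unweighted $L^p(\Omega)$.
\end{itemize}

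Separately, the step you flag as the main obstacle is unnecessary when $\partial\Omega$ is unbounded. That step is the reverse Hölder bound on balls centered at arbitrary $x\in\overline\Omega$, split into interior and boundary cases. Fixing one $\xi\in\partial\Omega$ and letting $r\to\infty$ in Lemma~\ref{lem:Meyers:boundary} already exhausts $\Omega$, and that lemma controls interior regions as well. Some covering is needed only in the bounded case, where the lemma requires $r<\diam\partial\Omega$. There a bounded number of boundary-centered balls of radius comparable to $\diam\partial\Omega$ suffices, by Ahlfors regularity. This covering is a point the paper passes over quickly, and your explicit treatment of it is fine.
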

\begin{proof}
By Remark~\ref{rmk:compatible:solvable} and Lemma~\ref{lem:uniqueness} it suffices to establish compatible solvability. Let $\vec H\in L^2(\Omega)\cap L^\beta(\Omega)$; by Corollary~\ref{cor:averaged:is:Wp}, this means $\vec H\in L^2_{av,2,1/2}(\Omega)\cap L^\beta(\Omega)$. Let $u=u_{\vec H}$ be the solution to the Poisson problem given by Definition~\ref{dfn:Lax-Milgram}.
By the boundary Meyers estimate (Lemma~\ref{lem:Meyers:boundary}) we have that for some ${\widetilde\beta}>2$,
\begin{equation*}\biggl(\int_{\Omega\cap B(\xi,r)} |\nabla u|^{\widetilde\beta}\biggr)^{1/{\widetilde\beta}}
\leq C r^{\pdmn/{\widetilde\beta}-\pdmn/2}\biggl(\int_{\Omega\cap B(\xi,2r)} |\nabla u|^2\biggr)^{1/2}
+ C \biggl(\int_{\Omega\cap B(\xi,2r)} |\vec H|^{\widetilde\beta}\biggr)^{1/{\widetilde\beta}}.\end{equation*}
If $\diam(\partial\Omega)=\infty$ we may take the limit as $r\to \infty$. Otherwise, we may apply the Caccioppoli inequality (which does not require $r<\diam(\Omega)$) to remove the term involving $|\nabla u|^2$. In either case, we have that
\begin{equation*}\|u\|_{\dot W^{1,{\widetilde\beta}}_{av,c,{\widetilde\beta},1-1/{\widetilde\beta}}(\Omega)}
=\|\nabla u\|_{L^{\widetilde\beta}(\Omega)}\leq C\|\vec H\|_{L^{\widetilde\beta}(\Omega)}=C\|\vec H\|_{L^{\widetilde\beta}_{av,c,{\widetilde\beta},1-1/{\widetilde\beta}}(\Omega)}\end{equation*}
which yields compatible solvability of the $L^{\widetilde\beta}(\Omega)$-Poisson problem.

We now turn to the case $2<\beta<\widetilde\beta$.
A careful reading of the proof of Lemma~\ref{lem:Meyers:boundary} (including of the works cited therein) will yield that Lemma~\ref{lem:Meyers:boundary} is valid for all $\beta$ with $2<\beta<\widetilde\beta$, and so we have compatible solvability of the $L^{\beta}(\Omega)$-Poisson problem for such~$\beta$. Alternatively, the reader may prefer to recall that if $\bdmn=\dmnMinusOne$, then $L^\beta(\Omega)=L^\beta_{av,\beta,1-1/\beta}$, and so we can apply Theorem~\ref{thm:interp:sol} to derive compatible solvability of the $L^{\beta}(\Omega)$-Poisson problem from that of the $L^2$ and $L^{\widetilde\beta}(\Omega)$-Poisson problem.

Finally, if $\widetilde\beta^*>2$ is as in Lemma~\ref{lem:Meyers:boundary} with $L$ replaced by~$L^*$, and if a prime denotes the Hölder conjugate, then $(\widetilde \beta^*)'<2$. If $(\widetilde \beta^*)'<\beta<2$, then by the above argument the $L^{\beta'}(\Omega)$-Poisson problem for $L^*$ is compatibly solvable.
By Theorem~\ref{dualspace}, we have compatible solvability for the $L^{\beta}(\Omega)$-Poisson problem for~$L$, as desired.
\end{proof}

\begin{proof}[Proof of Lemma~\ref{lem:boundary:DGN}]
The estimate
\begin{equation*}
\sup_{B(\xi,s)} |u|\leq Cs^\alpha
\biggl( \int_{B(\xi,2r/3)\cap\Omega} |\nabla u|^\beta \biggr)^{1/\beta},\end{equation*}
for $\alpha=1-\dmn/\beta$,
follows from Morrey's inequality (see for example \cite[Section 5.6.3]{Eva10}) applied to the extension of~$u$ by zero considered in the proof of Lemma~\ref{lem:Meyers:boundary}, which by  Lemma~\ref{lem:Meyers:boundary} satisfies $\nabla u\in L^\beta(B(\xi,r))$ for some $\beta>2=\dmn$.

By Lemma~\ref{lem:boundary:DGN} and the boundary Caccioppoli inequality, we may improve to
\begin{equation*}
\sup_{B(\xi,s)} |u|\leq C\frac{s^\alpha}{r^\alpha}
\biggl( \fint_{B(\xi,3r/4)\cap\Omega} |u|^2 \biggr)^{1/2},\end{equation*}
which is the $\bdmn=1$, $\dmn=2$ case of the estimate~\eqref{eqn:boundary:DGN:proof} in the proof of Lemma~\ref{lem:boundary:DGN}.

This yields a reverse Hölder type estimate on $u$ in balls centered on the boundary. The corresponding estimate in interior balls is the bound~\eqref{eqn:interior:DGN}.
We may complete the proof (that is, replace the $L^2$ estimate on~$u$ by a $L^1$ estimate) using standard arguments; see, for example, the end of the proof of \cite[Chapter~IV, Theorem~1.1]{HanL11}, the proof of \cite[Lemma~8.13]{DavFM21}, or \cite[Lemma~33]{Bar16} (inspired by \cite[Section 9, Lemma 2]{FefS72}) applied to the aforementioned extension of $u$ by zero.
\end{proof}

\begin{proof}[Proof of Lemma~\ref{lem:cts}]
Let $\Omega\subset\R^2$ be a weak local John domain with $1$-Ahlfors regular boundary and unbounded complement. Let $f:\partial\Omega\to\R$ be a compactly supported Lipschitz function. Our goal is to establish unique solvability of the continuous Dirichlet problem~\eqref{eqn:Dirichlet:cts}.

By the Tietze extension theorem, there is a compactly supported Lipschitz function $F:\R^2\to\R$ with $F=f$ on~$\partial\Omega$.

Let $\beta$ satisfy $2<\beta<\infty$ and the conditions of Corollary~\ref{cor:Meyers}, so that the $L^\beta(\Omega)$-Poisson problem for $L$ is compatibly well posed. Let $v$ be the solution to the Poisson problem $Lv=-\Div (A\nabla F)$ in~$\Omega$; by compatible well posedness
\begin{equation*}\|v\|_{\widetilde W^{1,2}(\Omega)} \leq C\|\nabla F\|_{L^2(\R^2)},
\qquad
\|v\|_{\widetilde W^{1,\beta}(\Omega)} \leq C\|\nabla F\|_{L^{\beta}(\R^2)}.\end{equation*}
We may extend $v$ by zero to a function in $\dot W^{1,\beta}(\R^2)$. As in the proof of Lemma~\ref{lem:boundary:DGN}, by Morrey's inequality we have that $v$ is Hölder continuous. Then $u=F-v$ is continuous on $\overline\Omega$ (in fact on~$\R^2$), lies in $\dot W^{1,2}(\Omega)$, and satisfies $Lu=0$ in $\Omega$ and $u=f$ on~$\partial\Omega$. We need only establish the upper and lower bounds on~$u$ to show that $u$ is a solution to the continuous Dirichlet problem~\eqref{eqn:Dirichlet:cts} and establish uniqueness.

If $\Omega$ is bounded then we are done by the maximum principle. Otherwise, let $u_1$ be any function with the properties we have established, that is, $Lu_1=0$ in~$\Omega$, $u_1\in \dot W^{1,2}(\Omega)$, $u_1$ is continuous on~$\overline\Omega$, and $u_1=f$ on~$\partial\Omega$.
Then $v_1=u_1-F$ lies in $\widetilde W^{1,2}(\Omega)$ by Lemma~\ref{lem:boundary:Poincare}.

Assume without loss of generality that $0\in \partial\Omega$ and let $R>0$ be such that $\supp F\subset B(0,R)$. By Lemma~\ref{lem:boundary:Moser}, if $x\in \Omega\setminus B(0,2R)$, then
\begin{equation*}|v_1(x)|=|u_1(x)|\leq \frac{C}{|x|^2}\int_{B(x,|x|/2)\cap\Omega} |v_1|\end{equation*}
and by Hölder's inequality
\begin{equation*}|v_1(x)|\leq  \frac{C}{|x|^2}
\biggl(\int_{B(x,|x|/2)\cap\Omega} |v_1|^2\delta^{-2}\biggr)^{1/2}
\biggl(\int_{B(x,|x|/2)\cap\Omega} \delta^{2}\biggr)^{1/2}.\end{equation*}
Because $0\in \partial\Omega$ we have that $\delta(y)\leq |y|$ for all~$y$, and so
\begin{equation*}|v_1(x)|\leq  C
\biggl(\int_{\Omega\setminus B(0,|x|/2)} |v|^2\delta^{-2}\biggr)^{1/2}.\end{equation*}
By Lemma~\ref{lem:boundary:Poincare},
\begin{equation*}\biggl(\int_{\Omega} |v_1|^2\delta^{-2}\biggr)^{1/2}
\leq C\biggl(\int_{\Omega} |\nabla v_1|^2\biggr)^{1/2}<\infty\end{equation*}
and so
\begin{equation*}\lim_{|x|\to\infty} \biggl(\int_{\Omega\setminus B(0,|x|/2)} |v_1|^2\delta^{-2}\biggr)^{1/2}=0.\end{equation*}
Thus $v_1(x)\to 0$ (and therefore $u_1(x)\to 0$) as $|x|\to\infty$. We can now establish the upper and lower bounds on~$u$ and show that $u_1=u$ in $\Omega$ using the maximum principle.
\end{proof}

\subsubsection{Construction of the Green's function}\label{sec:green:2:1}

In a weak local John domain with unbounded complement, and in the special case $\bdmn+1=\dmn=2$, the Green's function is in fact straightforward to construct.

Let $\Omega\subset\R^2$ be a weak local John domain with $1$-Ahlfors regular boundary and let $L$ be as in Definition~\ref{dfn:L}. For the rest of Section~\ref{sec:green:2:1}, we will let $\beta$ satisfy $2<\beta\leq 4/(2-\alpha^*)$ and the conditions of Corollary~\ref{cor:Meyers} with $L$ replaced by $L^*$; that is, we will let $\beta>2$ be such that the $L^\beta(\Omega)$-Poisson problem for $L^*$ is compatibly well posed. (The requirement $\beta\leq 4/(2-\alpha^*)$ is imposed to aid in the proof of the estimate~\eqref{eqn:green:near:size}, which we need to hold for $\gamma=\alpha^*/2$.)

If $\vec H\in L^{\beta}(\Omega)$, let $u^*_{\vec H}$ be the solution to the $L^{\beta}(\Omega)$-Poisson problem for~$L^*$ with data~$\vec H$. Then
\begin{equation*}\|u^*_{\vec H}\|_{\widetilde W^{1,\beta}(\Omega)}\leq C\|\vec H\|_{L^{\beta}(\Omega)}
.\end{equation*}
If $y\in\Omega$ and $0<r\leq\delta(y)/2$ then
\begin{equation*}|u^*_{\vec H}(y)|
\leq \sup_{z\in B(y,r)} |u^*_{\vec H}(y)-u^*_{\vec H}(z)|
+ \fint_{B(y,r)} |u^*_{\vec H}|.\end{equation*}
Recall $\beta>2=\dmn$. As in the proof of Lemma~\ref{lem:boundary:DGN}, by Morrey's inequality we have that
\begin{equation}\label{eqn:2d:cts}
\sup_{z\in B(y,r)} |u^*_{\vec H}(y)-u^*_{\vec H}(z)|
\leq Cr^{1-\pdmn/\beta} \|u^*_{\vec H}\|_{\dot W^{1,\beta}(\R^\dmn)}
\leq Cr^{1-\pdmn/\beta} \|\vec H\|_{L^{\beta}(\Omega)}
\end{equation}
where we have extended $u^*_{\vec H}$ by zero in the case where $\R^\dmn\setminus\Omega$ has a nonempty interior.
By Hölder's inequality,
\begin{equation*}\fint_{B(y,r)} |u^*_{\vec H}|
\leq \delta(y)r^{-\pdmn/\beta}\biggl(\int_{B(y,r)} |u^*_{\vec H}|^{\beta}\delta^{-\beta}\biggr)^{1/\beta}\end{equation*}
and by Lemma~\ref{lem:boundary:Poincare} and Corollary~\ref{cor:averaged:is:Wp}
\begin{equation*}\fint_{B(y,r)} |u^*_{\vec H}|
\leq C\delta(y)r^{-\pdmn/\beta}\biggl(\int_{\Omega} |\nabla u^*_{\vec H}|^{\beta}\biggr)^{1/\beta}
\leq C\delta(y)r^{-\pdmn/\beta} \|\vec H\|_{L^{\beta}(\Omega)}.
\end{equation*}
Taking $r=\delta(y)/2$ yields
\begin{equation}\label{eqn:2d:pointwise}
|u^*_{\vec H}(y)|\leq C\delta(y)^{1-\pdmn/\beta} \|\vec H\|_{L^{\beta}(\Omega)}.\end{equation}
The operator $\vec H\mapsto u^*_{\vec H}(y)$ is clearly linear, and by the above estimate is bounded on $L^\beta(\Omega)$, and so by the Riesz representation theorem there is a unique $\vec g_y^L\in L^{\beta'}(\Omega)$ such that
\begin{equation}\label{eqn:2d:green:Poisson}
\|\vec g_y^L\|_{L^{\beta'}(\Omega)} \leq C\delta(y)^{1-\pdmn/\beta}
\quad\text{and}\quad
u^*_{\vec H}(y)=\int_\Omega \vec g_y^L\cdot\vec H
\text{ for all $\vec H\in L^{\beta}(\Omega)$.}
\end{equation}
In light of formula~\eqref{eqn:green:Poisson}, we expect $\vec g_y^L$ to be the gradient of the Green's function $G_y^L$.

We now modify this construction to produce the function~$G_y^L$.
If $\varphi\in \widetilde W^{1,{\beta'}}(\Omega)=\widetilde W^{1,{\beta'}}_{av,{\beta'},1-1/{\beta'}}(\Omega)$, then by Lemma~\ref{lem:boundary:Poincare} $\varphi \in L^{\beta'}_{av,{\beta'},2-1/{\beta'}}(\Omega)$. If $f$ lies in the dual space $L^{\beta}_{av,\beta,-1/\beta}(\Omega)$ then the linear operator
$T(\varphi)=\int_\Omega f\varphi$
satisfies
\begin{equation*}
|T(\varphi)|=\biggl|\int_\Omega f\varphi\biggr|\leq \biggl(\int_\Omega f^{\beta}\delta^{\beta}\biggr)^{1/\beta}
\biggl(\int_\Omega \varphi^{{\beta'}}\delta^{-{\beta'}}\biggr)^{1/{\beta'}}
\leq C\|f\|_{L^{\beta}_{av,\beta,-1/\beta}(\Omega)}
\|\varphi\|_{\widetilde W^{1,{\beta'}}(\Omega)}.\end{equation*}
Thus $T$ is a bounded linear operator on $\widetilde W^{1,{\beta'}}(\Omega)$, and so by the Hahn-Banach theorem and the Riesz representation theorem there is at least one $\vec H_f\in L^{\beta}(\Omega)$ with
\begin{equation}
\label{eqn:2d:step}\|\vec H_f\|_{L^{\beta}(\Omega)}
\leq C\|f\|_{L^{\beta}_{av,\beta,-1/\beta}(\Omega)}
\end{equation}
and
\begin{equation*}
\int_\Omega f\varphi=T(\varphi)=\int_\Omega \vec H_f\cdot\nabla\varphi
\text{ for all $\varphi\in \widetilde W^{1,{\beta'}}(\Omega)$.}\end{equation*}
The mapping $f\mapsto \vec H_f$ is not well-defined; there may be many possible values of~$\vec H_f$. However, the mapping $f\mapsto u^*_{\vec H_f}$ is well defined. To see this, observe that
\begin{equation}\label{eqn:2d:nodiv:1}\int_\Omega f\varphi=\int_\Omega \vec H_f\cdot\nabla\varphi
= \int_\Omega \nabla \varphi \cdot A^*\nabla u_{\vec H_f}^*\text{ for all $\varphi\in \widetilde W^{1,{\beta'}}(\Omega)$}.\end{equation}
If $v$ is another function in $\widetilde W^{1,\beta}(\Omega)$ with $\int_\Omega \nabla\varphi\cdot A^*\nabla v=\int_\Omega f\varphi$ for all $\varphi\in \widetilde W^{1,{\beta'}}(\Omega)$, then $\int_\Omega \nabla\varphi\cdot A^*\nabla v=\int_\Omega \nabla\varphi\cdot \vec H_f$ for all $\varphi\in \widetilde W^{1,{\beta'}}(\Omega)$, and so $v$ is a solution to the $L^{\beta}$-Poisson problem for $L^*$ with data $\vec H_f$. By the uniqueness property for the $L^\beta$-Poisson problem, we must have that $v=u^*_{\vec H_f}$.

If $y\in\Omega$ then the operator $f\mapsto u^*_{\vec H_f}(y)$ is clearly linear, and by the bounds \eqref{eqn:2d:pointwise} and~\eqref{eqn:2d:step} satisfies
\begin{equation*}
|u^*_{\vec H_f}(y)|
\leq C\delta(y)^{1-\pdmn/\beta}\|\vec H_f\|_{L^\beta(\Omega)}
\leq C\delta(y)^{1-\pdmn/\beta} \|f\|_{L^\beta_{av,\beta,-1/\beta}(\Omega)}
.\end{equation*}
Thus, again by the Riesz representation theorem, there is a unique (that is, well-defined) function $G_y^L$ such that
\begin{equation}
\label{eqn:green:2d:directsize}
\|G_y^L\|_{L^{\beta'}_{av,\beta',2-1/\beta'}(\Omega)}
\leq C\delta(y)^{1-\pdmn/\beta},
\end{equation}
and
\begin{equation}\label{eqn:2d:nodiv:2}
u^*_{\vec H_f}(y)=\int_\Omega G_y^L\,f
\quad\text{for all }f\in L^{\beta}_{av,\beta,-1/\beta}(\Omega).
\end{equation}

The function $G_y^L$ is the Green's function. Observe that if $\vec H$ is smooth and compactly supported in~$\Omega$, then
\begin{equation*}\int_{\Omega} G_y^L (-\Div \vec H)=u_{\vec H}(y)=\int_\Omega \vec g_y^L\cdot \vec H\end{equation*}
and so $\vec g_y^L$ is the weak derivative of $G_y^L$. Thus $G_y^L\in L^{\beta'}_{av,{\beta'},2-1/{\beta'}}(\Omega)\cap \dot W^{1,\beta'}(\Omega)=\widetilde W^{1,\beta'}(\Omega)$.

We now must establish the conditions \cref{eqn:green:W12,eqn:green:local:upper:bound,eqn:green:solution,eqn:green:fundamental,eqn:green:boundary,eqn:green:symmetric,eqn:green:poisson:nodiv,eqn:green:Poisson,eqn:green:near:size,eqn:green:upper:bound,eqn:green:positive}. We remark that the condition~\eqref{eqn:green:local:upper:bound} follows immediately from the fact that $G_y^L\in\widetilde W^{1,\beta'}(\Omega)$.

\subsubsection{The bound~\texorpdfstring{\eqref{eqn:green:W12}}{(\ref*{eqn:green:W12})}}

To establish the bound~\eqref{eqn:green:W12} we must improve from a $\dot W^{1,\beta'}$ estimate to a $\dot W^{1,2}$ estimate.
Because the $L^{\beta}(\Omega)$-Poisson problem for $L^*$ is compatibly well posed, we have that if $y\in\Omega$, $0<r<\delta(y)/2$, and $\vec H\in L^2(\Omega\setminus B(y,r))\cap L^{\beta}(\Omega\setminus B(y,r))$, then $u^*_{\vec H}\in \widetilde W^{1,2}(\Omega) \cap \widetilde W^{1,\beta}(\Omega)$. Then
\begin{equation*}u^*_{\vec H}(y)=\int_{\Omega\setminus B(y,r)} \nabla G_y^L\cdot \vec H\end{equation*}
and by the interior Moser estimate~\eqref{eqn:interior:Moser} and Hölder's inequality
\begin{equation*}|u^*_{\vec H}(y)|\leq C\fint_{B(y,r)} |u^*_{\vec H}|
\leq C\frac{\delta(y)}{r}\biggl(\int_{B(y,r)} |u^*_{\vec H}|^2\delta^{-2}\biggr)^{1/2}
.\end{equation*}
By Lemma~\ref{lem:boundary:Poincare}
\begin{equation*}|u^*_{\vec H}(y)|
\leq C\frac{\delta(y)}{r}\biggl(\int_{B(y,r)} |\nabla u^*_{\vec H}|^2\biggr)^{1/2}
\leq C\frac{\delta(y)}{r}\|\vec H\|_{L^2(\Omega\setminus B(y,r))}.
\end{equation*}
Thus
\begin{equation*}\biggl|\int_{\Omega\setminus B(y,r)} \nabla G_y^L\cdot \vec H\biggr|\leq C\frac{\delta(y)}{r}\|\vec H\|_{L^2(\Omega\setminus B(y,r))}\end{equation*}
for all $\vec H\in L^2(\Omega\setminus B(y,r))$,
and so $\nabla G_y^L\in L^2(\Omega\setminus B(y,r))$. This establishes the bound~\eqref{eqn:green:W12}.

\subsubsection{Formulas~\texorpdfstring{\cref{eqn:green:fundamental,eqn:green:solution,eqn:green:boundary}}{(\ref*{eqn:green:fundamental}--\ref*{eqn:green:boundary})}}
If $\eta$ is smooth and compactly supported, then $\eta\in \widetilde W^{1,\beta}(\Omega)$. Taking $u_1=\eta$ and $\vec H_1=A^*\nabla \eta$, we have that $u_1\in \widetilde W^{1,\beta}(\Omega)$, $\vec H_1\in L^{\beta}(\Omega)$ and
\begin{equation*}\int_\Omega\nabla \varphi\cdot A^*\nabla u_1=\int_\Omega\nabla\varphi\cdot \vec H_1\end{equation*}
for all $\varphi\in C^\infty_0(\Omega)$, and so $u_1$ must be the solution to the $L^{\beta}(\Omega)$-Poisson problem for $L^*$ with data~$\vec H_1$. Thus
\begin{equation*}\eta(y)=u_1(y)=u^*_{\vec H_1}(y)=\int_\Omega \vec g_y^L\cdot \vec H_1 = \int_\Omega\nabla G_y^L\cdot A^*\nabla\eta\end{equation*}
and so formula~\eqref{eqn:green:fundamental} is valid. As in Section~\ref{sec:Green}, formula~\eqref{eqn:green:fundamental} immediately yields formula~\eqref{eqn:green:solution}, that is, that $LG_y^L=0$ in $\Omega\setminus\{y\}$. We have that $G_y^L\in \widetilde W^{1,{\beta'}}(\Omega)$ and so by Lemma~\ref{lem:trace:zero} $\Tr G_y^L=0$ almost everywhere on~$\partial\Omega$. Thus by  Lemma~\ref{lem:boundary:DGN} and the bound~\eqref{eqn:green:W12} we have that $G_y^L$ is continuous at the boundary (and zero on the boundary), while the interior Hölder continuity \eqref{eqn:interior:DGN} yields that $G_y^L$ is continuous everywhere in~$\Omega\setminus\{y\}$. This is formula~\eqref{eqn:green:boundary}.

\subsubsection{The pointwise bounds \texorpdfstring{\cref{eqn:green:upper:bound,eqn:green:near:size}}{(\ref*{eqn:green:upper:bound}) and~(\ref*{eqn:green:near:size})}}

We begin with the bound~\eqref{eqn:green:upper:bound}. Let $x$, $y\in \Omega$ with $x\neq y$. By the local boundedness of solutions (Lemma~\ref{lem:boundary:Moser}), we have that
\begin{equation*}|G_y^L(x)|\leq \frac{C}{|x-y|^2} \int_{B(x,|x-y|/2)\cap\Omega} |G_y^L|\end{equation*}
(regardless of the comparative sizes of $|x-y|$ and $\delta(y)$).

By Hölder's inequality we have that if $\beta>1$ then
\begin{equation*}|G_y^L(x)|\leq \frac{C}{|x-y|^2}
\biggl(\int_{B(x,|x-y|/2)} |G_y^L|^{\beta'}\delta^{-{\beta'}}\biggr)^{1/{\beta'}}
\biggl(\int_{B(x,|x-y|/2)} \delta^{\beta}\biggr)^{1/\beta}.\end{equation*}
If $z\in B(x,|x-y|/2)$ then $\delta(z)\leq |z-x|+|x-y|+\delta(y)<C(|x-y|+\delta(y))$. Thus
\begin{equation*}|G_y^L(x)|\leq \frac{C(|x-y|+\delta(y))}{|x-y|^{2/{\beta'}}}
\biggl(\int_{B(x,|x-y|/2)} |G_y^L|^{\beta'}\delta^{-{\beta'}}\biggr)^{1/{\beta'}}
.\end{equation*}
By the bound~\eqref{eqn:green:2d:directsize}  we have that
\begin{equation*}
|G_y^L(x)|
\leq C(|x-y|+\delta(y))\frac{\delta(y)^{2/{\beta'}-1}}{|x-y|^{2/{\beta'}}}
.\end{equation*}
Recall that ${\beta'}<2$ and so $2/{\beta'}-1>0$. Thus $|G_y^L(x)|\leq C$ whenever $|x-y|\geq\delta(y)/4$, which establishes the bound~\eqref{eqn:green:upper:bound} with $\bdmn=1$. In fact this yields a stronger estimate: $\lim_{x\to\infty} G_y^L(x)=0$.

We are left with the bound~\eqref{eqn:green:near:size}. (Note that the above argument would yield the estimate~\eqref{eqn:green:near:size} with $\gamma=2/\beta'>1$; however, we require $\gamma$ small and so we must do a more complicated analysis.) Let $x_n$, $n\geq 2$, satisfy $|x_n-y|=2^{-n}\delta(y)$. We have that $|G_y^L(x_2)|\leq C$ by the above analysis.

Now	let $x$ satisfy $2^{-n-1}\delta(y)\leq |x-y|\leq 2^{-n}\delta(y)$, $n\geq 2$. We have established the bound~\eqref{eqn:green:solution} and so we know $LG_y^L=0$ in $\Omega\setminus\{y\}$. By the bound~\eqref{eqn:interior:Moser} we have that
\begin{align*}
|G_y^L(x)-G_y^L(x_n)|
&\leq
|G_y^L(x)-g|+|g-G_y^L(x_n)|
\\&\leq
\frac{C}{(2^{-n}\delta(y))^2} \int_{B(y,2^{1-n}\delta(y))\setminus B(y,2^{-n-2}\delta(y))}
|G_y^L-g|
\end{align*}
for any $g\in\R$. Choosing $g$ appropriately, by the Poincar\'e inequality
\begin{equation*}|G_y^L(x)-G_y^L(x_n)|
\leq
\frac{C}{(2^{-n}\delta(y))} \int_{B(y,2^{1-n}\delta(y))\setminus B(y,2^{-n-2}\delta(y))}
|\nabla G_y^L|.\end{equation*}
By Hölder's inequality
\begin{equation*}|G_y^L(x)-G_y^L(x_n)|
\leq
\frac{C}{(2^{-n}\delta(y))^{1-2/\beta}} \biggl( \int_{B(y,2^{1-n}\delta(y))\setminus B(y,2^{-n-2}\delta(y))}
|\nabla G_y^L|^{\beta'}\biggr)^{1/\beta'}.\end{equation*}
By the bound~\eqref{eqn:2d:green:Poisson},
\begin{equation*}|G_y^L(x)-G_y^L(x_n)|
\leq
C2^{n(1-2/\beta)}.\end{equation*}
Observe that this estimate is also valid with $x$ replaced by $x_{n+1}$.
Summing, we see that
\begin{align*}
|G_y^L(x)|
&
\leq |G_y^L(x)-G_y^L(x_n)|
+\sum_{k=2}^{n-1} |G_y^L(x_k)-G_y^L(x_{k+1})|
+|G_y^L(x_2)|
\\&\leq
C2^{n(1-2/\beta)}+C \sum_{k=2}^{n-1}2^{k(1-2/\beta)}+C.\end{align*}
Because $\beta>2$ the geometric series may be bounded by the $k=n-1$ term, and so
\begin{equation*}|G_y^L(x)|\leq
C2^{n(1-2/\beta)}
\leq C\biggl(\frac{\delta(y)}{|x-y|}\biggr)^{1-2/\beta}
\end{equation*}
whenever $|x-y|\leq \delta(y)/4$.

This establishes the bound~\eqref{eqn:green:near:size} with $\gamma=1-2/\beta$; because we required $\beta\leq4/(2-\alpha^*)$, this establishes the bound~\eqref{eqn:green:near:size} for some $\gamma\in (0,\alpha^*/2]$. But $|x-y|<\delta(y)$, and so we may (for simplicity) increase $\gamma$ to $\alpha^*/2$ in the event that $\gamma$ is smaller.

\subsubsection{Formulas~\texorpdfstring{\cref{eqn:green:poisson:nodiv,eqn:green:symmetric}}{(\ref{eqn:green:poisson:nodiv}) and (\ref{eqn:green:symmetric})}}

We observe that the argument for  formulas~\eqref{eqn:2d:nodiv:1} and~\eqref{eqn:2d:nodiv:2} in Section~\ref{sec:green:2:1} is valid with $\beta$ (and~$\beta'$) replaced by~$2$ and with $L$ replaced by $L^*$. Thus, if $f\in C^\infty_0(\Omega)$ then
\begin{equation}\label{eqn:green:poisson:nodiv:reverse}
\text{if } u(y)=\int_\Omega G_y^{L^*}(x) \,f(x)\,dx
\text{ then } u\in \widetilde W^{1,2}(\Omega)\text{ and } \int_\Omega f\varphi=\int_\Omega \nabla\varphi\cdot A\nabla u
\end{equation}
for all $\varphi\in \widetilde W^{1,2}(\Omega)$.

We observe that formula~\eqref{eqn:green:poisson:nodiv} is very similar to formula~\eqref{eqn:green:poisson:nodiv:reverse}; in fact, formula~\eqref{eqn:green:poisson:nodiv} will follow immediately from formula~\eqref{eqn:green:poisson:nodiv:reverse} once we have established formula~\eqref{eqn:green:symmetric}.

We may follow the argument for formula~\eqref{eqn:green:symmetric} in Section~\ref{sec:Green}. In particular, we need only the known formula~\eqref{eqn:green:poisson:nodiv:reverse} (as a replacement for formula~\eqref{eqn:green:poisson:nodiv}) and continuity of $G_y^L(x)$ on $\Omega\times\Omega\setminus D$.

Continuity of $G_y^L(x)$ in $x$ follows from the De Giorgi-Nash estimate~\eqref{eqn:interior:DGN} and the above estimates on $G^L_y$. In particular, we may bound the modulus of continuity in terms of $x$ and~$y$. We are left with continuity in~$y$.

By the bound~\eqref{eqn:2d:cts}, if $|y-z|<\delta(y)/2$ then
\begin{equation}
|u^*_{\vec H}(y)-u^*_{\vec H}(z)|
\leq C|y-z|^{1-\pdmn/\beta} \|\vec H\|_{L^{\beta}(\Omega)}
\end{equation}
and so
\begin{equation*}\|\nabla G_y^L-\nabla G_z^L\|_{L^{\beta'}(\Omega)}
\leq C|y-z|^{1-\pdmn/\beta}.\end{equation*}
Applying Lemma~\ref{lem:boundary:Poincare} and the interior Moser estimate as usual yields continuity of $G_y^L(x)$ in~$y$, and a suitable estimate on the modulus of continuity. Combining these estimates yields continuity of $G_y^L(x)$ on $\Omega\times\Omega\setminus D$, and thus formula~\eqref{eqn:green:symmetric}.

\subsubsection{Formulas~\texorpdfstring{\cref{eqn:green:Poisson,eqn:green:positive}}{(\ref{eqn:green:Poisson}) and (\ref{eqn:green:positive})}}

The relation \eqref{eqn:green:Poisson} follows immediately from formulas~\eqref{eqn:green:W12}, \eqref{eqn:2d:green:Poisson}, the relation $\vec g_y^L=\nabla G_y^L$, the compatible well posedness of the $L^{\beta}(\Omega)$-Poisson problem for~$L^*$, and a density argument.

Finally, formula~\eqref{eqn:green:positive} follows from formula~\eqref{eqn:green:poisson:nodiv} as in Section~\ref{sec:Green}.

\subsection{Harmonic measure}\label{sec:harmonic:measure}

In this paper we will make extensive use of the harmonic measure for the operator~$L$.

If $x\in\Omega$, then the mapping $f\mapsto u_f(x)$, where $u_f$ is the solution to the Dirichlet problem~\eqref{eqn:Dirichlet:cts} with boundary data~$f$ guaranteed by Lemma~\ref{lem:cts}, is a linear operator and satisfies $|u_f(x)|\leq \|f\|_{L^\infty(\partial\Omega)}$. Thus it extends to a bounded linear operator on the set of compactly supported continuous functions. Note also that if $f\geq 0$ then $u_f(x)\geq 0$. Thus by the Riesz-Markov theorem, there is a unique Radon measure $\omega_L^x$ on $\partial\Omega$ that satisfies
\begin{equation*}u_f(x)=\int_{\partial\Omega} f\,d\omega_L^x\end{equation*}
for all compactly supported Lipschitz functions $f:\partial\Omega\to\R$.

We have that $\omega^x_L(E)\leq 1$ for all $E\subset\partial\Omega$. Furthermore, if $\xi\in\partial\Omega$ and $r>0$, then
 by Lemma~\ref{lem:boundary:DGN} applied to $u=1-u_f$ for $f$ a Lipschitz function supported in $\Delta(\xi,r)$ and $1$ in $\Delta(\xi,r/2)$, we have that
\begin{equation*}0\leq 1-\omega^x_L(\Delta(\xi,r)) =1-\int_{\Delta(\xi,r)} 1\,d\omega_L^x
\leq 1-\int_{\partial\Omega} f\,d\omega_L^x = 1-u_f(x)\leq C\biggl(\frac{|x-\xi|}{r}\biggr)^\alpha
\end{equation*}
if $|x-\xi|<r/8$. In particular, letting $r\to\infty$ yields that $\omega_L^x(\partial\Omega)=1$.

This also provides a proof of Bourgain's estimate, which we now state.
\begin{lem}[Bourgain's estimate]
\label{lem:harmonic:from:below}
Let $\Omega\subset\R^\dmn$ be as in Theorem~\ref{thm:Poisson}.
Then there is a $c\in (0,1)$ depending only on $\bdmn$, $\dmn$, $\lambda$, $\Lambda$, and the Ahlfors constant, such that if $\xi\in\partial\Omega$, $0<r\leq\diam\Omega$, and $x\in\Omega\cap B(\xi,cr)$, then $\omega_L^x(\Delta(\xi,r))\geq \frac{1}{2}$.
\end{lem}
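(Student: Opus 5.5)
The plan is to read the estimate off the computation that precedes the statement. Fix $\xi\in\partial\Omega$ and $0<r\le\diam\Omega$. Choose a Lipschitz function $f:\partial\Omega\to[0,1]$, compactly supported in $\Delta(\xi,r)$, with $f\equiv 1$ on $\Delta(\xi,r/2)$. For instance, take $f(\zeta)=\min(1,\max(0,2-4|\zeta-\xi|/r))$ restricted to $\partial\Omega$. Let $u_f$ be the solution to the continuous Dirichlet problem~\eqref{eqn:Dirichlet:cts} given by Lemma~\ref{lem:cts}. Since $f\le \1_{\Delta(\xi,r)}$ and $\omega^x_L$ is a positive measure, we get $\omega_L^x(\Delta(\xi,r))\ge \int f\,d\omega^x_L=u_f(x)$. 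So it suffices to show $1-u_f(x)\le 1/2$ for $x\in B(\xi,cr)$.

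Set $v=1-u_f$. By~\eqref{eqn:Dirichlet:cts}, $v$ is continuous on $\overline\Omega$ and $Lv=0$ in $\Omega$. The constant $1$ solves $L1=0$ with zero gradient, so $\int_{B(\xi,r)\cap\Omega}|\nabla v|^2\delta^{1+\bdmn-\pdmn}<\infty$. Also $v=0$ on $\Delta(\xi,r/2)$. Continuity up to the boundary gives $\Tr v=0$ there in the sense of~\eqref{dfn:Tr}, and the maximum principle bound in~\eqref{eqn:Dirichlet:cts} gives $0\le v\le 1$. Apply Lemma~\ref{lem:boundary:DGN} with radius $r/4$, so that $B(\xi,2\cdot r/4)$ sits inside the region where $v$ vanishes on the boundary. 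Note that $r/4<\diam\partial\Omega$, since $\diam\partial\Omega=\diam\Omega$ by the unbounded-complement assumption, or $\diam\partial\Omega=\infty$. For $s\le r/8$ this yields
\begin{equation*}\sup_{B(\xi,s)\cap\Omega}|v|\le C\Bigl(\frac{s}{r}\Bigr)^\alpha \frac{1}{r^{1+\bdmn}}\int_{B(\xi,r/4)\cap\Omega}|v|\,\delta^{1+\bdmn-\pdmn}.\end{equation*}
Bound $|v|\le 1$ and use Lemma~\ref{lem:close:to:Ahlfors} (formula~\eqref{eqn:delta:in:ball} with $\varpi=1$). The integral is then at most $Cr^{1+\bdmn}$, so $\sup_{B(\xi,s)\cap\Omega}v\le C_0(s/r)^\alpha$.

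Finally, choose $c\le 1/8$ so small that $C_0c^\alpha\le 1/2$. Taking $s=cr$ gives $v(x)\le 1/2$, hence $\omega_L^x(\Delta(\xi,r))\ge u_f(x)\ge 1/2$, for all $x\in\Omega\cap B(\xi,cr)$. The constant $c$ depends only on $C_0$ and $\alpha$, which in turn depend only on the standard parameters.

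The only delicate point is checking the hypotheses of Lemma~\ref{lem:boundary:DGN} for $v$: the weighted energy bound and $\Tr v=0$ in the averaged sense. Both follow from the defining properties in~\eqref{eqn:Dirichlet:cts}: finite energy, and continuity on $\overline\Omega$ with $v=0$ on the boundary ball, which directly gives the vanishing limit in~\eqref{dfn:Tr}. Once these are in place, the rest is routine.
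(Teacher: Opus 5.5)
Your proposal is correct and is essentially the paper's own argument: the paper applies Lemma~\ref{lem:boundary:DGN} to $1-u_f$, with $f$ Lipschitz, supported in $\Delta(\xi,r)$ and equal to $1$ on $\Delta(\xi,r/2)$, to get $1-\omega_L^x(\Delta(\xi,r))\le C(|x-\xi|/r)^\alpha$ for $|x-\xi|<r/8$, leaving implicit the normalization via $|1-u_f|\le 1$ and Lemma~\ref{lem:close:to:Ahlfors} that you write out. One small slip: your explicit $f=\min(1,\max(0,2-4|\zeta-\xi|/r))$ equals $1$ only on $\Delta(\xi,r/4)$, not on $\Delta(\xi,r/2)$, so either replace the $2$ by $3$ or apply Lemma~\ref{lem:boundary:DGN} at radius $r/8$; nothing else in your argument changes.
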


Let $E\subseteq\partial\Omega$ be a $\sigma$-measurable set. We have the following result:
\begin{equation}\label{eqn:harmonic:measure:solution}
\text{If }w(x)=\omega^x_{L}(E),\text{ then } Lw=0\text{ in }\Omega.
\end{equation}
This is proven in \cite[Lemma 1.2.7]{Ken94}. (The monograph \cite{Ken94} considers only symmetric coefficients~$A$, but the proof of \cite[Lemma 1.2.7]{Ken94} is elementary and relies only on Harnack's inequality, which also holds for non-symmetric equations, see for example \cite[Theorem~4.17]{HanL11}.)

\subsection{The Green's function and harmonic measure}

We will need to be able to bound the Green's function by the harmonic measure.
The following lemma is fairly well known in the case  where $y=A_r(\xi)$ is the corkscrew point for $\xi$ and~$r$; see for example \cite[Lemma~11.11]{DavFM21} in the higher codimensional case.

\begin{lem}\label{lem:harmonic:controls:Green}Let $\Omega\subset\R^\dmn$ be as in Theorem~\ref{thm:Poisson}.  Let $\xi\in\partial\Omega$, $0<r<\diam\Omega$,  $y\in\Omega\cap B(\xi,2r)$, and $x\in \Omega\setminus \overline{B(y,r)}$. Then
\begin{equation*}
G^L_y(x)\leq C\frac{\omega_L^x(\Delta(\xi,8r))}{r^{\bdmn-1}}.
\end{equation*}
\end{lem}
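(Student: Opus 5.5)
We compare two $L$-solutions in the region away from $y$ using the maximum principle. Set $u(x)=G^L_y(x)$ and $w(x)=\omega_L^x(\Delta(\xi,8r))$. By \eqref{eqn:green:solution}, \eqref{eqn:green:boundary} and \eqref{eqn:harmonic:measure:solution}, both functions solve $L=0$ in $\Omega\setminus\overline{B(y,r)}$. Moreover $u$ is continuous up to $\partial\Omega$ with $u=0$ there, and $u\ge 0$ by \eqref{eqn:green:positive}. The plan is to prove $u\le Cr^{1-\bdmn}w$ on the boundary of the region $D=\Omega\setminus\overline{B(y,r/2)}$ (or a slightly smaller ball) and at infinity, and then apply the maximum principle.

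\emph{Step 1: the sphere around $y$.} For $x\in\Omega$ with $|x-y|= r/2$ (or $r$), we have $|x-y|\ge \delta(y)/4$ unless $\delta(y)>2r$. If $\delta(y)\le 2r$, then \eqref{eqn:green:upper:bound} gives $u(x)\le Cr^{1-\bdmn}$. If $\delta(y)>2r$, we instead use \eqref{eqn:green:near:size}. Since $y\in B(\xi,2r)$ forces $\delta(y)<2r$, only the first case actually occurs, and so $u\le Cr^{1-\bdmn}$ on $\partial B(y,r/2)\cap\Omega$.

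\emph{Step 2: lower bound on $w$ there.} Such $x$ satisfy $|x-\xi|<3r$, and we need $w(x)\ge c>0$. Bourgain's estimate (Lemma~\ref{lem:harmonic:from:below}) applies only for $|x-\xi|<c\cdot 8r$, which need not hold. I would chain it: cover $\Omega\cap B(\xi,3r)$ at points far from the boundary by Harnack's inequality, and at points near the boundary apply Bourgain at a nearby boundary point $\xi'$ with $\Delta(\xi',\rho)\subset\Delta(\xi,8r)$. This step is the main obstacle, since weak local John domains lack Harnack chains in general, so $w$ cannot be propagated from a single corkscrew point.

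The route I would try first avoids this entirely by using the boundary Hölder estimate. Let $f$ be Lipschitz, equal to $1$ on $\Delta(\xi,4r)$, supported in $\Delta(\xi,8r)$, and $0\le f\le1$. Then $1-u_f$ vanishes on $\Delta(\xi,4r)$. Applying Lemma~\ref{lem:boundary:DGN} near each boundary point $\zeta\in\Delta(\xi,3r)$ at scale $\rho\approx r$ gives $u_f\ge 1/2$ at points within $c r$ of $\partial\Omega\cap B(\xi,3r)$. For points $x\in B(\xi,3r)$ with $\delta(x)\ge cr$, we use the weak local John condition. A carrot path from $x$ reaches the boundary within distance $N\delta(x)\lesssim r$. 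Along it there are boundedly many Harnack balls of size $\approx r$ before we reach the $cr$-neighborhood where $u_f\ge1/2$. Applying the interior Harnack inequality along these balls then gives $u_f(x)\ge c'$. To keep the points in the right neighborhood, we take $8r$ large relative to $(N+1/\lambda)3r$ by adjusting constants, or shrink the sphere radius to $r/2$. Hence $w\ge u_f\ge c'$ on the sphere.

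\emph{Step 3: maximum principle.} Let $v=Cr^{1-\bdmn}w-u$ in $D$, with $C$ chosen so that $v\ge0$ on $\partial B(y,r/2)\cap\Omega$. On $\partial\Omega\cap\partial D$ we have $u=0\le w$; boundary continuity of $w$ is not needed, since $w\ge0$ and it suffices to take $\liminf v\ge0$. At infinity, when $\Omega$ is unbounded, $u(x)\to0$ by \eqref{eqn:green:upper:bound} (for $\bdmn>1$) or by the decay shown for the case $\bdmn=1$. Applying the weak maximum principle in $D$, justified through the $W^{1,2}$ framework using \eqref{eqn:green:W12}, gives $v\ge0$ in $D$. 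This covers every $x\in\Omega\setminus\overline{B(y,r)}$.
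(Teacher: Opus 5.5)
Your skeleton matches the paper's. You take an upper bound $G^L_y\le Cr^{1-\bdmn}$ near $\partial B(y,r/2)$ from \eqref{eqn:green:upper:bound}, a lower bound there for a solution with boundary data $\approx\1_{\Delta(\xi,8r)}$, and then compare. Step~1 is correct, and since $\delta(y)<2r$ it holds on the whole annulus $B(y,r)\setminus\overline{B(y,r/2)}$.

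\emph{Step~2 fails with the constant $8$ fixed in the statement.} A weak local John carrot path from $x$ ends at some $x'\in\partial\Omega$ with $|x-x'|\le\min(N,1/\lambda)\,\delta(x)$, and you cannot choose $x'$. Since $|x-\xi|$ and $\delta(x)$ can be nearly $\tfrac52 r$, $x'$ may lie outside $\Delta(\xi,4r)$, where your $f\equiv1$. For large $N$ and $1/\lambda$ it may even lie outside $\Delta(\xi,8r)$. Replacing $8$ by a constant depending on $\lambda$ changes the lemma, and shrinking the sphere does not make $|x-\xi|$ or $\delta(x)$ smaller.

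The obstacle you identify is not real. For any $x\in\Omega$, the segment from $x$ to a nearest point $\zeta_x\in\partial\Omega$ is a $1$-carrot path, because $B(x,\delta(x))\subset\Omega$. For $x$ in the annulus you have $|x-\xi|<3r$, so $|\zeta_x-\xi|<6r$. Take $f\equiv1$ on $\Delta(\xi,7r)$ with $\supp f\subset\Delta(\xi,8r)$. Then Lemma~\ref{lem:harmonic:from:below} at $\zeta_x$ with radius $r$ gives $u_f\ge\frac12$ on $B(\zeta_x,cr)\cap\Omega$. Boundedly many Harnack balls along the segment then give $u_f(x)\ge 1/C$. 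This is exactly the paper's argument, and it uses neither Harnack chains nor the John condition.

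\emph{Step~3 should compare $G^L_y$ with $u_f$, not with $w=\omega^x_L(\Delta(\xi,8r))$.} The function $w$ has discontinuous boundary data and need not have finite weighted energy near the edge of $\Delta(\xi,8r)$. This already happens for the Laplacian in a half-space, since indicator functions of balls are not in $H^{1/2}$. So your ``$W^{1,2}$ framework'' does not apply to $w$ directly. By contrast, $u_f\in\dot W^{1,2}_{av,2,1/2}(\Omega)$, it is continuous on $\overline\Omega$, and $u_f\le w$.

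Your treatment of infinity is also incomplete. \eqref{eqn:green:upper:bound} gives decay only for $\bdmn>1$, and the decay proved in Section~\ref{sec:green:2:1} covers only $\bdmn=1$, $\dmn=2$. For $\bdmn<1$, or $\bdmn=1<\dmn-1$, you have no control of $G^L_y$ at infinity. So a pointwise maximum principle on the unbounded set $D$ is not justified.

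The paper avoids both issues with an energy argument:
\begin{itemize}
\item Set $\varphi=\max(G^L_y-Kr^{1-\bdmn}u_f,0)$ outside $B(y,r/2)$ and $\varphi=0$ inside.
\item $\varphi$ vanishes on the whole annulus, which is why Steps~1 and~2 must hold there and not only on the sphere. Hence $\varphi$ glues to a Sobolev function.
\item By \eqref{eqn:green:W12}, \eqref{eqn:Dirichlet:cts}, \eqref{eqn:green:boundary} and Lemma~\ref{lem:boundary:Poincare}, $\varphi\in\widetilde W^{1,2}_{av,2,1/2}(\Omega)$.
\item Testing $Lu_f=0$ and \eqref{eqn:green:fundamental} against $\varphi$, using $\varphi(y)=0$, yields $\int_\Omega\delta^{\bdmn+1-\dmn}|\nabla\varphi|^2\le0$, so $\varphi\equiv0$.
\end{itemize}
Membership in $\widetilde W^{1,2}_{av,2,1/2}(\Omega)$ already encodes the behavior at infinity, so no decay of $G^L_y$ is needed.
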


\begin{proof}
Let $f$ be a nonnegative Lipschitz function that is $1$ on $\Delta(\xi,7r)$ and $0$ outside of $\Delta(\xi,8r)$. Let $u_f(x)=\int_{\partial\Omega} f\,d\omega_L^x$ be the unique solution to the continuous Dirichlet problem~\eqref{eqn:Dirichlet:cts} guaranteed by Lemma~\ref{lem:cts}. Observe that
\begin{equation*}0\leq\omega_L^x(\Delta(\xi,7r))\leq u_f(x)\leq \omega_L^x(\Delta(\xi,8r))\leq 1\end{equation*}
for all $x\in\Omega$.

Let $z\in B(y,r)$. Then $z\in B(\xi,3r)$ and so $\delta(z)<3r$. Let $\zeta\in\partial\Omega$ satisfy $\delta(z)=|z-\zeta|$. Then $|\zeta-\xi|\leq |\zeta-z|+|z-\xi|<6r$, and so $\Delta(\zeta, r)\subset \Delta(\xi,7r)$. Let $c$ be as in Lemma~\ref{lem:harmonic:from:below}. Then $u_f\geq 1/2$ in $B(\zeta,cr)\cap\Omega$. Because $B(z,|z-\zeta|)\subset\Omega$, we may connect $B(\zeta,cr)\cap\Omega$ to $z$ by at most $C(c)$ Harnack balls. Applying Harnack's inequality (see for example \cite[Theorem~4.17]{HanL11}) we see that $u_f(z)\geq 1/C$.

Recall $y\in B(\xi,2r)$ and so $\delta(y)<2r$, that is, $r/2>\delta(y)/4$.
If $z\in B(y,r)\setminus\overline B(y,r/2)$, then by the bound~\eqref{eqn:green:upper:bound}, $G_y^L(z)\leq Cr^{1-\pbdmn}$. Thus there is a constant $K=C^2$ such that, if
\begin{equation*}w(z)=G^L_y(z)-Kr^{1-\pbdmn}u_f(z),\end{equation*}
then $w\leq 0$ in $B(y,r)\setminus\overline B(y,r/2)$.

Our goal is to show that $w\leq 0$ in $\Omega\setminus \overline B(y,r)$.
Define $\varphi$ as follows:
\begin{equation*}\varphi(z)=\begin{cases} 0, & z\in B(y,r/2) \text{ or } w(z)\leq 0,\\
w(z), & w(z)\geq 0 \text{ and } z\notin B(y,r/2).\end{cases}\end{equation*}
We claim that $\varphi\in \widetilde W^{1,2}_{av,2,1/2}(\Omega)$. To see this, recall that the solution $u_f$ to the continuous Dirichlet problem~\eqref{eqn:Dirichlet:cts} is guaranteed to lie in $\dot W^{1,2}_{av,2,1/2}(\Omega)$. By the bound~\eqref{eqn:green:W12} we have that $\nabla G_y^L\in L^2_{av,2,1/2}(\Omega\setminus B(y,(2/3)r);\Omega)$. Thus $w$ is a (local) Sobolev function in $\Omega\setminus B(y,(2/3)r)$. It is well known that $\max(w,0)$ is also a local Sobolev function with gradient equal to $\nabla w$ if $w\geq 0$ and 0 if $w\leq 0$. But by the above remarks $w(z)\leq 0$ in $B(y,r)\setminus\overline B(y,r/2)$, and so we have that $\varphi$ is a local Sobolev function in all of~$\Omega$ with $\nabla\varphi\in L^2_{av,2,1/2}(\Omega)$, that is, $\varphi\in \dot W^{1,2}_{av,2,1/2}(\Omega)$. To see that $\varphi\in\widetilde W^{1,2}_{av,2,1/2}(\Omega)$, recall that by formulas \eqref{eqn:Dirichlet:cts} and~\eqref{eqn:green:boundary}, $w$ is continuous up to the boundary and satisfies $w=-f\leq 0$ on $\partial\Omega$. Then $\varphi$ is also continuous and is zero on the boundary. Thus $\varphi\in \widetilde  W^{1,2}_{av,2,1/2}(\Omega)$ by Lemma~\ref{lem:boundary:Poincare}.

By Proposition~\ref{prp:dense}, the definition~\eqref{eqn:L} of $Lu_f=0$, and formula~\eqref{eqn:green:fundamental}, we have that
\begin{equation*}0=\varphi(z)=\int_\Omega A\nabla w\cdot\nabla\varphi.\end{equation*}
But $\nabla\varphi$ is either zero or equal to~$\nabla w$, and so by the ellipticity condition~\eqref{eqn:elliptic}
\begin{equation*}0\geq \int_{\Omega\setminus B(y,r)}
\lambda\delta^{\bdmn+1-\pdmn}|\nabla \varphi|^2.\end{equation*}
The right hand side is clearly nonnegative and so must be zero. Thus $\varphi$ is constant and so must be uniformly zero; thus we have that $w\leq 0$ and so $G_y^L\leq Kr^{1-\pbdmn}u_f$ in $\Omega\setminus B(y,r)$. Recalling that $u_f(x)\leq \omega^x_L(\Delta(\xi,8r))$ completes the proof.
\end{proof}

\section{The case \texorpdfstring{$p\leq 1$}{p≤1}}
\label{sec:atom}

In this section we will prove the $p\leq 1$, $s>\bdmn/p+1-\bdmn-\alpha^*$ case of  Theorem~\ref{thm:Poisson}. (We will also lay the groundwork for the case $\bdmn<1-\alpha^*$, $p\leq 1$, $\bdmn/p< s \leq\bdmn/p+1-\bdmn-\alpha^*$.)  Most of the work of this section will be in the proof of the following proposition; we will pass from Proposition~\ref{prp:atom:Poisson} to Theorem~\ref{thm:Poisson} in Section~\ref{sec:Dirichlet:atom}.

\begin{prp}
\label{prp:atom:Poisson}
Let $\Omega$, $\dmn$, $\bdmn$, and~$L$ be as in Theorem~\ref{thm:Poisson}.

Let $\beta$ satisfy $p^-_{L^*}< \beta<p^+_L$, where $p^+_L$ is as in the bound~\eqref{eqn:interior:Meyers} and $p^-_{L^*}$ is as in Lemma~\ref{lem:beta}.

Let $\mathcal{G}=\mathcal{G}_1$ be the grid of Whitney cubes in $\Omega$ given by Definition~\ref{dfn:Whitney}, let $Q\in \mathcal{G}_1$, and let $\vec H:\Omega\to\R^\dmn$ satisfy
\begin{equation*}
\supp\vec H\subseteq \overline{Q},
\qquad
\vec H\in L^\beta(Q),\cap L^2(Q),
\qquad
\biggl(\int_{Q} |\vec H|^\beta\biggr)^{1/\beta}\leq \diam(Q)^{\pdmn/\beta}.
\end{equation*}
By Lemma~\ref{lem:averaged:Whitney}, $\vec H\in L^2_{av,\beta,1/2}(\Omega)$.
Let $u\in \widetilde W^{1,2}_{av,\beta,1/2}(\Omega)\cap \widetilde W^{1,2}_{av,2,1/2}(\Omega)
$ be the solution to the $L^{2}_{av,\beta,1/2}(\Omega)$-Poisson problem with data~$\vec H$ guaranteed by Lemma~\ref{lem:beta}.

Let $c\in (0,1)$ and let $C_0$, ${\mathfrak{a}^*}$, ${\mathfrak{q}}$, and~$\theta$ satisfy one of the following conditions.
\begin{itemize}
\item ${\mathfrak{q}}=1$ and $1-\theta={\mathfrak{a}^*}=\alpha^*$, where $\alpha^*$ is the number in the boundary De Giorgi-Nash condition (Lemma~\ref{lem:boundary:DGN}) for~$L^*$; that is,
if $\xi\in\partial\Omega$, $r>0$, $w\in \widetilde W^{1,2}_{av,2,1/2}(B(\xi,r)\cap\Omega;\Omega)$ and $L^*w= 0$ in $B(\xi,r)\cap\Omega$, then for all $x\in B(\xi,r/2)$ we have that
\begin{equation}\label{eqn:p<1:DGN}
|w(x)|\leq C_0\frac{|x-\xi|^{{\mathfrak{a}^*}}}{r^{{\mathfrak{a}^*}}} \frac{1}{\int_{B(\xi,r)\cap\Omega}\delta^{1+\bdmn-\pdmn}} \int_{B(\xi,r)\cap\Omega}|w|\,\delta^{1+\bdmn-\pdmn}.\end{equation}
\item $0<{\mathfrak{q}}<\infty$, $0<\theta<1$, ${\mathfrak{a}^*}=1-\theta-\bdmn(1-1/\mathfrak{q})\in (0,1)$, and the function $u$ (as defined above) satisfies the estimate
\begin{equation}\label{eqn:p<1:extrapolation}
\|u\|_{\dot W^{1,\mathfrak q}_{av,c,\beta,\theta}(\Omega)}
\leq C_0 \diam(Q)^{\bdmn/\mathfrak{q}+1-\theta}.
\end{equation}
\end{itemize}

Let $s\in (1-\mathfrak{a}^*,1)$ and let $p$ satisfy
\begin{equation*}\bdmn\leq\frac{\bdmn}{p}<\bdmn-1+{\mathfrak{a}^*}+s.\end{equation*}
Then
\begin{equation}\label{eqn:atom:Poisson:conclusion}
\|u\|_{\dot W^{1,p}_{av,c,\beta,s}(\Omega)}
\leq C \diam(Q)^{\bdmn/p+1-s}
\end{equation}
where $C$ depends only on $c$, $p$, $s$, ${\mathfrak{a}^*}$, $\mathfrak{q}$, $C_0$, and the standard parameters.
\end{prp}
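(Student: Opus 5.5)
We will prove the estimate~\eqref{eqn:atom:Poisson:conclusion} with an atomic-decomposition style argument. The solution $u$ is split into the part near the atom and the part far from it, and the far part is controlled by decay estimates. Let $r_Q=\diam Q$, let $\xi_Q$ be as in formula~\eqref{eqn:cube:point}, and set $B_k=B(\xi_Q,2^kC r_Q)$ for a suitable large constant~$C$. By Lemma~\ref{lem:averaged:Whitney} we may compute the norm as a sum over Whitney cubes $R$, and we group these cubes into the annular regions $B_{k+1}\setminus B_k$ for $k\geq 1$, together with the ball $B_1$.

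\emph{Near region.} On $B_1\cap\Omega$ we use Hölder's inequality in the sum over Whitney cubes, which passes from $p$ to $2$ since $p\leq 1\leq 2$. This is valid because, by Lemma~\ref{lem:close:to:Ahlfors}, the weight $\delta^{\bdmn-\pdmn+\varpi}$ integrates over $B_1$ to $r_Q^{\bdmn+\varpi}$ when $\varpi>0$, and the condition $s<1$ gives positive exponents. We then bound the $L^2_{av,\beta,1/2}$ norm of $\nabla u$ by $C\|\vec H\|_{L^2_{av,\beta,1/2}}\approx r_Q^{\bdmn/2+1/2}$, using Lemma~\ref{lem:beta}. Dimensional bookkeeping then gives the contribution $r_Q^{\bdmn/p+1-s}$.

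\emph{Far region.} For $k\ge1$ we first estimate $\mathcal{W}_{c,\beta}(\nabla u)$ on Whitney cubes $R\subset B_{k+1}\setminus B_k$. Since $Lu=0$ away from $Q$, we apply interior Meyers~\eqref{eqn:interior:Meyers} followed by interior or boundary Caccioppoli (Lemma~\ref{lem:Caccioppoli:boundary}). This reduces the problem to bounding $u$ itself, in the form $\mathcal{W}_{c,\beta}(\nabla u)(z)\lesssim \delta(z)^{-1}\sup_{B(z,\delta(z)/2)}|u|$. In the first case ($\mathfrak q=1$) we bound $u$ by duality: by formula~\eqref{eqn:green:Poisson}, $u(x)=\int_Q\nabla G^{L^*}_x\cdot\delta^{\bdmn+1-\pdmn}\vec H$. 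Since $w=G^{L^*}_x$ solves $L^*w=0$ in $B(\xi_Q,2^{k-1}r_Q)\cap\Omega$ and vanishes on the boundary there, we can use Caccioppoli on the Whitney ball containing $Q$ together with the boundary De Giorgi--Nash estimate~\eqref{eqn:p<1:DGN} with $r\approx2^kr_Q$. Combined with the size bound~\eqref{eqn:green:upper:bound}, and with a suitable interpolation of the Green's function bound (for instance via Lemma~\ref{lem:harmonic:controls:Green}, or simply the average of $G$ over $B(\xi_Q,2^kr_Q)$ bounded by $(2^kr_Q)^{1-\bdmn}$), this yields
\[
|u(x)|\lesssim r_Q^{\bdmn+1}\, r_Q^{-\bdmn}\,2^{-k\alpha^*}(2^kr_Q)^{1-\bdmn}\quad\text{for } |x-\xi_Q|\approx 2^kr_Q.
\]
That is, $|u(x)|\lesssim r_Q\,2^{-k(\bdmn-1+\alpha^*)}$. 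Summing the $p$-th powers over annuli, with the weight integral $\int_{B_{k+1}}\delta^{\bdmn-\pdmn-ps}|u|^p$ handled by Lemma~\ref{lem:close:to:Ahlfors}, we obtain a geometric series in $2^{k(\bdmn-p(\bdmn-1+\alpha^*+s))}$. This series converges precisely when $\bdmn/p<\bdmn-1+\mathfrak a^*+s$. The exponent in $\delta$ must also be handled near the boundary: there we use the boundary decay from Lemma~\ref{lem:boundary:DGN} for $u$, which is legitimate since $Lu=0$ in $B_{k+1}\setminus B_{k-1}$ near $\partial\Omega$, to make $\delta^{-ps}$ integrable via Lemma~\ref{lem:boundary:Poincare} applied locally in its trace-zero form~\eqref{eqn:boundary:Poincare:zero}.

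In the second case we replace the Green's function argument by Hölder's inequality applied directly to the given bound~\eqref{eqn:p<1:extrapolation} on each annulus. Since $p\le1$, the $\ell^p$ sum over the Whitney cubes of $B_{k+1}\setminus B_k$ is bounded, by Hölder with respect to the number of such cubes at each scale, by the $\mathfrak q$-norm restricted to that annulus. The decay in $k$ comes from first pushing the annular $\dot W^{1,\mathfrak q}$ norm, through local Caccioppoli and Poincaré (Lemma~\ref{lem:boundary:Poincare}), into the improved exponent. The bookkeeping gives $2^{k(\bdmn/p-\bdmn/\mathfrak q+\theta-s)}$ times the annular norm; here $\mathfrak a^*=1-\theta-\bdmn(1-1/\mathfrak q)$ is what makes the series summable under the same condition on $p$.

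The main obstacle is the far-region pointwise decay of $u$ in the first case. There one must combine the boundary Hölder estimate for $G^{L^*}_x$ in the variable near $Q$, the symmetry~\eqref{eqn:green:symmetric}, and a correct global size bound for $G$ on a ball of radius $2^kr_Q$, so as to get exactly the factor $2^{-k(\bdmn-1+\alpha^*)}$. Equally delicate is the treatment of Whitney cubes in the annuli that are very close to $\partial\Omega$, where the weight $\delta^{\bdmn-\pdmn+p-ps}$ with $p<1$ requires the boundary decay rather than only interior bounds.
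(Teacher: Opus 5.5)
There is a genuine gap in the boundary layer, which is where this proposition is actually hard. Near $\partial\Omega$, the weight $\delta^{\bdmn-\pdmn+p-ps}$ with $s$ close to $1$ is barely integrable, and none of the tools you invoke reaches $s<1$ there.

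\textbf{Near region.} Applying Hölder with exponents $2/p$ and $2/(2-p)$ leaves the factor $\int_{B_1\cap\Omega}\delta^{\bdmn-\pdmn+\varpi}$ with $\varpi=p(1-2s)/(2-p)$. This is positive only for $s<1/2$. For $s\geq 1/2$, Lemma~\ref{lem:close:to:Ahlfors} does not apply and the factor is infinite, already for a half-space. So the claim that $s<1$ gives positive exponents is false. The paper uses this Hölder argument only on $\Omega_{b,\vartheta}$ (Lemma~\ref{lem:local:Poisson}), where $\delta\approx\diam Q$.

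\textbf{Far annuli, first case.} Boundary De Giorgi--Nash decay $|u|\lesssim(\delta/2^kr_Q)^{\alpha}M_k$, combined with $\mathcal{W}_{c,\beta}(\nabla u)\lesssim\delta^{-1}\sup|u|$, leaves the integrand $\delta^{\bdmn-\pdmn+p(\alpha-s)}$. This is integrable only for $s<\alpha$, and $\alpha$ is the exponent for $L$, which may be small. Lemma~\ref{lem:boundary:Poincare} cannot help: it bounds $u$ by $\nabla u$, the wrong direction here, and is only proved for $p\geq 1$.

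\textbf{Far annuli, second case.} Hölder from $\ell^p$ to $\ell^{\mathfrak q}$ over the Whitney cubes of an annulus (about $2^{j\bdmn}$ of them at scale $2^{-j}2^kr_Q$, accumulating at $\partial\Omega$) produces $\sum_j 2^{jp(s-\theta)/(1-p/\mathfrak q)}$. This diverges, because $s>1-\mathfrak a^*\geq\theta$ once $\mathfrak q\geq 1$.

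So near $\partial\Omega$ the argument never reaches $s$ close to $1$, which is exactly the range the proposition exists to cover.

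The missing ingredient is the paper's Lemma~\ref{lem:W:near:0}, and this is where $p\leq 1$ is really used.
\begin{itemize}
\item Take a boundary ball $B(\zeta,r)$ with $Lu=0$ in $B(\zeta,14r)\cap\Omega$. Cover each layer $\{\delta\approx 2^{-j}r\}$ by $L_j\lesssim 2^{j\bdmn}$ balls.
\item Represent $u(y)$ through $G^{L^*}_y$ and a cutoff (Lemma~\ref{lem:green:solution:2}). Dominate $G^{L^*}_y$ by $(2^{-j}r)^{1-\bdmn}\,\omega^x_{L^*}(\Delta(\xi_{j,\ell},C2^{-j}r))$ (Lemma~\ref{lem:harmonic:controls:Green}), and integrate in $x$.
\item This gives $|u|\lesssim U_r\,(2^{-j}r)^{1-\bdmn}\,\nu(\Delta(\xi_{j,\ell},C2^{-j}r))$. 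Here $U_r$ is the normalized local $\dot W^{1,2}_{av,c,\beta,1/2}$ norm of $u$, and $\nu$ is a single boundary measure of total mass $\lesssim r^{\bdmn+1}$.
\item Bounded overlap gives $\sum_\ell\nu(\Delta_{j,\ell})\lesssim r^{\bdmn+1}$ at every scale. So $u$ decays linearly in $\delta$ in an $\ell^1$-averaged sense, which no pointwise Hölder estimate provides.
\item Concavity, $\sum_\ell a_\ell^p\leq(\sum_\ell a_\ell)^pL_j^{1-p}$, turns this into $\sum_j 2^{-j(p-ps)}$, which converges exactly when $s<1$.
\end{itemize}
The paper then covers $\Omega\setminus\Omega_b$ by boundary balls $B(\zeta_k,5\kappa\dist(\zeta_k,Q))$ whose dilates miss $Q$, and applies this lemma on each. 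The resulting local $\dot W^{1,2}_{av,c,\beta,1/2}$ norms are bounded by \eqref{eqn:p<1:basic} for nearby balls. For distant balls they are bounded by boundary Caccioppoli together with Lemma~\ref{lem:u:7delta} in the first case, or together with boundary De Giorgi--Nash, Hölder and Lemma~\ref{lem:boundary:Poincare} at exponent $\mathfrak q>1$ in the second case. That is where the decay in $k$ and the condition $\bdmn/p<\bdmn-1+\mathfrak a^*+s$ enter.

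Your far-field pointwise bound $|u|\lesssim r_Q2^{-k(\bdmn-1+\alpha^*)}$ via \eqref{eqn:green:Poisson} is sound. (Your intermediate product is off by a factor $r_Q^{1-\bdmn}$; the correct prefactor is $r_Q^{\bdmn}$, but the stated conclusion is right.) Your treatment of the region where $\delta\gtrsim\dist(\cdot,Q)$ is also sound and matches Lemmas~\ref{lem:pointwise:Poisson} and~\ref{lem:atom:far:Poisson}. It is the boundary layer that needs this additional idea.
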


The allowable values of $s$ and $p$ are illustrated in Figure~\ref{fig:p<1:DGN} (the case~\eqref{eqn:p<1:DGN}) and Figure~\ref{fig:p<1:extrapolation} (the case~\eqref{eqn:p<1:extrapolation}). We observe that by Corollary~\ref{cor:weighted:embedding}, if the bound~\eqref{eqn:p<1:extrapolation} is valid for some $\mathfrak{q}=\mathfrak{q}_0$ and some $\theta=\theta_0$, then it is valid for all $\mathfrak{q}>\mathfrak{q}_0$ with the choice $\theta=\theta_0-\bdmn/\mathfrak{q}_0+\bdmn/\mathfrak{q}$. This is the diagonal line in the $(\theta,1/\mathfrak{q})$-plane of slope $1/\bdmn$ that passes through the point $(\theta_0,1/\mathfrak{q}_0)$. Thus we may (and will) assume without loss of generality that $\mathfrak{q}>1$ in the case~\eqref{eqn:p<1:extrapolation}.

The De Giorgi-Nash estimates are known to be valid, and so there always exist $q$, $\mathfrak{a}^*$ as in the case~\eqref{eqn:p<1:DGN}.
Our main use for the case~\eqref{eqn:p<1:extrapolation} will be to ensure that the numbers $\mathfrak{a}$ and $\mathfrak{b}$ in Theorems~\ref{thm:Poisson} and~\ref{thm:Poisson:Lq} satisfy the estimates $\mathfrak{a}\geq 1-\bdmn$ and $\mathfrak{b}\geq 1+\frac{\bdmn}{q^*}-\bdmn$.

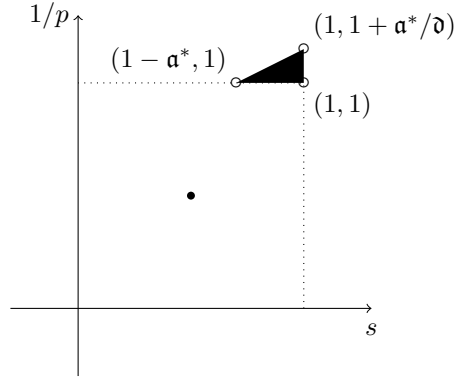
\begin{figure}
\begin{center}
\begin{tikzpicture}[scale=3]
\begin{scope}[shift={(0,0)}]
\draw [dotted] (1,0)--(1,1)--(0,1);
\draw [->] (-0.3,0)--(1.3,0) node [below ] {$\vphantom{1}s$};
\draw [->] (0,-0.3)--(0,1.3) node [ left] {$1/p$};
\fill  (1,1)  -- (1-\figurealpha,1) --(1,1+\figurealpha/\figuredimen) -- cycle;
\node at (1,1) {$\circ$};
\node [below right] at (1,1) {$(1,1)$};
\node at (1,1+\figurealpha/\figuredimen) {$\circ$};
\node [above right] at (1,1+\figurealpha/\figuredimen) {$(1,1+\mathfrak{a}^*/\bdmn)$};
\node at (1-\figurealpha,1) {$\circ$};
\node [above left] at (1-\figurealpha,1) {$(1-\mathfrak{a}^*,1)$};
\fill (1/2,1/2) circle (0.5pt);
\node [left] at (0,1) {$\phantom{(0,1)}$};
\end{scope}
\end{tikzpicture}
\end{center}
\caption{If the condition~\eqref{eqn:p<1:DGN} is valid, then the conclusion~\eqref{eqn:atom:Poisson:conclusion} is valid whenever $(s,1/p)$ lies in the indicated triangle. The $s=1/2$, $p=2$ case of the bound~\eqref{eqn:atom:Poisson:conclusion} follows from Lemma~\ref{lem:beta}.}
\label{fig:p<1:DGN}
\end{figure}

\def\figureq{0.85}
\def\figuretheta{0.25}
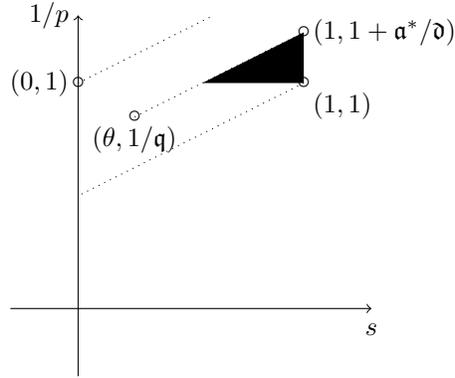
\begin{figure}
\begin{center}
\begin{tikzpicture}[scale=3]
\begin{scope}[shift={(0,0)}]
\fill (1,1)-- (1,\figureq+1/\figuredimen-\figuretheta/\figuredimen) -- (\figuretheta+\figuredimen-\figuredimen*\figureq,1) -- cycle;
\draw [dotted] (1,1) -- (0,1-1/\figuredimen);
\draw [dotted] (0,1) -- (0.3*\figuredimen,1.3);
\draw [dotted] (\figuretheta,\figureq) -- (1,\figureq+1/\figuredimen-\figuretheta/\figuredimen) node {$\circ$} node [right] {$(1,1+\mathfrak{a}^*/\pbdmn)$};
\draw [->] (-0.3,0)--(1.3,0) node [below ] {$\vphantom{1}s$};
\draw [->] (0,-0.3)--(0,1.3) node [ left] {$1/p$};
\node at (1,1) {$\circ$};
\node [below right] at (1,1) {$(1,1)$};
\node at (0,1) {$\circ$};
\node [left] at (0,1) {$(0,1)$};
\node at (\figuretheta,\figureq) {$\circ$};
\node [below] at (\figuretheta,\figureq) {$(\theta,1/\mathfrak{q})$};
\end{scope}
\end{tikzpicture}
\end{center}
\caption{If the condition~\eqref{eqn:p<1:extrapolation} is valid, then the conclusion~\eqref{eqn:atom:Poisson:conclusion} is valid whenever $(s,1/p)$ lies in the indicated region. All three of the dotted lines have slope $1/\bdmn$, and the region is nonempty (and $\mathfrak{a}^*\in (0,1)$) provided $\theta\in (0,1)$ and the point $(\theta,1/q)$ lies between the upper and lower dotted lines. }
\label{fig:p<1:extrapolation}
\end{figure}

The rest of this section will be devoted to the proof of
Proposition~\ref{prp:atom:Poisson} (Sections~\ref{sec:Poisson:proof:start}--\ref{sec:Poisson:proof:end}) and its immediate consequence (the $p\leq 1$ case of Theorem~\ref{thm:Poisson}, to be proven in Section~\ref{sec:Dirichlet:atom}).

\subsection{A preliminary estimate \texorpdfstring{on~$u$}{on u}}
\label{sec:Poisson:proof:start}

Let $u$ be as in Proposition~\ref{prp:atom:Poisson}.
We begin by recalling that the quantity we seek to bound is
\begin{equation*}\|u\|_{\dot W^{1,p}_{av,c,\beta,s}(\Omega)}
=\biggl(\int_\Omega \mathcal{W}_{c,\beta} (\nabla u)^p \delta^{\bdmn-\pdmn+p-ps}\biggr)^{1/p}\end{equation*}
where $\mathcal{W}_{c,\beta}$ is the Whitney averaging operator of Definition~\ref{dfn:weighted:averaged}.

By Lemma~\ref{lem:beta}, if $u$ is as in Proposition~\ref{prp:atom:Poisson} then
\begin{equation*}\int_\Omega \mathcal{W}_{c,\beta}(\nabla u)^2\delta^{\bdmn-\pdmn+1}
=\|u\|_{\dot W^{1,2}_{av,c,\beta,1/2}(\Omega)}^2
\leq
C
\|\vec H\|_{L^{2}_{av,c,\beta,1/2}(\Omega)}^2
.\end{equation*}
But by Lemma~\ref{lem:averaged:Whitney} and the assumption on~$\vec H$ in Proposition~\ref{prp:atom:Poisson}, $\|\vec H\|_{L^{2}_{av,\beta,1/2}(\Omega)} \leq C\diam(Q)^{\bdmn/2+1/2}$, and so
\begin{equation}\label{eqn:p<1:basic}
\int_\Omega \mathcal{W}_{c,\beta}(\nabla u)^2\delta^{\bdmn-\pdmn+1}
=\|u\|_{\dot W^{1,2}_{av,c,\beta,1/2}(\Omega)}^2
\leq
C\diam(Q)^{\bdmn+1}
.\end{equation}

\subsection{Control near the support of the data}

We now give a relatively straightforward estimate of the integral of $\mathcal{W}_{c,\beta}(\nabla u)^p\delta^{\bdmn-\pdmn+p-ps}$ over a certain subset of~$\Omega$.
\begin{lem}\label{lem:local:Poisson}
Let $u$, $\beta$, and $Q$ be as in Proposition~\ref{prp:atom:Poisson}. Let $s\in\R$, $0<p<2$, and $0<c<1$. Additionally let $0<b<1/2$ and $\vartheta>1$.
Define
\begin{align*}
\Omega_{b,\vartheta}
&= \Omega\cap \vartheta Q \setminus \{y\in\Omega:\delta(y)<b\dist(y,Q)\}
\\&= \{y\in \Omega\cap \vartheta Q: \delta(y)\geq b\dist(y,Q)\}.\end{align*}
Then
\begin{equation*}
\int_{\Omega_{b,\vartheta}} \mathcal{W}_{c,\beta}(\nabla u)^p \delta^{\bdmn-\pdmn+p-ps}
\leq
C\diam(Q)^{\bdmn+p-ps}
.\end{equation*}
\end{lem}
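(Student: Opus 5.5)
The plan is to reduce to the $L^2$ bound~\eqref{eqn:p<1:basic} by Hölder's inequality, exploiting that $\Omega_{b,\vartheta}$ is a bounded region all of whose points have comparable distance to the boundary, namely distance comparable to $\diam Q$ or larger, up to a region near~$Q$ itself. First I will check the geometry. For $y\in\Omega_{b,\vartheta}$, either $y$ lies within distance $\diam Q$ of~$Q$, in which case $\delta(y)\approx\diam Q$ by~\eqref{eqn:Whitney}. Or $\dist(y,Q)\geq\diam Q$, in which case $\delta(y)\geq b\dist(y,Q)\geq b\diam Q$. In addition, $\delta(y)\leq \dist(y,Q)+\diam Q+\dist(Q,\partial\Omega)\leq C_\vartheta\diam Q$ because $y\in\vartheta Q$. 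Thus $\delta\approx\diam Q$ on $\Omega_{b,\vartheta}$, with constants depending on $b$ and~$\vartheta$, and $|\Omega_{b,\vartheta}|\leq|\vartheta Q|\leq C\diam(Q)^\dmn$.

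Next, since $p<2$, I will apply Hölder's inequality with exponents $2/p$ and $2/(2-p)$:
\begin{multline*}
\int_{\Omega_{b,\vartheta}}\mathcal{W}_{c,\beta}(\nabla u)^p\delta^{\bdmn-\pdmn+p-ps}
\\\leq
\biggl(\int_{\Omega_{b,\vartheta}}\mathcal{W}_{c,\beta}(\nabla u)^2\delta^{\bdmn-\pdmn+1}\biggr)^{p/2}
\biggl(\int_{\Omega_{b,\vartheta}}\delta^{(\bdmn-\pdmn+p-ps-(p/2)(\bdmn-\pdmn+1))\cdot 2/(2-p)}\biggr)^{(2-p)/2}.
\end{multline*}
The first factor is at most $C\diam(Q)^{(\bdmn+1)p/2}$ by~\eqref{eqn:p<1:basic}. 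In the second factor the weight is a power of $\delta\approx\diam Q$, so the sign of the exponent is irrelevant. That factor is therefore bounded by $C\diam(Q)^{\bdmn-\pdmn+p-ps-(p/2)(\bdmn-\pdmn+1)}\,\diam(Q)^{\pdmn(2-p)/2}$.

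Finally, adding the exponents gives
\[
\frac{(\bdmn+1)p}{2}+\bdmn-\pdmn+p-ps-\frac{p(\bdmn-\pdmn+1)}{2}+\pdmn-\frac{\pdmn p}{2}=\bdmn+p-ps,
\]
which is the claimed bound. The only point needing care is the geometric comparability $\delta\approx\diam Q$ on $\Omega_{b,\vartheta}$, together with the trivial volume bound. The restriction $b<1/2$ is not essential here; the argument only needs $b>0$.
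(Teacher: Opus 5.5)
Your argument is essentially the paper's: H\"older with exponents $2/p$ and $2/(2-p)$ against the bound~\eqref{eqn:p<1:basic}, together with $\delta\approx\diam Q$ and $|\Omega_{b,\vartheta}|\lesssim\diam(Q)^{\pdmn}$ on $\Omega_{b,\vartheta}$. Your exponent bookkeeping is correct.

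There is one misjustified step: the lower bound on $\delta(y)$ in your first case. The Whitney property~\eqref{eqn:Whitney}, with $\kappa=1$, only gives
\[
\delta(y)\geq\dist(Q,\partial\Omega)-\dist(y,Q)>\diam Q-\dist(y,Q).
\]
This degenerates as $\dist(y,Q)\to\diam Q$. Nothing prevents $\dist(Q,\partial\Omega)$ from being only slightly larger than $\diam Q$. In that case a point on the segment from $Q$ to a nearest boundary point, at distance just under $\diam Q$ from $Q$, has $\delta(y)$ arbitrarily small compared with $\diam Q$. So Whitney alone does not give $\delta(y)\approx\diam Q$ there; you must also use the defining condition $\delta(y)\geq b\dist(y,Q)$.

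The cleanest repair is the paper's one-line bound, valid for every $y\in\Omega_{b,\vartheta}$ and making the case split unnecessary:
\[
\diam Q<\dist(Q,\partial\Omega)\leq\dist(y,Q)+\delta(y)\leq\Bigl(1+\frac{1}{b}\Bigr)\delta(y).
\]
Combined with your upper bound $\delta(y)\leq C_\vartheta\diam Q$, this completes the proof.
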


We will of course only use this lemma in the case $\frac{\bdmn}{\bdmn+{\mathfrak{a}^*}}<p\leq 1$.

\begin{proof}[Proof of Lemma~\ref{lem:local:Poisson}]
Because $p<2$, we may apply Hölder's inequality to see that
\begin{multline*}
\int_{\Omega_{b,\vartheta}} \mathcal{W}_{c,\beta}(\nabla u)^p \delta^{\bdmn-\pdmn+p-ps}
\\\noalign{\vskip-3pt}
\leq
\biggl(\int_{\Omega_{b,\vartheta}} \mathcal{W}_{c,\beta}(\nabla u)^2
\,\delta^{\bdmn-\pdmn+1}
\biggr)^{p/2}
\biggl(\int_{\Omega_{b,\vartheta}} \delta^{\bdmn-\pdmn+p(1-2s)/(2-p)}
\biggr)^{1-p/2}
.\end{multline*}
We estimate the rightmost integral. The region of integration $\Omega_{b,\vartheta}$ is contained in~$\vartheta Q$ and so has volume at most $\vartheta^\dmn (\ell(Q))^\dmn$.
If $y\in \Omega_{b,\vartheta}\subset \vartheta Q$, then $\delta(y)\leq \dist(Q,\partial\Omega)+\diam(\vartheta Q)$ and so by the bound~\eqref{eqn:Whitney} $\delta(y)\leq (3+\vartheta)\diam Q$. Conversely, $\dist(Q,\partial\Omega)\leq \dist(Q,y)+\dist(y,\partial\Omega)\leq \bigl(\frac{1}{b}+1\bigr)\delta(y)$.
Thus $\delta\approx \diam(Q)$ in $\Omega_{b,\vartheta}$ with comparability constants depending only on $b$ and~$\vartheta$, and so
\begin{multline*}
\int_{\Omega_{b,\vartheta}} \mathcal{W}_{c,\beta}(\nabla u)^p\delta^{\bdmn-\pdmn+p-ps}
\\\noalign{\vskip-5pt}
\leq C_{\dmn,\vartheta,b}
\biggl(\int_{\Omega_{b,\vartheta}} \mathcal{W}_{c,\beta}(\nabla u)^2
\,\delta^{\bdmn-\pdmn+1}
\biggr)^{p/2}
\diam(Q)^{\bdmn-\bdmn p/2+p/2-ps}
.\end{multline*}
By the bound~\eqref{eqn:p<1:basic},
\begin{equation*}
\int_{\Omega_{b,\vartheta}} \mathcal{W}_{c,\beta}(\nabla u)^p\delta^{\bdmn-\pdmn+p-ps}
\leq C
\diam(Q)^{\bdmn+p-ps}
\end{equation*}
as desired.
\end{proof}

\subsection{Pointwise estimates}\label{sec:pointwise}

In order to prove Proposition~\ref{prp:atom:Poisson} we will need some pointwise estimates on the function~$u$.
This section incorporates many ideas from \cite{KenP93}.

Recall that formula~\eqref{eqn:green:fundamental} lets us write $\varphi(y)$ in terms of an integral against the gradient $\nabla G^L_y$ of the Green's function in the special case where $\varphi$ is smooth and compactly supported. We would like to produce a similar formula for $u(y)$, where $u$ is as in Proposition~\ref{prp:atom:Poisson}. However, we must contend with the fact that $u$ may not be either smooth or compactly supported.

\begin{lem}\label{lem:green:solution}
Let $\Omega$ and~$L$ be as in Theorem~\ref{thm:Poisson}.

Let $\varphi:\R^\dmn\to [0,1]$ be smooth, let $y\in\Omega$, and suppose that $\varphi\equiv 1$ in an open neighborhood of~$y$. Suppose further that $\nabla\varphi$ (although not necessarily~$\varphi$) is compactly supported.

Let $W\subseteq\Omega$ be open and contain $\Omega\cap\supp\varphi$.
Let $u\in \widetilde W^{1,2}_{av,2,1/2}(\Omega)$ be such that $Lu=0$ in~$W$. Then
\begin{align*}
u(y)
&=
\int_\Omega u \nabla \varphi\cdot A^*\nabla G^{L^*}_y
-\int_\Omega G^{L^*}_y\nabla u\cdot A^*\nabla \varphi
.\end{align*}
\end{lem}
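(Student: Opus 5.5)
The plan is to test the fundamental-solution identity for $L^*$ against the function $\varphi u$, after first reducing to a situation where that test is legitimate. Formally, formula~\eqref{eqn:green:fundamental} applied to $L^*$, with test function $\varphi u$, gives
\begin{equation*}
u(y)=(\varphi u)(y)=\int_\Omega A^*\nabla G^{L^*}_y\cdot\nabla(\varphi u)
=\int_\Omega u\,\nabla\varphi\cdot A^*\nabla G^{L^*}_y+\int_\Omega \varphi\,\nabla u\cdot A^*\nabla G^{L^*}_y .
\end{equation*}
The last term should be rewritten using $Lu=0$ in $W$. Test the weak equation for $u$ against $\varphi G^{L^*}_y$. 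Using $A^*\nabla G\cdot\nabla u=\nabla G\cdot A\nabla u$, this gives
\begin{equation*}
\int_\Omega\varphi\,\nabla G^{L^*}_y\cdot A\nabla u=-\int_\Omega G^{L^*}_y\,\nabla\varphi\cdot A\nabla u=-\int_\Omega G^{L^*}_y\,\nabla u\cdot A^*\nabla\varphi .
\end{equation*}
Substituting this into the previous display yields exactly the claimed formula. So the algebra is immediate, and the work is entirely in justifying the two test functions.

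\textbf{Justifying the first identity.} Write $u(y)$ as a pointwise value. This makes sense because $Lu=0$ near $y$, so $u$ is continuous there by~\eqref{eqn:interior:DGN}. The function $\varphi u$ is neither smooth nor compactly supported, so I would approximate it.
\begin{itemize}
\item By Proposition~\ref{prp:dense}, choose $u_k\in C^\infty_0(\Omega)$ with $u_k\to u$ in $\widetilde W^{1,2}_{av,2,1/2}(\Omega)$.
\item Apply~\eqref{eqn:green:fundamental} (for $L^*$) to $\varphi u_k\in C^\infty_0(\Omega)$.
\item Split the integral into the region $B(y,r)$, where $\varphi\equiv1$, and its complement.
\item On the complement, $\nabla G^{L^*}_y\in L^2(\delta^{1+\bdmn-\pdmn})$ by~\eqref{eqn:green:W12}. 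Since $\nabla\varphi$ is compactly supported and bounded, and $u_k\to u$ in $L^2_{av,2,3/2}$ by Lemma~\ref{lem:boundary:Poincare}, the integrals $\int u_k\nabla\varphi\cdot A^*\nabla G$ and $\int\varphi\nabla u_k\cdot A^*\nabla G$ converge on the complement.
\end{itemize}
The region near $y$ is the delicate part: $\nabla G^{L^*}_y$ is only in $L^1$ there, by~\eqref{eqn:green:local:upper:bound}. To avoid it, I would not approximate $u$ near $y$ at all. Instead, write $\varphi=\eta+(\varphi-\eta)$, where $\eta$ is a smooth cutoff equal to $1$ near $y$ and supported in a small ball inside $W$.
\begin{itemize}
\item In that ball $u$ solves $Lu=0$, so mollifying $\eta u$ gives smooth compactly supported functions $\psi_\epsilon$ with $\psi_\epsilon(y)\to u(y)$.
\item One needs $\int_{\supp\eta} A^*\nabla G\cdot\nabla\psi_\epsilon\to\int A^*\nabla G\cdot\nabla(\eta u)$.
\item Since $\nabla G\in L^1$ near $y$, this requires $\nabla\psi_\epsilon\to\nabla(\eta u)$ uniformly near $y$, or at least in $L^\infty$ of a tiny ball. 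That is not available for rough coefficients.
\end{itemize}
So this route breaks down, and I would take an alternative.

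\textbf{Alternative near $y$: Green-function representation.} Let $v=\eta u$. Then $Lv=-\Div(A u\nabla\eta)+(\text{lower-order terms }A\nabla u\cdot\nabla\eta)$, and both right-hand sides are supported away from $y$.
\begin{itemize}
\item Represent $v$ using~\eqref{eqn:green:poisson:nodiv} and~\eqref{eqn:green:Poisson}, which are valid for data vanishing near $y$.
\item Use symmetry~\eqref{eqn:green:symmetric}.
\item This produces precisely integrals over the support of $\nabla\eta$, where $G^{L^*}_y$ is nice.
\item Handle the part $(\varphi-\eta)u$ by the approximation argument above, since it vanishes near $y$.
\end{itemize}

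\textbf{Justifying the second identity.} Here $\varphi G^{L^*}_y$ must be an admissible test function for $Lu=0$ in $W$. I would cut out a ball around $y$ using $\eta$ as before. The piece $(\varphi-\eta)G^{L^*}_y$ has zero trace by~\eqref{eqn:green:boundary}. It lies in $\widetilde W^{1,2}$ by~\eqref{eqn:green:W12} together with Lemma~\ref{lem:boundary:Poincare}, and is supported in $W$ up to the boundary, so by density it can be used as a test function. The $\eta$-piece is again treated through the representation formula.

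\textbf{Main obstacle.} The step I expect to be hardest is the local analysis at the pole $y$, where only $\nabla G^{L^*}_y\in L^1$ is known. I would handle it by the $\eta$-splitting combined with~\eqref{eqn:green:Poisson}, so that no integral ever touches a neighborhood of $y$.
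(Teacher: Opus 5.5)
Your proposal is correct, but it handles the pole differently from the paper. Your two density steps are exactly what the paper does: testing $L^*G^{L^*}_y=0$ against $(\varphi-\eta)u$, and testing $Lu=0$ against $(\varphi-\eta)G^{L^*}_y$. In the paper these appear as the identity $0=\int\nabla(u(\varphi-\eta_\varepsilon))\cdot A^*\nabla G^{L^*}_y$ and the term $I=0$. Together they show that the right-hand side of the lemma is unchanged when $\varphi$ is replaced by a cutoff concentrated near~$y$.

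\textbf{The paper's route.} It applies \eqref{eqn:green:fundamental} only to the smooth function $u(y)\eta_\varepsilon$. Then $u(y)$ minus the formula for $\eta_\varepsilon$ equals the annular error terms $IV+V$ on $B(y,2\varepsilon)\setminus B(y,\varepsilon)$. These are killed as $\varepsilon\to0^+$ using Caccioppoli, the interior H\"older continuity \eqref{eqn:interior:DGN} of $u$ at~$y$, and the near-pole bound \eqref{eqn:green:near:size}.

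\textbf{Your route.} You get the formula for a fixed $\eta$ exactly, with no limit. The function $v=\eta u\in\widetilde W^{1,2}_{av,2,1/2}(\Omega)$ satisfies $Lv=-A\nabla u\cdot\nabla\eta-\Div(uA\nabla\eta)$, and both data vanish near~$y$. Representing $v$ via \eqref{eqn:green:Poisson}, \eqref{eqn:green:poisson:nodiv} and \eqref{eqn:green:symmetric} gives
\[
u(y)=\int u\nabla\eta\cdot A^*\nabla G^{L^*}_y-\int G^{L^*}_y\nabla u\cdot A^*\nabla\eta .
\]

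\textbf{What each buys.} Your route dispenses with the quantitative pole analysis. It also makes \eqref{eqn:green:near:size} unnecessary here; the paper has to prove that bound separately, for a specific $\gamma$, when $\bdmn+1=\dmn=2$. The price is that you use symmetry and uniqueness of the Lax--Milgram solution. You also need to extend \eqref{eqn:green:poisson:nodiv} from $C^\infty_0$ data to the $L^2$ datum $-A\nabla u\cdot\nabla\eta$, with pointwise evaluation at~$y$. This is a routine density argument: interior Moser near $y$ gives the pointwise convergence, just as in the paper's derivation of \eqref{eqn:green:Poisson}. You should state it, not merely assert that these formulas hold ``for data vanishing near $y$.''

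\textbf{Write-up.} Use the representation formula once, for the combined identity with~$\eta$. Your ``first'' and ``second'' identities should not be justified separately near~$y$. Each one contains $\int\eta\,A\nabla u\cdot\nabla G^{L^*}_y$, whose convergence at the pole you never establish, and which you do not need.
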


The assumption that $u\in \dot W^{1,2}_{av,2,1/2}(W;\partial\Omega)$, rather than $\dot W^{1,2}_{av,\beta,1/2}(W;\partial\Omega)$, is used to ensure that the De Giorgi-Nash condition is valid in a neighborhood of~$y$. Thus, because
$Lu=0$ in a neighborhood of~$y$, by the bound~\eqref{eqn:interior:DGN} we have that $u$ is continuous at~$y$, and so $u(y)$ is meaningful.

\begin{proof}[Proof of Lemma~\ref{lem:green:solution}]
Let $\varepsilon>0$ be a sufficiently small positive number and let $\eta_\varepsilon$ be a smooth cutoff function supported in $B(y,2\varepsilon)$ and identically equal to~$1$ in $B(y,\varepsilon)$.

By Lemma~\ref{lem:boundary:Poincare}, $u(\varphi-\eta_\varepsilon) \in \widetilde  W^{1,2}_{av,1/2,2}(\Omega)$.
By the bound~\eqref{eqn:green:W12},
$G^{L^*}_y\in \dot W^{1,2}_{av,2,1/2}(\Omega\setminus B(y,\varepsilon);\Omega)$. By formula~\eqref{eqn:green:solution}, $L^*G^{L^*}_y=0$ in $\Omega\setminus B(y,\varepsilon)$. Furthermore, by Proposition~\ref{prp:dense}, we have that $C^\infty_0(\Omega)$ is dense in $\widetilde  W^{1,2}_{av,c,1/2,2}(\Omega)$. Thus, by definition of~$L^*$, we have that
\begin{align*}
0&=\int_\Omega \nabla(u(\varphi-\eta_\varepsilon))\cdot A^*\nabla G^{L^*}_y
.\end{align*}
By formula~\eqref{eqn:green:fundamental},
\begin{equation*}
u(y)=u(y)\eta_\varepsilon(y)=u(y)\int_\Omega\nabla \eta_\varepsilon\cdot A^*\nabla G_y^{L^*}
\end{equation*}
and so
\begin{align*}
u(y)
&=\int_\Omega \nabla(u(\varphi-\eta_\varepsilon)+u(y)\eta_\varepsilon)\cdot A^*\nabla G^{L^*}_y
.\end{align*}
Several applications of the product rule yield that
\begin{align*}
u(y)
&=\int_\Omega
A\nabla u\cdot \nabla ((\varphi-\eta_\varepsilon)G^{L^*}_y)
-\int_\Omega G^{L^*}_y\nabla u\cdot A^*\nabla \varphi
+\int_\Omega u \nabla \varphi\cdot A^*\nabla G^{L^*}_y
\\&\qquad
+\int_\Omega G^{L^*}_y \nabla u\cdot A^*\nabla \eta_\varepsilon
+\int_\Omega ( u(y)-u)\nabla \eta_\varepsilon\cdot A^*\nabla G^{L^*}_y
\\&=I-II+III+IV+V
.
\end{align*}
To complete the proof we must show that the terms $I$, $IV$ and~$V$ are zero, at least in the limit as $\varepsilon\to 0^+$.

We claim that
\begin{equation*}(\varphi-\eta_\varepsilon)G^{L^*}_y\in \widetilde W^{1,2}_{av,c,2,1/2}(\Omega).\end{equation*}
Because $Lu=0$ in a neighborhood of $\supp \varphi$, this suffices to show that the term $I$ is zero. To prove the claim, observe that by the bound~\eqref{eqn:green:W12} and the Poincar\'e inequality in an annulus, $G_y^{L^*} (1-\eta_{\varepsilon})\in \dot W^{1,2}_{av,2,1/2}(\Omega)$. By formula~\eqref{eqn:green:boundary}, $\Tr G_y^{L^*} (1-\eta_{\varepsilon}) = \Tr G_y^{L^*}=0$. Thus by Lemma~\ref{lem:boundary:Poincare}, $(1-\eta_\varepsilon)G^{L^*}_y\in \widetilde W^{1,2}_{av,2,1/2}(\Omega)$. Observe that $\varphi(1-\eta_\varepsilon)=\varphi-\eta_{\varepsilon}$ if $\varepsilon$ is small enough because $\varphi\equiv 1$ in a neighborhood of~$y$ and $\eta_\varepsilon$ is supported in $B(y,2\varepsilon)$. Because $\varphi$ is bounded and $\nabla\varphi$ is compactly supported, this proves the claim.

The terms $IV$ and $V$ satisfy
\begin{align*}
|IV+V|&=
\biggl|\int_\Omega G^{L^*}_y \nabla u\cdot A^*\nabla \eta_\varepsilon
+\int_\Omega ( u(y)-u)\nabla \eta_\varepsilon\cdot A^*\nabla G^{L^*}_y\biggr|
\\&\leq
\frac{C\delta(y)^{1+\bdmn-\pdmn}}{\varepsilon}
\biggl(\int_{B(y,2\varepsilon)\setminus B(y,\varepsilon)} G^{L^*}_y |\nabla u|
+\int_{B(y,2\varepsilon)\setminus B(y,\varepsilon)}  |u(y)-u| |\nabla G^{L^*}_y|\biggr)
.\end{align*}
By Hölder's inequality
\begin{align*}
|IV+V|
&\leq
\frac{C\delta(y)^{1+\bdmn-\pdmn}}{\varepsilon}
\biggl(\int_{B(y,2\varepsilon)} |\nabla u|^2\biggr)^{1/2}
\biggl(\int_{B(y,2\varepsilon)\setminus B(y,\varepsilon)} |G^{L^*}_y|^2\biggr)^{1/2}
\\&\qquad
+\frac{C\delta(y)^{1+\bdmn-\pdmn}}{\varepsilon}
\biggl(\int_{B(y,2\varepsilon)} |u-u(y)|^2\biggr)^{1/2}
\biggl(\int_{B(y,2\varepsilon)\setminus B(y,\varepsilon)}
|\nabla G^{L^*}_y|^2\biggr)^{1/2}
\end{align*}
and by the Caccioppoli inequality
\begin{align*}
|IV+V|
&\leq
C\delta(y)^{1+\bdmn-\pdmn}\varepsilon^{\dmn-2}
\biggl(\fint_{B(y,3\varepsilon)} |u-u(y)|^2\biggr)^{1/2}
\biggl(\fint_{B(y,3\varepsilon)\setminus B(y,\varepsilon/2)} |G^{L^*}_y|^2\biggr)^{1/2}
.\end{align*}
Let $r=\delta(y)/2$.
By the bound~\eqref{eqn:interior:DGN}, if $0<\varepsilon<r/6$, then
\begin{align*}
|IV+V|
&\leq
C\varepsilon^{\dmn-2+\alpha} r^{1+\bdmn-\pdmn-\alpha}
\biggl(\fint_{B(y,r)} |u|^2\biggr)^{1/2}
\biggl(\fint_{B(y,3\varepsilon)\setminus B(y,\varepsilon/2)} |G^{L^*}_y|^2\biggr)^{1/2}
.\end{align*}
By the bound~\eqref{eqn:green:near:size},
\begin{equation*}
\biggl(\fint_{B(y,3\varepsilon)\setminus B(y,\varepsilon/2)} |G^{L^*}_y|^2\biggr)^{1/2}
\leq
C\delta(y)^{\dmn-\pbdmn-1} \varepsilon^{2-\pdmn}
\biggl(\frac{\delta(y)}{\varepsilon}\biggr)^{\alpha/2}
\end{equation*}
Thus
\begin{align*}
|IV+V|
&\leq
C\varepsilon^{\alpha/2}
r^{-\alpha/2}
\biggl(\fint_{B(y,r)} |u|^2\biggr)^{1/2}
.\end{align*}
Thus, taking the limit as $\varepsilon\to 0^+$, we have that
\begin{align*}
u(y)
&=II-III=
\int_\Omega u \nabla \varphi\cdot A^*\nabla G^{L^*}_y
-\int_\Omega G^{L^*}_y\nabla u\cdot A^*\nabla \varphi
\end{align*}
as desired.
\end{proof}

We will now apply Lemma~\ref{lem:green:solution} for some specific choices of~$\varphi$.
\begin{lem}\label{lem:green:solution:2}
Let $\Omega$, $\dmn$, $\bdmn$, and~$L$ be as in Theorem~\ref{thm:Poisson}. Let $p_{L^*}^-<\beta<p_L^+$.
Let $u\in \widetilde W^{1,2}_{av,2,1/2}(\Omega)\cap  \widetilde W^{1,2}_{av,\beta,1/2}(\Omega)$.

Let $\xi\in \partial\Omega$ and let $r>0$.
Suppose that either
\begin{itemize}
\item
$y\in\Omega\cap B(\xi,r)$ and $Lu=0$ in $\Omega\cap B(\xi,6r)$, or
\item $y\in \Omega\setminus\overline{B(\xi,8r)}$ and $Lu=0$ in $\Omega\setminus \overline{B(\xi,3r)}$.
\end{itemize}
Let $0<c\leq 2/11$. Then
\begin{align*}
|u(y)|
&\leq
Cr^{-\bdmn/2-3/2}
\|u\|_{\dot W^{1,2}_{av,c,\beta,1/2}(\Omega\cap B(\xi,C_2r); \Omega)}
\int_{\Omega\cap B(\xi,7r)\setminus B(\xi,2r)}  G^{L^*}_y\,\delta^{\bdmn+1-\pdmn}
\end{align*}
where $C_2=5C_1/(1-c/2)$, and where $C_1$ is the constant in Lemma~\ref{lem:boundary:Poincare}.
\end{lem}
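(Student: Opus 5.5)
The plan is to apply Lemma~\ref{lem:green:solution} with a cutoff $\varphi$ adapted to each of the two cases, so that $\nabla\varphi$ is supported in the annulus $B(\xi,7r)\setminus B(\xi,2r)$ where $Lu=0$. In the first case take $\varphi\equiv1$ on $B(\xi,3r)$, $\varphi\equiv 0$ outside $B(\xi,5r)$, $|\nabla\varphi|\le C/r$. Then $y\in B(\xi,r)$ lies where $\varphi\equiv1$, and $W=\Omega\cap B(\xi,6r)$ contains $\Omega\cap\supp\varphi$. In the second case take $\varphi=1-\psi$ with $\psi\equiv1$ on $B(\xi,4r)$ and $\psi\equiv0$ outside $B(\xi,6r)$. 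Then $\varphi\equiv1$ near $y\notin\overline{B(\xi,8r)}$, $\nabla\varphi$ is compactly supported, and $W=\Omega\setminus\overline{B(\xi,3r)}$ works. In either case Lemma~\ref{lem:green:solution} gives
\begin{equation*}
|u(y)|\le \frac{C}{r}\int_{\mathcal A}\delta^{\bdmn+1-\pdmn}\bigl(|u|\,|\nabla G^{L^*}_y|+G^{L^*}_y|\nabla u|\bigr),
\end{equation*}
where $\mathcal A=\Omega\cap B(\xi,5r)\setminus B(\xi,3r)$ (respectively $B(\xi,6r)\setminus B(\xi,4r)$). The distance from $\mathcal A$ to $y$ is at least of order $r$, using $y\in B(\xi,r)$ in the first case and $|y-\xi|>8r$ in the second.

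Both terms are handled the same way, using that $G^{L^*}_y$ solves $L^*G=0$ on a neighborhood of $\mathcal A$ with zero trace by \eqref{eqn:green:solution} and \eqref{eqn:green:boundary}. First apply Cauchy--Schwarz with weight $\delta^{\bdmn+1-\pdmn}$ to each term. For the term with $\nabla G^{L^*}_y$, use the boundary Caccioppoli inequality (Lemma~\ref{lem:Caccioppoli:boundary}, with interior Caccioppoli away from $\partial\Omega$) on a slightly larger annulus. This gives $\|\nabla G^{L^*}_y\|_{L^2(\delta^{\bdmn+1-\pdmn})}\lesssim r^{-1}\|G^{L^*}_y\|_{L^2(\delta^{\bdmn+1-\pdmn})}$ there, still inside $B(\xi,7r)\setminus B(\xi,2r)$. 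Then convert the $L^2$ norm of $G^{L^*}_y$ to an $L^1$ norm. Cover the annulus by balls centered on the boundary or interior balls and apply Lemma~\ref{lem:boundary:Moser} (boundary/interior Moser) to $G^{L^*}_y$, whose pole is far away. This yields $\sup_{\mathcal A'}G^{L^*}_y\lesssim r^{-1-\bdmn}\int_{B(\xi,7r)\setminus B(\xi,2r)}G^{L^*}_y\delta^{\bdmn+1-\pdmn}$. Combined with $\int_{B(\xi,Cr)}\delta^{\bdmn+1-\pdmn}\approx r^{\bdmn+1}$ from Lemma~\ref{lem:close:to:Ahlfors}, this bounds the $L^2$ norm by $r^{-(1+\bdmn)/2}$ times the desired integral. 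The delicate step is choosing radii so that the Moser balls stay inside the region where $L^*G=0$ and in the target annulus. That is why the radii $2r$, $7r$ and the constraint $c\le 2/11$ appear; I would fix them first.

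For the factors involving $u$, I need $\bigl(\int_{\mathcal A}|\nabla u|^2\delta^{\bdmn+1-\pdmn}\bigr)^{1/2}$ and $r^{-1}\bigl(\int_{\mathcal A}|u|^2\delta^{\bdmn+1-\pdmn}\bigr)^{1/2}$ to be bounded by $\|u\|_{\dot W^{1,2}_{av,c,\beta,1/2}(\Omega\cap B(\xi,C_2r);\Omega)}$. The gradient term follows from Corollary~\ref{cor:averaged:is:Wp}, or Lemma~\ref{lem:averaged:to:unaveraged} with $\varrho$, $r$ adapted and $\beta\ge2$; if $\beta<2$, first use interior Meyers \eqref{eqn:interior:Meyers}, since $Lu=0$ near $\mathcal A$. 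For the zeroth-order term, use $\delta\lesssim r$ on $\mathcal A$ to get $r^{-2}\int|u|^2\delta^{\bdmn+1-\pdmn}\le\int|u|^2\delta^{\bdmn-1-\pdmn}$. Bound this by the boundary Poincar\'e inequality \eqref{eqn:boundary:Poincare:zero} with $p=2$, $s=1/2$ on $B(\xi,5r/(1-c/2))$, after passing from $\mathcal W_{c,\beta}$ averages via Lemma~\ref{lem:averaged:to:unaveraged}. This produces the radius $C_2r=5C_1r/(1-c/2)$. Collecting powers of $r$ gives $r^{-1}\cdot r^{-(1+\bdmn)/2}=r^{-\bdmn/2-3/2}$, which is the claimed bound. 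The main obstacle I expect is this bookkeeping of supports and radii, together with justifying the $\beta$ versus $2$ averaging swap.
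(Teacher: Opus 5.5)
Your overall route is the paper's. You use Lemma~\ref{lem:green:solution} with a cutoff whose gradient lives on an annulus about $\xi$, then Cauchy--Schwarz with weight $\delta^{\bdmn+1-\pdmn}$. For $G^{L^*}_y$ you use Caccioppoli and Lemma~\ref{lem:boundary:Moser}, together with the positivity \eqref{eqn:green:positive}, which you use implicitly when you drop absolute values. For $u$ you use Lemma~\ref{lem:averaged:to:unaveraged} plus the boundary Poincar\'e inequality.

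However, the step you single out as the main obstacle is justified incorrectly. Since $p^-_{L^*}<2$, the case $\beta<2$ really occurs, and there \eqref{eqn:interior:Meyers} goes the wrong way. It bounds an $L^\beta$ average with $\beta\geq 2$ by an $L^2$ average. You need instead $\mathcal{W}_{c/2,2}(\nabla u)\leq C\,\mathcal{W}_{c,\beta}(\nabla u)$, and for the zeroth-order term the analogous bound of an $L^2$ Whitney average of $u$ by an $L^\beta$ one over slightly larger balls. That is, you need a reverse H\"older inequality \emph{below} the exponent~$2$. The paper gets these from the interior Caccioppoli inequality, the interior Moser bound \eqref{eqn:interior:Moser}, and the $L^\beta$ Poincar\'e inequality on the balls $B(z,c\delta(z))$. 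These balls stay inside the region where $Lu=0$, which is what $c\leq 2/11$ is for. In particular, when $\beta<2$, Lemma~\ref{lem:averaged:to:unaveraged} alone only controls $\int|u|^\beta$. So you must insert \eqref{eqn:interior:Moser} before applying \eqref{eqn:boundary:Poincare:zero}. There the aperture on the $u$ side must be strictly smaller than on the $\nabla u$ side, as Lemma~\ref{lem:boundary:Poincare} requires; the paper pairs $\mathcal{W}_{c/2,\beta}(u)$ with $\mathcal{W}_{c,\beta}(\nabla u)$.

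Second, your radii do not produce the stated $C_2$ in the second case. With $\nabla\varphi$ supported in $B(\xi,6r)\setminus B(\xi,4r)$, the enlarged annulus from Lemma~\ref{lem:averaged:to:unaveraged} reaches radius $6r/(1-c/2)$. It is therefore not contained in the ball $B(\xi,5r/(1-c/2))$ on which you apply \eqref{eqn:boundary:Poincare:zero}. A single ball large enough to contain it yields the norm of $u$ over $B(\xi,6C_1r/(1-c/2))$, which is larger than $B(\xi,C_2r)$. The simple fix is the paper's choice: take $\supp\nabla\varphi\subset B(\xi,5r)\setminus\overline{B(\xi,4r)}$ in both cases, with $\varphi\equiv1$ on the side containing $y$ and $\varphi\equiv0$ on the other. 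Since $|y-\xi|<r$ or $|y-\xi|>8r$, and $Lu=0$ either in $B(\xi,6r)$ or outside $\overline{B(\xi,3r)}$, this one annulus serves both cases. With these two repairs your argument coincides with the paper's proof.
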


\begin{proof}Let $\varphi$ be smooth and satisfy $\supp\nabla\varphi\subset B(\xi,5r)\setminus\overline{B(\xi,4r)}$. We further require that $\varphi\equiv 1$ near~$y$ (so either $\varphi\equiv 1$ in a neighborhood of $\overline {B(\xi,4r)}$ or $\varphi\equiv 1$ in an open set containing the complement of~$B(\xi,5r)$), and that $\varphi\equiv 0$ in the other component, that is, in the region of~$\Omega$ where $Lu$ need not equal zero. We may require $|\nabla\varphi|\leq C/r$.

Then by Lemma~\ref{lem:green:solution} and the bound~\eqref{eqn:elliptic:bounded:introduction},
\begin{align*}
|u(y)|
&\leq
\frac{C}{r}
\int_{\Omega\cap\supp\nabla\varphi} |u| \,|\nabla G^{L^*}_y|\,\delta^{\bdmn+1-\pdmn}
+\frac{C}{r}\int_{\Omega\cap\supp\nabla\varphi} |\nabla u|\,G^{L^*}_y\,\delta^{\bdmn+1-\pdmn}
.\end{align*}
By Hölder's inequality,
\begin{align*}
|u(y)|
&\leq
\frac{C}{r}
\biggl(\int_{\Omega\cap\supp\nabla\varphi} |u|^2\,\delta^{\bdmn+1-\pdmn}
\biggr)^{1/2}
\biggl(\int_{\Omega\cap\supp\nabla\varphi} |\nabla G^{L^*}_y|^2\,\delta^{\bdmn+1-\pdmn}
\biggr)^{1/2}
\\&\qquad
+\frac{C}{r}
\biggl(\int_{\Omega\cap\supp\nabla\varphi} |\nabla u|^2\,\delta^{\bdmn+1-\pdmn}
\biggr)^{1/2}
\biggl(\int_{\Omega\cap\supp\nabla\varphi} |G^{L^*}_y|^2\,\delta^{\bdmn+1-\pdmn}
\biggr)^{1/2}
.\end{align*}
By Lemma~\ref{lem:averaged:to:unaveraged},
\begin{equation*}\int_{\Omega\cap\supp\varphi} |\nabla u|^2\delta^{\bdmn+1-\pdmn}
\leq
C\int_{\Omega\cap B(\xi,\frac{5}{1-c/2}r)\setminus B(\xi,\frac{4}{1+c/2}r)} \mathcal{W}_{c/2,2}(\nabla u)^2\delta^{\bdmn+1-\pdmn}
.\end{equation*}
If $z\in \Omega\cap B(\xi,\frac{5}{1-c/2}r)\setminus B(\xi,\frac{4}{1+c/2}r)$ then $B(z,c\delta(z))\subset B(\xi,5\frac{1+c}{1-c/2}r)\setminus B(\xi,4\frac{1-c}{1+c/2}r)$. If $c\leq 2/11$ then $B(z,c\delta(z))\subset \Omega \cap B(\xi,6r)\setminus B(\xi,3r)$ and so $Lu=0$ in $B(z,c\delta(z))$. Thus by the Caccioppoli inequality, the local bound~\eqref{eqn:interior:Moser}, and the Poincar\'e inequality, we have that
\begin{equation*}\mathcal{W}_{c/2,2}(\nabla u)\leq C \mathcal{W}_{c,\beta}(\nabla u).\end{equation*}
Similarly, by Lemma~\ref{lem:averaged:to:unaveraged} and the local bound~\eqref{eqn:interior:Moser},
\begin{equation*}\int_{\Omega\cap\supp\varphi} | u|^2\delta^{\bdmn+1-\pdmn}
\leq
C\int_{\Omega\cap B(\xi,\frac{5}{1-c/4}r)\setminus B(\xi,\frac{4}{1+c/4}r)} \mathcal{W}_{c/2,\beta} ( u)^2 \delta^{\bdmn-\pdmn+1}
.\end{equation*}
Because $\delta(z)\leq |z-\xi|\leq Cr$ for all $z\in B(\xi,\frac{5}{1-c/2}r)$,
\begin{equation*}\int_{\Omega\cap\supp\varphi} | u|^2\delta^{\bdmn+1-\pdmn}
\leq
Cr^2\int_{\Omega\cap B(\xi,\frac{5}{1-c/4}r)\setminus B(\xi,\frac{4}{1+c/4}r)} \mathcal{W}_{c/2,\beta} ( u)^2 \delta^{\bdmn-\pdmn-1}
.\end{equation*}

By Lemma~\ref{lem:boundary:Poincare},
\begin{align*}
\int_{\Omega\cap\supp\nabla\varphi} |u|^2\,\delta^{\bdmn+1-\pdmn}
&\leq
Cr^2\int_{\Omega\cap B(\xi,C_1\frac{5}{1-c/4}r)} \mathcal{W}_{c,\beta} (\nabla u)^2\delta^{\bdmn-\pdmn+1}
\end{align*}
and so
\begin{align*}
|u(y)|
&\leq
C
\|u\|_{\dot W^{1,2}_{av,c,\beta,1/2}(\Omega\cap B(\xi,C_2r); \Omega)}
\biggl(\int_{\Omega\cap\supp\nabla\varphi} |\nabla G^{L^*}_y|^2\,\delta^{\bdmn+1-\pdmn}
\biggr)^{1/2}
\\&\qquad
+\frac{C}{r}
\|u\|_{\dot W^{1,2}_{av,c,\beta,1/2}(\Omega\cap B(\xi,C_2r); \Omega)}
\biggl(\int_{\Omega\cap\supp\nabla\varphi} |G^{L^*}_y|^2\,\delta^{\bdmn+1-\pdmn}
\biggr)^{1/2}
.\end{align*}

Recall that either $y\in \Omega\cap B(\xi,2r)$ or $y\in \Omega\setminus \overline{B(\xi,7r)}$ and that $\nabla\varphi$ is supported in $B(\xi,5r)\setminus B(\xi,4r)$. By formula~\eqref{eqn:green:solution}, we may bound $\nabla G_y^{L^*}$ using the boundary and interior Caccioppoli inequalities (the bound~\eqref{eqn:interior:Caccioppoli} and Lemma~\ref{lem:Caccioppoli:boundary}). Thus
\begin{align*}
\biggl(\int_{\Omega\cap\supp\nabla\varphi} |\nabla G^{L^*}_y|^2\,\delta^{\bdmn+1-\pdmn}
\biggr)^{1/2}
&\leq
\frac{C}{r}
\biggl(\int_{\Omega\cap B(\xi,6r)\setminus B(\xi,3r)} |G^{L^*}_y|^2\,\delta^{\bdmn+1-\pdmn}
\biggr)^{1/2}
.\end{align*}
Another covering argument, Lemma~\ref{lem:close:to:Ahlfors}, Lemma~\ref{lem:boundary:Moser}, and the nonnegativity of Green's functions (formula~\eqref{eqn:green:positive}) yield that
\begin{equation*}
\biggl(\int_{\Omega\cap B(\xi,6r)\setminus B(\xi,3r)} | G^{L^*}_y|^2\,\delta^{\bdmn+1-\pdmn}
\biggr)^{1/2}
\leq
Cr^{-\bdmn/2-1/2}
\int_{\Omega\cap B(\xi,7r)\setminus B(\xi,2r)} G^{L^*}_y\,\delta^{\bdmn+1-\pdmn}
.\end{equation*}
This completes the proof.
\end{proof}

In the case where $y$ is very far from the support of the data, it is useful to estimate the Green's function and derive an explicit decay bound on~$u$.
\begin{lem}\label{lem:u:7delta}%
Let $\Omega$, $\dmn$, $\bdmn$, and~$L$ be as in Theorem~\ref{thm:Poisson}. Let $c\in(0,1)$ and let $p_{L^*}^-<\beta<p_L^+$.
Let $\xi\in \partial\Omega$ and let $r>0$.

Suppose that
$y\in \Omega\setminus\overline{B(\xi,28r)}$,
$u\in \widetilde W^{1,2}_{av,2,1/2}(\Omega)\cap  \widetilde W^{1,2}_{av,c,\beta,1/2}(\Omega)$, and $Lu=0$ in $\Omega\setminus \overline{B(\xi,3r)}$.
Then
\begin{align*}
|u(y)|
&\leq C\frac{r^{\bdmn/2-1/2+\alpha^*} }{|y-\xi|^{\bdmn-1+\alpha^*}}
\|u\|_{\widetilde W^{1,2}_{av,c,\beta,1/2}(\Omega)}
\end{align*}
where $\alpha^*$ is as in Lemma~\ref{lem:boundary:DGN} with $L$ replaced by~$L^*$.
\end{lem}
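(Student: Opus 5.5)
The plan is to apply Lemma~\ref{lem:green:solution:2} in its second alternative, with $r$ unchanged. Since $|y-\xi|>28r>8r$ and $Lu=0$ in $\Omega\setminus\overline{B(\xi,3r)}$, that lemma gives
\begin{equation*}
|u(y)|\leq Cr^{-\bdmn/2-3/2}\|u\|_{\dot W^{1,2}_{av,c',\beta,1/2}(\Omega)}\int_{\Omega\cap B(\xi,7r)\setminus B(\xi,2r)} G^{L^*}_y\,\delta^{\bdmn+1-\pdmn}
\end{equation*}
for $c'=\min(c,2/11)$. By Lemma~\ref{lem:averaged:Whitney}, the norm with parameter $c'$ is comparable to the norm with parameter $c$. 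It therefore suffices to prove the pointwise bound
\begin{equation*}
G^{L^*}_y(x)\leq C\frac{r^{\alpha^*}}{|y-\xi|^{\bdmn-1+\alpha^*}}\quad\text{for } x\in\Omega\cap B(\xi,7r).
\end{equation*}
Once this holds, integrating against $\delta^{\bdmn+1-\pdmn}$ over $B(\xi,7r)$ gives a factor $Cr^{\bdmn+1}$ by Lemma~\ref{lem:close:to:Ahlfors}, with $\varpi=1$. The exponents then combine as $r^{-\bdmn/2-3/2}\cdot r^{\bdmn+1}\cdot r^{\alpha^*}=r^{\bdmn/2-1/2+\alpha^*}$, which is the claim.

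To get the pointwise bound, set $R=|y-\xi|/4\geq 7r$. The function $w=G^{L^*}_y$ satisfies $L^*w=0$ in $B(\xi,2R)\cap\Omega$, because $y\notin B(\xi,2R)$, by formula~\eqref{eqn:green:solution}. It has vanishing trace and finite energy there by formulas~\eqref{eqn:green:W12} and~\eqref{eqn:green:boundary}. We apply Lemma~\ref{lem:boundary:DGN} to $L^*$ with exponent $\alpha^*$ and with $s=7r\leq R/2$ after shrinking constants; if needed, replace $28$ by a larger fixed constant, absorbing the intermediate range into the constant. This yields
\begin{equation*}
\sup_{B(\xi,7r)\cap\Omega}G^{L^*}_y\leq C\Bigl(\frac{r}{R}\Bigr)^{\alpha^*}\frac{1}{R^{1+\bdmn}}\int_{B(\xi,R)\cap\Omega}G^{L^*}_y\,\delta^{1+\bdmn-\pdmn}.
\end{equation*}
For $x\in B(\xi,R)$ we have $|x-y|\geq 3R\geq\tfrac14\delta(y)$, because $\delta(y)\leq|y-\xi|=4R$. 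By the symmetry~\eqref{eqn:green:symmetric} together with the bound~\eqref{eqn:green:upper:bound} for $L^*$, or more directly by the analogue of~\eqref{eqn:green:upper:bound} for $G^{L^*}$ (which has the same constructions), we get $G^{L^*}_y(x)\leq CR^{1-\bdmn}$. Lemma~\ref{lem:close:to:Ahlfors} bounds the remaining integral by $R^{1+\bdmn}$. Hence $\sup_{B(\xi,7r)}G^{L^*}_y\leq C(r/R)^{\alpha^*}R^{1-\bdmn}$, which is the desired bound since $R\approx|y-\xi|$.

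The main point to check carefully is the applicability of Lemma~\ref{lem:boundary:DGN} to $G^{L^*}_y$ on $B(\xi,2R)$. This needs $R<\diam\partial\Omega$. If instead $R\geq\diam\partial\Omega$, I would apply the lemma at the largest admissible scale, comparable to $\diam\partial\Omega$ (note $\Omega$ is then bounded since $\diam\Omega=\diam\partial\Omega$, so $|y-\xi|\lesssim\diam\partial\Omega$ and the scales are comparable anyway). One also needs the Green's function upper bound to hold for $L^*$, which follows since all the cited constructions apply equally to the adjoint operator.
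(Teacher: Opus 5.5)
Your proposal is correct and follows the paper's proof essentially step for step: Lemma~\ref{lem:green:solution:2} in its exterior form, then the boundary De Giorgi--Nash estimate (Lemma~\ref{lem:boundary:DGN} for $L^*$) applied to $G^{L^*}_y$ on a ball of radius comparable to $|y-\xi|$, and finally the upper bound~\eqref{eqn:green:upper:bound} together with Lemma~\ref{lem:close:to:Ahlfors}. The extra details you handle only tighten points the paper passes over quickly. These are the restriction $c\le 2/11$, the admissibility $7r\le R/2$ (treating the range $28r<|y-\xi|\le 56r$ directly by~\eqref{eqn:green:upper:bound} does work), and $R<\diam\partial\Omega$, which in fact holds automatically because $|y-\xi|\le\diam\Omega=\diam\partial\Omega$.
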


\begin{proof}
We apply Lemma~\ref{lem:green:solution:2}. For all $y\in \Omega\setminus \overline{B(\xi,8r)}$ we have that
\begin{align*}
|u(y)|
&\leq
\frac{C\|u\|_{\widetilde W^{1,2}_{av,c,\beta,1/2}(\Omega)}}{r^{\bdmn/2+3/2}}
\int_{\Omega\cap B(\xi,7r)\setminus B(\xi,2r)} \delta^{1+\bdmn-\pdmn} G^{L^*}_y
.
\end{align*}
We strengthen the requirement on $y$ to $y\in \Omega\setminus \overline B(\xi,28r)$. Thus $7r<|\xi-y|/4$.

Clearly
\begin{align*}
|u(y)|
&\leq
\frac{C\|u\|_{\widetilde W^{1,2}_{av,c,\beta,1/2}(\Omega)}}{r^{\bdmn/2+3/2}}
\sup_{\Omega\cap B(\xi,7r)}
G^{L^*}_y
\int_{\Omega\cap B(\xi,7r)} \delta^{1+\bdmn-\pdmn}
\end{align*}
and by Lemma \ref{lem:close:to:Ahlfors}
\begin{align*}
|u(y)|
&\leq
{C\|u\|_{\widetilde W^{1,2}_{av,c,\beta,1/2}(\Omega)}r^{\bdmn/2-1/2}}
\sup_{\Omega\cap B(\xi,7r)}
G^{L^*}_y
.\end{align*}

Recall from Section~\ref{sec:Green} that $L^*(G^{L^*}_y)=0$ in $\Omega\setminus\{y\}\supseteq \Omega\cap B(\xi,|y-\xi|/2)$ and $G^{L^*}_y=0$ on all of~$\partial\Omega$, so by Lemma~\ref{lem:boundary:DGN}
\begin{align*}
|u(y)|
&\leq {C\|u\|_{\widetilde W^{1,2}_{av,c,\beta,1/2}(\Omega)}r^{\bdmn/2-1/2}}
\frac{r^{\alpha^*}}{|\xi-y|^{\alpha^*}} \frac{1}{|\xi-y|^{1+\bdmn}} \int_{B(\xi,|y-\xi|/4)\cap\Omega}  G^{L^*}_y \delta^{1+\bdmn-\pdmn}
.\end{align*}
Again by  Lemma~\ref{lem:boundary:DGN}
\begin{align*}
|u(y)|
&\leq {C\|u\|_{\widetilde W^{1,2}_{av,c,\beta,1/2}(\Omega)}r^{\bdmn/2-1/2}}
\frac{r^{\alpha^*}}{|\xi-y|^{\alpha^*}} \sup_{B(\xi,|y-\xi|/4)\cap\Omega}  G^{L^*}_y
.\end{align*}
Observe that $\delta(y)\leq |y-\xi|$ and so $\frac{1}{4}\delta(y)<\frac{3}{4}|y-\xi|<|y-s|$ for all $s\in B(\xi,|y-\xi|/4)$. We may thus apply the bound~\eqref{eqn:green:upper:bound} on the Green's function, which yields that
\begin{align*}
|u(y)|
&\leq C\|u\|_{\widetilde W^{1,2}_{av,c,\beta,1/2}(\Omega)}\frac{r^{\bdmn/2-1/2+\alpha^*} }{|y-\xi|^{\bdmn-1+\alpha^*}}\end{align*}
as desired.\end{proof}

We can use this result to estimate $u$ when $u$ is as in Proposition~\ref{prp:atom:Poisson}.
\begin{lem}\label{lem:pointwise:Poisson}%
Let $u$, $\Omega$, $p$, $s$, and~$Q$ be as in Proposition~\ref{prp:atom:Poisson}.
If $y\in \Omega$ and $\dist(y,Q)>60\diam(Q)$, then
\begin{equation*}|u(y)|\leq C\frac{(\diam Q)^{\bdmn+\alpha^*}} {\dist(y,Q)^{\bdmn-1+\alpha^*}}
.\end{equation*}
\end{lem}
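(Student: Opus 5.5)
We will deduce this directly from Lemma~\ref{lem:u:7delta}, applied with a boundary point $\xi$ near~$Q$ and a radius $r$ comparable to $\diam Q$. The first step is the choice of $\xi$ and~$r$. Let $\xi=\xi_Q$ be the boundary point of formula~\eqref{eqn:cube:point}. We have $\mathcal{G}=\mathcal{G}_1$, so $\kappa=1$, and the bound~\eqref{eqn:Whitney} gives $\dist(Q,\xi_Q)\leq 3\diam Q$. Hence every point of $\overline Q$ lies within $4\diam Q$ of~$\xi$. Take $r=2\diam Q$. Then $\overline Q\subset B(\xi,3r)$, hence $\supp\vec H\subset\overline{B(\xi,3r)}$. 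Consequently $Lu=-\Div(\delta^{\bdmn+1-\pdmn}\vec H)=0$ in $\Omega\setminus\overline{B(\xi,3r)}$. The function $u$ lies in $\widetilde W^{1,2}_{av,2,1/2}(\Omega)\cap\widetilde W^{1,2}_{av,c,\beta,1/2}(\Omega)$ by hypothesis, so the structural assumptions of Lemma~\ref{lem:u:7delta} hold.

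Next we verify the geometric condition $y\notin\overline{B(\xi,28r)}$. Every point of $Q$ is within $4\diam Q$ of~$\xi$, so $|y-\xi|\geq\dist(y,Q)-4\diam Q>56\diam Q=28r$, using $\dist(y,Q)>60\diam Q$. The same triangle inequality, combined with $\dist(y,Q)\leq |y-\xi|+4\diam Q$, shows that $|y-\xi|\approx\dist(y,Q)$. Indeed, $|y-\xi|\geq \frac{56}{60}\dist(y,Q)$, which is the only direction actually needed.

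The third step is to apply Lemma~\ref{lem:u:7delta}. It gives
\begin{equation*}|u(y)|\leq C\frac{r^{\bdmn/2-1/2+\alpha^*}}{|y-\xi|^{\bdmn-1+\alpha^*}}\|u\|_{\widetilde W^{1,2}_{av,c,\beta,1/2}(\Omega)}.\end{equation*}
We then bound the norm. By Remark~\ref{rmk:traceless} (or Lemma~\ref{lem:boundary:Poincare}) the $\widetilde W$ norm is comparable to the $\dot W$ norm, and the bound~\eqref{eqn:p<1:basic} gives $\|u\|_{\dot W^{1,2}_{av,c,\beta,1/2}(\Omega)}\leq C\diam(Q)^{\bdmn/2+1/2}$.

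Finally we collect exponents. Since $r\approx\diam Q$, the powers of $\diam Q$ add to $(\bdmn/2-1/2+\alpha^*)+(\bdmn/2+1/2)=\bdmn+\alpha^*$. The exponent $\bdmn-1+\alpha^*$ in the denominator may be of either sign, but this does not matter because $|y-\xi|\approx\dist(y,Q)$ in both directions. Therefore
\begin{equation*}|u(y)|\leq C\frac{(\diam Q)^{\bdmn+\alpha^*}}{\dist(y,Q)^{\bdmn-1+\alpha^*}}.\end{equation*}
The main point needing care is the comparison $|y-\xi|\approx\dist(y,Q)$ together with the containment of $\supp\vec H$ inside $B(\xi,3r)$. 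Both are elementary consequences of the Whitney bound~\eqref{eqn:Whitney}, so I do not expect a genuine obstacle.
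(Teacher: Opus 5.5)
Your proposal is correct and is essentially the paper's own proof. Like the paper, you take $\xi=\xi_Q$ and $r=2\diam Q$ in Lemma~\ref{lem:u:7delta}, use \eqref{eqn:Whitney} to get $\supp\vec H\subset B(\xi,3r)$ and $y\notin\overline{B(\xi,28r)}$, and finish with \eqref{eqn:p<1:basic}.

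One small slip. The lower bound $|y-\xi|\geq\frac{56}{60}\dist(y,Q)$ is not the only direction needed, since $\bdmn-1+\alpha^*<0$ is possible when $\bdmn<1$. Also, the inequality you cite for the other direction is again the lower bound. What you actually need there is $|y-\xi|\leq\dist(y,Q)+4\diam Q<\frac{64}{60}\dist(y,Q)$, which is equally elementary; the paper also uses it only implicitly.
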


\begin{proof}
Let $\xi\in\partial\Omega$ satisfy $\dist(\xi,Q)=\dist(Q,\partial\Omega)$; because $\partial\Omega$ is closed such a $\xi$ must exist.
By Lemma~\ref{lem:dilate:Whitney} and because $Q\in\mathcal{G}_1$ we have that $2Q\subset B(\xi,5\diam(Q))$.

We may thus apply Lemma~\ref{lem:u:7delta} with  $r=2\diam(Q)$. We then have that
\begin{equation*}|u(y)|\leq C\frac{(\diam Q)^{\bdmn/2-1/2+\alpha^*}} {|y-\xi|^{\bdmn-1+\alpha^*}}
\|u\|_{\widetilde W^{1,2}_{av,\beta,1/2}(\Omega)}
\end{equation*}
for all $y\in \Omega\setminus B(\xi,56\diam(Q))$.
But $|y-\xi|\geq \dist(y,Q)-\diam(Q)-\dist(Q,\partial\Omega)$.
By the bound~\eqref{eqn:Whitney} $|y-\xi|\geq\dist(y,Q)-4\diam(Q)$, and so if $\dist(y,Q)>60\diam(Q)$ then $y\notin B(\xi,56\diam(Q))$.
Using the bound~\eqref{eqn:p<1:basic} completes the proof.
\end{proof}

The previous lemma allows us to estimate $u$ in cases where ${\mathfrak{a}^*}=\alpha^*$, that is, where the bound~\eqref{eqn:p<1:DGN} is valid. We will also need decay in the case where the bound~\eqref{eqn:p<1:extrapolation} is valid.

\begin{lem}\label{lem:pointwise:extrapolation}%
Let $u$, $\Omega$, $p$, $s$, and~$Q$ be as in Proposition~\ref{prp:atom:Poisson}. Suppose that $u$ satisfies the estimate~\eqref{eqn:p<1:extrapolation}.
If $y\in \Omega$ and $\dist(y,Q)>60\diam(Q)$, and if in addition $\delta(y)>b\dist(y,Q)$ for some $b>0$, then
\begin{equation*}|u(y)|\leq C_b\frac{(\diam Q)^{\bdmn+{\mathfrak{a}^*}}} {\dist(y,Q)^{\bdmn-1+{\mathfrak{a}^*}}}
.\end{equation*}
\end{lem}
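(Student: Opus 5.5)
Since $y$ lies far from the support of the data and well inside the domain relative to $\dist(y,Q)$, I will use the interior Moser bound to replace $|u(y)|$ by an $L^1$ average of $u$ over a Whitney-sized ball around $y$. Next I will control that average by the weighted $L^{\mathfrak{q}}$ norm of $u$, which the boundary Poincaré inequality (Lemma~\ref{lem:boundary:Poincare}) turns into the norm $\|u\|_{\dot W^{1,\mathfrak q}_{av,c,\beta,\theta}(\Omega)}$ appearing in the hypothesis~\eqref{eqn:p<1:extrapolation}.

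Set $\rho=\dist(y,Q)$. By hypothesis $\delta(y)>b\rho$. Moreover $\delta(y)\leq \rho+\diam Q+\dist(Q,\partial\Omega)\leq C\rho$, because $\rho>60\diam Q$ and $Q$ is a Whitney cube. Let $r=\min(b,1/2)\rho/4$. Then $B(y,2r)\subset\Omega$, and $B(y,2r)$ lies at distance $\gtrsim\rho$ from $\overline Q\supseteq\supp\vec H$, so $Lu=0$ in $B(y,2r)$ and $\delta\approx\rho$ there. The interior Moser estimate~\eqref{eqn:interior:Moser} gives $|u(y)|\leq C\fint_{B(y,2r)}|u|$. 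Since all balls $B(z,c\delta(z))$ with $z\in B(y,r)$ have radius comparable to $\rho$, a covering argument together with Hölder's inequality (as in the averaging arguments of Lemma~\ref{lem:averaged:to:unaveraged}) yields
\begin{equation*}
|u(y)|\leq C\rho^{-\pdmn/\mathfrak{q}}\rho^{(\pdmn-\bdmn+\mathfrak{q}\theta)/\mathfrak{q}}\biggl(\int_{B(y,C\rho)\cap\Omega}\mathcal{W}_{c,\beta}(u)^{\mathfrak q}\delta^{\bdmn-\pdmn-\mathfrak q\theta}\biggr)^{1/\mathfrak q}.
\end{equation*}
Here I use $\mathfrak q>1$, which was arranged without loss of generality, and that $\delta\approx\rho$ on the region of integration.

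I then apply Lemma~\ref{lem:boundary:Poincare}, in the global form of Remark~\ref{rmk:traceless}, with $p=\mathfrak q$ and $s=\theta>0$. Its hypotheses hold because $u\in\widetilde W^{1,2}_{av,2,1/2}(\Omega)$ has trace zero, and the trace statement is independent of the exponents. This bounds the integral by $C\|u\|_{\dot W^{1,\mathfrak q}_{av,\widetilde c,\beta,\theta}(\Omega)}^{\mathfrak q}$. The estimate~\eqref{eqn:p<1:extrapolation} together with Lemma~\ref{lem:averaged:Whitney} (to change $\widetilde c$ to $c$) then gives
\begin{equation*}
|u(y)|\leq C\rho^{\theta-\bdmn/\mathfrak q}\diam(Q)^{\bdmn/\mathfrak q+1-\theta}.
\end{equation*}
Since $\mathfrak a^*=1-\theta-\bdmn+\bdmn/\mathfrak q$, we have $\theta-\bdmn/\mathfrak q=1-\bdmn-\mathfrak a^*$ and $\bdmn/\mathfrak q+1-\theta=\bdmn+\mathfrak a^*$. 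The bound is therefore exactly $C(\diam Q)^{\bdmn+\mathfrak a^*}\rho^{1-\bdmn-\mathfrak a^*}$, which is the claim.

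I expect the main obstacle to be the justification that $u$ has zero trace in the sense required by Lemma~\ref{lem:boundary:Poincare}, together with checking that the global Poincaré inequality applies with the parameters $(\mathfrak q,\theta)$. The statement is only local, and passing to all of $\Omega$ requires $R\to\infty$, or $R=\diam\Omega$ when $\Omega$ is bounded. I would first try to take $R=C\rho$ centred at a point of $\partial\Omega$ nearest to $y$, which keeps everything local, so that only the integral over $B(\xi,C_1R)$ of the gradient term appears. That integral is dominated by the full norm, so no limit is needed.
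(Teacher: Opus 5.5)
Your proof is correct and is essentially the paper's argument. You combine the boundary Poincaré inequality (Lemma~\ref{lem:boundary:Poincare}) with exponents $(\mathfrak q,\theta)$ on a ball of radius comparable to $\delta(y)$ centred at a nearest boundary point, the hypothesis~\eqref{eqn:p<1:extrapolation}, the interior Moser bound~\eqref{eqn:interior:Moser} on a Whitney-sized ball avoiding $Q$, and $\delta(y)\approx\dist(y,Q)$. The only difference is the order: the paper applies Poincaré first, passes to the Whitney average at $y$ via Lemma~\ref{lem:averaged:nearby} (so it needs no H\"older step in $\mathfrak q$), and only then invokes Moser, whereas you apply Moser first and then convert the $L^1$ average by a covering and H\"older argument.
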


\begin{proof} Let $\xi\in\partial\Omega$ with $|y-\xi|=\delta(y)$.
Because $\Tr u=0$ on~$\partial\Omega$, we may apply Lemma~\ref{lem:boundary:Poincare} to see that
\begin{equation*}\int_{B(\xi,2\delta(y))} \mathcal{W}_{c/2,\beta}(u)^{\mathfrak{q}} \delta^{\bdmn-\pdmn-\mathfrak{q}\theta}
\leq C \int_{B(\xi,2C_1\delta(y))\cap\Omega} \mathcal{W}_{c,\beta}(\nabla u)^{\mathfrak{q}}
\delta^{\bdmn-\pdmn+\mathfrak{q}-\mathfrak{q}\theta}.\end{equation*}
By the bound~\eqref{eqn:p<1:extrapolation},
\begin{equation*}\int_{B(\xi,2\delta(y))} \mathcal{W}_{c/2,\beta}(u)^{\mathfrak{q}} \delta^{\bdmn-\pdmn-\mathfrak{q}\theta}
\leq C C_0^{\mathfrak{q}} \diam(Q)^{\bdmn+\mathfrak{q}-\mathfrak{q}\theta}.
\end{equation*}
By Lemma~\ref{lem:averaged:nearby}, for some $\upsilon\in (0,1/4)$ depending only on~$c$,
\begin{equation*}\biggl(\fint_{B(y, c\delta(y)/4)} |u|^\beta \biggr)^{\mathfrak{q}/\beta}
=\mathcal{W}_{c/4,\beta}u(y)^{\mathfrak{q}}
\leq C\fint_{B(y,\upsilon\delta(y))} \mathcal{W}_{c/2,\beta}u(y)^{\mathfrak{q}}
\end{equation*}
and because $\delta\approx\delta(y)$ in $B(y,\upsilon\delta(y))$,
\begin{equation*}\biggl(\fint_{B(y, c\delta(y)/4)} |u|^\beta \biggr)^{\mathfrak{q}/\beta}
\leq C\delta(y)^{-\bdmn+\mathfrak{q}\theta} \int_{B(y,\upsilon\delta(y))} \mathcal{W}_{c/2,\beta}u(y)^{\mathfrak{q}} \delta^{\bdmn-\pdmn-q\theta}
.\end{equation*}
Combining with the above estimate yields that
\begin{equation*}\biggl(\fint_{B(y, c\delta(y)/4)} |u|^\beta \biggr)^{1/\beta}
\leq C\delta(y)^{-\bdmn/\mathfrak{q}+\theta} C_0\diam(Q)^{\bdmn/\mathfrak{q}+1-\theta}
.\end{equation*}
We wish to apply the interior Moser estimate~\eqref{eqn:interior:Moser}. We need only show that $Lu=0$ in $B(y,c\delta(y)/4)$, that is, that $B(y,c\delta(y)/4)\cap Q=\emptyset$.

But $\delta(y)\leq \dist(y,Q)+\diam(Q)+\dist(Q,\partial\Omega)\leq \dist(y,Q)+4\diam(Q)$. Because $\dist(y,Q)>60\diam(Q)$, we have that $\delta(y)< \frac{16}{15}\dist(y,Q)$. Because $c\delta(y)/4<\delta(y)/4<\frac{15}{16}\delta(y)< \dist(y,Q)$, we have that $B(y,c\delta(y)/4)\cap Q=\emptyset$, as desired. Thus we may apply the estimate~\eqref{eqn:interior:Moser} to see that
\begin{equation*}u(y)\leq C\frac{\diam(Q)^{\bdmn/\mathfrak{q}+1-\theta}} {\delta(y)^{\bdmn/\mathfrak{q}-\theta}}.\end{equation*}
Recalling that ${\mathfrak{a}^*}=1-\theta-\bdmn(1-1/\mathfrak{q})$ and that $\delta(y)\approx\dist(y,Q)$ completes the proof.
\end{proof}

\subsection{The region far from the boundary}

In Lemma~\ref{lem:local:Poisson}, we established estimates on the solutions $u$ in Proposition~\ref{prp:atom:Poisson} near the support of the data $\vec H$. In this section we will establish estimates on $u$ far from the data, and also far from the boundary. In the next section we will consider the region near the boundary.

\begin{lem}\label{lem:atom:far:Poisson}%
Let $u$, $\Omega$, $p$, $s$, and~$Q$ be as in Proposition~\ref{prp:atom:Poisson}. Let $0<b<1/2$. Then
\begin{equation*}
\int_{\{y\in\Omega:\delta(y)\geq b\dist(y,Q)\}} \mathcal{W}_{c,\beta}(\nabla u)^p \,\delta^{\bdmn-\pdmn+p-ps}\leq C\diam(Q)^{\bdmn+p-ps}.
\end{equation*}
\end{lem}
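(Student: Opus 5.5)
The region $\{\delta(y)\geq b\dist(y,Q)\}$ splits into the part near $Q$ and the part far from it. The near part, $\{y:\dist(y,Q)\le 60\vartheta_0\diam Q\}$ with a large fixed $\vartheta_0$, lies in $\Omega_{b,\vartheta}$ for a suitable $\vartheta$ depending only on $b$. Lemma~\ref{lem:local:Poisson} then handles it directly, with the bound $C\diam(Q)^{\bdmn+p-ps}$; this uses $p\le1<2$.

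For the far part I plan to decompose dyadically. Let $D_k=\{y\in\Omega:\delta(y)\ge b\dist(y,Q),\ 2^k\diam Q\le \dist(y,Q)<2^{k+1}\diam Q\}$ for $2^k\ge 60$. On $D_k$ we have $\delta(y)\approx 2^k\diam Q$, since $\delta(y)\le \dist(y,Q)+4\diam Q$. Also $B(y,c\delta(y))$ is disjoint from $Q$ (shrinking $c$ if needed, via Lemma~\ref{lem:averaged:Whitney}), so $Lu=0$ on the ball $B(y,2c\delta(y))$.

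By Meyers~\eqref{eqn:interior:Meyers} with $\vec H=0$, Caccioppoli~\eqref{eqn:interior:Caccioppoli:1} applied to $u$ minus a constant, and the interior Moser bound~\eqref{eqn:interior:Moser}, I get
\[
\mathcal{W}_{c,\beta}(\nabla u)(y)\lesssim \delta(y)^{-1}\sup_{B(y,c'\delta(y))}|u|.
\]
Every point of $B(y,c'\delta(y))$ is at distance $\approx 2^k\diam Q$ from $Q$, has $\delta\gtrsim b\dist(\cdot,Q)$, and (for $k$ large) satisfies $\dist(\cdot,Q)>60\diam Q$. So Lemma~\ref{lem:pointwise:Poisson}, in the case~\eqref{eqn:p<1:DGN}, or Lemma~\ref{lem:pointwise:extrapolation}, in the case~\eqref{eqn:p<1:extrapolation}, gives
\[
|u|\lesssim \frac{(\diam Q)^{\bdmn+\mathfrak a^*}}{(2^k\diam Q)^{\bdmn-1+\mathfrak a^*}}.
\]
Hence on $D_k$,
\[
\mathcal{W}_{c,\beta}(\nabla u)\lesssim 2^{-k(\bdmn+\mathfrak a^*)}\quad\text{and}\quad \delta^{\bdmn-\pdmn+p-ps}\approx(2^k\diam Q)^{\bdmn-\pdmn+p-ps}.
\]
In the DGN case the constant $\mathfrak a^*=\alpha^*$ is exactly what Lemma~\ref{lem:pointwise:Poisson} provides.

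The main obstacle is the measure of $D_k$. The crude bound $|D_k|\lesssim (2^k\diam Q)^{\pdmn}$ gives a term of order $(\diam Q)^{\bdmn+p-ps}\,2^{k(\bdmn+p-ps-p\bdmn-p\mathfrak a^*)}$. This sums over $k$ precisely when $\bdmn/p<\bdmn-1+\mathfrak a^*+s$, which is exactly the hypothesis of Proposition~\ref{prp:atom:Poisson}. I do not even need Lemma~\ref{lem:close:to:Ahlfors} here, because $\delta\approx 2^k\diam Q$ throughout $D_k$. Summing the geometric series together with the near-part bound gives the claim.
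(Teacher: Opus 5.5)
Your proposal is correct and is essentially the paper's proof. Lemma~\ref{lem:local:Poisson} handles the part near $Q$, and far from $Q$ you combine interior Meyers (or H\"older when $\beta\le 2$) and Caccioppoli with the decay from Lemma~\ref{lem:pointwise:Poisson} or Lemma~\ref{lem:pointwise:extrapolation} and $\delta\approx\dist(\cdot,Q)$; convergence comes exactly from $\bdmn/p<\bdmn-1+\mathfrak{a}^*+s$ (you sum over dyadic shells where the paper integrates $\dist(y,Q)^{-(\cdots)}$ directly over $\R^\dmn\setminus\vartheta Q$).

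The one step to tidy is shrinking $c$ ``via Lemma~\ref{lem:averaged:Whitney}'', since that lemma compares norms over all of $\Omega$ rather than over your subregion. Instead, as the paper does with some $\widetilde c\in(c,1)$, bound $\mathcal{W}_{c,\beta}(\nabla u)(y)\lesssim\delta(y)^{-1}\sup_{B(y,\widetilde c\delta(y))}|u|$ by covering $B(y,c\delta(y))$ with small interior balls, after pushing the far region out (depending on $c$) so that $B(y,\widetilde c\delta(y))$ misses $Q$.
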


\begin{proof}
Let $\Omega_b=\{y\in\Omega:\delta(y)\geq b\dist(y,Q)\}$. We seek to bound an integral over $\Omega_b$. If $0<b<1/2$ and $\Omega_{b,\vartheta}$ is as in Lemma~\ref{lem:local:Poisson}, then by Lemma~\ref{lem:local:Poisson} we have that
\begin{equation*}\int_{\Omega_{b,\vartheta}} \mathcal{W}_{c,\beta}(\nabla u)^p \,\delta^{\bdmn-\pdmn+p-ps}\leq C_{c,\dmn,\bdmn,\beta,\vartheta,b}\diam(Q)^{\bdmn+p-ps}.\end{equation*}
We thus need only estimate the integral over
\begin{equation*}\widetilde \Omega_{b,\vartheta}=
\Omega_b\setminus \Omega_{b,\vartheta}
=
\{y\in\Omega:y\notin\vartheta Q,\>\delta(y)\geq b\dist(y,Q)\}.\end{equation*}

Let $1-\widetilde c=\frac{1}{2}(1-c)$. Choose $\vartheta>0$ large enough (depending on $c$ and~$\dmn$) that $\dist(y,Q)>60\diam(Q)+\widetilde c\delta(y)$ for all $y\in \widetilde \Omega_{b,\vartheta}$. Thus Lemma~\ref{lem:pointwise:Poisson} (or Lemma~\ref{lem:pointwise:extrapolation}) applies not only to~$y$ but to every point in $B(y,\widetilde c\delta(y))$.

We may thus apply Meyers's inequality~\eqref{eqn:interior:Meyers} or Hölder's inequality, and then the Caccioppoli inequality, to see that
\begin{align*}\mathcal{W}_{c,\beta}(\nabla u)(y)
&=\biggl(\fint_{B(y,c\delta(y))} |\nabla u|^\beta\biggr)^{1/\beta}
\leq
\frac{C_c}{\delta(y)}
\biggl(\fint_{B(y,\widetilde c\delta(y))} |u|^2\biggr)^{1/2}
.\end{align*}
Then by Lemma~\ref{lem:pointwise:Poisson} or~\ref{lem:pointwise:extrapolation},
\begin{align*}\mathcal{W}_{c,\beta}(\nabla u)(y)
&\leq
\frac{C}{\delta(y)}
\frac{(\diam Q)^{\bdmn+{\mathfrak{a}^*}}} {\dist(y,Q)^{\bdmn-1+{\mathfrak{a}^*}}}
\end{align*}
If $y\in \widetilde \Omega_{b,\vartheta}$ then $\delta(y)\approx \dist(y,Q)$ and so
\begin{equation*}
\int_{\widetilde \Omega_{b,\vartheta}} \mathcal{W}_{c,2}(\nabla u)^p \,\delta^{\bdmn-\pdmn+p-ps}
\\\leq
C\int_{\R^\dmn\setminus \vartheta Q}
\frac{(\diam Q)^{p\bdmn+p{\mathfrak{a}^*}}} {\dist(y,Q)^{p\bdmn+p{\mathfrak{a}^*}-\bdmn+\pdmn-p+ps}}
\,dy
.\end{equation*}
By assumption,
\begin{equation*}\frac{\bdmn}{p}<\bdmn-1+{\mathfrak{a}^*}+s \end{equation*}
and so
\begin{equation*}p\bdmn+p{\mathfrak{a}^*}-\bdmn-p+ps>0. \end{equation*}
Thus the integral converges and
\begin{equation*}
\int_{\widetilde \Omega_{b,\vartheta}} \mathcal{W}_{c,2}(\nabla u)^p \,\delta^{\bdmn-\pdmn+p-ps}
\leq
C(\diam Q)^{\bdmn+p-ps}
\end{equation*}
as desired.
\end{proof}

\subsection{The region near the boundary}

We will now establish the following estimate on the solution $u$ in Proposition~\ref{prp:atom:Poisson} in small balls centered on the boundary.

\begin{lem}\label{lem:W:near:0}
Let $\Omega$, $\dmn$, $\bdmn$, and $L$ be as in Theorem~\ref{thm:Poisson}.

Let $\zeta\in\partial\Omega$, let $0<r<\diam(\partial\Omega)$, and let $u\in \widetilde W^{1,2}_{av,2,1/2}(\Omega)\cap \widetilde W^{1,2}_{av,\beta,1/2}(\Omega)$ satisfy $Lu=0$ in $B(\zeta,14r)\cap \Omega$ for some $p_{L^*}^-<\beta<p_L^+$.

If $0<c\leq 2/11$, $s<1$, and $0<p\leq 1$, then
\begin{align*}
\biggl(\int_{B(\zeta,r)\cap\Omega} \mathcal{W}_{c,\beta}(\nabla u)^p\,
\delta^{\bdmn-\pdmn+p-ps}\biggr)^{1/p}
&\leq
C
r^{1/2-\bdmn/2+\bdmn/p-s}
\|u\|_{\dot W^{1,2}_{av,c,\beta,1/2}(\Omega\cap  B(\zeta,3C_2r);\Omega)}
\end{align*}
where $C_2$ is as in Lemma~\ref{lem:green:solution:2}.
\end{lem}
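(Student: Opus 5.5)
The approach is to use the pointwise representation of Lemma~\ref{lem:green:solution:2} to obtain Hölder decay of $u$ toward $\partial\Omega$ inside $B(\zeta,2r)$, convert that decay into Whitney-scale gradient bounds by Caccioppoli, and then sum. The sum converges because $p\le 1$ and $s<1$.

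\textbf{Pointwise decay.} Fix $y\in B(\zeta,2r)\cap\Omega$. Choose $\xi\in\partial\Omega$ with $|y-\xi|=\delta(y)$, and work at the fixed scale $r$ rather than the scale $\delta(y)$. Since $y\in B(\zeta,r')$ for $r'=2r$, and $Lu=0$ in $B(\zeta,6r')\cap\Omega\subset B(\zeta,14r)\cap\Omega$, the first case of Lemma~\ref{lem:green:solution:2} (centered at $\zeta$ with radius $2r$) gives
\begin{equation*}
|u(y)|\le Cr^{-\bdmn/2-3/2}\,\|u\|_{\dot W^{1,2}_{av,c,\beta,1/2}(\Omega\cap B(\zeta,2C_2r);\Omega)}\int_{\Omega\cap B(\zeta,14r)\setminus B(\zeta,4r)}G^{L^*}_y\,\delta^{\bdmn+1-\pdmn}.
\end{equation*}
To estimate the remaining integral, use $G^{L^*}_y(x)=G^{L}_x(y)$ from~\eqref{eqn:green:symmetric} and $G^L_x\ge0$. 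For $x$ in the annulus, the function $G^L_x$ solves $L$ in $B(\zeta,3r)\cap\Omega$ and vanishes on $\partial\Omega$ there. Lemma~\ref{lem:boundary:DGN} (for $L$) therefore gives $G^L_x(y)\le C(\delta(y)/r)^{\alpha}\sup_{B(\xi,r)} G^L_x$, up to recentering at $\xi$ since $|\xi-\zeta|<2r$.

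Next, by \eqref{eqn:green:upper:bound}, $G^L_x\le Cr^{1-\bdmn}$ on $B(\xi,r)$, because $|x-z|\gtrsim r\ge\delta(x)/C$ in the relevant range; if needed, the part of the annulus with $\delta(x)$ large is handled with Lemma~\ref{lem:harmonic:controls:Green}. Lemma~\ref{lem:close:to:Ahlfors} gives $\int_{B(\zeta,14r)}\delta^{\bdmn+1-\pdmn}\approx r^{\bdmn+1}$. Combining these, I expect
\begin{equation*}
|u(y)|\le C\Bigl(\frac{\delta(y)}{r}\Bigr)^{\alpha'} r^{1/2-\bdmn/2}\,\|u\|_{\dot W^{1,2}_{av,c,\beta,1/2}(\Omega\cap B(\zeta,3C_2r);\Omega)}
\end{equation*}
for some exponent $\alpha'>0$. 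The exact value of $\alpha'$ is irrelevant; only positivity is needed.

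\textbf{Gradient bound and summation.} For $y\in B(\zeta,r)\cap\Omega$, the ball $B(y,\delta(y))$ lies in $B(\zeta,2r)$ and in the region where $Lu=0$. Meyers' inequality~\eqref{eqn:interior:Meyers} (or Hölder when $\beta\le2$) followed by the interior Caccioppoli inequality gives
\begin{equation*}
\mathcal{W}_{c,\beta}(\nabla u)(y)\le C\delta(y)^{-1}\sup_{B(y,\widetilde c\delta(y))}|u|\le C\delta(y)^{\alpha'-1}r^{-\alpha'}\,r^{1/2-\bdmn/2}\,\|u\|.
\end{equation*}
Raise this to the power $p$ and integrate against $\delta^{\bdmn-\pdmn+p-ps}$ over $B(\zeta,r)$. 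The resulting integral is $\int_{B(\zeta,r)}\delta^{\bdmn-\pdmn+p\alpha'-ps}$. Since $s<1$, one could even take $\alpha'$ near zero as long as $p\alpha'-ps+\bdmn$ stays positive. If $ps\ge\bdmn+p\alpha'$, the integral diverges, which shows that this exponent is too weak by itself.

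I therefore expect the main obstacle to be that the pointwise decay exponent may be too small. To handle it, I would first split $B(\zeta,r)$ into Whitney cubes. In each cube $\mathcal{W}(\nabla u)\approx$ const, and the crude $L^2$ information \eqref{eqn:p<1:basic}-type, obtained via Hölder with $p\le1$ on each dyadic layer $\{\delta\approx 2^{-k}r\}$, gives per-layer factors $(2^{-k}r)^{(\bdmn-\bdmn p/2)+\dots}$. Interpolating the $L^2$ bound on the layer (volume $\approx r^\bdmn(2^{-k}r)^{\pdmn-\bdmn}$ by Lemma~\ref{lem:close:to:Ahlfors}) with Hölder for $p\le1<2$ yields a layer contribution of
\begin{equation*}
(2^{-k}r)^{p(1-s)}\,r^{\bdmn-p\bdmn/2}\cdots,
\end{equation*}
and this sums geometrically because $s<1$. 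This layer-wise Hölder argument with exponent $1-s>0$ is what I would actually rely on, using the pointwise decay only to control the layers near $\partial B(\zeta,r)$. Summing over $k$ gives the claimed factor $r^{1/2-\bdmn/2+\bdmn/p-s}$.
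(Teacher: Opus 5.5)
\textbf{There is a genuine gap: neither of your two mechanisms reaches the full range $s<1$.}

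\emph{The pointwise route.} The integral $\int_{B(\zeta,r)\cap\Omega}\delta^{\bdmn-\pdmn+p\alpha'-ps}$ is finite only when $p\alpha'-ps>0$. By Lemma~\ref{lem:close:to:Ahlfors}, the layer $\{\delta\approx 2^{-k}r\}$ contributes about $r^{\bdmn}(2^{-k}r)^{p\alpha'-ps}$, so the condition $p\alpha'-ps+\bdmn>0$ you state is not the right one. Your $\alpha'$ is at best the boundary De Giorgi--Nash exponent of Lemma~\ref{lem:boundary:DGN}, so this route proves the lemma only for $s<\alpha$, and $\alpha$ may be tiny.

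\emph{The layer-wise H\"older fallback.} Combine $\int\mathcal{W}_{c,\beta}(\nabla u)^2\delta^{\bdmn-\pdmn+1}\le\|u\|^2$ with the layer volume $r^{\bdmn}(2^{-k}r)^{\pdmn-\bdmn}$. This gives a layer contribution $(2^{-k}r)^{p/2-ps}\,r^{\bdmn-p\bdmn/2}\|u\|^p$, which is summable only for $s<1/2$; the exponent is $p/2-ps$, not $p(1-s)$. Your exponent would give the factor $r^{1-s+\bdmn/p-\bdmn/2}$, which does not match the scaling $r^{1/2-\bdmn/2+\bdmn/p-s}$ of the statement. The layer energy is simply the wrong quantity to control. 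Note also three further problems: the difficulty sits at $\partial\Omega$ ($k\to\infty$), not near $\partial B(\zeta,r)$; your argument never uses $p\le 1$ beyond $p<2$; and the claim that ``the sum converges because $p\le 1$ and $s<1$'' is therefore unsupported.

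\textbf{The missing idea.} Pointwise, $u$ decays only like $\delta^{\alpha}$, but in the $\ell^1$ sense over each Whitney layer it decays linearly. This comes from elliptic measure being a probability measure. The paper's argument runs as follows.
\begin{enumerate}
\item Cover the $j$th layer by $L_j\lesssim 2^{j\bdmn}$ balls $B(y_{j,\ell},2^{-j-2}r)$.
\item Apply Lemma~\ref{lem:green:solution:2} centered at the nearest boundary point $\xi_{j,\ell}$.
\item Bound $G^{L^*}_y(x)$ not by boundary H\"older continuity but by Lemma~\ref{lem:harmonic:controls:Green}: $G^{L^*}_y(x)\lesssim (2^{-j}r)^{1-\bdmn}\,\omega^x_{L^*}(\Delta(\xi_{j,\ell},12\cdot 2^{-j}r))$.
\item Integrate in $x$ against $\delta^{1+\bdmn-\pdmn}$ over $\Omega\cap B(\zeta,16r)$. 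This defines a measure $\nu$ on $\partial\Omega$ with $\nu(\partial\Omega)\lesssim r^{\bdmn+1}$. Hence $\sup_{B(y_{j,\ell},2^{-j-2}r)}\mathcal{W}_{c,\beta}(\nabla u)\lesssim U_r\,(2^{-j}r)^{-\bdmn}\,\nu(\Delta(\xi_{j,\ell},2^{4-j}r))$, where $U_r=r^{-\bdmn/2-3/2}\|u\|$.
\item Bounded overlap of these surface balls gives $\sum_\ell\nu(\Delta(\xi_{j,\ell},2^{4-j}r))\lesssim r^{\bdmn+1}$.
\item This is where $p\le 1$ enters: $\sum_{\ell\le L_j}a_\ell^p\le L_j^{1-p}\bigl(\sum_\ell a_\ell\bigr)^p$. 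The powers of $2^j$ then combine to exactly $2^{-jp(1-s)}$, which is summable precisely because $s<1$.
\end{enumerate}
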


\begin{proof}
By the Vitali covering lemma, if $0\leq j\leq \infty$ then we may write
\begin{equation*}\{y\in B(\zeta,r)\cap\Omega: 2^{-j-1}r<\delta(y)\leq 2^{-j}r\}
\subseteq \bigcup_{\ell=1}^{L_j} B(y_{j,\ell}, 2^{-j-2}r)\end{equation*}
where each $y_{j,\ell}$ satisfies $y_{j,\ell}\in \Omega\cap B(\zeta,r)$, $2^{-j-1}r<\delta(y_{j,\ell})\leq 2^{-j}r$, and where the balls $B(y_{j,\ell},2^{-j-2}r/5)$ are pairwise disjoint. By Lemma~\ref{lem:close:to:Ahlfors}, this implies that $L_j\leq C2^{j\bdmn}$.
We then have that
\begin{equation*}B(\zeta,r)\cap\Omega\subset \bigcup_{j=0}^\infty \bigcup_{\ell=1}^{L_j} B(y_{j,\ell},2^{-j-2}r)\end{equation*}
and so
\begin{align*}
\int_{B(\zeta,r)\cap\Omega} \mathcal{W}_{c,\beta}(\nabla u)^p
\delta^{\bdmn-\pdmn+p-ps}
&\leq
\sum_{j=0}^\infty
\sum_{\ell=0}^{L_j}
\int_{B(y_{j,\ell},2^{-j-2}r)\cap\Omega} \mathcal{W}_{c,\beta}(\nabla u)^p
\delta^{\bdmn-\pdmn+p-ps}
\\&\leq
C\sum_{j=0}^\infty
(2^{-j}r)^{\bdmn+p-ps}
\sum_{\ell=0}^{L_j}
\sup_{B(y_{j,\ell},2^{-j-2}r)\cap\Omega} \mathcal{W}_{c,\beta}(\nabla u)^p
.\end{align*}

Suppose that $x\in B(y_{j,\ell},2^{-j-2}r)$ for some such~$j$ and~$\ell$. Then $x\in B(\zeta,(5/4)r)$ and so $B(x,\delta(x))\subset B(\zeta,(5/2)r)\cap \Omega$, and so $Lu=0$ in $B(x,\delta(x))$. We may thus apply Holder's inequality or Meyers's inequality~\eqref{eqn:interior:Meyers}, and then the Caccioppoli inequality, to see that
\begin{align*}\mathcal{W}_{c,\beta}(\nabla u)(x)
=\biggl(\fint_{B(x,c\delta(x))} |\nabla u|^\beta\biggr)^{1/\beta}
&\leq
\frac{C_c}{\delta(x)}
\biggl(\fint_{B(x,2c\delta(x))} | u|^2\biggr)^{1/2}
.\end{align*}
Because $\delta(x)>\delta(y_{j,\ell})-2^{-j-2}r> 2^{-j-1}r-2^{-j-2}r=2^{-j-2}r$, we have that
\begin{align*}
\mathcal{W}_{c,\beta}(\nabla u)(x)
&\leq
\frac{C_c}{2^{-j}r}
\biggl(\fint_{B(x,2c\delta(x))} | u|^2\biggr)^{1/2}
.\end{align*}

Our ultimate goal is to apply Lemma~\ref{lem:green:solution:2} to bound $u$ in the right hand integral. Specifically, we will apply Lemma~\ref{lem:green:solution:2} with $\xi=\xi_{j,\ell}$, where $\xi_{j,\ell}\in\partial\Omega$ and $|y_{j,\ell}-\xi_{j,\ell}|=\delta(y_{j,\ell})$.

If $z\in B(x,2c\delta(x))$ and $x\in B(y_{j,\ell},2^{-j-2}r)$, then by the triangle inequality and because $\delta(x)<\delta(y_{j,\ell})+2^{-j-2}r<(5/4)2^{-j}r$, we have that
\begin{align*}
|z-\xi_{j,\ell}|
&\leq
|z-x|+|x-y_{j,\ell}|+|y_{j,\ell}-\xi_{j,\ell}|
\\&<
(1+2c)(5/4)2^{-j}r
.\end{align*}
Because $c\leq2/11$, we have that $(1+2c)(5/4)\leq75/44<2$. Thus
\begin{equation*}B(x,2c\delta(x))\subset B(\xi_{j,\ell},2^{1-j}r)
\subseteq B(\xi_{j,\ell},2r)\end{equation*}
and so
\begin{equation*}\sup_{B(y_{j,\ell},2^{-j-2}r)}
\mathcal{W}_{c,\beta}(\nabla u)
\leq
\frac{C_c}{2^{-j}r}
\sup_{B(\xi_{j,\ell},2^{1-j}r)} |u|.\end{equation*}
Thus
\begin{align*}
\int_{B(\zeta,r)\cap\Omega} \mathcal{W}_{c,\beta}(\nabla u)^p
\delta^{\bdmn-\pdmn+p-ps}
&\leq
C\sum_{j=0}^\infty
(2^{-j}r)^{\bdmn-ps}
\sum_{\ell=0}^{L_j}
\sup_{y\in B(\xi_{j,\ell},2^{1-j}r)} |u(y)|^p
.\end{align*}

We now bound $u(y)$ for $y$ in some $B(\xi_{j,\ell},2^{1-j}r)$. We wish to apply Lemma~\ref{lem:green:solution:2} with $\xi=\xi_{j,\ell}$ and with $r$ replaced by~$2r$. Observe that $|\xi_{j,\ell}-\zeta|\leq |\xi_{j,\ell}-y_{j,\ell}|+|y_{j,\ell}-\zeta| < \delta(y_{j,\ell}) + r<2r$ and so $B(\xi_{j,\ell},12r)\subset B(\zeta,14r)$. We thus have that $Lu=0$ in $\Omega\cap B(\xi_{j,\ell},12r)$. Recall that $u\in \widetilde W^{1,2}_{av,\beta,1/2}(\Omega)\cap \widetilde W^{1,2}_{av,2,1/2}(\Omega)$. Thus Lemma~\ref{lem:green:solution:2} applies and we see that, if $y\in B(\xi_{j,\ell},2^{1-j}r)\cap \Omega \subseteq B(\xi_{j,\ell},2r)\cap\Omega$, then
\begin{equation*}
|u(y)|
\leq
\frac{C}{r^{\bdmn/2+3/2}}
\|u\|_{\dot W^{1,2}_{av,c,\beta,1/2}(\Omega\cap  B(\xi_{j,\ell},2C_2r);\Omega)}
\int_{\Omega\cap B(\xi_{j,\ell},14r)\setminus B(\xi_{j,\ell},4r)}  G^{L^*}_y\,\delta^{\bdmn+1-\pdmn}
.\end{equation*}
We have that $B(\xi_{j,\ell},2C_2r)\subset B(\zeta,2C_2r+2r)\subset B(\zeta,3C_2r)$.
For conciseness let
\begin{equation*}U_r=\frac{1}{r^{\bdmn/2+3/2}}
\|u\|_{\dot W^{1,2}_{av,c,\beta,1/2}(\Omega\cap  B(\zeta,3C_2r);\Omega)}.\end{equation*}
We apply Lemma~\ref{lem:harmonic:controls:Green} with $r$ replaced by $r'=\frac{3}{4}(2^{1-j}r)<\frac{3}{2}r$. Note that if $x\in \Omega\cap B(\xi_{j,\ell},14r)\setminus B(\xi_{j,\ell},4r)$ and $y\in B(\xi_{j,\ell},2^{1-j}r)\cap\Omega\subset B(\xi_{j,\ell},2r)$, then $|y-x|\geq 2r>r'$ and so Lemma~\ref{lem:harmonic:controls:Green} applies.
Thus
\begin{equation*}|u(y)|\leq CU_r
\int_{\Omega\cap B(\zeta,16r)} \delta(x)^{1+\bdmn-\pdmn} \frac{\omega^x_{L^*}(\Delta(\xi_{j,\ell},12(2^{-j}r)))} {(2^{-j}r)^{\bdmn-1}} \,dx.
\end{equation*}

If $f:\partial\Omega\to\R$ is continuous and compactly supported, let $u_f^*$ be the solution to the Dirichlet problem~\eqref{eqn:Dirichlet:cts} for $L^*$ with boundary data~$f$. Then $u_f^*$ is nonnegative if $f$ is, and
\begin{equation*}\int_{\Omega\cap B(\zeta,16r)} \delta(x)^{1+\bdmn-\pdmn} \,u_f^*(x)\,dx
\leq Cr^{\bdmn+1}\sup_{\partial\Omega} f
\end{equation*}
by Lemma~\ref{lem:close:to:Ahlfors}. Thus by the Riesz-Markov theorem, there is a unique Radon measure $\nu=\nu_{\zeta,r}$ on $\partial\Omega$ that satisfies
\begin{equation*}\int_{\partial\Omega} f\,d\nu = \int_{\Omega\cap B(\zeta,16r)} \delta(x)^{1+\bdmn-\pdmn} \,u_f^*(x)\,dx.\end{equation*}
Then
\begin{equation*}\nu(\partial\Omega)\leq Cr^{\bdmn+1}.\end{equation*}
Letting $f=1$ in $\Delta(\xi_{j,\ell},12(2^{-j}r))$ and $f=0$ outside $\Delta(\xi_{j,\ell},13(2^{-j}r))$, with $f$ continuous and with $0\leq f\leq 1$, we see that
\begin{equation*}\omega^x_{L^*}(\Delta(\xi_{j,\ell},12(2^{-j}r)))
= \int_{\Delta(\xi_{j,\ell},12(2^{-j}r))} 1\,d\omega_{L^*}^x
\leq \int_{\partial\Omega} f\,d\omega_{L^*}^x = u_f^*(x)\end{equation*}
and so if $y\in B(\xi_{j,\ell},2^{1-j}r)\cap\Omega$, then
\begin{align*}|u(y)|
&\leq \frac{CU_r} {(2^{-j}r)^{\bdmn-1}}
\int_{\Omega\cap B(\zeta,16r)} \delta(x)^{1+\bdmn-\pdmn} u_f^*(x) \,dx
=\frac{CU_r} {(2^{-j}r)^{\bdmn-1}}
\int_{\partial\Omega} f\,d\nu
\\&\leq
\frac{CU_r} {(2^{-j}r)^{\bdmn-1}}
\nu(\Delta(\xi_{j,\ell},13(2^{-j}r)))
.\end{align*}

Thus
\begin{align*}
\int_{B(\zeta,r)\cap\Omega} \mathcal{W}_{c,\beta}(\nabla u)^p
\delta^{\bdmn-\pdmn+p-ps}
&\leq
\sum_{j=0}^\infty
\sum_{\ell=0}^{L_j}
\frac{C(U_r)^p(2^{-j}r)^{\bdmn-ps}} {(2^{-j}r)^{p\bdmn-p}}
\bigl(
\nu(\Delta(\xi_{j,\ell},2^{4-j}r))\bigr)^p
.\end{align*}
Recall that $p\leq 1$. We may thus apply Hölder's inequality in sequence spaces to see that
\begin{multline*}
\int_{B(\zeta,r)\cap\Omega} \mathcal{W}_{c,\beta}(\nabla u)^p
\delta^{\bdmn-\pdmn+p-ps}
\\\leq
C(U_r)^p
\sum_{j=0}^\infty
\frac{(2^{-j}r)^{\bdmn-ps}}{(2^{-j}r)^{p\bdmn-p}}
\biggl(\sum_{\ell=0}^{L_j}
\nu(\Delta(\xi_{j,\ell},2^{4-j}r))
\biggr)^p L_j^{1-p}
.\end{multline*}

Let $z\in \partial\Omega$. Suppose that $z\in \Delta(\xi_{j,\ell},2^{4-j}r)$. Then $|z-y_{j,\ell}|\leq |z-\xi_{j,\ell}|+|\xi_{j,\ell}-y_{j,\ell}|<17(2^{-j}r)$. In particular, $B(y_{j,\ell},2^{-j-2}r/5)\subset B(z,18(2^{-j}r))$.
Recall that the balls $B(y_{j,\ell},2^{-j-2}r/5)$ are pairwise disjoint. Thus there can be at most $C_{\dmn}$ values of $\ell$ with $z\in \Delta(\xi_{j,\ell},2^{4-j}r)$. Thus $\sum_{\ell=0}^{L_j}
\nu(\Delta(\xi_{j,\ell},2^{4-j}r))\leq C\nu(\partial\Omega)$, and so
\begin{align*}
\int_{B(\zeta,r)\cap\Omega} \mathcal{W}_{c,\beta}(\nabla u)^p
\delta^{\bdmn-\pdmn+p-ps}
&\leq
C(U_r)^p
\sum_{j=0}^\infty
\frac{(2^{-j}r)^{\bdmn-ps}}{(2^{-j}r)^{p\bdmn-p}}
\bigl(
\nu(\partial\Omega)
\bigr)^p L_j^{1-p}
.\end{align*}
Recall that $L_j\leq C2^{j\bdmn}$ and $\nu(\partial\Omega)\leq Cr^{\bdmn+1}$. Thus
\begin{align*}
\int_{B(\zeta,r)\cap\Omega} \mathcal{W}_{c,\beta}(\nabla u)^p
\delta^{\bdmn-\pdmn+p-ps}
&\leq
C(U_r)^p
\sum_{j=0}^\infty
\frac{(2^{-j}r)^{\bdmn-ps}}{(2^{-j}r)^{p\bdmn-p}}
r^{p(\bdmn+1)} (2^{j\bdmn})^{1-p}
.\end{align*}
Simplifying the right hand side reveals that
\begin{align*}
\int_{B(\zeta,r)\cap\Omega} \mathcal{W}_{c,\beta}(\nabla u)^p
\delta^{\bdmn-\pdmn+p-ps}
&\leq
C(U_r)^p
r^{2p+\bdmn-ps}
\sum_{j=0}^\infty 2^{-j(p-ps)}
.\end{align*}
The sum converges if $s<1$.
Recalling the definition of $U_r$ completes the proof.
\end{proof}

\subsection{The proof of Proposition~\ref{prp:atom:Poisson}}
\label{sec:Poisson:proof:end}

In this section we will prove Proposition~\ref{prp:atom:Poisson}.

Let $\Omega$, $L$, $\dmn$, $\bdmn$, $u$, $\vec H$, $s$, and~$p$ be as in Proposition~\ref{prp:atom:Poisson}. By Lemma~\ref{lem:averaged:Whitney} we may assume without loss of generality that $c=1/7$.

Let $0<b<1$ (to be chosen later) and let
\begin{equation}
\Omega_b=\{y\in \Omega:\delta(y)\geq b\dist(y,Q)\}\end{equation}
be as in the proof of Lemma~\ref{lem:atom:far:Poisson}.
By Lemma~\ref{lem:atom:far:Poisson}, we have that
\begin{align*}
\int_{\Omega_b} \mathcal{W}_{c,\beta}(\nabla u)^p
\delta^{\bdmn-\pdmn+p-ps}
\leq C_b\diam(Q)^{\bdmn+p-ps}.\end{align*}
To complete the proof of Proposition~\ref{prp:atom:Poisson} it suffices to establish the corresponding bound on the integral over~$\Omega\setminus\Omega_b$ for some positive~$b$.

Suppose that $y\in\Omega\setminus\Omega_b$. There is a $\zeta\in\partial\Omega$ with $|y-\zeta|=\delta(y)<b\dist(y,Q)$. By the triangle inequality
$\dist(y,Q)\leq |y-\zeta|+\dist(\zeta,Q)<b\dist(y,Q)+\dist(\zeta,Q)$. Thus $\dist(y,Q)<\frac{1}{1-b}\dist(\zeta,Q)$ and so $|y-\zeta|=\delta(y)<\frac{b}{1-b}\dist(\zeta,Q)$. Thus
\begin{equation}\label{eqn:Poisson:proof:Omega1}
\Omega\setminus\Omega_b=\{y\in \Omega:\delta(y)<b\dist(y,Q)\}
\subset \bigcup_{\zeta\in\partial\Omega } B\bigl(\zeta,{\textstyle\frac{b}{1-b}} \dist(\zeta,Q)\bigr).\end{equation}
Let $\kappa=b/(1-b)$.
By the Vitali covering lemma, there exist at most countably many points $\zeta_k\in\partial\Omega$ with $B(\zeta_j,\kappa\dist(\zeta_j,Q))\cap B(\zeta_k,\kappa\dist(\zeta_k,Q))=\emptyset$ whenever $j\neq k$ and with
\begin{equation*}\Omega\setminus\Omega_b\subseteq \bigcup_k B(\zeta_k,5\kappa\dist(\zeta_k,Q)).\end{equation*}

By formula~\eqref{eqn:Poisson:proof:Omega1},
\begin{align*}
\int_{\Omega\setminus\Omega_b} \mathcal{W}_{c,\beta}(\nabla u)^p
\delta^{\bdmn-\pdmn+p-ps}
&\leq
\sum_k
\int_{\Omega\cap B(\zeta_k,5\kappa\dist(\zeta_k,Q))}
\mathcal{W}_{c,\beta}(\nabla u)^p
\delta^{\bdmn-\pdmn+p-ps}
.\end{align*}
Let $r_k=5\kappa\dist(\zeta_k,Q)$.
Suppose that $b<1/71$. Then $\kappa=b/(1-b)<1/70$ and so $14r_k=70\kappa\dist(\zeta_k,Q)<\dist(\zeta_k,Q)$. Recall that $Lu=0$ in $\Omega\setminus\overline{Q}\supset B(\zeta_k,\dist(\zeta_k,Q))\cap\Omega$ and that $u\in \widetilde W^{1,2}_{av,2,1/2}(\Omega)\cap \widetilde W^{1,2}_{av,\beta,1/2}(\Omega)$. Thus the conditions of Lemma~\ref{lem:W:near:0} are satisfied and we have that
\begin{align*}
\int_{\Omega\setminus\Omega_b} \mathcal{W}_{c,\beta}(\nabla u)^p
\delta^{\bdmn-\pdmn+p-ps}
&\leq
C
\sum_k
\|u\|_{\dot W^{1,2}_{av,c,\beta,1/2}(\Omega\cap  B(\zeta_k,3C_2r_k);\Omega)}^p
r_k^{p/2-p\bdmn/2+\bdmn-ps}
.\end{align*}

By Lemma~\ref{lem:dilate:Whitney}, we have that $3Q\subset\overline\Omega$. Thus $\zeta_k\notin 2Q$ for all~$k$.
If $j\geq 1$ and $\zeta_k \in 2^{j+1}Q\setminus 2^jQ$, then
\begin{equation*}2^{j-2}\ell(Q)\leq (2^{j-1}-1/2)\ell(Q) \leq
\dist(\zeta_k,Q)\leq (2^j-1/2)\diam(Q)
=(2^j-1/2)\sqrt{\dmn}\ell(Q)\end{equation*}
and so
\begin{equation*}B(\zeta_k,\kappa\dist(\zeta_k,Q))\subset C_\dmn 2^jQ\end{equation*}
for some $C_\dmn$ independent of~$j$.
By disjointness of the balls $B(\zeta_k,\kappa\dist(\zeta_k,Q))$, we have that there can be at most $C_\dmn$ such values of~$k$. Let $I_j=\{k:\zeta_k \in 2^{j+1}Q\setminus 2^jQ\}$.
Thus
\begin{equation*}
\int_{\Omega\setminus\Omega_b} \mathcal{W}_{c,\beta}(\nabla u)^p
\delta^{\bdmn-\pdmn+p-ps}
\leq
C
\sum_{j=1}^\infty
\sum_{k\in I_j}
r_k^{p/2-p\bdmn/2+\bdmn-ps}
\|u\|_{\dot W^{1,2}_{av,c,\beta,1/2}(\Omega\cap  B(\zeta_k,3C_2r_k);\Omega)}^p
.\end{equation*}
Observe that $r_k\approx 2^j\diam(Q)$, and so each summand is at most
\begin{equation*}C (2^j\diam(Q))^{p/2-p\bdmn/2+\bdmn-ps}
\|u\|_{\dot W^{1,2}_{av,c,\beta,1/2}(\Omega)}^p\end{equation*}
which by the bound~\eqref{eqn:p<1:basic} is at most
\begin{equation*}C2^{j(p/2-p\bdmn/2+\bdmn-ps)}\diam(Q)^{\bdmn+p-ps}.\end{equation*}
The quantity $p/2-p\bdmn/2+\bdmn-ps$ may be positive, and so the sum in $j$ may not converge; however, we can use this estimate to bound the first few terms. Thus, if $J\geq 0$ is an integer, then
\begin{multline}
\label{eqn:p<1:step1}
\int_{\Omega\setminus\Omega_b} \mathcal{W}_{c,\beta}(\nabla u)^p
\delta^{\bdmn-\pdmn+p-ps}
\leq
C_J\diam(Q)^{\bdmn+p-ps}
\\+
C
\sum_{j=J}^\infty
\sum_{k\in I_j}
r_k^{p/2-p\bdmn/2+\bdmn-ps}
\|u\|_{\dot W^{1,2}_{av,c,\beta,1/2}(\Omega\cap  B(\zeta_k,3C_2r_k);\Omega)}^p
.\end{multline}
We must bound the final term for some~$J$ depending only on the standard constants. Let
\begin{equation*}U_k=r_k^{p/2-p\bdmn/2+\bdmn-ps}
\|u\|_{\dot W^{1,2}_{av,c,\beta,1/2}(\Omega\cap  B(\zeta_k,3C_2r_k);\Omega)}^p
.\end{equation*}
Recall that $r_k=5\kappa\dist(\zeta_k,Q)$.
If $\kappa<1/80C_2$, then $B(\zeta_k,16C_2r_k)\cap Q=\emptyset$. Thus by the bound~\eqref{eqn:interior:Meyers} or Hölder's inequality
\begin{equation*}U_k\leq Cr_k^{p/2-p\bdmn/2+\bdmn-ps}
\|u\|_{\dot W^{1,2}_{av,2c,2,1/2}(\Omega\cap  B(\zeta_k,3C_2r_k);\Omega)}^p .\end{equation*}
If $x\in B(\zeta_k,3C_2r_k)$ and $z\in B(x,2c\delta(x))$, then $|z-\zeta_k|\leq (2c+1)|x-\zeta_k|<(1+2c)3C_2r_k$. Thus by Corollary~\ref{cor:averaged:is:Wp},
\begin{equation*}
U_k\leq Cr_k^{p/2-p\bdmn/2+\bdmn-ps}
\biggl(\int_{\Omega\cap  B(\zeta_k,3(1+2c)C_2r_k)} |\nabla u|^2\delta^{\bdmn+1-\pdmn}\biggr)^{p/2}
.\end{equation*}
Because $c\leq 1/6$ we have that $3(1+2c)\leq 4$. By the boundary Caccioppoli inequality
\begin{equation}
\label{eqn:p<1:step}
U_k\leq Cr_k^{-p/2-p\bdmn/2+\bdmn-ps}
\biggl(\int_{\Omega\cap  B(\zeta_k,8C_2r_k)} |u|^2\delta^{\bdmn+1-\pdmn}\biggr)^{p/2}
.\end{equation}

The proof now diverges based on whether the condition~\eqref{eqn:p<1:DGN} or the condition~\eqref{eqn:p<1:extrapolation} is valid.

\subsubsection{Proof given the condition~\texorpdfstring{\eqref{eqn:p<1:extrapolation}}{(\ref*{eqn:p<1:extrapolation})}} By Lemma~\ref{lem:boundary:DGN} and Lemma~\ref{lem:close:to:Ahlfors},
\begin{align*}
U_k
&\leq
Cr_k^{\bdmn-\bdmn p-ps-p}
\biggl(\int_{\Omega\cap  B(\zeta_k,16C_2r_k)} |u|\,\delta^{\bdmn+1-\pdmn}\biggr)^{p}
.\end{align*}
By Lemma~\ref{lem:averaged:to:unaveraged}, and because $c\leq 1/6$ and so $16/(1-c)< 20$,
\begin{align*}
U_k
&\leq
Cr_k^{\bdmn-\bdmn p-ps-p}
\biggl(\int_{\Omega\cap  B(\zeta_k,20C_2r_k)} \mathcal{W}_{c,1}(u)\,\delta^{\bdmn+1-\pdmn}\biggr)^{p}
.\end{align*}
Recall that we may require $\mathfrak{q}>1$. By Hölder's inequality and Lemma~\ref{lem:close:to:Ahlfors},
\begin{align*}
U_k
&\leq
Cr_k^{\bdmn-ps -\bdmn p/\mathfrak{q}+p\theta}
\biggl(\int_{\Omega\cap  B(\zeta_k,20C_2r_k)} \mathcal{W}_{c,\beta}(u)^{\mathfrak{q}}\,\delta^{\bdmn-\pdmn-\mathfrak{q}\theta}\biggr)^{p/\mathfrak{q}}
.\end{align*}
By Lemma~\ref{lem:boundary:Poincare}, letting $C_3=20C_1C_2$, we have that
\begin{align*}
U_k
&\leq
Cr_k^{\bdmn-ps -\bdmn p/\mathfrak{q}+p\theta}
\biggl(\int_{\Omega\cap  B(\zeta_k,C_3r_k)} \mathcal{W}_{c,\beta}(\nabla u)^{\mathfrak{q}}\, \delta^{\bdmn-\pdmn+\mathfrak{q}-\mathfrak{q}\theta}\biggr)^{p/\mathfrak{q}}
.\end{align*}
By assumption on $\mathfrak{q}$ and~$\theta$,
\begin{align*}
U_k
&\leq
Cr_k^{\bdmn-ps -\bdmn p/\mathfrak{q}+p\theta}
\diam(Q)^{\bdmn p/\mathfrak{q}+p-p\theta}
.\end{align*}
By the bound~\eqref{eqn:p<1:step1} with $J=1$,
\begin{equation*}
\int_{\Omega\setminus\Omega_b} \mathcal{W}_{c,\beta}(\nabla u)^p
\delta^{\bdmn-\pdmn+p-ps}
\leq
C
\sum_{j=1}^\infty
\sum_{k\in I_j}
r_k^{\bdmn-ps -\bdmn p/\mathfrak{q}+p\theta}
\diam(Q)^{\bdmn p/\mathfrak{q}+p-p\theta}
.
\end{equation*}
If $k\in I_j$ then $r_k\approx 2^j\diam(Q)$. Recall that each $I_j$ contains at most $C$ values of~$k$. Thus
\begin{equation*}
\int_{\Omega\setminus\Omega_b} \mathcal{W}_{c,\beta}(\nabla u)^p
\delta^{\bdmn-\pdmn+p-ps}
\leq
C
\diam(Q)^{\bdmn+p-ps}
\sum_{j=1}^\infty
2^{j(\bdmn-ps -\bdmn p/\mathfrak{q}+p\theta)}
.\end{equation*}
Recall that $\mathfrak{a}^*=1-\theta-\bdmn+\bdmn/\mathfrak{q}$.
By assumption
$\bdmn/p<\bdmn-1+\mathfrak{a}^*+s
=\bdmn/\mathfrak{q}-\theta+s$ and so $\bdmn-ps -\bdmn p/\mathfrak{q}+p\theta$ is negative. Thus the geometric series converges. This completes the proof in this case.

\subsubsection{Proof given the condition~\texorpdfstring{\eqref{eqn:p<1:DGN}}{(\ref*{eqn:p<1:DGN})}}
We now turn to the case ${\mathfrak{a}^*}=\alpha^*$, that is, where the condition~\eqref{eqn:p<1:DGN} holds.
We are now in a position to apply Lemma~\ref{lem:u:7delta}.
Recall the bound~\eqref{eqn:p<1:step}:
\begin{equation*}
U_k\leq Cr_k^{-p/2-p\bdmn/2+\bdmn-ps}
\biggl(\int_{\Omega\cap  B(\zeta_k,8C_2r_k)} |u|^2\delta^{\bdmn+1-\pdmn}\biggr)^{p/2}
.\end{equation*}
By Lemma~\ref{lem:close:to:Ahlfors},
\begin{equation*}
U_k\leq Cr_k^{\bdmn-ps}
\sup_{\Omega\cap  B(\zeta_k,8C_2r_k)} |u|^p
.\end{equation*}

Let $\xi\in\partial\Omega$ satisfy $\dist(\xi,Q)=\dist(Q,\partial\Omega)$ and let $r=(4/3)\diam(Q)$. By Lemma~\ref{lem:dilate:Whitney} with $K=1$ we have that $Q\subset \overline{B(\xi,4\diam(Q))}$ and so $Lu=0$ in $\Omega\setminus \overline{B(\xi,4\diam(Q))}=\Omega\setminus \overline{B(\xi,3r)}$.

If $y\in B(\zeta_k,8C_2r_k)$, then
\begin{align*}
|y-\xi|
&\geq \dist(\zeta_k,Q)-4\diam(Q)-|y-\zeta_k|
\geq (1-40\kappa C_2)\dist(\zeta_k,Q)-4\diam(Q)
\\&\geq (1-40\kappa C_2) 2^{j-2}\ell(Q)-4\diam(Q). \end{align*}
We now require $\kappa<1/80C_2$ so that
\begin{equation*}|y-\xi|\geq (2^{j-3}{\dmn}^{-1/2}-4)\diam(Q).\end{equation*}
Choose $J$ large enough that $2^{j-3}{\dmn}^{-1/2}-4>28\cdot4/3$. Then the conditions of Lemma~\ref{lem:u:7delta} are satisfied with $r=(4/3)\diam(Q)$, and so
\begin{equation*}
U_k\leq Cr_k^{\bdmn-ps}
\frac{\diam(Q)^{\bdmn p/2-p/2+p\alpha^*}}
{(2^j\diam(Q))^{\bdmn p-p+p\alpha^*}}
\|u\|_{\widetilde W^{1,2}_{av,c,\beta,1/2}(\Omega)}^p
.\end{equation*}
Recalling the bound~\eqref{eqn:p<1:basic} and that $r_k\approx 2^j\diam(Q)$, we see that
\begin{equation*}
U_k\leq C
\frac{\diam(Q)^{
\bdmn
+p
-ps
}}
{2^{j(\bdmn p-p+p\alpha^*-\bdmn+ps)}}
.\end{equation*}
By the bound~\eqref{eqn:p<1:step1}, and recalling that there are at most $C$ values of $k$ in each~$I_j$,
\begin{equation*}
\int_{\Omega\setminus\Omega_b} \mathcal{W}_{c,\beta}(\nabla u)^p
\delta^{\bdmn-\pdmn+p-ps}
\leq
C_J\diam(Q)^{\bdmn+p-ps}
\\+
C
\sum_{j=J}^\infty
\frac{\diam(Q)^{\bdmn+p-ps}}
{2^{j(\bdmn p-p+p\alpha^*+ps-\bdmn)}}
.\end{equation*}
Recalling that ${\bdmn}/{p}<\bdmn-1+{\alpha^*}+s$, we see that the exponent is positive and so the geometric series converges. This completes the proof.

\subsection{The \texorpdfstring{$p\leq 1$}{p≤1} case of Theorem~\ref{thm:Poisson}}\label{sec:Dirichlet:atom}

\begin{cor}\label{cor:atom}Let $\Omega$ and~$L$ be as in Theorem~\ref{thm:Poisson}. Let $p_{L^*}^-<\beta<p_L^+$.

Suppose that either $\mathfrak{a}^*=\alpha^*$, as in Proposition~\ref{prp:atom:Poisson}, or that there exists a $\mathfrak{q}\in (0,\infty)$ and a $\mathfrak{a}^*\in (0,1)$ such that $\theta=1-\mathfrak{a}^*-\bdmn(1-1/\mathfrak{q})\in (0,1)$ and such that the $L^{\mathfrak{q}}_{av,\beta,\theta}(\Omega)$-Poisson problem for $L$ is compatibly solvable.

Let $s\in (1-\mathfrak{a}^*,1)$ and let $p$ satisfy
\begin{equation*}\bdmn\leq\frac{\bdmn}{p}<\bdmn-1+{\mathfrak{a}^*}+s.\end{equation*}

Then the conclusion of Theorem~\ref{thm:Poisson} is valid; that is, if $\vec H\in L^p_{av,\beta,s}(\Omega)$, then there is a unique $u\in \dot W^{1,p}_{av,\beta,s}(\Omega)$ that is a solution to the Poisson problem~\eqref{eqn:Poisson}, and in addition $u$ satisfies $\|u\|_{\dot W^{1,p}_{av,1/2,\beta,s}(\Omega)} \leq C\|\vec H\|_{L^p_{av,1/2,\beta,s}(\Omega)}$.

Furthermore, the problem is compatibly well posed; that is, if $\vec H\in L^2_{av,2,1/2}(\Omega)\cap L^p_{av,\beta,s}(\Omega)$ then the solution $u$ in $\dot W^{1,p}_{av,\beta,s}(\Omega)$ coincides with the Lax-Milgram solution in~$\dot W^{1,2}_{av,2,1/2}(\Omega)$.
\end{cor}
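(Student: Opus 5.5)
The plan is an atomic decomposition argument. By Lemma~\ref{lem:averaged:Whitney}, the quasi-norm $\|\vec H\|_{L^p_{av,\beta,s}(\Omega)}^p$ is comparable to $\sum_{Q\in\mathcal{G}}\bigl(\fint_Q|\vec H|^\beta\bigr)^{p/\beta}\ell(Q)^{\bdmn+p-ps}$. Accordingly, for $\vec H\in L^p_{av,\beta,s}(\Omega)$ I write $\vec H=\sum_Q \lambda_Q \vec H_Q$, where $\vec H_Q=\lambda_Q^{-1}\1_Q\vec H$. Here $\lambda_Q=\bigl(\fint_Q|\vec H|^\beta\bigr)^{1/\beta}\ell(Q)^{\bdmn/p+1-s}\cdot C$ is chosen so that each $\vec H_Q$ satisfies the normalization of Proposition~\ref{prp:atom:Poisson} up to the factor $\diam(Q)^{-\bdmn/p-1+s}$, and $\sum_Q\lambda_Q^p\approx\|\vec H\|^p$.

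\textbf{Existence and the estimate.} For each $Q$, let $u_Q$ be the Lax--Milgram solution with data $\vec H_Q$; this requires $\vec H_Q\in L^2$, so I first treat $\vec H$ whose restriction to each cube lies in $L^2\cap L^\beta$ and extend by density at the end. In the $\mathfrak a^*=\alpha^*$ case, Proposition~\ref{prp:atom:Poisson} gives $\|u_Q\|_{\dot W^{1,p}_{av,\beta,s}}^p\le C$ after rescaling. In the other case, compatible solvability of the $L^{\mathfrak q}_{av,\beta,\theta}$ problem applied to an atom gives the required estimate~\eqref{eqn:p<1:extrapolation}, since $\|\vec H_Q\|_{L^{\mathfrak q}_{av,\beta,\theta}}\le C\diam(Q)^{\bdmn/\mathfrak q+1-\theta}$ by Lemma~\ref{lem:averaged:Whitney}. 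Because $p\le 1$, the $p$-triangle inequality yields, for $u=\sum_Q\lambda_Qu_Q$,
\[
\|u\|_{\dot W^{1,p}_{av,\beta,s}}^p\le\sum_Q\lambda_Q^p\|u_Q\|^p\le C\|\vec H\|^p_{L^p_{av,\beta,s}}.
\]
Convergence of this series holds in $\dot W^{1,p}_{av,\beta,s}$, and by Corollary~\ref{cor:weighted:embedding} (as $\bdmn/p<\bdmn+s$) also in $\widetilde W^{1,1}_{av,\beta,s+\bdmn-\bdmn/p}$. Hence the limit has trace zero by Lemma~\ref{lem:trace:zero} and lies in $\widetilde W^{1,p}_{av,\beta,s}(\Omega)$. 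Testing against $\varphi\in C^\infty_0(\Omega)$ is continuous under this convergence, because local $L^\beta$ convergence of gradients follows from the weighted averaged norm; so $Lu=-\Div(\delta^{\bdmn+1-\pdmn}\vec H)$.

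\textbf{Compatibility.} If $\vec H\in L^2_{av,2,1/2}\cap L^p_{av,\beta,s}$, then the Lax--Milgram solution equals $\sum_Q\lambda_Qu_Q$, since the partial sums converge in $\widetilde W^{1,2}_{av,2,1/2}$ by linearity and Lax--Milgram continuity. The two limits agree locally, giving the same function. This shows compatible solvability, and density (compactly supported data, Remark~\ref{rmk:compatible:solvable}) then gives solvability for general data.

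\textbf{Uniqueness (the main obstacle).} Lemma~\ref{lem:uniqueness} was stated for $p\ge1$. My plan is to reduce to $p=1$: by Corollary~\ref{cor:weighted:embedding}, a solution in $\widetilde W^{1,p}_{av,\beta,s}$ lies in $\widetilde W^{1,1}_{av,\beta,\tilde s}$ with $\tilde s=s+\bdmn-\bdmn/p$. The key check is that $(\tilde s,1)$ again lies in the admissible region: since $\tilde s>1-\mathfrak a^*$ exactly when $\bdmn/p<\bdmn-1+\mathfrak a^*+s$, this holds. The existence argument above at $p=1$ then gives compatible solvability of the $L^1_{av,\beta,\tilde s}$ problem, and Lemma~\ref{lem:uniqueness} (with $0<\tilde s<1$ and $1<\beta<\infty$, restricting $\beta$ if needed) yields uniqueness there, hence uniqueness in the original class. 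The duality step inside Lemma~\ref{lem:uniqueness} needs compatible solvability of the $L^\infty_{av,\beta',1-\tilde s}$ problem for $L^*$, which Theorem~\ref{first:step:duality} supplies. The delicate points I expect are the justification of using $u$ as a test function in the $p=1$ setting and the exact range of $\beta$.
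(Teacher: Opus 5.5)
Your proposal is correct and is essentially the paper's proof. The shared steps are: split $\vec H$ over Whitney cubes; apply Proposition~\ref{prp:atom:Poisson} to each piece, using compatible solvability to supply~\eqref{eqn:p<1:extrapolation} in the second case; sum with the $p$-triangle inequality; identify the sum with the Lax--Milgram solution and pass to general data by density; and get uniqueness for $p<1$ by embedding into the admissible $L^1_{av,\beta,s+\bdmn-\bdmn/p}(\Omega)$ problem handled by Lemma~\ref{lem:uniqueness}. The only differences are cosmetic. The paper starts from $\vec H\in L^2_{av,2,1/2}(\Omega)\cap L^p_{av,\beta,s}(\Omega)$ and transfers the bound to the Lax--Milgram limit by a Fatou-type supremum over finite families of Whitney cubes, rather than summing the series in the quasi-Banach space. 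Also, no restriction on $\beta$ is needed, since $1<p^-_{L^*}<\beta<p^+_L$ already gives $1<\beta<\infty$.
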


\begin{proof}Suppose first that $\vec H\in L^p_{av,\beta,s}(\Omega)\cap L^2_{av,2,1/2}(\Omega)$.
Let $\mathcal{G}=\mathcal{G}_1$ be the grid of Whitney cubes in~$\Omega$. If $Q\in\mathcal{G}$, let $\vec H_Q=\1_Q\vec H$, that is, $\vec H_Q=\vec H$ in~$Q$ and $\vec H_Q=0$ in $\Omega\setminus Q$.
Let $u_Q$ be the solution to the Poisson problem with data~$\vec H_Q$ guaranteed by Lemma~\ref{lem:beta}.

If $\mathcal{Q}\subset\mathcal{G}$ is finite, let $u_{\mathcal{Q}}=\sum_{Q\in\mathcal{Q}} u_Q$ and let $\vec H_{\mathcal{Q}}=\sum_{Q\in\mathcal{Q}}\vec H_Q$.
Because $\vec H\in L^2_{av,2,1/2}(\Omega)$ we have that $\vec H_{\mathcal{Q}}\to \vec H$ in $ L^2_{av,2,1/2}(\Omega)$ as $\mathcal{Q}\to\mathcal{G}$, and so $u_{\mathcal{Q}}$ converges in $\widetilde W^{1,2}_{av,2,1/2}(\Omega)$ to the solution $u$ to the Poisson problem with data $\vec H$ given by Definition~\ref{dfn:Lax-Milgram}. In particular by Lemma~\ref{lem:averaged:Whitney} $u_{\mathcal{Q}}\to u$ in $\dot W^{1,2}(K)$ for any compact subset $K\subset\Omega$. By definition of $\widetilde W^{1,2}_{av,2,1/2}(\Omega)$ we have that $u_{\mathcal{Q}}\to u$ in $L^2_{av,2,3/2}(\Omega)$ (and in particular both lie in that space), and so $u_{\mathcal{Q}}\to u$ in $L^2(K)$ as well. By H\"older's inequality (if $\beta<2$) or by Meyers's estimate~\eqref{eqn:interior:Meyers} (if $\beta>2$) we have that $u_{\mathcal{Q}}\to u$ in $\dot W^{1,\beta}(R)$ for all $R\in\mathcal{G}$.

In particular
\begin{equation*}\sum_{R\in\mathcal{O}}
\|\nabla u\|_{L^\beta(R)}^p \ell(R)^{\bdmn-ps+p-\pdmn p/\beta}
=\lim_{\mathcal{Q}\to\mathcal{G}} \sum_{R\in\mathcal{O}}
\|\nabla u_{\mathcal{Q}}\|_{L^\beta(R)}^p \ell(R)^{\bdmn-ps+p-\pdmn p/\beta}
\end{equation*}
for any $\mathcal{O}\subset\mathcal{G}$ finite. Recall from Lemma~\ref{lem:averaged:Whitney} that
\[
\sum_{R\in\mathcal{G}}
\|\nabla u_{\mathcal{Q}}\|_{L^\beta(R)}^p \ell(R)^{\bdmn-ps+p-\pdmn p/\beta}
\approx \|u_{\mathcal{Q}}\|_{\dot W^{1,p}_{av,1/2,\beta,s}(\Omega)}.\]

The given assumptions imply that each pair $(\vec H_Q,u_Q)$ satisfy the conditions of Proposition~\ref{prp:atom:Poisson}, and so for each~$Q$
\begin{equation*}\|u_Q\|_{\dot W^{1,p}_{av,1/2,\beta,s}(\Omega)}
\leq C \diam(Q)^{\bdmn/p-s+1-\pdmn/\beta} \|\vec H\|_{L^\beta(Q)}.\end{equation*}Because $p\leq 1$, we have by the triangle inequality that
\begin{align*}\|u_{\mathcal{Q}}\|_{\dot W^{1,p}_{av,1/2,\beta,s}(\Omega)}^p
\leq \sum_{Q\in\mathcal{Q}}\|u_{Q}\|_{\dot W^{1,p}_{av,1/2,\beta,s}(\Omega)}^p
&\leq C \sum_{Q\in\mathcal{Q}} \|\vec H\|_{L^\beta(Q)}^p \ell(Q)^{\bdmn-ps+p-\pdmn p/\beta}
\end{align*}
whenever $\mathcal{Q}\subset\mathcal{G}$ is finite.

By Lemma~\ref{lem:averaged:Whitney},
\begin{align*}
\sum_{Q\in\mathcal{Q}} \|\vec H\|_{L^\beta(Q)}^p \ell(Q)^{\bdmn-ps+p-\pdmn p/\beta}
&\approx \|\vec H_{\mathcal{Q}}\|_{L^p_{av,1/2,\beta,s}(\Omega)}^p
.\end{align*}
Thus
\begin{align*}\|u_{\mathcal{Q}}\|_{\dot W^{1,p}_{av,1/2,\beta,s}(\Omega)}
&\leq C \|\vec H_{\mathcal{Q}}\|_{L^p_{av,1/2,\beta,s}(\Omega)}
\leq C \|\vec H\|_{L^p_{av,1/2,\beta,s}(\Omega)}.
\end{align*}
Combining the above estimates and applying Lemma~\ref{lem:averaged:Whitney} again yields that
\[\|u\|_{\dot W^{1,p}_{av,1/2,\beta,s}(\Omega)}
=
\sup_{\substack{\mathcal{O}\subset\mathcal{G}\\\mathcal{O}\text{ finite}}} \sum_{R\in\mathcal{O}}
\|\nabla u\|_{L^\beta(R)}^p \ell(R)^{\bdmn-ps+p-\pdmn p/\beta}
\leq C\|\vec H\|_{L^p_{av,1/2,\beta,s}(\Omega)}.\]
Applying Corollary~\ref{cor:weighted:embedding} and Lemma~\ref{lem:boundary:Poincare} yields that $\|u\|_{L^1_{av,1/2,\beta,1+\varsigma}(\Omega)}
\leq C\|u\|_{\dot W^{1,p}_{av,1/2,\beta,s}(\Omega)}$ where $\varsigma=s+\bdmn-\bdmn/p$. Thus $\|u\|_{\widetilde W^{1,p}_{av,1/2,\beta,s}(\Omega)}\leq C \|u\|_{\dot W^{1,p}_{av,1/2,\beta,s}(\Omega)}$ and so the $L^p_{av,\beta,s}(\Omega)$-Poisson problem for $L$ is compatibly solvable.

By Remark~\ref{rmk:compatible:solvable} we have solvability of the $L^p_{av,\beta,s}(\Omega)$-Poisson problem for $L$. We are left with uniqueness. If $p=1$ then uniqueness (and thus compatible well posedness) follows immediately from Lemma~\ref{lem:uniqueness}.

If $p<1$, recall that $\varsigma$ above satisfies $\bdmn-\varsigma=\bdmn/p-s$. Then $\varsigma<s<1$. Because $0<\bdmn-1+\mathfrak{a}^*+s-\bdmn/p$ we have that $0<\bdmn-1+\mathfrak{a}^*+\varsigma-\bdmn/1$ and $1-\mathfrak{a}^*<\varsigma$. Thus the pair $(1,\varsigma)$ satisfies the conditions on $(p,s)$ assumed above, and so by the above analysis the $L^1_{av,\beta,\varsigma}(\Omega)$-Poisson problem is compatibly well posed.

But by Corollary~\ref{cor:weighted:embedding}, if $u\in \widetilde W^{1,p}_{av,\beta,s}(\Omega)$ is the solution to the Poisson problem with data $\vec H\in L^{p}_{av,\beta,s}(\Omega)$, then $u\in \widetilde W^{1,1}_{av,\beta,\varsigma}(\Omega)$ and $\vec H\in L^{1}_{av,\beta,\varsigma}(\Omega)$.
Uniqueness of solutions to the $L^{1}_{av,\beta,\varsigma}(\Omega)$-Poisson problem yields that $u$ is the only solution to the Poisson problem in $\widetilde W^{1,1}_{av,\beta,\varsigma}(\Omega)$, and thus is the only solution to the $L^p_{av,\beta,s}(\Omega)$-Poisson problem. This completes the proof.
\end{proof}

\section{The proof of Theorem~\ref{thm:Poisson}}
\label{sec:thm:Poisson}

We have now proven Theorem~\ref{thm:Poisson} in the case $p\leq 1$ and $s>\bdmn/p+1-\bdmn-\alpha^*$ and the case $p=2$, $s=1/2$. See Figure~\ref{fig:Poisson:final}. We now complete the proof.

\begin{figure}
\begin{tikzpicture}[scale=1.9]
\draw [dotted] (1,0)--(1,1)--(0,1);
\draw [->] (-0.1,0)--(1.3,0) node [below] {$\vphantom{1}s$};
\draw [->] (0,-0.3)--(0,1.4) node [above] {$1/p$};
\fill  (1,1)  -- (1-\figurealpha,1) --(1,1+\figurealpha/\figuredimen) -- cycle;
\fill (1/2,1/2) circle (0.5pt);
\end{tikzpicture}
\hfill
\begin{tikzpicture}[scale=1.9]
\draw [dotted] (1,0)--(1,1)--(0,1);
\draw [->] (-0.1,0)--(1.3,0) node [below] {$\vphantom{1}s$};
\draw [->] (0,-0.3)--(0,1.4) node [above] {$1/p$};
\fill [white!60!black] (1/2,1/2)--(1,1) -- (1-\figurealpha,1) -- node [left] {$T_1^L$} cycle;
\fill  (1,1)  -- (1-\figurealpha,1) --(1,1+\figurealpha/\figuredimen) -- cycle;
\fill (1/2,1/2) circle (0.5pt);
\end{tikzpicture}\hfill
\begin{tikzpicture}[scale=1.9]
\draw [dotted] (1,0)--(1,1)--(0,1);
\draw [->] (-0.1,0)--(1.3,0) node [below] {$\vphantom{1}s$};
\draw [->] (0,-0.3)--(0,1.4) node [above] {$1/p$};
\fill [white!60!black] (1/2,1/2)--(1,1) -- (1-\figurealpha,1) -- cycle;
\fill [white!60!black] (1/2,1/2) -- (0,0) -- (\figurealpha,0) -- node [right] {$T_2^L$} cycle;
\fill  (1,1)  -- (1-\figurealpha,1) --(1,1+\figurealpha/\figuredimen) -- cycle;
\fill (1/2,1/2) circle (0.5pt);
\end{tikzpicture}
\hfill
\begin{tikzpicture}[scale=1.9]
\draw [dotted] (1,0)--(1,1)--(0,1);
\draw [->] (-0.1,0)--(1.3,0) node [below] {$\vphantom{1}s$};
\draw [->] (0,-0.3)--(0,1.4) node [above] {$1/p$};
\fill [white!80!black] (1,1) -- (1-\figurealpha,1) -- (0,0)--(\figurealpha,0)--cycle;
\fill [white!60!black] (1/2,1/2)--(1,1) -- (1-\figurealpha,1) -- cycle;
\fill [white!60!black] (1/2,1/2) -- (0,0) -- (\figurealpha,0) -- cycle;
\fill  (1,1)  -- (1-\figurealpha,1) --(1,1+\figurealpha/\figuredimen) -- cycle;
\fill (1/2,1/2) circle (0.5pt);
\end{tikzpicture}

\caption{The regions mentioned in Section~\ref{sec:thm:Poisson} in the case where $\bdmn\geq1-\alpha$ and $\bdmn\geq1-\alpha^*$}\label{fig:Poisson:final}
\end{figure}

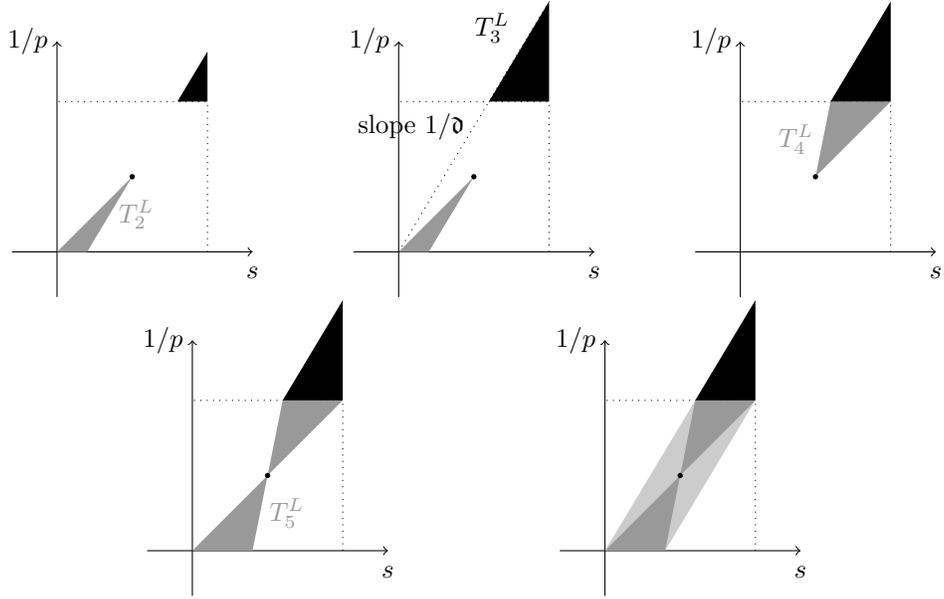
\begin{figure}
\def\figuredimen{0.6}
\def\figurealpha{0.2}
\begin{tikzpicture}[scale=2]
\draw [dotted] (1,0)--(1,1)--(0,1);
\draw [->] (-0.3,0)--(1.3,0) node [below ] {$\vphantom{1}s$};
\draw [->] (0,-0.3)--(0,1.4) node [ left] {$1/p$};
\fill [white!60!black] (1/2,1/2) -- (0,0) -- (\figurealpha,0) -- node [right] {$T_2^L$} cycle;
\fill  (1,1)  -- (1-\figurealpha,1) --(1,1+\figurealpha/\figuredimen) -- cycle;
\fill (1/2,1/2) circle (0.5pt);
\end{tikzpicture}
\hfill
\begin{tikzpicture}[scale=2]
\draw [dotted] (1,0)--(1,1)--(0,1);
\draw [->] (-0.3,0)--(1.3,0) node [below ] {$\vphantom{1}s$};
\draw [->] (0,-0.3)--(0,1.4) node [ left] {$1/p$};
\fill [white!60!black] (1/2,1/2) -- (0,0) -- (\figurealpha,0) -- cycle;
\fill (1/2,1/2) circle (0.5pt);
\draw [dotted] (0,0)--node [left] {slope $1/\bdmn$} (1,1/\figuredimen);
\fill (1,1)--(1,1/\figuredimen)-- node [above left] {$T_3^L$} (\figuredimen,1)--cycle;
\end{tikzpicture}
\hfill
\begin{tikzpicture}[scale=2]
\draw [dotted] (1,0)--(1,1)--(0,1);
\draw [->] (-0.3,0)--(1.3,0) node [below ] {$\vphantom{1}s$};
\draw [->] (0,-0.3)--(0,1.4) node [ left] {$1/p$};
\fill [white!60!black] (1/2,1/2) -- (1,1) -- (\figuredimen,1) -- node [left] {$T_4^L$} cycle;
\fill (1/2,1/2) circle (0.5pt);
\fill (1,1)--(1,1/\figuredimen)-- (\figuredimen,1)--cycle;
\end{tikzpicture}%
\break
\null\hfill
\begin{tikzpicture}[scale=2]
\draw [dotted] (1,0)--(1,1)--(0,1);
\draw [->] (-0.3,0)--(1.3,0) node [below ] {$\vphantom{1}s$};
\draw [->] (0,-0.3)--(0,1.4) node [ left] {$1/p$};
\fill [white!60!black] (1/2,1/2) -- (1,1) -- (\figuredimen,1) -- cycle;
\fill [white!60!black] (1/2,1/2) -- (0,0) -- (1-\figuredimen,0) -- node [right] {$T_5^L$} cycle;
\fill (1/2,1/2) circle (0.5pt);
\fill (1,1)--(1,1/\figuredimen)-- (\figuredimen,1)--cycle;
\end{tikzpicture}
\hfill
\begin{tikzpicture}[scale=2]
\draw [dotted] (1,0)--(1,1)--(0,1);
\draw [->] (-0.3,0)--(1.3,0) node [below ] {$\vphantom{1}s$};
\draw [->] (0,-0.3)--(0,1.4) node [ left] {$1/p$};
\fill [white!80!black] (1,1) -- (\figuredimen,1) -- (0,0)--(1-\figuredimen,0)--cycle;
\fill [white!60!black] (1/2,1/2) -- (1,1) -- (\figuredimen,1) -- cycle;
\fill [white!60!black] (1/2,1/2) -- (0,0) -- (1-\figuredimen,0) -- cycle;
\fill (1/2,1/2) circle (0.5pt);
\fill (1,1)--(1,1/\figuredimen)-- (\figuredimen,1)--cycle;
\end{tikzpicture}\hfill\break

\caption{The regions mentioned in Section~\ref{sec:thm:Poisson} in the case where $\bdmn<1-\alpha$ or $\bdmn<1-\alpha^*$}\label{fig:Poisson:final:no:really}
\end{figure}

Let $p_{L^*}^-<\beta<p_L^+$. By Corollary~\ref{cor:atom}, the $L^{1}_{av,\beta,s}(\Omega)$-Poisson problem for $L$ is compatibly well posed whenever $1-\alpha^*<s<1$. Lemma~\ref{lem:beta} ensures that the $L^{2}_{av,\beta,1/2}(\Omega)$-Poisson problem for $L$ is also compatibly well posed.  We can then use Theorem~\ref{thm:interp:sol} to interpolate between those values, obtaining that the $L^{p}_{av,\beta,s}(\Omega)$-Poisson problem for $L$ is well posed whenever $(s,1/p)$ lies in the open triangle $T^L_1$ with vertices $\{(1-\alpha^*,1), (1,1), (1/2,1/2)\}$.

The same argument with $L^*$ in place of $L$, together with Remark~\ref{rmk:compatible:solvable}, Theorem~\ref{first:step:duality}, and Lemma~\ref{lem:uniqueness}, gives compatible well posedness of the $L^{s}_{av,\beta,p}(\Omega)$-Poisson problem for $L$, whenever $p_{L^*}^-<\beta<p_L^+$ and $(s,1/p)$ lies in the open triangle $T^L_2$ spanned by $\{(\alpha,0), (0,0), (1/2,1/2))\}$, that is the symmetrization of $T^{L^*}_1$ with respect to the point $(1/2,1/2)$. Applying Lemma~\ref{lem:solv:infty} in place of Remark~\ref{rmk:compatible:solvable} yields compatible well posedness of the $L^\infty_{av,\beta,s}(\Omega)$-Poisson problem for~$L$ whenever $1<s<\alpha$, that is, along the bottom edge of $T^L_2$.

If $\alpha^*<1-\bdmn$, then we may apply Corollary~\ref{cor:atom} with $(\theta,1/\mathfrak{q})$ in the triangle $T^L_2$ and close to the corner $(0,0)$. See Figure~\ref{fig:Poisson:final:no:really}. This yields well posedness whenever $(s,1/p)$ lies in the open triangle $T^L_3$ with vertices $\{(\min(\bdmn,1-\alpha^*),1), (1,1), (1,\max(1/\bdmn,1+\alpha^*/\bdmn))\}$.
An interpolation argument as before
yields well posedness whenever $(s,1/p)$ lies in the triangle $T^L_4$ with vertices
$\{(\max(\alpha,1-\bdmn),0), (0,0), (1/2,1/2))\}$.
A duality argument as before
yields well posedness whenever $(s,1/p)$ lies in the triangle $T^L_5$ with vertices $\{(\max(\alpha,1-\bdmn),0), (0,0), (1/2,1/2))\}$.

In order to complete the proof it is sufficient now to use Theorem~\ref{thm:interp:sol} and interpolate between $T^L_4$ and $T^L_5$.

\section{The proof of Theorem~\ref{thm:Poisson:Lq}}
\label{sec:Poisson:Lp}

In this section we will prove Theorem~\ref{thm:Poisson:Lq}.

Let $\Omega$, $L$, and $\beta$ be as in Theorem~\ref{thm:Poisson}, and let $q$ be as in Theorem~\ref{thm:Poisson:Lq}. By Theorem~\ref{thm:Poisson}, the $L^q_{av,\beta,1/q}(\Omega)$-Poisson problem for $L$ is compatibly well posed. We will show that the $L^q_{av,\beta,s}(\Omega)$-Poisson problem for $L$ is compatibly solvable whenever $0<s<1/q$.

Using interpolation and duality results and the case \eqref{eqn:p<1:extrapolation} of Corollary~\ref{cor:atom}, as in Section~\ref{sec:thm:Poisson}, this yields well posedness of the $L^q_{av,\beta,s}(\Omega)$-Poisson problem for the range listed in Theorem~\ref{thm:Poisson:Lq}.

Let $0<s<1/q$. We now establish compatible solvability of the $L^q_{av,\beta,s}(\Omega)$-Poisson problem for~$L$.

Let $\vec H\in L^2(\Omega)\cap L^\beta(\Omega)$ be compactly supported and let $u$ be the solution to the $L^q_{av,\beta,1/q}(\Omega)$-Poisson problem for~$L$ with data~$\vec H$. (By compatible well posedness, $u$ coincides with the solution to the $L^2_{av,2,1/2}(\Omega)$-Poisson problem.)
Our goal is then to show that
\begin{equation*}\|u\|_{\dot W^{1,q}_{av,c,\beta,s}(\Omega)}
\leq C_{q,s} \|\vec H\|_{L^q_{av,c,\beta,s}(\Omega)}\end{equation*}
for some $c\in (0,1)$.
A density argument will then yield the compatible solvability, and thus the compatible well posedness.

By Lemma~\ref{lem:averaged:Whitney}, if $\mathcal{G}=\mathcal{G}_1$ is the grid of dyadic Whitney cubes in~$\Omega$, then
\begin{align*}\|u\|_{\dot W^{1,q}_{av,c,\beta,s}(\Omega)}
&\approx
\biggl(\sum_{Q\in\mathcal{G}} \biggl(\fint_Q |\nabla u|^\beta\biggr)^{q/\beta} \ell(Q)^{\bdmn+q-qs}\biggr)^{1/q}
.\end{align*}
Partitioning $\mathcal{G}$ by side length yields that
\begin{align*}\|u\|_{\dot W^{1,q}_{av,c,\beta,s}(\Omega)}
&\approx
\biggl(\sum_{k=-\infty}^\infty \sum_{\substack{Q\in\mathcal{G}\\\ell(Q)=2^k}} \biggl(\fint_Q |\nabla u|^\beta\biggr)^{q/\beta} \ell(Q)^{\bdmn+q-qs}\biggr)^{1/q}
.\end{align*}

If $k\in\Z$, let
\begin{equation*} \vec H_k= \sum_{\substack{R\in\mathcal{G}\\\clap{$\scriptstyle\ell(R)\leq 2^{k-6}$}}} \1_R \vec H, \qquad u_k\in \widetilde W^{1,q}_{av,\beta,1/q}(\Omega)\cap \widetilde W^{1,2}_{av,2,1/2}(\Omega),\quad Lu_k=\Div (\delta^{1+\bdmn-\pdmn}\vec H_k).\end{equation*}
That is, $u_k$ is the solution to the Poisson problem with data~$\vec H_k$. Then
$\vec H_k=\vec H$ near $\partial\Omega$ and $\vec H_k=0$ far from $\partial\Omega$.
We have that
\begin{align*}\|u\|_{\dot W^{1,q}_{av,c,\beta,s}(\Omega)}
&\leq
C\biggl(\sum_{k=-\infty}^\infty \sum_{\substack{Q\in\mathcal{G}\\\ell(Q)=2^k}} \biggl(\fint_Q |\nabla u_k|^\beta\biggr)^{q/\beta} \ell(Q)^{\bdmn+q-qs}\biggr)^{1/q}
\\&\qquad+
C\biggl(\sum_{k=-\infty}^\infty \sum_{\substack{Q\in\mathcal{G}\\\ell(Q)=2^k}} \biggl(\fint_Q |\nabla (u-u_k)|^\beta\biggr)^{q/\beta} \ell(Q)^{\bdmn+q-qs}\biggr)^{1/q}
.\end{align*}
To complete the proof, it suffices to show that each of the two terms on the right hand side are at most $C_{q,s} \|\vec H\|_{L^q_{av,c,\beta,s}(\Omega)}$.

\subsection{The term involving \texorpdfstring{$u-u_k$}{u-uk}}
We begin with the second term. Let $v_k=u-u_k$, so $v_k\in \widetilde W^{1,q}_{av,\beta,1/q}(\Omega)$ and $Lv_k=-\Div(\delta^{1+\bdmn-\pdmn}(\vec H-\vec H_k))$. We have that
\begin{equation*}\sum_{\substack{Q\in\mathcal{G}\\\ell(Q)=2^k}} \biggl(\fint_Q |\nabla v_k|^\beta\biggr)^{q/\beta} \ell(Q)^{\bdmn+q-qs}
=
2^{k-kqs}\sum_{\substack{Q\in\mathcal{G}\\\ell(Q)=2^k}} \biggl(\fint_Q |\nabla v_k|^\beta\biggr)^{q/\beta} \ell(Q)^{\bdmn+q-1}.\end{equation*}
By the compatible well posedness of the $L^q_{av,\beta,1/q}(\Omega)$-Poisson problem, we have that
\begin{equation*}\sum_{\substack{Q\in\mathcal{G}\\\ell(Q)=2^k}} \biggl(\fint_Q |\nabla v_k|^\beta\biggr)^{q/\beta} \ell(Q)^{\bdmn+q-1}
\leq
C\|\vec H-\vec H_k\|_{L^q_{av,c,\beta,1/q}(\Omega)}^q.\end{equation*}
But if $\vec H-\vec H_k\neq0$ in $R$ then $\ell(R)\geq 2^{k-5}$, and so by Lemma~\ref{lem:averaged:Whitney}
\begin{equation*}\|\vec H-\vec H_k\|_{L^q_{av,c,\beta,1/q}(\Omega)}^q
\approx\sum_{\substack{R\in\mathcal{G}\\\ell(R)\geq 2^{k-5}}}
\biggl(\fint_R |\vec H|^\beta\biggr)^{q/\beta} \ell(R)^{\bdmn+q-1}
.\end{equation*}
Combining all of these estimates yields that
\begin{multline*}
\smash{\sum_{k=-\infty}^\infty
\sum_{\substack{Q\in\mathcal{G}\\\ell(Q)=2^k}}} \biggl(\fint_Q |\nabla v_k|^\beta\biggr)^{q/\beta} \ell(Q)^{\bdmn+q-qs}
\\\leq
C\smash{\sum_{k=-\infty}^\infty}
2^{k-kqs}
\sum_{\substack{R\in\mathcal{G}\\\ell(R)\geq 2^{k-5}}}
\smash{\biggl(\fint_R |\vec H|^\beta\biggr)^{q/\beta} \ell(R)^{\bdmn+q-1}}
.\end{multline*}
Changing the order of summation yields that
\begin{multline*}
\smash{\sum_{k=-\infty}^\infty
\sum_{\substack{Q\in\mathcal{G}\\\ell(Q)=2^k}}} \biggl(\fint_Q |\nabla v_k|^\beta\biggr)^{q/\beta} \ell(Q)^{\bdmn+q-qs}
\\\leq
C
\sum_{R\in\mathcal{G}}
\smash{\biggl(\fint_R |\vec H|^\beta\biggr)^{q/\beta} \ell(R)^{\bdmn+q-1}}
\sum_{\substack{k\in\Z\\ 2^{k-5}\leq \ell(R)}}
2^{k-kqs}
.\end{multline*}
By assumption $qs<1$, and so the geometric series converges. Thus
\begin{multline*}
\smash{\sum_{k=-\infty}^\infty
\sum_{\substack{Q\in\mathcal{G}\\\ell(Q)=2^k}}} \biggl(\fint_Q |\nabla v_k|^\beta\biggr)^{q/\beta} \ell(Q)^{\bdmn+q-qs}
\\\leq
C\sum_{\substack{R\in\mathcal{G}}}
\smash{\biggl(\fint_R |\vec H|^\beta\biggr)^{q/\beta}} \ell(R)^{\bdmn+q-qs}
\approx \|\vec H\|_{L^q_{av,c,\beta,s}(\Omega)}^q
.
\end{multline*}

\subsection{The term involving \texorpdfstring{$u_k$}{uk}}
Let
\begin{equation*}
I_k=\sum_{\substack{Q\in\mathcal{G}\\\ell(Q)=2^k}} \biggl(\fint_Q |\nabla u_k|^\beta\biggr)^{q/\beta} \ell(Q)^{\bdmn+q-qs}
.
\end{equation*}
To complete the proof we need only show that
\begin{equation*}\sum_{k=-\infty}^\infty I_k\leq C\|\vec H\|_{L^q_{av,c,\beta,s}(\Omega)}^q.\end{equation*}

Suppose that $\ell(R)\leq 2^{k-6}$ and $\ell(Q)=2^k$.
By formula~\eqref{eqn:Whitney:dilate:distance} in Lemma~\ref{lem:dilate:Whitney}, we have that
\begin{equation*}
2^{k-2}\sqrt\dmn
<
2^{k-1}\sqrt\dmn-9\cdot 2^{k-7}\sqrt\dmn
\leq
\frac{1}{2}\diam(Q)
-\frac{9}{2}\diam(R)
\leq\dist(2Q,2R)\end{equation*}
and so $2Q$ and $\supp H_k$ are disjoint.
We may thus apply the bound~\eqref{eqn:interior:Meyers} to see that
\begin{equation*}
I_k\leq
C\sum_{\substack{Q\in\mathcal{G}\\\ell(Q)=2^k}} \biggl(\fint_{(3/2)Q} |\nabla u_k|^2\biggr)^{q/2} \ell(Q)^{\bdmn+q-qs}
.\end{equation*}
By the Caccioppoli inequality and the local bound~\eqref{eqn:interior:Moser},
\begin{equation*}
I_k
\leq
C\sum_{\substack{Q\in\mathcal{G}\\\ell(Q)=2^k}} \biggl(\fint_{2Q} | u_k|\biggr)^{q} \ell(Q)^{\bdmn-qs}
.\end{equation*}
But $2Q\cap\supp \vec H_k=\emptyset$, and so by  formula~\eqref{eqn:green:Poisson}
\begin{equation*}u_k(x)=\sum_{\ell(R)\leq 2^{k-6}} \int_R \nabla G_x^{L^*}\cdot \delta^{1+\bdmn-\pdmn}\vec H
\quad\text{for all }x\in 2Q, \>\ell(Q)=2^k.
\end{equation*}
Thus
\begin{equation*}
I_k
\leq
C\sum_{\substack{Q\in\mathcal{G}\\\ell(Q)=2^k}} \biggl(\fint_{2Q}
\sum_{\ell(R)\leq 2^{k-6}} \int_R \nabla G_x^{L^*}(y)\cdot \delta(y)^{1+\bdmn-\pdmn}\,\vec H(y)\,dy
\,dx\biggr)^{q} \ell(Q)^{\bdmn-qs}
.\end{equation*}
Because $\delta\approx \ell(R)$ in~$R$, we may apply Hölder's inequality to see that
\begin{equation*}
I_k
\leq
C\sum_{\substack{Q\in\mathcal{G}\\\ell(Q)=2^k}} \biggl(\fint_{2Q}
\sum_{\ell(R)\leq 2^{k-6}} \ell(R)^{1+\bdmn-\pdmn}
\|\nabla G_x^{L^*}\|_{L^{\beta'}(R)}
\|\vec H\|_{L^{\beta}(R)}
\,dx\biggr)^{q} \ell(Q)^{\bdmn-qs}
.\end{equation*}
But if $x\in 2Q$ then $x\notin \overline{2R}$. Thus $L^* G_x^{L^*}=0$ in $2R$ by formula~\eqref{eqn:green:solution}. Thus we may apply the reverse Hölder estimate~\eqref{eqn:interior:Meyers} and the Caccioppoli inequality to see that
\begin{equation*}\|\nabla G_x^{L^*}\|_{L^{\beta'}(R)}
\leq C\ell(R)^{\pdmn/\beta'-\pdmn/2-1} \|G_x^{L^*}\|_{L^{2}(2R)}
\leq  C\ell(R)^{\pdmn/\beta'-1} \|G_x^{L^*}\|_{L^{\infty}(2R)}
.\end{equation*}
Let $\xi_R\in\partial\Omega$ with $\dist(R,\partial\Omega)=\dist(R,\xi_R)$.
By Lemma~\ref{lem:dilate:Whitney}, if $y\in 2R$ then $|y-\xi_R|\leq (9/2)\sqrt{\dmn}\ell(R)$. Furthermore, $|x-y|\geq \dist(2Q,2R)$ and as noted above $\dist(2Q,2R)\geq 2^{k-2}\sqrt\dmn
\geq 16\sqrt{\dmn}\ell(R)$.

Thus by the symmetry property~\eqref{eqn:green:symmetric} and Lemma~\ref{lem:harmonic:controls:Green} with $2r=5\sqrt{\dmn}\ell(R)$, if $x\in 2Q$ then
\begin{equation*}
\sup_{y\in 2R} G_x^{L^*}(y)=\sup_{y\in 2R} G_y^L(x)\leq \frac{C}{\ell(R)^{\bdmn-1}} \omega_L^x(\Delta(\xi_R,20\sqrt{\dmn}\ell(R)))
.\end{equation*}
Let $\eta_R$ be a nonnegative Lipschitz continuous function defined on~$\partial\Omega$ that is $1$ in $\Delta(\xi_R,20\sqrt{\dmn}\ell(R))$ and supported in $\Delta(\xi_R,30\sqrt{\dmn}\ell(R))$. Because the harmonic measure is an unsigned (nonnegative) measure, we have that
\begin{equation*}\sup_{2R}G_x^{L^*}\leq \frac{C}{\ell(R)^{\bdmn-1}} \omega_L^x(\Delta(\xi_R,20\sqrt{\dmn}\ell(R)))
\leq \frac{C}{\ell(R)^{\bdmn-1}}
\int_{\partial\Omega}\eta_R\, d \omega_L^x.\end{equation*}
Let $v_R$ be the solution to the Dirichlet problem~\eqref{eqn:Dirichlet:cts} with boundary data~$\eta_R$.
By the definition of the harmonic measure (see Section~\ref{sec:harmonic:measure}) we have that
\begin{equation*}\sup_{2R}G_x^{L^*}
\leq \frac{C}{\ell(R)^{\bdmn-1}}
v_R(x).\end{equation*}
Thus
\begin{equation*}
I_k
\leq
C\sum_{\substack{Q\in\mathcal{G}\\\ell(Q)=2^k}} \biggl(\fint_{2Q}
\sum_{\ell(R)\leq 2^{k-6}} \ell(R)^{1-\pdmn/\beta}
\|\vec H\|_{L^{\beta}(R)}
v_R(x)
\,dx\biggr)^{q} \ell(Q)^{\bdmn-qs}
.\end{equation*}
Recall that $\vec H$ is compactly supported and thus $\|\vec H\|_{L^{\beta}(R)}\neq 0$ for only finitely many values of~$R$. Then
\begin{equation*}w_k(x)=\sum_{\ell(R)\leq 2^{k-6}} \ell(R)^{1-\pdmn/\beta}
\|\vec H\|_{L^{\beta}(R)}
v_R(x)
\end{equation*}
is the solution to the continuous Dirichlet problem~\eqref{eqn:Dirichlet:cts} with boundary data
\begin{equation*}f_k=\sum_{\ell(R)\leq 2^{k-6}} \ell(R)^{1-\pdmn/\beta}
\|\vec H\|_{L^{\beta}(R)}
\eta_R.\end{equation*}
If $a$ is large enough, then by the definition~\eqref{eqn:cone} of a nontangential cone and by Lemma~\ref{lem:dilate:Whitney}, we have that
\begin{equation*}2Q\subset \gamma_a(\xi)\quad\text{for all }\xi\in \Delta(\xi_Q,\ell(Q)).\end{equation*}
Thus
\begin{equation*}
I_k
\leq
C\sum_{\substack{Q\in\mathcal{G}\\\ell(Q)=2^k}} \biggl(\fint_{2Q}
w_k(x)
\,dx\biggr)^{q} \ell(Q)^{\bdmn-qs}
\leq
C\sum_{\substack{Q\in\mathcal{G}\\\ell(Q)=2^k}}
\ell(Q)^{\bdmn-qs}
\fint_{\Delta(\xi_Q,\ell(Q))} N_a(w_k)^q\,d\sigma
.\end{equation*}
By Lemma~\ref{lem:Whitney:boundary},
\begin{equation*}
I_k
\leq
C
2^{-kqs}
\int_{\partial\Omega} N_a(w_k)^q\,d\sigma
.\end{equation*}
Recall that Theorem~\ref{thm:Poisson:Lq} assumes validity of the nontangential bound~\eqref{eqn:N:bound}, and so
\begin{equation*}
I_k
\leq
C
2^{-kqs}
\int_{\partial\Omega} f_k^q\,d\sigma
.\end{equation*}
We compute that
\begin{align*}\biggl(\int_{\partial\Omega} f_k^q\,d\sigma\biggr)^{1/q}
&=
\biggl(\int_{\partial\Omega} \Bigl(\sum_{\ell(R)\leq 2^{k-6}} \ell(R)^{1-\pdmn/\beta}
\|\vec H\|_{L^{\beta}(R)}
\eta_R\Bigr)^q\,d\sigma\biggr)^{1/q}
\\&=
\biggl(\int_{\partial\Omega} \Bigl(\sum_{j=-\infty}^{k-6}
\sum_{\ell(R)=2^j} \ell(R)^{1-\pdmn/\beta}
\|\vec H\|_{L^{\beta}(R)}
\eta_R\Bigr)^q\,d\sigma\biggr)^{1/q}
\\&\leq\sum_{j=-\infty}^{k-6}
\biggl(\int_{\partial\Omega} \Bigl(
\sum_{\ell(R)=2^j} \ell(R)^{1-\pdmn/\beta}
\|\vec H\|_{L^{\beta}(R)}
\eta_R\Bigr)^q\,d\sigma\biggr)^{1/q}
.\end{align*}
Again by Lemma~\ref{lem:Whitney:boundary} we have that each $\sum_{\ell(R)=2^j} \ell(R)^{1-\pdmn/\beta}
\|\vec H\|_{L^{\beta}(R)}
\eta_R(\xi)$ is a sum of at most $C$ nonzero terms, and so
\begin{align*}\biggl(\int_{\partial\Omega} f_k^q\,d\sigma\biggr)^{1/q}
&\leq
C_q\sum_{j=-\infty}^{k-6}
\biggl(\int_{\partial\Omega}
\sum_{\ell(R)=2^j} \ell(R)^{q-q\pdmn/\beta}
\|\vec H\|_{L^{\beta}(R)}^q
\eta_R^q\,d\sigma\biggr)^{1/q}
.\end{align*}
By the Ahlfors regularity of the boundary and because $\eta_R$ is supported in a surface ball of radius~$30\diam R$,
\begin{align*}\biggl(\int_{\partial\Omega} f_k^q\,d\sigma\biggr)^{1/q}
&\leq
C_q\sum_{j=-\infty}^{k-6}
\biggl(
\sum_{\ell(R)=2^j} \ell(R)^{q-q\pdmn/\beta+\bdmn}
\|\vec H\|_{L^{\beta}(R)}^q
\biggr)^{1/q}
.\end{align*}
Thus
\begin{equation*}
I_k
\leq
C
2^{-kqs}
\biggl(
C_q\sum_{j=-\infty}^{k-6}
\biggl(
\sum_{\ell(R)=2^j} \ell(R)^{q-q\pdmn/\beta+\bdmn}
\|\vec H\|_{L^{\beta}(R)}^q
\biggr)^{1/q}
\biggr)^q
.\end{equation*}
If $\varepsilon>0$, then by Hölder's inequality in sequence spaces and convergence of geometric series
\begin{equation*}
I_k
\leq
C_q
2^{-kqs}
\sum_{j=-\infty}^{k-6}2^{\varepsilon(k-j)}
\sum_{\ell(R)=2^j} \ell(R)^{q-q\pdmn/\beta+\bdmn}
\|\vec H\|_{L^{\beta}(R)}^q
.\end{equation*}
Thus
\begin{equation*}\sum_{k=-\infty}^\infty I_k
\leq C_q
\sum_{k=-\infty}^\infty
2^{-kqs}
\sum_{j=-\infty}^{k-6}2^{\varepsilon(k-j)}
\sum_{\ell(R)=2^j} \ell(R)^{q-q\pdmn/\beta+\bdmn}
\|\vec H\|_{L^{\beta}(R)}^q
.\end{equation*}
Changing the order of summation yields that
\begin{equation*}\sum_{k=-\infty}^\infty I_k
\leq C_q
\sum_{j=-\infty}^{\infty}
\sum_{\ell(R)=2^j} \ell(R)^{q-q\pdmn/\beta+\bdmn}
\|\vec H\|_{L^{\beta}(R)}^q
\sum_{k=j+6}^\infty
2^{-kqs}
2^{\varepsilon(k-j)}
.\end{equation*}
Choosing $\varepsilon<qs$ yields that
\begin{equation*}\sum_{k=-\infty}^\infty I_k
\leq C_{q,s}
\sum_{R\in\mathcal{G}} \ell(R)^{q-q\pdmn/\beta+\bdmn-qs}
\|\vec H\|_{L^{\beta}(R)}^q
\end{equation*}
which by Lemma~\ref{lem:averaged:Whitney} is comparable to $\|\vec H\|_{L^q_{av,c,\beta,s}(\Omega)}^q$. This completes the proof.

\appendix

\section{The \texorpdfstring{$L^q$}{Lq} Dirichlet problem and small constant \texorpdfstring{$A^\infty$}{A∞} condition}
\label{sec:Lp}

The $L^q$-Dirichlet problem has been a subject of intense study over the past few decades, and so the bound~\eqref{eqn:N:bound} is known to be valid in many cases. A complete survey of such results is beyond the scope of this paper. We refer the reader to our short paper \cite{BarMP25pD} for a brief discussion of a few recent results.

Here, however, we would like to prove a result which seems to be missing and which provides yet another beautiful corollary to our theorems. As is very common in the literature, the authors of \cite{DavLM23} showed not that the $L^q$ Dirichlet problem is well posed, but that the harmonic measure satisfies an $A_\infty$ condition. It has long been known that some forms of this $A_\infty$ condition are equivalent to $L^q$ well posedness for some~$q$; however, it is occasionally somewhat involved to connect the particular form of the $A_\infty$ condition to the $L^q$ Dirichlet problem for any particular value of~$q$. And, in the case of small oscillations and relatively flat domains covered in \cite{DavLM23} we actually expect well-posedness of the Dirichlet problem for arbitrarily small $q>1$. In this section we will provide the connection between the solvability of the Dirichlet problem for all $L^q$ and the small constant $A_\infty$ condition as stated in \cite{DavLM23}. An emerging corollary is that well-posedness is valid in a much bigger range than that drawn in Figure~\ref{fig:extrapolation} as $1/q$ may be taken arbitrarily close to~1.

It is known that well posedness of the $L^q$-Dirichlet problem requires additional conditions on the coefficients and domains, beyond those listed in Theorem~\ref{thm:Poisson}; see \cite{ModM80,CafFK81}.
In this section we will state the conditions on domains and coefficients used in  \cite{DavLM23}.

\begin{defn}[Chord-arc surfaces with small constant]\label{dfn:CASSC} Let $\Gamma\subset\R^{\dmn}$ be closed and unbounded. We say that $\Gamma$ is a chord-arc surface with constant~$\varepsilon$ if, for every $x\in \Gamma$ and $r>0$, there is a Lipschitz function $\varphi_x:\R^\dmnMinusOne\to\R$ with Lipschitz constant at most $\varepsilon$ and a rigid motion $T_x$ such that, if
\begin{equation*} G=\{T_x((y,\varphi(y))):y\in\R^\dmnMinusOne\}, \end{equation*}
then $G\cap B(x,r/2)\neq \emptyset$ and
\begin{equation*}\sigma(B(x,r)\cap \Gamma\setminus G)+\sigma(B(x,r)\cap G\setminus \Gamma)<\varepsilon r^\dmnMinusOne.\end{equation*}
\end{defn}

\begin{rmk}\label{rmk:CASSC}In particular, a chord-arc surface with sufficiently small constant (depending on the dimension~$\dmn$) necessarily has $\pdmnMinusOne$-Ahlfors regular boundary. By \cite[Lemma~5.4]{DavLM23} and the following remarks, if $\Omega\subset\R^{\dmn}$ and $\partial\Omega$ is a chord-arc surface with sufficiently small constant~$\varepsilon$, then $\Omega$ is a chord-arc domain, and therefore satisfies the weak local John condition.\end{rmk}

\begin{defn}[The weak DKP condition]\label{dfn:DKP}
Let $\Omega\subset\R^{\dmn}$ be open and such that $\partial\Omega$ is $\pdmnMinusOne$-Ahlfors regular.
Let $A$ be a matrix-valued function defined on $\Omega$ that satisfies the ellipticity conditions~(\ref{eqn:elliptic:introduction}--\ref{eqn:elliptic:bounded:introduction}) for some $\Lambda>\lambda>0$ with $\bdmn=\dmn-1$.

We say that $A$ satisfies the weak DKP condition with constant $M$ in $\Omega$ if $A=B+C$, where
\begin{equation*}\sup_{x\in\partial\Omega}\sup_{r>0} \frac{1}{r^\dmnMinusOne} \int_{B(x,r)\cap\Omega} |\nabla B|^2\delta + |C|^2\frac{1}{\delta}=M<\infty.\end{equation*}

\end{defn}

An earlier and more restrictive form of this condition was proposed by Dahlberg and investigated by Kenig and Pipher in \cite{KenP01}.

We will conclude this appendix with the following lemma, which was stated without proof in \cite{BarMP25pD}.

\begin{lem}\label{lem:DKP:small}
Let $q\in (1,\infty)$, let $\dmn\geq 3$ be an integer, and let $\Lambda>\lambda>0$. Then there is a $\delta>0$ (depending on $q$, $\dmn$, $\Lambda$, and~$\lambda$) such that if
\begin{enumerate}
\item $\Omega\subset\R^{\dmn}$,
\item $\partial\Omega$ is a chord-arc surface with constant at most~$\delta$ (as defined in Definition~\ref{dfn:CASSC}),
\item $A$ satisfies the ellipticity conditions~(\ref{eqn:elliptic:introduction}--\ref{eqn:elliptic:bounded:introduction}) (with $\bdmn=\dmnMinusOne$), and
\item $A$ satisfies the weak Dahlberg-Kenig-Pipher condition in $\Omega$ with constant at most~$\delta$ (as defined in Definition~\ref{dfn:DKP}),
\end{enumerate}
then the $L^q$-Dirichlet problem is well posed.
\end{lem}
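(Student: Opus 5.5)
The plan is to obtain Lemma~\ref{lem:DKP:small} from the main theorem of \cite{DavLM23} together with the classical equivalence between the $L^q$ Dirichlet problem and a reverse Hölder ($B_{q'}$) property of the elliptic measure density. For $\varepsilon>0$ small, \cite{DavLM23} shows that if $\partial\Omega$ is a chord-arc surface and $A$ satisfies the weak DKP condition, both with sufficiently small constant depending on $\varepsilon$, $\dmn$, $\lambda$, $\Lambda$, then $\omega_L$ satisfies a small-constant $A_\infty$ condition. Concretely, for every surface ball $\Delta=\Delta(\xi,r)$, every Borel $E\subset\Delta$, and corkscrew point $X_\Delta$,
\begin{equation*}
\biggl|\frac{\omega_L^{X_\Delta}(E)}{\omega_L^{X_\Delta}(\Delta)}-\frac{\sigma(E)}{\sigma(\Delta)}\biggr|\leq \varepsilon
\end{equation*}
or an equivalent formulation such as the $\log$-density having small $BMO$ norm. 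By Remark~\ref{rmk:CASSC}, $\Omega$ is a chord-arc domain, so interior corkscrews, Harnack chains, Lemma~\ref{lem:cts}, and the standard change-of-pole and doubling properties of $\omega_L$ are available. Moreover, $\omega_L\ll\sigma$ with density $k^X=d\omega_L^X/d\sigma$.

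\textbf{Step 1: small $A_\infty$ to arbitrary reverse Hölder exponent.} The plan is to show that for any $r<\infty$, taking $\varepsilon$ small enough gives
\begin{equation*}
\biggl(\fint_\Delta (k^{X_\Delta})^{r}\,d\sigma\biggr)^{1/r}\leq C\fint_\Delta k^{X_\Delta}\,d\sigma.
\end{equation*}
The route goes through $\log k$. Small-constant $A_\infty$ gives, by a Coifman--Fefferman/Sarason type argument, that $\log k$ has $BMO(\sigma)$ norm $\leq\eta(\varepsilon)$ with $\eta\to 0$. John--Nirenberg on the Ahlfors regular space $\partial\Omega$, using David's dyadic cubes, then gives $\fint_\Delta e^{r(\log k-(\log k)_\Delta)}\leq C$ provided $r\eta<c_0$. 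Combined with $e^{-(\log k)_\Delta}\fint_\Delta k\leq C$, this yields the $B_r$ estimate.

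I expect Step 1 to be the main obstacle, since \cite{DavLM23} phrase their conclusion in a specific form and translating it into a $BMO$ smallness with a quantitative $\eta(\varepsilon)$ may require rederiving part of the Korey/Semmes theory of small $A_\infty$ weights in the Ahlfors-regular, non-doubling-pole setting. The first thing to try is to work with the normalized measures $\omega_L^{X_\Delta}/\omega_L^{X_\Delta}(\Delta)$ on each $\Delta$ and to apply the dyadic Korey argument on David cubes.

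\textbf{Step 2: reverse Hölder to the $L^q$ bound.} Take $r=q'$ and follow the standard argument, e.g.\ Kenig's monograph \cite{Ken94}, adapted to chord-arc domains. The comparison principle and Harnack chains give $N_a u_f(\xi)\leq C\,M_\sigma(f\,k^{X_0})$-type control, and more precisely $N u_f\lesssim \sup_{\Delta\ni\xi}\fint_\Delta |f|\,k^{X_\Delta}\,d\sigma$. By Hölder's inequality and $B_{q'}$, this is at most $C M_\sigma(|f|^q)^{1/q}$. Then apply $L^{q/(1+)}$ boundedness of the maximal function, which is possible because $B_{q'}$ self-improves to $B_{q'+\epsilon}$ by Gehring, so we may use $M_\sigma(|f|^{\tilde q})^{1/\tilde q}$ with $\tilde q<q$. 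This gives \eqref{eqn:N:bound} for Lipschitz compactly supported $f$. Uniqueness follows in the usual way from the maximum principle and the Hölder decay of Lemma~\ref{lem:boundary:DGN}.

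Finally, choose $\delta$ so that the $\varepsilon$ required by Step 1 for $r=q'$ is achieved.
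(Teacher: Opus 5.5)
Your overall route (\cite{DavLM23}, then John--Nirenberg, then a reverse Hölder inequality for the elliptic-measure density, then the standard reverse-Hölder-implies-$L^q$ argument) is the paper's. However, you misstate the input from \cite{DavLM23}, and you leave out the step needed to use the correct version. \cite[Corollary~6.23]{DavLM23} gives $\|\log k^y\|_{BMO(\Delta(x,r))}\leq\varepsilon$ only for poles $y$ with $\delta(y)\geq r/\tau$ and $\Delta(x,r)\subset B(y,\kappa\,\delta(y))$, that is, for poles far away relative to~$r$. It gives this $BMO$ smallness directly, so no Korey/Semmes-type argument is needed.

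Smallness with the pole at a corkscrew point of $\Delta$ itself is false, even for the Laplacian in a half-space. There the Poisson kernel with pole $(x',r)$ is $c\,r\,(r^2+|\xi-x'|^2)^{-\dmn/2}$, which changes by the factor $2^{\dmn/2}$ between the center and the edge of $\Delta(x',r)$. Consequently the mean oscillation of $\log k^{X_\Delta}$ on $\Delta$ is bounded below independently of $r$. So is $\omega^{X_\Delta}(E)/\omega^{X_\Delta}(\Delta)-\sigma(E)/\sigma(\Delta)$ for $E=\Delta(x',r/2)$. Hence your Step~1 can only produce a reverse Hölder inequality for $k^y$ with $y$ far away. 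It does not give one for $k^{X_\Delta}$, nor for the family of poles your Step~2 needs.

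The paper closes this gap with a change of pole. It takes $\kappa=2/\corkscrew$ and $y=A_{cr}(x)$ with $c\geq\max(2/\corkscrew,1/(\corkscrew\tau))$, so that the hypotheses of \cite[Corollary~6.23]{DavLM23} hold. John--Nirenberg on the relatively open set $\Delta(x,r)$ \cite{Buc99}, followed by Jensen, then gives $\fint_{\Delta(x,r/2)}(k^y)^p\,d\sigma\leq 16\bigl(\fint_{\Delta(x,r/2)}k^y\,d\sigma\bigr)^p$ once $\varepsilon$ is small compared to $1/p$. Next, the comparison principle \cite[Lemma~15.47]{DavFM20p} gives $\omega^z(E)/\omega^z(\Delta)\approx\omega^y(E)/\omega^y(\Delta)$ for $E\subset\Delta$ and all $z\in\Omega\setminus B(x,2r)$. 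Hence $k^z\approx k^y\,\omega^z(\Delta)/\omega^y(\Delta)$ on $\Delta$, and the reverse Hölder inequality transfers to all such $z$ with constants independent of~$z$. This uniform statement is exactly the hypothesis of \cite[Proposition~4.5]{HofL18}, which the paper cites instead of carrying out your Step~2.

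Your sketch of Step~2 is the standard argument and works once it is fed this input, apart from two slips. First, $\fint_\Delta|f|\,k^{X_\Delta}\,d\sigma$ should be $\int_\Delta|f|\,d\omega^{X_\Delta}$ for the scaling to come out right. Second, Gehring is unnecessary, since $\varepsilon$ may be chosen for any exponent $p>q'$.

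Finally, in Step~1 the inequality to combine with John--Nirenberg is Jensen's, $\exp\bigl(\fint_\Delta\log k\,d\sigma\bigr)\leq\fint_\Delta k\,d\sigma$. The bound $e^{-(\log k)_\Delta}\fint_\Delta k\leq C$ that you wrote points the wrong way.
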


\begin{proof} The proof follows from several known results in the literature. First, by the recent result \cite[Corollary~6.23]{DavLM23}, the given conditions imply a quantitative local $A_\infty$-type condition on the harmonic measure. Second, this $A_\infty$-type condition implies a local reverse Hölder condition on the density (that is, the Radon-Nikodym derivative with respect to the surface measure) of the harmonic measure. Finally, this reverse Hölder estimate yields well posedness of the $L^q$-Dirichlet problem. There are well known arguments for the second and third steps that suffice to establish that there exists \emph{some} $q$ such that the $L^q$-Dirichlet problem is well posed; in the case of the present situation, when we preselect $q$ and seek a setting such that the particular $L^q$-Dirichlet problem is well posed, we must state the relevant estimates with more care.

Specifically, \cite[Corollary~6.23]{DavLM23} is as follows. Suppose that $\varepsilon>0$ and $\kappa>1$. Then there is a $\delta>0$ and $\tau>0$ (depending on $\varepsilon$, $\kappa$, $\dmn$, $\lambda$, and~$\Lambda$) with the following significance. Suppose that the conditions of the lemma are satisfied with the given~$\delta$. Then
for every $x\in \partial\Omega$,
for every $r>0$,
and
for every $y\in\Omega$ with $\dist(y,\partial\Omega)\geq r/\tau$ and with $\Delta=B(x,r)\cap\partial\Omega\subset B(y,\kappa\dist(y,\partial\Omega))$,
we have that $\omega^y_L$, the harmonic measure for $-\Div A\nabla$ with pole at~$y$, is absolutely continuous with respect to the surface measure on $\Delta=B(x,r)\cap\partial\Omega$, and furthermore if $k^y_L$ denotes the Radon-Nikodym derivative of $\omega_L^y$ with respect to~$\sigma$, then \begin{equation*}\sup_{\Delta(y,\varrho)\subseteq \Delta} \fint_{\Delta(y,\varrho)} \bigl|\log k_L^y(z)-{\textstyle \fint_{\Delta(y,\varrho)} \log k_L^y}\bigr|\,d\sigma(z)
=\|\log k_L^y\|_{BMO(\Delta)}\leq\varepsilon.\end{equation*}

A form of the John-Nirenberg inequality in doubling spaces that is known to be true (see \cite[Theorem~2.2]{Buc99}) with this form of $BMO$ in an open set (that is, in $\Delta(x,r)$, which is relatively open in~$\partial\Omega$), is that
\begin{equation*}\fint_{\Delta(x,r/2)} \exp \biggl( \frac{|\log k^y-c_{x,r}
|}{C_1 \|\log k^y\|_{BMO(\Delta(x,r))}} \biggr) \,d\sigma \leq 16\end{equation*}
where $c_{x,r}=\fint_{\Delta(x,r/2)}\log k^y\,d\sigma$ and where $C_1$ depends on $\bdmn$ and the Ahlfors constant of~$\partial\Omega$ (or more precisely on the doubling constant of the doubling measure $\sigma$ on~$\partial\Omega$).

Let $1<p<\infty$. If $c>0$, then
\begin{align*}
\fint_{\Delta(x,r/2)} (k^y)^p\,d\sigma
&=\fint_{\Delta(x,r/2)} \exp(p\log k^y)\,d\sigma
\\&=c^p\fint_{\Delta(x,r/2)} \exp(p(\log k^y-\log c))\,d\sigma.
\end{align*}
If we choose $c=\exp c_{x,r}$, and if $\varepsilon<1/C_1p$, then
\begin{align*}
\fint_{\Delta(x,r/2)} (k^y)^p\,d\sigma
&\leq 16c^p = 16 \biggl(\exp \fint_{\Delta(x,r/2)}\log k^y\,d\sigma\biggr)^p.
\end{align*}
Because $\exp$ is a convex function, by Jensen's inequality
\begin{equation*}\exp \fint_{\Delta(x,r/2)}\log k^y\,d\sigma
\leq \fint_{\Delta(x,r/2)}\exp \log k^y\,d\sigma
=\fint_{\Delta(x,r/2)}k^y\,d\sigma\end{equation*}
and so
\begin{align*}
\fint_{\Delta(x,r/2)} (k^y)^p\,d\sigma
&\leq 16 \biggl(\fint_{\Delta(x,r/2)}k^y\,d\sigma\biggr)^p.
\end{align*}

To summarize, if $1<p<\infty$ and $\kappa>1$, then there is a $\delta>0$ and a $\tau>0$ such that if the conditions of the lemma are satisfied, then
for every $x\in \partial\Omega$,
for every $r>0$,
and every $y\in\Omega$ with $\dist(y,\partial\Omega)\geq r/\tau$ and with $\Delta=B(x,r)\cap\partial\Omega\subset B(y,\kappa\dist(y,\partial\Omega))$, we have that
\begin{align*}
\fint_{\Delta(x,r/2)} (k^y)^p\,d\sigma
&\leq 16 \biggl(\fint_{\Delta(x,r/2)}k^y\,d\sigma\biggr)^p.
\end{align*}
We choose $\kappa=2/\corkscrew$, where $\corkscrew$ is the constant in the interior corkscrew condition (Definition~\ref{dfn:iCS}). By Remark~\ref{rmk:CASSC}, $\corkscrew$ exists and is positive. It is straighforward to establish that if $c\geq 2/\corkscrew>1$ and $c\geq 1/\corkscrew\tau$, then the corkscrew point $y=A_{cr}(x)$ given by Definition~\ref{dfn:iCS} satisfies the above conditions.

Now, recall that $k^y$ is the density of the elliptic measure with pole at~$y$. Thus if $E\subset \Delta(x,r)$, then $\int_E k^y\,d\sigma=\omega^y(E)$.

By the well known comparison principle (a precise statement in the generality we need may be found in \cite[Lemma~15.47]{DavFM20p}) we have that if $z\in \Omega\setminus B(x,2r)$ and $\Delta=\Delta(x,r)$, then
\begin{equation*}\frac{\omega^z(E)}{\omega^z(\Delta)} \approx \frac{\omega^{A_r(x)}(E)}{\omega^{A_r(x)}(\Delta)}\end{equation*}
where in particular the comparability constant does not depend on~$z$. We may clearly replace $A_r(x)$ by $A_{cr}(x)$ (that is, our value of~$y$ from above).

In particular, if \begin{equation*}E=\{x\in\Delta:k^z(x)\geq R k^y(x)\omega^z(\Delta)/\omega^y(\Delta)\},\end{equation*}
then
\begin{equation*}\omega^z(E)=\int_E k^z\,d\sigma \geq R\int_E k^y\omega^z(\Delta)/\omega^y(\Delta)\,d\sigma
= R\omega^y(E)\omega^z(\Delta)/\omega^y(\Delta)\end{equation*}
and so $E$ must be empty for sufficiently large~$R$. The same is true with the roles of $y$ and $z$ reversed. Thus $k^z\approx k^y$ in $\Delta$ and so we have the estimate
\begin{align*}
\fint_{\Delta(x,r/2)} (k^z)^p\,d\sigma
&\leq C \biggl(\fint_{\Delta(x,r/2)}k^z\,d\sigma\biggr)^p
\end{align*}
for all $z\in \Omega\setminus B(x,2r)$, where $C$ does not depend on~$z$.
By \cite[Proposition~4.5]{HofL18}, this implies well posedness of the $L^q$-Dirichlet problem in~$\Omega$.
\end{proof}

\def\cprime{'}
\bibliographystyle{amsalpha}
\bibliography{bibli}
\end{document}